\documentclass{elsarticle}
\usepackage{amssymb}
\usepackage{amsfonts}
\usepackage{amsmath}
\usepackage{stmaryrd}

\begin{document}

\newtheorem{theorem}{Theorem}
\newtheorem{definition}[theorem]{Definition}
\newtheorem{lemma}[theorem]{Lemma}
\newtheorem{proposition}[theorem]{Proposition}
\newtheorem{example}[theorem]{Example}
\newtheorem{corollary}[theorem]{Corollary}
\newtheorem{conjecture}[theorem]{Conjecture}
\newenvironment{note}{\noindent{\bf Note:}}{\medskip}
\newenvironment{proof}{\noindent{\bf Proof:}}{\medskip}
\def\squareforqed{\hbox{\rlap{$\sqcap$}$\sqcup$}}
\def\qed{\ifmmode\squareforqed\else{\unskip\nobreak\hfil
\penalty50\hskip1em\null\nobreak\hfil\squareforqed
\parfillskip=0pt\finalhyphendemerits=0\endgraf}\fi}

\def\cal#1{{\mathcal #1}}
\def\bb#1{{\mathbb #1}}
\def\bsigma#1{\mathbf{\Sigma}^0_{#1}}
\def\bpi#1{\mathbf{\Pi}^0_{#1}}
\def\bdelta#1{\mathbf{\Delta}^0_{#1}}
\def\analytic{\mathbf{\Sigma}^1_1}
\def\coanalytic{\mathbf{\Pi}^1_1}
\def\bianalytic{\mathbf{\Delta}^1_1}
\def\baire{\omega^{\omega}}
\def\cntbased{$\omega${\bf Top}$_0$}
\def\bdiff#1{\mathbf{\Sigma}^{-1}_{#1}}

\def\wayabovearrow{\rlap{\raise-.25ex\hbox{$\shortuparrow$}}\raise.25ex\hbox{$\shortuparrow$}}
\def\waybelowarrow{\rlap{\raise.25ex\hbox{$\shortdownarrow$}}\raise-.25ex\hbox{$\shortdownarrow$}}

\def\abs#1{{\lvert#1\rvert}}


\begin{frontmatter}

\title{Quasi-Polish Spaces}

\author{Matthew de Brecht}
\ead{matthew@nict.go.jp}

\address{National Institute of Information and Communications Technology\\ Kyoto, Japan}

\begin{abstract}
We investigate some basic descriptive set theory for countably based completely quasi-metrizable topological spaces, which we refer to as quasi-Polish spaces. These spaces naturally generalize much of the classical descriptive set theory of Polish spaces to the non-Hausdorff setting. We show that a subspace of a quasi-Polish space is quasi-Polish if and only if it is $\bpi 2$ in the Borel hierarchy. Quasi-Polish spaces can be characterized within the framework of Type-2 Theory of Effectivity as precisely the countably based spaces that have an admissible representation with a Polish domain. They can also be characterized domain theoretically as precisely the spaces that are homeomorphic to the subspace of all non-compact elements of an $\omega$-continuous domain. Every countably based locally compact sober space is quasi-Polish, hence every $\omega$-continuous domain is quasi-Polish. A metrizable space is quasi-Polish if and only if it is Polish. We show that the Borel hierarchy on an uncountable quasi-Polish space does not collapse, and that the Hausdorff-Kuratowski theorem generalizes to all quasi-Polish spaces.
\end{abstract}

\begin{keyword}
descriptive set theory \sep domain theory \sep quasi-metric space \sep Borel hierarchy \sep admissible representation
\end{keyword}

\end{frontmatter}


\section{Introduction}

Separable completely metrizable spaces, called Polish spaces, are perhaps the best understood and most widely researched class of topological spaces. These include the space of natural numbers with the discrete topology, the real numbers with the Euclidean topology, as well as the separable Hilbert and Banach spaces.

Descriptive set theory \cite{kechris} has proven to be an invaluable tool for the study of Polish spaces, from providing elegant characterizations of Polish spaces and a means of quantifying the complexity of ``definable'' sets, to exploring the limits of what is provable within Zermelo-Fraenkel set theory with the axiom of choice. The techniques of descriptive set theory have been successfully applied to many fields such as functional analysis, topological group theory, and mathematical logic.

Somewhat more recently, however, there has been growing interest in non-metrizable spaces, in particular the continuous lattices and domains of domain theory \cite{etal_scott}. These spaces generally fail to satisfy even the $T_1$-separation axiom, but naturally occur in the general theory of computation, the analysis of function spaces, as well as in algebra and logic. Continuous domains are also characterized by a kind of completeness property, which at first glance seems rather different than the completeness property of a metric.

Another interest in non-metrizable spaces comes from the theory of quasi-metrics \cite{kunzi_intro} and partial metrics \cite{Matthews}. These are generalizations of metrics, where quasi-metrics are the result of removing the axiom of symmetry, and partial metrics are the result of removing the requirement that the distance from a point to itself be zero. These generalized metrics provide a useful tool for defining and analyzing non-metrizable topologies, and have applications in general topology, theoretical computer science, and other fields of mathematics.


Despite the great success of descriptive set theory with the analysis of metrizable spaces, the extension of this approach to more general spaces seems to have been largely overlooked by the mathematical community. This is most likely due to the fact that the traditional definition of the Borel hierarchy in terms of $F_\sigma$ and $G_\delta$ sets behaves poorly on non-metrizable spaces. However, a hint to overcoming this technicality was given at least as early as 1976, in the perhaps not so well known Section 6 of a very well known paper by Dana Scott \cite{scott_datatypes}. There it was shown that countable intersections of \emph{boolean combinations} of open sets, called $\cal B_\delta$ sets, can be much more interesting than $G_\delta$ sets when dealing with non-metrizable spaces like domains. This research was continued briefly by A. Tang \cite{tang1979, tang1981}, but the focus of these papers was exclusively on the space $\cal P(\omega)$, the power set of the natural numbers with the Scott-topology, and dealt mainly with $\cal B_\delta$ sets and their complements, the $\cal B_\sigma$ sets.

Victor Selivanov was the first to investigate the Borel hierarchy systematically on general topological spaces, using a modified version of the hierarchy that identifies the $\cal B_\sigma$ and $\cal B_\delta$ sets of Scott and Tang with the levels $\bsigma 2$ and $\bpi 2$. Selivanov demonstrated the viability of this approach by showing that many basic theorems of descriptive set theory extend to the class of $\omega$-continuous domains, and made many other contributions such as studying the difference hierarchy and Wadge reducibility on domains (see \cite{selivanov} and the references therein for an overview). Despite these many successes, descriptive set theory became divided into the traditional theory for Polish spaces and the recently emerging theory for $\omega$-continuous domains. Selivanov \cite{selivanov2008} then posed the question as to whether or not there exists a more general class of spaces, containing both Polish spaces and $\omega$-continuous domains, which can allow a unified generalization of the descriptive set theory of Polish spaces.

The goal of this paper is to introduce a class of spaces, which we call quasi-Polish spaces, which we propose as a solution to the question posed by Selivanov. In a sense these spaces are not new, they are defined as the countably based spaces which admit a (Smyth)-complete quasi-metric, and correspond (at least up to homeomorphism) to the $\cal B_\delta$ subspaces of $\cal P(\omega)$ investigated by Scott and Tang. However, to our knowledge this paper is the first attempt to develop a coherent descriptive set theory for these spaces, as well as demonstrate both their generality and nice completeness properties. For example, we will see that the class of quasi-Polish spaces is general enough to contain both the Polish spaces and the countably based locally compact sober spaces, hence all $\omega$-continuous domains, but is not too general as demonstrated by the fact that every quasi-Polish space is sober and every metrizable quasi-Polish space is Polish. 

The majority of this paper will be dedicated to showing the naturalness of extending the descriptive set theory of Polish spaces to the class of quasi-Polish spaces. For example, a subspace of a quasi-Polish space is quasi-Polish if and only if it is a $\bpi 2$ subset, and quasi-Polish spaces have a game-theoretic characterization in terms of a simple modification of the strong Choquet game. The topology of quasi-Polish spaces can also be extended to finer quasi-Polish topologies in a manner similar to the case for Polish spaces. The naturalness of quasi-Polish spaces will also be demonstrated by showing that they are precisely the spaces that are homeomorphic to the subspace of non-compact elements of an $\omega$-continuous domain, and that they are precisely the countably based spaces that have a total admissible representation in the sense of Type 2 Theory of Effectivity.

In addition to our multiple characterizations of the countably based spaces that admit complete quasi-metrics, we will also provide solutions to several other problems that, to our knowledge, remain open. In particular, we will combine our techniques with the work of H. Junnila and H.-P. A. K\"{u}nzi \cite{junnila_kunzi} to provide a complete characterization of the countably based spaces which admit a bicomplete quasi-metric in terms of level $\bpi 3$ of the Borel hierarchy. We will also show that the quasi-Polish spaces satisfying the $T_1$-axiom provide a solution to the problem posed by K. Martin \cite{keye_martin} of characterizing the spaces that can be modeled by an $\omega$-ideal domain. We will also extend the results of Selivanov by proving the Hausdorff-Kuratowski theorem for quasi-Polish spaces in full generality.

The basic outline of this paper is as follows. We will introduce basic definitions and notation in the following section. Section \ref{sec:borelhierarchy} defines the Borel hierarchy for general topological spaces. Quasi-metrics are defined in Section \ref{sec:quasimetricspaces}, and quasi-Polish spaces are defined and characterized in Section \ref{sec:complete_quasimetricspaces}. Bicomplete quasi-metrics and complete partial metrics are briefly discussed in Section \ref{sec:othercompletegeneralmetrics}, where we provide a characterization of countably based bicompletely quasi-metrizable spaces. Sections \ref{sec:opencontsurjections} through \ref{sec:scatteredspaces} investigate general properties of quasi-Polish spaces, provide alternative characterizations, and demonstrate that many important classes of spaces are quasi-Polish. Section \ref{sec:HausdorffKuratowski} extends the Hausdorff-Kuratowski theorem to quasi-Polish spaces, and Section \ref{sec:topextension} investigates extensions of quasi-Polish topologies.


\section{Preliminaries}

We will assume that the reader is familiar with general topology. Ideally, the reader will also be familiar with the classical descriptive set theory of Polish spaces and have a basic understanding of domain theory, and we will only provide some of the basic definitions in this section. A reader familiar with both of these fields may feel free to skip this section and only return to it later if necessary. Our main reference for descriptive set theory is \cite{kechris}, and our main reference for domain theory is \cite{etal_scott}.

We use $\omega$ to denote the set of natural numbers, and $\baire$ to denote the set of functions on $\omega$. Finite sequences of natural numbers will be denoted by $\omega^{<\omega}$. For any $\sigma\in\omega^{<\omega}$, we write $\abs{\sigma}$ for the length of $\sigma$, and write $\sigma\diamond i$ for the sequence obtained by appending $i\in\omega$ to the end of $\sigma$. The prefix relation and strict prefix relation on $\omega^{<\omega}$ will be denoted by $\preceq$ and $\prec$, respectively. For $\sigma\in\omega^{<\omega}$ and $p\in\baire$, we will write $\sigma\prec p$ to mean that $\sigma$ is a prefix of $p$, and define $\uparrow\sigma = \{p\in\baire\,|\, \sigma\prec p\}$. Similar notation will also apply to $2^{<\omega}$, the set of finite binary sequences.

We denote a topological space with underlying set $X$ and topology $\tau$ by $(X,\tau)$. If $\tau$ is clear from context, then we will often abbreviate $(X,\tau)$ by $X$. We will always assume that $\baire$ has the product topology, which is generated by sets of the form $\uparrow\sigma$ for $\sigma\in\omega^{<\omega}$. The \emph{specialization order} on a topological space $X$ is defined as $x\leq y$ if and only if $x$ is in the closure of $y$. A topological space $X$ is said to satisfy the \emph{$T_0$-separation axiom} if and only if $x\leq y$ and $y\leq x$ implies $x=y$ for all $x,y\in X$. A \emph{basis} for a topology $\tau$ is a family $\cal B\subseteq \tau$ such that every element of $\tau$ equals the union of elements of $\cal B$. A topological space is \emph{countably based} if and only if it has a basis with countably many elements. We emphasize that we always assume that a basis for a topology contains only open sets, which differs slightly from the definition used in \cite{etal_scott}.

Let $X$ be a set and $d$ a metric on $X$. A sequence $\{x_n\}_{n\in\omega}$ in $X$ is a \emph{Cauchy sequence} if and only if $\lim_{m,n} d(x_m,x_n)=0$. The metric space $(X,d)$ is \emph{complete} if and only if every Cauchy sequence has a limit in $X$. A topological space $X$ is \emph{completely metrizable} if and only if there is a metric $d$ compatible with the topology on $X$ such that $(X,d)$ is complete. A topological space is \emph{Polish} if and only if it is separable and completely metrizable.

We write $f\colon \subseteq X\to Y$ to denote that $f$ is a \emph{partial} function from $X$ to $Y$. The \emph{domain} of $f$ is the subset of $X$ for which $f$ is defined, and will be denoted $dom(f)$. A partial function $f\colon\subseteq X\to Y$ is continuous if and only if the preimage of every open subset of $Y$ is open with respect to the subspace topology on $dom(f)$.

Let $(P,\sqsubseteq)$ be a partially ordered set. A subset $D\subseteq P$ is \emph{directed} if and only if $D$ is non-empty and every pair of elements in $D$ has an upper bound in $D$. $P$ is a \emph{directed complete partially ordered set} (dcpo) if and only if every directed subset $D$ of $P$ has a supremum $\bigsqcup D$ in $P$. Given $x,y\in P$, $x$ is \emph{way below} $y$, written $x \ll y$, if and only if for every directed $D\subseteq P$ for which $\bigsqcup D$ exists, if $y\sqsubseteq  \bigsqcup D$ then there is $d\in D$ with $x\sqsubseteq d$. For $x\in P$ we define $\wayabovearrow x=\{y\in P\,|\, x \ll y\}$ and $\waybelowarrow x=\{y\in P\,|\, y\ll x\}$. An element $x\in P$ is \emph{compact} if and only if $x\ll x$.

A subset $U$ of $P$ is \emph{Scott-open} if and only if $U$ is an upper set (i.e., $x\in U$ and $x\sqsubseteq y$ implies $y\in U$) and for every directed $D\subseteq P$, if $\bigsqcup D$ exists and is in $U$ then $D\cap U\not=\emptyset$. The Scott-open subsets of $P$ form a topology on $P$ called the \emph{Scott-topology}.

A subset $B$ of $P$ is a (domain theoretic) \emph{basis} for $P$ if and only if for every $x\in P$, the set $B\cap \waybelowarrow x$ contains a directed subset with supremum equal to $x$. $P$ is an \emph{$\omega$-continuous domain} if and only if $P$ is a dcpo with a countable (domain theoretic) basis, and $P$ is an \emph{$\omega$-algebraic domain} if and only if $P$ is an $\omega$-continuous domain with a basis consisting only of compact elements. If $P$ is an $\omega$-continuous domain and $B$ is a countable (domain theoretic) basis for $P$, then $\wayabovearrow x$ is Scott-open for each $x\in P$ and $\{\wayabovearrow x\,|\, x\in B\}$ is a countable (topological) basis for the Scott-topology on $P$.

We let $\cal P(\omega)$ denote the power set of $\omega$ ordered by subset inclusion. $\cal P(\omega)$ is an $\omega$-algebraic domain, and the compact elements are precisely the finite subsets of $\omega$. We will always assume the Scott-topology on $\cal P(\omega)$, which is generated by sets of the form $\uparrow\!F = \{ X\in \cal P(\omega) \,|\, F\subseteq X\}$ with $F\subseteq \omega$ finite.


\section{Borel Hierarchy}\label{sec:borelhierarchy}

It is common for non-Hausdorff spaces to have open sets that are not $F_\sigma$ (i.e., countable unions of closed sets) and closed sets that are not $G_\delta$ (i.e., countable intersections of open sets). The Sierpsinski space, which has $\{\bot,\top\}$ as an underlying set and the singleton $\{\top\}$ open but not closed, is perhaps the simplest example of this phenomenon. This implies that the classical definition of the Borel hierarchy, which defines level $\bsigma 2$ as the $F_\sigma$-sets and $\bpi 2$ as the $G_\delta$-sets, is not appropriate in the general setting. We can overcome this problem by using the following modification of the Borel hierachy due to Victor Selivanov (see \cite{selivanov1984, selivanov2004,selivanov}).

\begin{definition}\label{def:borel_hierarchy}
Let $(X,\tau)$ be a topological space. For each ordinal $\alpha$ ($1\leq \alpha < \omega_1$) we define $\bsigma \alpha(X,\tau)$ inductively as follows.
\begin{enumerate}
\item
$\bsigma 1(X,\tau)=\tau$.
\item
For $\alpha>1$, $\bsigma \alpha(X,\tau)$ is the set of all subsets $A$ of $X$ which can be expressed in the form
\[A=\bigcup_{i\in\omega}B_i\setminus B'_i,\]
where for each $i$, $B_i$ and $B'_i$ are in $\bsigma{\beta_i}(X,\tau)$ for some $\beta_i<\alpha$.
\end{enumerate}
We define $\bpi \alpha(X,\tau)= \{X\setminus A\,|\, A\in \bsigma \alpha(X,\tau)\}$ and $\bdelta \alpha(X,\tau)=\bsigma \alpha(X,\tau)\cap \bpi \alpha(X,\tau)$. Finally, we define ${\mathbf B}(X,\tau)=\bigcup_{\alpha<\omega_1} \bsigma \alpha(X,\tau)$ to be the \emph{Borel} subsets of $(X,\tau)$.
\qed
\end{definition}

When the topology is clear from context, we will usually write $\bsigma \alpha(X)$ instead of $\bsigma \alpha(X,\tau)$.

The definition above is equivalent to the classical definition of the Borel hierarchy on metrizable spaces, but differs in general. V. Selivanov has investigated this hierarchy in a series of papers, with an emphasis on applications to $\omega$-continuous domains (see \cite{selivanov} for an overview of results). D. Scott \cite{scott_datatypes} and his student A. Tang \cite{tang1979, tang1981} have also investigated some aspects of the hierarchy in $\cal P(\omega)$, using the notation $\cal B_\sigma$ and $\cal B_\delta$ to refer to the levels $\bsigma 2$ and $\bpi 2$, respectively.

In the rest of this section, $X$ and $Y$ will denote arbitrary topological spaces, unless stated otherwise. The following results are easily proven, and can also be found in \cite{selivanov}.

\begin{proposition}
For each $\alpha$ ($1\leq \alpha < \omega_1$),
\begin{enumerate}
\item
$\bold{\Sigma}_{\alpha}^0(X)$ is closed under countable unions and finite intersections,
\item
$\bold{\Pi}_{\alpha}^0(X)$ is closed under countable intersections and finite unions,
\item
$\bold{\Delta}_{\alpha}^0(X)$ is closed under finite unions, finite intersections, and complementation.
\end{enumerate}
\qed
\end{proposition}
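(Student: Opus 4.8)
The plan is to prove the three closure properties by induction on $\alpha$, though in fact statements (2) and (3) will follow formally from (1) together with the definitions of $\bpi\alpha$ and $\bdelta\alpha$, so the real work lies in establishing (1). I would handle the base case $\alpha=1$ first: $\bsigma 1(X)=\tau$ is closed under countable unions and finite intersections by the very axioms of a topology, so there is nothing to prove there.

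For the inductive step on statement (1), suppose $\alpha>1$ and that the claim holds for all $\beta<\alpha$. The key observation is that the defining form $A=\bigcup_{i\in\omega}B_i\setminus B'_i$ with each $B_i,B'_i\in\bsigma{\beta_i}(X)$ for some $\beta_i<\alpha$ is manifestly stable under countable unions: given countably many sets $A^{(n)}\in\bsigma\alpha(X)$, each presented in this form, I simply concatenate the countable families of differences indexed by the pairs $(n,i)$ into a single countable family, which again witnesses membership in $\bsigma\alpha(X)$. The more delicate part is finite intersection. Here I would first reduce to the binary case and then expand: if $A=\bigcup_i B_i\setminus B'_i$ and $C=\bigcup_j D_j\setminus D'_j$, then $A\cap C=\bigcup_{i,j}(B_i\setminus B'_i)\cap(D_j\setminus D'_j)$. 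The distributive identity $(B_i\setminus B'_i)\cap(D_j\setminus D'_j)=(B_i\cap D_j)\setminus(B'_i\cup D'_j)$ rewrites each term as a single difference, and I must check that $B_i\cap D_j$ and $B'_i\cup D'_j$ again lie in $\bsigma\gamma(X)$ for some $\gamma<\alpha$. Taking $\gamma=\max(\beta_i,\beta'_j)<\alpha$ (where the $B$'s and $D$'s are at levels below $\alpha$), this follows from the inductive hypothesis applied at level $\gamma$: finite intersections and finite (hence countable) unions of $\bsigma\gamma$ sets are $\bsigma\gamma$. Reindexing over the countable set of pairs $(i,j)$ then exhibits $A\cap C$ in the required form.

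I expect the main subtlety — the step most likely to need care rather than being purely routine — to be the bookkeeping of ordinal levels: when I intersect two differences I must ensure the resulting $B_i\cap D_j$ and $B'_i\cup D'_j$ sit at a common level $\gamma$ strictly below $\alpha$, and that the inductive hypothesis is genuinely available at that level. One must also be mindful that the lower-level sets need not all share the same $\beta$, so the argument uses the monotonicity of the hierarchy, namely that $\bsigma\beta(X)\subseteq\bsigma\gamma(X)$ whenever $\beta\leq\gamma<\alpha$. This monotonicity is itself a small lemma that I would either cite from the preceding development or verify quickly by noting that any $B\in\bsigma\beta(X)$ can be written trivially as $\bigcup_i B\setminus\emptyset$ with the empty set lying in every lower level.

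Finally, statements (2) and (3) follow without further induction. For (2), since $\bpi\alpha(X)$ consists of complements of $\bsigma\alpha(X)$ sets, De~Morgan's laws convert the closure of $\bsigma\alpha(X)$ under countable unions and finite intersections into closure of $\bpi\alpha(X)$ under countable intersections and finite unions, respectively. For (3), $\bdelta\alpha(X)=\bsigma\alpha(X)\cap\bpi\alpha(X)$ is closed under complementation by its symmetric definition, and closed under finite unions and finite intersections because each of $\bsigma\alpha(X)$ and $\bpi\alpha(X)$ is closed under both finite unions and finite intersections (combining parts (1) and (2)). This completes the argument.
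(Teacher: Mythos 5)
Your proof is correct. The paper itself gives no argument for this proposition (it is stated as ``easily proven'' with a citation to Selivanov and closed with an immediate \qed), and your transfinite induction --- countable unions by reindexing, finite intersections via the identity $(B\setminus B')\cap(D\setminus D')=(B\cap D)\setminus(B'\cup D')$ together with monotonicity of the hierarchy, then (2) and (3) by De Morgan --- is precisely the standard routine argument the paper is alluding to.
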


\begin{proposition}
If $\beta<\alpha$ then $\bold{\Sigma}_{\beta}^0(X)\cup \bold{\Pi}_{\beta}^0(X)\subseteq \bold{\Delta}_{\alpha}^0(X)$.
\qed
\end{proposition}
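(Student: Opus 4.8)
The plan is to reduce the claim to four inclusions and to verify each one by exhibiting the required defining expression directly from Definition~\ref{def:borel_hierarchy}. Since $\bdelta\alpha(X)=\bsigma\alpha(X)\cap\bpi\alpha(X)$, the statement $\bsigma\beta(X)\cup\bpi\beta(X)\subseteq\bdelta\alpha(X)$ is equivalent to the conjunction of the four inclusions $\bsigma\beta(X)\subseteq\bsigma\alpha(X)$, $\bsigma\beta(X)\subseteq\bpi\alpha(X)$, $\bpi\beta(X)\subseteq\bsigma\alpha(X)$, and $\bpi\beta(X)\subseteq\bpi\alpha(X)$. The crucial preliminary observation is that $\emptyset$ and $X$ are both open, hence both lie in $\bsigma 1(X)=\tau$, and that $\beta<\alpha$ forces $\alpha>1$ (because $\beta\geq 1$), so that level $1$ is among the levels strictly below $\alpha$ permitted in the defining union.

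First I would prove the monotonicity inclusion $\bsigma\beta(X)\subseteq\bsigma\alpha(X)$. Given $A\in\bsigma\beta(X)$, I would write $A$ in the form of item~(2) of the definition by setting $B_0=A$ and $B_0'=\emptyset$, and taking all remaining terms empty, so that $A=A\setminus\emptyset=\bigcup_{i\in\omega}B_i\setminus B_i'$. Here $A\in\bsigma\beta(X)$ with $\beta<\alpha$ and $\emptyset\in\bsigma 1(X)$ with $1<\alpha$, so every witness lies at a level strictly below $\alpha$, giving $A\in\bsigma\alpha(X)$. Taking complements and using the definition $\bpi\gamma(X)=\{X\setminus A\,|\,A\in\bsigma\gamma(X)\}$ then immediately yields $\bpi\beta(X)\subseteq\bpi\alpha(X)$.

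Next I would establish the cross inclusion $\bpi\beta(X)\subseteq\bsigma\alpha(X)$. Given $A\in\bpi\beta(X)$, I would write $A=X\setminus B$ with $B\in\bsigma\beta(X)$, and then present $A$ as a single-term union by taking $B_0=X$ and $B_0'=B$, so that $A=X\setminus B=\bigcup_{i\in\omega}B_i\setminus B_i'$. Since $X\in\bsigma 1(X)$ with $1<\alpha$ and $B\in\bsigma\beta(X)$ with $\beta<\alpha$, this exhibits $A\in\bsigma\alpha(X)$. Dualizing by complementation gives the remaining inclusion $\bsigma\beta(X)\subseteq\bpi\alpha(X)$. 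Combining the four inclusions yields $\bsigma\beta(X)\subseteq\bsigma\alpha(X)\cap\bpi\alpha(X)=\bdelta\alpha(X)$ and likewise $\bpi\beta(X)\subseteq\bdelta\alpha(X)$, which is the desired statement.

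I do not expect any genuine obstacle here; the argument is entirely bookkeeping within the definition. The only point requiring care is the degenerate handling of the countable union in item~(2): since the index set is all of $\omega$, I must allow trivial (empty) terms, which is unproblematic because $\emptyset\in\bsigma 1(X)$ ensures that $\emptyset\setminus\emptyset=\emptyset$ is a legitimate term. It is also worth noting explicitly that the hypothesis $\beta<\alpha$ is used only through $\alpha>1$ and through $\beta$ itself being an admissible level, so that no transfinite induction on $\alpha$ is needed.
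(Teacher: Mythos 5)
Your proof is correct. The paper gives no proof of this proposition (it is listed among the results that are ``easily proven''), and your argument---realizing each set as a single-term union $B_0\setminus B_0'$ with $\emptyset$ or $X$ as the trivial second witness---is precisely the routine verification intended; the only nuance worth flagging is that Definition~\ref{def:borel_hierarchy} literally asks for $B_i$ and $B_i'$ to lie in a \emph{common} level $\bsigma{\beta_i}(X)$, which is harmless here because $\emptyset$ and $X$ belong to $\bsigma\gamma(X)$ for every $\gamma\geq 1$, so you may simply take $\beta_0=\beta$ for both witnesses in each of your two displays.
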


\begin{proposition}\label{prop:hierarchy_on_subspaces}
If $X$ is a subspace of $Y$, then $\bold{\Sigma}_{\alpha}^0(X)=\{ A\cap X\,|\, A\in \bold{\Sigma}_{\alpha}^0(Y) \}$ and $\bold{\Pi}_{\alpha}^0(X)=\{ A\cap X\,|\, A\in \bold{\Pi}_{\alpha}^0(Y) \}$.
\qed
\end{proposition}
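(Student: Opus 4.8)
The plan is to prove the identity for $\bsigma\alpha$ by transfinite induction on $\alpha$ and then to read off the $\bpi\alpha$ case by complementation. The whole argument rests on the fact that intersection with $X$ commutes with the set operations used to build the levels of the hierarchy, so in each step I would simply push $(-)\cap X$ through the defining expression.

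For the base case $\alpha=1$, the claim $\bsigma 1(X)=\{A\cap X \mid A\in\bsigma 1(Y)\}$ is exactly the definition of the subspace topology, since $\bsigma 1=\tau$.

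For the inductive step I would fix $\alpha>1$ and assume the identity at every $\beta<\alpha$, for both $\bsigma\beta$ and (where convenient) the complements. The key algebraic observations are that $(B\setminus B')\cap X=(B\cap X)\setminus(B'\cap X)$ for all $B,B'\subseteq Y$, and that $(-)\cap X$ distributes over countable unions. Given $A=\bigcup_i B_i\setminus B_i'\in\bsigma\alpha(Y)$ with $B_i,B_i'\in\bsigma{\beta_i}(Y)$ and $\beta_i<\alpha$, these facts yield $A\cap X=\bigcup_i (B_i\cap X)\setminus(B_i'\cap X)$; by the induction hypothesis each $B_i\cap X$ and $B_i'\cap X$ lies in $\bsigma{\beta_i}(X)$, so $A\cap X\in\bsigma\alpha(X)$, giving one inclusion. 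Conversely, starting from $C=\bigcup_i D_i\setminus D_i'\in\bsigma\alpha(X)$, the induction hypothesis lets me choose $B_i,B_i'\in\bsigma{\beta_i}(Y)$ with $D_i=B_i\cap X$ and $D_i'=B_i'\cap X$; setting $A=\bigcup_i B_i\setminus B_i'\in\bsigma\alpha(Y)$, the same algebra gives $A\cap X=C$. Hence the two sets coincide.

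For $\bpi\alpha$ I would not run a separate induction but simply compute using the $\bsigma\alpha$ result together with $X\setminus(A\cap X)=(Y\setminus A)\cap X$: as $A$ ranges over $\bsigma\alpha(Y)$, the set $Y\setminus A$ ranges over all of $\bpi\alpha(Y)$, so $\bpi\alpha(X)=\{X\setminus A' \mid A'\in\bsigma\alpha(X)\}=\{(Y\setminus A)\cap X \mid A\in\bsigma\alpha(Y)\}=\{P\cap X\mid P\in\bpi\alpha(Y)\}$. I do not expect a genuine obstacle here: the proposition is a bookkeeping induction, and the only things to watch are getting the set-difference identity right and checking that the same witnessing ordinals $\beta_i<\alpha$ serve both inclusions. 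The entire content is that $(-)\cap X$ is a Boolean homomorphism preserving countable unions.
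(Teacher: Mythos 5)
Your proof is correct, and it is exactly the routine transfinite induction the paper has in mind when it states this proposition without proof (citing it as easily proven and deferring to Selivanov): the base case is the subspace topology, the inductive step pushes $(-)\cap X$ through countable unions and set differences, and the $\bpi\alpha$ case follows by complementation via $X\setminus(A\cap X)=(Y\setminus A)\cap X$. Nothing is missing.
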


\begin{proposition}
If $f\colon X\to Y$ is continuous, and $A\in \bsigma \alpha(Y)$ ($1\leq \alpha\leq \omega_1$), then $f^{-1}(A)\in \bsigma \alpha(X)$.
\qed
\end{proposition}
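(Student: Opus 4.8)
The plan is to argue by transfinite induction on $\alpha$, exploiting the fact that the inverse image operation $f^{-1}$ commutes with all of the set-theoretic operations used to build up the levels of the hierarchy in Definition \ref{def:borel_hierarchy}.

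For the base case $\alpha=1$, I would simply observe that $\bsigma 1(Y)$ consists of the open subsets of $Y$, and since $f$ is continuous the preimage of every open set is open, so $f^{-1}(A)\in\bsigma 1(X)$ whenever $A\in\bsigma 1(Y)$.

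For the inductive step, suppose the claim holds for every $\beta<\alpha$, and let $A\in\bsigma \alpha(Y)$. By definition we may write $A=\bigcup_{i\in\omega}B_i\setminus B'_i$ with $B_i,B'_i\in\bsigma{\beta_i}(Y)$ for some $\beta_i<\alpha$. The key computation is that $f^{-1}$ distributes over these operations: $f^{-1}(C\setminus D)=f^{-1}(C)\setminus f^{-1}(D)$ and $f^{-1}(\bigcup_i C_i)=\bigcup_i f^{-1}(C_i)$ for arbitrary subsets $C,D,C_i$ of $Y$. Applying these identities gives $f^{-1}(A)=\bigcup_{i\in\omega}f^{-1}(B_i)\setminus f^{-1}(B'_i)$. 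By the induction hypothesis each $f^{-1}(B_i)$ and $f^{-1}(B'_i)$ lies in $\bsigma{\beta_i}(X)$, so this expression exhibits $f^{-1}(A)$ in exactly the form required by Definition \ref{def:borel_hierarchy} to conclude that $f^{-1}(A)\in\bsigma \alpha(X)$.

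I do not anticipate any genuine obstacle here; the argument is a routine transfinite induction. The only points requiring care are verifying the elementary set-theoretic identities for the preimage (in particular that preimage commutes with relative complement, which follows since $x\in f^{-1}(C\setminus D)$ holds exactly when $f(x)\in C$ and $f(x)\notin D$), and observing that the same bounding ordinals $\beta_i$ serve as witnesses on both sides, since $f^{-1}$ alters neither the index set nor the ordinals appearing in the defining expression.
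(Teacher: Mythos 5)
Your proof is correct: the paper states this proposition without proof (grouping it among results that are ``easily proven''), and your transfinite induction---continuity for the base case, plus the fact that $f^{-1}$ commutes with countable unions and set differences for the inductive step---is precisely the routine argument the paper leaves to the reader.
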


The next two propositions show that the Borel hierarchy in Definition \ref{def:borel_hierarchy} is equivalent to the classical definition for the case of metrizable spaces. This result is known (see \cite{selivanov}), but we include proofs for completeness.

\begin{proposition}\label{prop:borel3_and_up}
For $\alpha>2$, each $A\in\bold{\Sigma}_{\alpha}^0(X)$ can be expressed in the form
\[A=\bigcup_{i\in\omega}B_i,\]
where for each $i$, $B_i$ is in $\bold{\Pi}_{\beta_i}^0(X)$ for some $\beta_i<\alpha$.
\qed
\end{proposition}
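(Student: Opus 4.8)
The plan is to start from the defining representation of a set $A\in\bsigma{\alpha}(X)$, namely $A=\bigcup_{i\in\omega}B_i\setminus B'_i$ with $B_i,B'_i\in\bsigma{\beta_i}(X)$ and $\beta_i<\alpha$, and to rewrite each difference $B_i\setminus B'_i$ as a countable union of sets drawn from levels $\bpi{\beta}(X)$ with $\beta<\alpha$. Since a countable union of countable unions is again a countable union, this immediately yields the desired form for $A$. Writing $B_i\setminus B'_i = B_i\cap(X\setminus B'_i)$, note that $X\setminus B'_i\in\bpi{\beta_i}(X)$, so each difference is the intersection of a $\bsigma{\beta_i}$ set with a $\bpi{\beta_i}$ set.

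First I would dispose of the easy case $\beta_i+1<\alpha$. Here the proposition that $\bsigma{\beta}(X)\cup\bpi{\beta}(X)\subseteq\bdelta{\alpha}(X)$ whenever $\beta<\alpha$ gives $B_i,\,X\setminus B'_i\in\bdelta{\beta_i+1}(X)$, and since $\bdelta{\beta_i+1}(X)$ is closed under finite intersection, $B_i\setminus B'_i\in\bdelta{\beta_i+1}(X)\subseteq\bpi{\beta_i+1}(X)$ with $\beta_i+1<\alpha$. So the difference is already a single $\bpi{\beta}$ set of the required kind.

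The remaining, and genuinely delicate, case is $\beta_i+1=\alpha$, which forces $\alpha$ to be a successor and $\beta_i=\alpha-1$. This is exactly where the hypothesis $\alpha>2$ enters: it guarantees $\beta_i=\alpha-1>1$, so the inductive clause of Definition \ref{def:borel_hierarchy} applies to $\bsigma{\alpha-1}(X)$ and lets me unfold $B_i=\bigcup_{j\in\omega}C_j\setminus C'_j$ with $C_j,C'_j\in\bsigma{\gamma_j}(X)$ and $\gamma_j<\alpha-1$. Distributing the intersection with $X\setminus B'_i$ over this union gives
\[B_i\setminus B'_i=\bigcup_{j\in\omega}\bigl(C_j\cap(X\setminus C'_j)\cap(X\setminus B'_i)\bigr).\]
Each of the three factors lies in $\bpi{\alpha-1}(X)$: since $\gamma_j<\alpha-1$ we have $C_j\in\bsigma{\gamma_j}(X)\subseteq\bdelta{\alpha-1}(X)\subseteq\bpi{\alpha-1}(X)$ and $X\setminus C'_j\in\bpi{\gamma_j}(X)\subseteq\bpi{\alpha-1}(X)$, while $X\setminus B'_i\in\bpi{\beta_i}(X)=\bpi{\alpha-1}(X)$. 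As $\bpi{\alpha-1}(X)$ is closed under finite intersection, each term of the union lies in $\bpi{\alpha-1}(X)$, and $\alpha-1<\alpha$.

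Combining the two cases, every $B_i\setminus B'_i$ is a countable union of $\bpi{\beta}(X)$ sets with $\beta<\alpha$, and hence so is $A$. The main obstacle is precisely the successor case above: one must recognize that the difference cannot in general be pushed below level $\alpha$ directly, and that a single unfolding of the inner $\bsigma{\alpha-1}$ representation is both available (thanks to $\alpha>2$) and sufficient. This also explains why the statement is restricted to $\alpha>2$, since for $\alpha=2$ the sets $B_i$ may be open (level $1$), which admit no such unfolding.
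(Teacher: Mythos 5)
Your proof is correct and takes essentially the same route as the paper: the core step in both is to unfold the inner $\bsigma{\beta_i}$ representation one more level and absorb the complement of the subtracted set into each term of the resulting union, using closure of $\bpi{\beta_i}(X)$ under finite intersections. The only difference is organizational: the paper avoids your two-case split by noting that, since $\alpha>2$, one may assume $\beta_i\geq 2$ for every $i$ (raising lower levels via monotonicity of the hierarchy), so that the unfolding argument applies uniformly to all terms rather than only in your successor case $\beta_i+1=\alpha$.
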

\begin{proof}
Let $A=\bigcup_{i\in\omega} D_i\setminus D'_i$ with $D_i,D'_i \in \bsigma {\beta_i}(X)$ and $\beta_i<\alpha$. Since $\alpha>2$ we can assume $\beta_i\geq 2$ and write $D_i = \bigcup_{j\in\omega} G_{i,j}\setminus G'_{i,j}$ with $G_{i,j},G'_{i,j}\in \bsigma {\gamma_{i,j}}(X)$ and $\gamma_{i,j}<\beta_i$. Finally, $A=\bigcup_{i,j\in\omega} B_{i,j}$, where $B_{i,j} = G_{i,j}\setminus  (G'_{i,j}\cup D'_i)$ is in $\bpi {\beta_i}(X)$ because it is the intersection of the  $\bpi {\beta_i}$-sets $G_{i,j}$ and $X\setminus (G'_{i,j}\cup D'_i)$.
\qed
\end{proof}

\begin{proposition}
If $X$ is a metrizable space then every $A\in \bold{\Sigma}_2^0(X)$ is $F_\sigma$.
\end{proposition}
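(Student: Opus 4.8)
The plan is to unfold the definition of $\bsigma 2(X)$ and then exploit the standard fact that open sets in a metric space are $F_\sigma$. Since the only ordinal $\beta$ with $\beta<2$ is $\beta=1$, any $A\in\bsigma 2(X)$ has the form $A=\bigcup_{i\in\omega}U_i\setminus V_i$, where each $U_i$ and $V_i$ lies in $\bsigma 1(X)=\tau$ and is therefore open. Rewriting each summand as $U_i\setminus V_i = U_i\cap (X\setminus V_i)$ exhibits it as the intersection of an open set with a closed set, so it suffices to express each open $U_i$ as a countable union of closed sets and then distribute.

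The one place where metrizability is genuinely used is the observation that every open subset $U$ of a metric space $(X,d)$ is $F_\sigma$. Fixing a compatible metric $d$, I would set $F_{n}=\{x\in X\,|\, d(x,X\setminus U)\geq 1/n\}$. The map $x\mapsto d(x,X\setminus U)$ is continuous (indeed $1$-Lipschitz), so each $F_n$ is closed as the preimage of $[1/n,\infty)$; and since $X\setminus U$ is closed, $x\in U$ holds if and only if $d(x,X\setminus U)>0$, which gives $U=\bigcup_{n\in\omega}F_n$ (the case $U=X$ being trivial). Applying this to each $U_i$, I write $U_i=\bigcup_{n\in\omega}F_{i,n}$ with every $F_{i,n}$ closed.

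Substituting back, $U_i\setminus V_i=\bigcup_{n\in\omega}\bigl(F_{i,n}\cap(X\setminus V_i)\bigr)$, and each $F_{i,n}\cap(X\setminus V_i)$ is closed as a finite intersection of closed sets. Hence $A=\bigcup_{i,n\in\omega}\bigl(F_{i,n}\cap(X\setminus V_i)\bigr)$ is a countable union of closed sets, i.e., $A$ is $F_\sigma$. I do not anticipate any real obstacle: the argument is routine once the definition is unfolded, and its only substantive ingredient is the classical fact that open sets are $F_\sigma$ in metric spaces, which rests essentially on the continuity of the distance-to-complement function and thus on the presence of the metric.
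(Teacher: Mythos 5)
Your proposal is correct and follows essentially the same route as the paper: unfold the definition so that $A=\bigcup_{i\in\omega}U_i\setminus V_i$ with $U_i,V_i$ open, write each $U_i$ as a countable union of closed sets, and distribute to exhibit $A$ as a countable union of closed sets. The only difference is cosmetic: the paper cites the classical fact that open subsets of metrizable spaces are $F_\sigma$ (Proposition 3.7 in Kechris), whereas you prove it inline via the distance-to-complement function, which is exactly the standard proof of that fact.
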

\begin{proof}
Assume $A=\bigcup_{i\in\omega} U_i\setminus V_i$, with $U_i$ and $V_i$ open. Every open subset of a metrizable space is $F_\sigma$ (see Proposition 3.7 in \cite{kechris}), so we can write $U_i = \bigcup_{j\in\omega} C_{i,j}$ with each $C_{i,j}$ closed. Then $A= \bigcup_{i,j\in\omega}\big( C_{i,j}\setminus V_i\big)$ is a countable union of closed sets.
\qed
\end{proof}

Singleton sets and diagonals of topological spaces are not closed in general, but the following results show that they are still well behaved for countably based $T_0$-spaces.

\begin{proposition}
If $X$ is a countably based $T_0$-space then every singleton set $\{x\}\subseteq X$ is in $\bpi 2(X)$.
\qed
\end{proposition}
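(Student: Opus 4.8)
The plan is to fix a countable basis $\{U_n\}_{n\in\omega}$ for the topology on $X$ and to describe the complement $X\setminus\{x\}$ explicitly in terms of this basis, using the specialization order. Write $I_x=\{n\in\omega \,|\, x\in U_n\}$ for the set of indices of basic open sets containing $x$.

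First I would recall that in a $T_0$-space the specialization order $\leq$ is a genuine partial order, and that $x\leq y$ holds if and only if every open set containing $x$ also contains $y$; since open sets are unions of basic ones, this is equivalent to every \emph{basic} open set containing $x$ containing $y$. Hence for any $y\neq x$ the $T_0$-axiom guarantees that at least one of $x\not\leq y$ and $y\not\leq x$ holds. I would then translate each failure into a witnessing basic open set: $x\not\leq y$ means there is $n\in I_x$ with $y\notin U_n$, while $y\not\leq x$ means there is $m\notin I_x$ with $y\in U_m$.

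This yields the set equality
\[
X\setminus\{x\} = \bigcup_{n\in I_x}(X\setminus U_n)\;\cup\;\bigcup_{m\notin I_x}U_m.
\]
The right-to-left inclusion is immediate, since each displayed set excludes $x$ (by definition of $I_x$); the left-to-right inclusion is precisely the $T_0$ dichotomy above. Verifying this equality, and in particular seeing exactly where the separation axiom is needed, is the one genuine (though modest) obstacle in the argument.

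Finally I would read off the complexity. The first union is a countable union of closed sets, so it lies in $\bsigma 2(X)$, because each closed set is in $\bpi 1(X)\subseteq\bsigma 2(X)$ and $\bsigma 2(X)$ is closed under countable unions. The second union is open, hence in $\bsigma 1(X)\subseteq\bsigma 2(X)$. Their union is therefore in $\bsigma 2(X)$, and consequently $\{x\}=X\setminus(X\setminus\{x\})$ is in $\bpi 2(X)$, as required.
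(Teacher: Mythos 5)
Your proof is correct. The paper states this proposition without proof (treating it as immediate), but your decomposition
\[
X\setminus\{x\} = \bigcup_{n\in I_x}(X\setminus U_n)\;\cup\;\bigcup_{m\notin I_x}U_m
\]
is exactly the one-variable analogue of the decomposition the paper uses to prove the companion result that the diagonal $\Delta_X$ is in $\bpi 2(X\times X)$, so your argument is essentially the intended one, including the correct use of the $T_0$ dichotomy and the observation that a countable union of closed sets lies in $\bsigma 2(X)$.
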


For any topological space $X$ we let $\Delta_X = \{ \langle x,y \rangle\in X\times X \,|\, x = y \}$ be the diagonal of $X$.

\begin{proposition}
If $X$ is a countably based $T_0$-space then $\Delta_X \in \bold{\Pi}_2^0(X\times X)$.
\end{proposition}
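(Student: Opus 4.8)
The plan is to exploit the $T_0$-axiom together with countable basedness to express membership in the diagonal as a condition on a fixed countable basis, and then to show that the complement of the diagonal lies in $\bsigma 2(X\times X)$, so that $\Delta_X$ itself lies in $\bpi 2(X\times X)$.

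First I would fix a countable basis $\{U_n\}_{n\in\omega}$ for $X$. The key observation is that in a $T_0$-space the specialization order determines equality: $x = y$ if and only if $x\leq y$ and $y\leq x$, and $x\leq y$ holds precisely when every open set containing $x$ also contains $y$. Since $\{U_n\}_{n\in\omega}$ is a basis, membership of a point in an arbitrary open set is witnessed by membership in some basic open set, so this condition reduces to the basis: $x = y$ if and only if $x\in U_n \Leftrightarrow y\in U_n$ for every $n\in\omega$. Dually, $x\neq y$ if and only if there is some $n$ such that exactly one of $x,y$ lies in $U_n$.

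I would then translate this directly into a description of the complement of the diagonal:
\[ (X\times X)\setminus\Delta_X = \bigcup_{n\in\omega} \big[(U_n\times(X\setminus U_n)) \cup ((X\setminus U_n)\times U_n)\big]. \]
It remains to check each term is at the right level. The point is that a product of an open set with the complement of an open set is a difference of two open sets, for instance $U_n\times(X\setminus U_n) = (U_n\times X)\setminus(X\times U_n)$, where $U_n\times X$ and $X\times U_n$ are both open in the product topology. Hence each bracketed term is a finite union of differences of $\bsigma 1$-sets and so lies in $\bsigma 2(X\times X)$; taking the countable union over $n$ keeps us in $\bsigma 2(X\times X)$, since $\bsigma 2$ is closed under countable unions. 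Therefore $\Delta_X$, being the complement of a $\bsigma 2$-set, is in $\bpi 2(X\times X)$.

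The only genuinely delicate step is the reduction of the equality condition to the fixed countable basis; everything after that is a matter of rewriting products of open sets and complements of open sets as differences of open sets. I do not expect any real obstacle, but care is needed to justify that checking the basic open sets $U_n$ suffices — this is exactly the point where both hypotheses, $T_0$ and countable basedness, are used.
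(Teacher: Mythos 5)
Your proof is correct and is essentially the paper's own argument: both use the identity $(X\times X)\setminus\Delta_X = \bigcup_n \big[(U_n\times(X\setminus U_n))\cup((X\setminus U_n)\times U_n)\big]$ over a countable basis and then observe each term is a difference of open sets in the product (the paper phrases it as an intersection of an open and a closed set, which is the same thing). Your explicit justification of the decomposition via the specialization order is a fine elaboration of the step the paper leaves implicit.
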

\begin{proof}
Note that 
\[X\times X \setminus \Delta_X = \big(\bigcup_{i\in\omega}B_i\times (X\setminus B_i)\big)\cup \big(\bigcup_{i\in\omega}(X\setminus B_i)\times B_i \big),\]
where $\{B_i\}_{i\in\omega}$ is a countable basis for $X$. $B_i\times (X\setminus B_i)=(B_i\times X) \cap (X\times (X\setminus B_i))$, so it is the intersection of an open set and a closed set (similarly for $(X\setminus B_i)\times B_i$). It follows that $X\times X \setminus \Delta_X$ is in $\bsigma 2(X\times X)$, hence $\Delta_X$ is in $\bpi 2(X\times X)$.
\qed
\end{proof}

\begin{corollary}\label{cor:equalizer}
If $X$ and $Y$ are countably based $T_0$ spaces and $f,g\colon X\to Y$ are continuous functions, then $[f=g]:=\{ x\in X \,|\, f(x)=g(x)\}$ is in $\bpi 2(X)$.
\end{corollary}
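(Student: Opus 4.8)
The plan is to realize $[f=g]$ as a preimage of the diagonal under a continuous map and then invoke the preceding proposition together with the closure of the Borel hierarchy under continuous preimages. Concretely, I would define the map $h\colon X\to Y\times Y$ by $h(x)=\langle f(x),g(x)\rangle$. Since $f$ and $g$ are continuous, $h$ is continuous into the product space $Y\times Y$ (the components of a map into a product are exactly the projections composed with the map, and these are just $f$ and $g$). The key observation is then that $[f=g]=h^{-1}(\Delta_Y)$, because $h(x)\in\Delta_Y$ says precisely that $f(x)=g(x)$.

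With this setup the proof is essentially immediate. First I would note that $Y$ is a countably based $T_0$-space by hypothesis, so the previous proposition applies to give $\Delta_Y\in\bpi 2(Y\times Y)$. Then, since $h\colon X\to Y\times Y$ is continuous and $\bpi 2$ is preserved under continuous preimages (this follows from the corresponding $\bsigma 2$ preimage proposition by taking complements, as $h^{-1}$ commutes with complementation), I would conclude that $[f=g]=h^{-1}(\Delta_Y)\in\bpi 2(X)$.

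There is no serious obstacle here; the statement is a routine corollary and the work has already been done in establishing that $\Delta_Y$ is $\bpi 2$ in the countably based $T_0$ setting. The only points requiring a word of care are the two small verifications that I would make explicit: that $h$ is genuinely continuous (rather than merely having continuous components in some informal sense) and that the $\bpi 2$ class, and not just $\bsigma 2$, is closed under continuous preimages. The latter is worth stating because the excerpt only records the preimage proposition for $\bsigma \alpha$; I would simply remark that applying it to the $\bsigma 2$ complement $Y\times Y\setminus\Delta_Y$ and taking complements yields the $\bpi 2$ version, so that $X\setminus[f=g]=h^{-1}(Y\times Y\setminus\Delta_Y)\in\bsigma 2(X)$ and hence $[f=g]\in\bpi 2(X)$.
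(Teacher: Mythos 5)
Your proof is correct and is essentially identical to the paper's own argument: both form the continuous map $\langle f,g\rangle\colon X\to Y\times Y$, pull back the diagonal $\Delta_Y\in\bpi 2(Y\times Y)$, and invoke closure of the hierarchy under continuous preimages. Your extra remark on deducing the $\bpi 2$ preimage statement from the stated $\bsigma\alpha$ proposition by taking complements is a fine point of rigor that the paper leaves implicit, but it does not change the approach.
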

\begin{proof}
Note that $\langle f, g\rangle \colon X \to Y\times Y$, defined as $x \mapsto \langle f(x),g(x)\rangle$, is continuous. Then $[f=g] = \langle f, g\rangle^{-1}(\Delta_Y)$ is in $\bpi 2(X)$.
\qed
\end{proof}

A special case of the above corollary was proven by D. Scott \cite{scott_datatypes} and A. Tang \cite{tang1979}.


\section{Quasi-metric spaces}\label{sec:quasimetricspaces}

Quasi-metrics are a generalization of metrics where the axiom of symmetry is dropped. These provide a useful way to generalize results from the theory of metric spaces to more general topological spaces.

\begin{definition}
A \emph{quasi-metric} on a set $X$ is a function $d\colon X\times X \to [0,\infty)$ such that for all $x,y,z\in X$:
\begin{enumerate}
\item
$x=y \iff d(x,y)=d(y,x)=0$
\item
$d(x,z)\leq d(x,y)+d(y,z)$.
\end{enumerate}
A \emph{quasi-metric space} is a pair $(X,d)$ where $d$ is a quasi-metric on $X$.
\qed
\end{definition}

Our general reference for quasi-metric spaces is \cite{kunzi_intro}. Note, however, that in \cite{kunzi_intro} the author reserves the term ``quasi-metric'' for spaces that satisfy the $T_1$-separation axiom, and our definition of quasi-metric is referred to as a ``$T_0$-quasi-pseudometric''.

A quasi-metric $d$ on $X$ induces a $T_0$ topology $\tau_d$ on $X$ generated by basic open balls of the form $B_d(x,\varepsilon) = \{ y\in X \,|\, d(x,y)<\varepsilon\}$ for $x\in X$ and real number $\varepsilon>0$.

We will sometimes call sets of the form $\overline{B}_d(x,\varepsilon) = \{ y\in X \,|\, d(x,y)\leq\varepsilon\}$ a closed ball, although in general they are not closed with respect to $\tau_d$.

If $(X,d)$ is a quasi-metric space, then $(X,\widehat{d} )$ is a metric space, where $\widehat{d}$ is defined as $\widehat{d}(x,y) = \max\{d(x,y), d(y,x)\}$. The metric topology induced by $\widehat{d}$ will be denoted $\tau_{\widehat{d}}$.

\begin{proposition}\label{prop:inducedmetric_opens_are_sigma2}
Let $(X,d)$ be a quasi-metric space. Every basic open ball $B_{\widehat{d}}(x,\varepsilon)$ is in $\bsigma 2(X,\tau_d)$.
\end{proposition}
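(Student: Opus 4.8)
The plan is to split the symmetric ball into an intersection that separates the ``forward'' and ``reverse'' distances, and to handle each piece using the closure properties of the Borel hierarchy established in the propositions above. Since $\widehat{d}(x,y)=\max\{d(x,y),d(y,x)\}$, we have
\[B_{\widehat{d}}(x,\varepsilon)=\{y\in X\,|\, d(x,y)<\varepsilon\}\cap\{y\in X\,|\, d(y,x)<\varepsilon\}=B_d(x,\varepsilon)\cap R,\]
where $R=\{y\in X\,|\, d(y,x)<\varepsilon\}$. The forward ball $B_d(x,\varepsilon)$ is $\tau_d$-open, hence lies in $\bsigma 1(X,\tau_d)\subseteq\bsigma 2(X,\tau_d)$. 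Because $\bsigma 2$ is closed under finite intersections, it suffices to show that the reverse ball $R$ is in $\bsigma 2(X,\tau_d)$.

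First I would exhibit $R$ as a countable union of reverse \emph{closed} balls, $R=\bigcup_{n}\{y\in X\,|\, d(y,x)\leq\varepsilon-\tfrac1n\}$, the union taken over all $n$ large enough that $\varepsilon-\tfrac1n>0$. The crux of the whole argument is then to check that each set $\{y\in X\,|\, d(y,x)\leq\delta\}$ is $\tau_d$-closed, equivalently that $\{y\in X\,|\, d(y,x)>\delta\}$ is $\tau_d$-open. For this I would fix $y_0$ with $d(y_0,x)>\delta$, pick $\eta>0$ with $d(y_0,x)-\eta>\delta$, and apply the triangle inequality in the form $d(y_0,x)\leq d(y_0,y)+d(y,x)$: every $y\in B_d(y_0,\eta)$ then satisfies $d(y,x)\geq d(y_0,x)-d(y_0,y)>d(y_0,x)-\eta>\delta$, so $B_d(y_0,\eta)$ is an open neighborhood of $y_0$ contained in $\{y\in X\,|\, d(y,x)>\delta\}$. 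Thus each reverse closed ball is closed, so $R$ is $F_\sigma$; since every closed set lies in $\bpi 1\subseteq\bsigma 2$ and $\bsigma 2$ is closed under countable unions, we get $R\in\bsigma 2(X,\tau_d)$, and combining with the first paragraph yields $B_{\widehat{d}}(x,\varepsilon)\in\bsigma 2(X,\tau_d)$.

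I expect the semicontinuity step to be the main obstacle, or at least the only genuinely non-routine point, because it is where the asymmetry of $d$ enters. The forward map $y\mapsto d(x,y)$ is upper semicontinuous essentially by the definition of $\tau_d$, which is why forward balls are open; but the reverse map $y\mapsto d(y,x)$ is only \emph{lower} semicontinuous, which forces the reverse ball to be $F_\sigma$ rather than open. This is precisely the reason the conclusion lands in $\bsigma 2$ rather than in $\bsigma 1$, and it is the feature distinguishing the quasi-metric setting from the ordinary metric case where $\widehat d=d$ and all such balls are already open.
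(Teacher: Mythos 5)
Your proof is correct and follows essentially the same route as the paper's: the same decomposition $B_{\widehat{d}}(x,\varepsilon)=B_d(x,\varepsilon)\cap\{y\,|\,d(y,x)<\varepsilon\}$, the same exhibition of the reverse open ball as a countable union of reverse closed balls (the paper indexes over rationals $0<r<\varepsilon$ rather than radii $\varepsilon-\tfrac1n$, an immaterial difference), and the same triangle-inequality argument that each reverse closed ball is $\tau_d$-closed. Your explicit bookkeeping with the closure properties of $\bsigma 2$ at the end is just a spelled-out version of what the paper leaves implicit.
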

\begin{proof}
Note that $B_{\widehat{d}}(x,\varepsilon)=\{y\in X\,|\, d(x,y)<\varepsilon \mbox{ \& } d(y,x)<\varepsilon\}=B_d(x,\varepsilon)\cap B_{d^{-1}}(x,\varepsilon)$, where we define $B_{d^{-1}}(x,\varepsilon) = \{y\in X\,|\, d(y,x)<\varepsilon\}$.

Let $R = \{r\in \bb Q\,|\, 0<r<\varepsilon\}$ and define $\overline{B}_{d^{-1}}(x,r) = \{y\in X\,|\, d(y,x)\leq r\}$ for $r\in R$. It is easy to see that $B_{d^{-1}}(x,\varepsilon) = \bigcup_{r\in R} \overline{B}_{d^{-1}}(x,r)$, so the proposition will be proved if we show (the well known fact) that $\overline{B}_{d^{-1}}(x,r)$ is $\tau_d$-closed for each $r \in R$.

Fix $r\in R$. For any $y\not\in \overline{B}_{d^{-1}}(x,r)$, there is some $r'>0$ such that $d(y,x)>r+r'$. If $z\in B_d(y,r')$ and $d(z,x)\leq r$ then $d(y,x)\leq d(y,z)+d(z,x)<r'+r$, a contradiction. Hence, $B_d(y,r')$ is a $\tau_d$-open neighborhood of $y$ which does not intersect $\overline{B}_{d^{-1}}(x,r)$, and it follows that $\overline{B}_{d^{-1}}(x,r)$ is closed with respect to $\tau_d$.
\qed
\end{proof}

\begin{proposition}[H.-P. A. K\"{u}nzi \cite{kunzi1983}]\label{prop:countablybased_iff_sep}
A quasi-metric space $(X,d)$ is countably based if and only if $(X,\widehat{d})$ is separable.
\end{proposition}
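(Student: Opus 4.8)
The plan is to prove the biconditional in both directions, using the relationship between the quasi-metric $d$ and its induced metric $\widehat{d}$ as the central bridge, since countable basedness is a topological property of $\tau_d$ while separability is a property of the metric space $(X,\widehat{d})$.

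First I would treat the easier direction: assume $(X,\widehat{d})$ is separable. Separability of a metric space is equivalent to second countability, so $(X,\tau_{\widehat{d}})$ has a countable basis, which we may take to be the balls $B_{\widehat{d}}(x_n,q)$ for a countable dense set $\{x_n\}_{n\in\omega}$ and rationals $q>0$. The key observation is that $\tau_d\subseteq\tau_{\widehat{d}}$, since $B_d(x,\varepsilon)\supseteq B_{\widehat{d}}(x,\varepsilon)$ shows every $\tau_d$-open set is $\tau_{\widehat{d}}$-open. I would then argue that the countably many $\tau_d$-basic balls $B_d(x_n,q)$ (indexed by the dense set and rationals) form a basis for $\tau_d$: given any $\tau_d$-open $U$ and $y\in U$, pick $\varepsilon$ with $B_d(y,\varepsilon)\subseteq U$, then use $\widehat{d}$-density to find some $x_n$ with $\widehat{d}(x_n,y)$ small, and verify via the triangle inequality that $y\in B_d(x_n,q)\subseteq B_d(y,\varepsilon)\subseteq U$ for a suitable rational $q$. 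This standard density-plus-triangle-inequality argument gives a countable basis, so $(X,\tau_d)$ is countably based.

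For the converse, assume $(X,d)$ is countably based, say with countable basis $\{V_n\}_{n\in\omega}$ for $\tau_d$. I want to produce a countable $\widehat{d}$-dense set, equivalently a countable basis for $\tau_{\widehat{d}}$. The difficulty is that $\tau_{\widehat{d}}$ is strictly finer than $\tau_d$ in general, so a $\tau_d$-basis need not witness separability of $\widehat{d}$ directly. The natural approach is to select, for each pair $(m,n)$ of basis indices, a point $x_{m,n}$ (if one exists) lying in a suitable combination of the $V_i$ witnessing closeness in both $d$ and $d^{-1}$; more precisely I would use the representation from Proposition \ref{prop:inducedmetric_opens_are_sigma2} that $B_{\widehat{d}}(x,\varepsilon)=B_d(x,\varepsilon)\cap B_{d^{-1}}(x,\varepsilon)$, and the fact that each basic $d^{-1}$-ball can be approximated from inside the countable $\tau_d$-basis. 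Choosing a countable family of points that meets every nonempty intersection of a $\tau_d$-basic set with an appropriate $d^{-1}$-controlled set should yield a $\widehat{d}$-dense set.

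The hard part will be the converse direction, specifically controlling the \emph{backward} distance $d^{-1}$ using only information from the forward topology $\tau_d$; since $\widehat{d}$ mixes both $d$ and $d^{-1}$, one must ensure that the chosen countable set approximates points simultaneously in both directions. I would expect the cleanest route is to extract, for each basic open $V_n$ and each rational radius, a countable selection of points and then verify by a triangle-inequality estimate that every $\widehat{d}$-ball around an arbitrary point contains one of the selected points. Assembling these estimates carefully so that both $d(x_k,y)$ and $d(y,x_k)$ are made small at once is the crux of the argument, and is where I would spend the most care.
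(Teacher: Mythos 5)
Your first direction (separable implies countably based) is correct and is essentially the paper's own argument: take the balls $B_d(y,2^{-n})$ centered at points of a countable $\widehat{d}$-dense set and verify via the triangle inequality that they form a countable basis for $\tau_d$.

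The converse direction, however, contains a genuine gap, and you have located it yourself without filling it: you never actually explain how to control the backward distance $d^{-1}$ using only the countable $\tau_d$-basis, and the mechanism you sketch does not work as stated. A basic $d^{-1}$-ball $B_{d^{-1}}(x,\varepsilon)$ is in general \emph{not} $\tau_d$-open (indeed the closed $d^{-1}$-balls are $\tau_d$-closed, which is precisely why such balls land in $\bsigma 2$ rather than $\bsigma 1$), so it cannot be ``approximated from inside the countable $\tau_d$-basis''; moreover, the sets you propose your countable family should meet---intersections of basic $\tau_d$-sets with ``$d^{-1}$-controlled'' sets---are indexed by arbitrary centers $x\in X$ and thus form an uncountable family, so choosing a point in each does not yield a countable dense set. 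The missing idea, which is the heart of K\"unzi's proof, is to avoid $d^{-1}$ altogether and use a self-witnessing condition: for each basis element $U_i$ and each $n\in\omega$, let
\[
S_{\langle i,n\rangle}=\{x\in X \mid x\in U_i\subseteq B_d(x,2^{-n})\},
\]
and pick one point $x_{\langle i,n\rangle}$ from each non-empty $S_{\langle i,n\rangle}$. Given any $x\in X$ and $n$, since $\{U_i\}_{i\in\omega}$ is a basis there is $i$ with $x\in U_i\subseteq B_d(x,2^{-n})$, i.e.\ $x\in S_{\langle i,n\rangle}$; now \emph{both} distances are controlled for free: $x_{\langle i,n\rangle}\in U_i\subseteq B_d(x,2^{-n})$ gives $d(x,x_{\langle i,n\rangle})<2^{-n}$, while $x\in U_i\subseteq B_d(x_{\langle i,n\rangle},2^{-n})$ gives $d(x_{\langle i,n\rangle},x)<2^{-n}$, so $\widehat{d}(x,x_{\langle i,n\rangle})<2^{-n}$. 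The simultaneous two-sided estimate you were trying to engineer arises automatically because any two points of $S_{\langle i,n\rangle}$ each lie in $U_i$ and each contain $U_i$ in their forward ball; no approximation of $d^{-1}$-balls by open sets is needed anywhere.
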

\begin{proof}
Let $\{U_i\}_{i\in\omega}$ be a countable basis for $(X,d)$. For $i,n\in\omega$, define
\[S_{\langle i,n\rangle}=\{x\in X\,|\,x\in U_i\subseteq B_d(x,2^{-n})\}.\]
If $S_{\langle i,n\rangle}$ is non-empty then choose $x_{\langle i,n\rangle}\in S_{\langle i,n\rangle}$. If $S_{\langle i,n\rangle}$ is empty, then let $x_{\langle i,n\rangle}$ be any element of $X$.

Now fix $n\in\omega$ and $x\in X$. Since $\{U_i\}_{i\in\omega}$ is a basis for $(X,d)$ there is $i\in\omega$ such that $x\in U_i \subseteq B_d(x,2^{-n})$. Thus $x\in S_{\langle i,n\rangle}$, so $S_{\langle i,n\rangle}$ is non-empty and $x_{\langle i,n\rangle}\in S_{\langle i,n\rangle}$. Then $d(x,x_{\langle i,n\rangle})<2^{-n}$ and $d(x_{\langle i,n\rangle},x)<2^{-n}$, so $\widehat{d}(x,x_{\langle i,n\rangle})<2^{-n}$. It follows that $\{x_{\langle i,n\rangle}\,|\, i,n\in\omega\}$ is a countable dense subset of $(X,\widehat{d})$.

For the converse, assume $D\subseteq X$ is countable dense with respect to $\tau_{\widehat{d}}$. Given $x\in X$ and $\varepsilon>0$, choose $y\in D$ and $n\in\omega$ so that $\widehat{d}(x,y)<2^{-n}<\varepsilon/2$. If $z\in B_d(y,2^{-n})$ then $d(x,z)\leq d(x,y)+d(y,z)<\varepsilon$, hence $z\in B_d(x,\varepsilon)$. Therefore, $x\in B_d(y,2^{-n})\subseteq B_d(x,\varepsilon)$, and it follows that $\{B_d(y,2^{-n})\,|\, \langle y,n\rangle\in D\times\omega\}$ is a countable basis for $\tau_d$.
\qed
\end{proof}

It follows from Propositions \ref{prop:inducedmetric_opens_are_sigma2} and \ref{prop:countablybased_iff_sep} that if $(X,d)$ is a countably based quasi-metric space, then every open subset of $(X,\widehat{d})$ is equal to a countable union of $\bsigma 2$-subsets of $(X,d)$. We therefore obtain the following.

\begin{theorem}\label{thrm:quasi_and_metric_borelrelation}
If $(X,d)$ is a countably based quasi-metric space, then the metric topology $\tau_{\widehat{d}}$ is a subset of $\bsigma 2(X,\tau_d)$. In particular, $\bsigma \alpha(X,\tau_d)= \bsigma \alpha(X,\tau_{\widehat{d}})$ for all $\alpha\geq\omega$, and ${\mathbf B}(X,\tau_d)={\mathbf B}(X,\tau_{\widehat{d}})$.
\qed
\end{theorem}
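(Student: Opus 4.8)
The plan is to separate the theorem into the basic inclusion $\tau_{\widehat{d}}\subseteq\bsigma 2(X,\tau_d)$ and the two ``in particular'' claims, proving the latter by comparing the hierarchies $\bsigma\alpha(X,\tau_d)$ and $\bsigma\alpha(X,\tau_{\widehat{d}})$ level by level. The basic inclusion is almost immediate: the remark preceding the theorem records that every $\tau_{\widehat{d}}$-open set is a countable union of $\bsigma 2(X,\tau_d)$-sets, and since $\bsigma 2(X,\tau_d)$ is closed under countable unions, each such set already lies in $\bsigma 2(X,\tau_d)$.

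Next I would dispose of the easy inclusion $\bsigma\alpha(X,\tau_d)\subseteq\bsigma\alpha(X,\tau_{\widehat{d}})$. Since $\widehat{d}(x,y)=\max\{d(x,y),d(y,x)\}\geq d(x,y)$, each $\tau_{\widehat{d}}$-ball is contained in the corresponding $\tau_d$-ball, so $\tau_d\subseteq\tau_{\widehat{d}}$ and the identity map $(X,\tau_{\widehat{d}})\to(X,\tau_d)$ is continuous. Applying the proposition on continuous preimages to this map yields $\bsigma\alpha(X,\tau_d)\subseteq\bsigma\alpha(X,\tau_{\widehat{d}})$ for every $\alpha$.

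For the reverse inclusion I would prove the sharper statement $\bsigma\alpha(X,\tau_{\widehat{d}})\subseteq\bsigma{1+\alpha}(X,\tau_d)$ for all $\alpha\geq 1$, where $1+\alpha$ is ordinal addition, by transfinite induction on $\alpha$. The base case $\alpha=1$ is precisely the basic inclusion $\tau_{\widehat{d}}\subseteq\bsigma 2(X,\tau_d)$. For $\alpha>1$, any $A\in\bsigma\alpha(X,\tau_{\widehat{d}})$ has the form $\bigcup_{i}B_i\setminus B'_i$ with $B_i,B'_i\in\bsigma{\beta_i}(X,\tau_{\widehat{d}})$ and $\beta_i<\alpha$; the induction hypothesis places each $B_i,B'_i$ in $\bsigma{1+\beta_i}(X,\tau_d)$, and since $\gamma\mapsto 1+\gamma$ is strictly increasing we have $1+\beta_i<1+\alpha$, so the very same expression witnesses $A\in\bsigma{1+\alpha}(X,\tau_d)$.

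Finally I would invoke the arithmetic fact that $1+\alpha=\alpha$ for every $\alpha\geq\omega$: for such $\alpha$ the two inclusions combine to give $\bsigma\alpha(X,\tau_d)=\bsigma\alpha(X,\tau_{\widehat{d}})$, and the equality of the Borel algebras follows since every Borel set sits at some level and all levels at or above $\omega$ already agree (the lower, finitely shifted classes being absorbed into $\bsigma\omega$). The only genuinely delicate point is the ordinal bookkeeping: one must check that $\gamma\mapsto 1+\gamma$ is strictly monotone, so that the inductive step lands in the intended level, and that the unavoidable one-level shift coming from the base case disappears exactly at $\omega$. Everything else reduces to the closure properties of the hierarchy, the continuity proposition, and the fact that $\tau_{\widehat{d}}$-balls are $\bsigma 2(X,\tau_d)$.
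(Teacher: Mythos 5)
Your proof is correct and matches the paper's intended argument: the paper states this theorem with no separate proof, treating it as immediate from Propositions \ref{prop:inducedmetric_opens_are_sigma2} and \ref{prop:countablybased_iff_sep} (every $\tau_{\widehat{d}}$-open set is a countable union of $\bsigma 2(X,\tau_d)$-sets) together with routine level-by-level bookkeeping. Your transfinite induction establishing $\bsigma \alpha(X,\tau_{\widehat{d}})\subseteq \bsigma {1+\alpha}(X,\tau_d)$, with the one-level shift absorbed by $1+\alpha=\alpha$ for $\alpha\geq\omega$, is exactly the standard way to fill in the details the paper leaves implicit.
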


Recall that the Scott-topology on $\cal P(\omega)$ is generated by sets of the form $\uparrow\!F = \{X\in\cal P(\omega)\,|\, F\subseteq X \}$ with $F\subseteq \omega$ finite. A quasi-metric $d$ compatible with this topology on $\cal P(\omega)$ can be defined as 
\[d(X,Y) = \sup\{2^{-n}\,|\, n\in X\setminus Y\}\]
for $X,Y\subseteq \omega$, where we define the supremum of the empty set to be zero. In other words, $d(X,Y) = 2^{-n}$ where $n$ is the least element in $X$ and not in $Y$ if such an element exists, and $d(X,Y)=0$ if $X$ is a subset of $Y$. Then $\widehat{d}$ is the usual complete metric on $2^\omega$ if we identify elements of $\cal P(\omega)$ with their characteristic function. Selivanov \cite{selivanov} has shown that in this case $\bsigma n(\cal P(\omega),\tau_d)\not\subseteq \bpi n(\cal P(\omega),\tau_{\widehat{d}})$ and $\bpi n(\cal P(\omega),\tau_{\widehat{d}})\not\subseteq \bsigma {n+1}(\cal P(\omega),\tau_d)$ for all $n<\omega$.


\section{Complete quasi-metric spaces}\label{sec:complete_quasimetricspaces}

In the literature on quasi-metric spaces there are many competing definitions of ``Cauchy sequence'' and ``completeness''. The definition of ``Cauchy'' that we will adopt is sometimes called ``left K-Cauchy'' and our definition of completeness is sometimes called ``Smyth-complete'' (see \cite{kunzi_intro}). The main goal of this section is to characterize the countably based spaces which have topologies induced by a complete quasi-metric.

\begin{definition}[M. Smyth \cite{smyth1988}]
A sequence $(x_n)_{n\in\omega}$ in a quasi-metric space $(X,d)$ is \emph{Cauchy} if and only if for each real number $\varepsilon>0$ there exists $n_0\in\omega$ such that $d(x_n,x_m)<\varepsilon$ for all $m\geq n\geq n_0$. $(X,d)$ is a \emph{complete quasi-metric space} if and only if every Cauchy sequence in $X$ converges with respect to the metric topology $\tau_{\widehat{d}}$.
\qed
\end{definition}

To help build an intuition for the above definition, we consider two examples of quasi-metrics on the ordinal $\omega+1$. This set is naturally ordered as $0 < 1 < \cdots < \omega$.

The first quasi-metric we consider is defined as
\begin{eqnarray*}
d_1(x,y) &=& \left\{
\begin{array}{ll}
0 & \mbox{ if }x\leq y,\\
1/y & \mbox{ otherwise. }
\end{array}
\right.
\end{eqnarray*}
Verifying that $d_1$ is a quasi-metric can safely be left to the reader. The quasi-metric topology induced by $d_1$ is the \emph{Scott-topology}, which consists of the empty set and sets of the form $\uparrow\! x = \{y\in\omega+1\,|\, x\leq y\}$ for each $x\in\omega$. In this case, the singleton $\{\omega\}$ is \emph{not} open. The metric topology induced by $\widehat{d_1}$ has as open sets every $U\subseteq \omega+1$ such that either $\omega\not\in U$ or else $U\setminus (\omega+1)$ is finite. A sequence $(x_i)_{i\in\omega}$ is Cauchy with respect to $d_1$ if and only if for each $y\in\omega$ there is $i\in \omega$ such that $x_j\geq y$ for each $j\geq i$. Thus every Cauchy sequence is eventually constant or else converges to $\omega$ with respect to the metric topology $\tau_{\widehat{d_1}}$. This shows that $(\omega+1, d_1)$ is a complete quasi-metric space.

Next consider the quasi-metric defined as
\begin{eqnarray*}
d_2(x,y) &=& \left\{
\begin{array}{ll}
0 & \mbox{ if }x\leq y,\\
1 & \mbox{ otherwise. }
\end{array}
\right.
\end{eqnarray*}
The quasi-metric topology on $\omega+1$ induced by $d_2$ is the \emph{Alexandroff topology}, which consists of the empty set and sets of the form $\uparrow\! x = \{y\in\omega+1\,|\, x\leq y\}$ for each $x\in\omega+1$. In particular, the singleton $\{\omega\}$ is open in this topology. Note that the increasing sequence $\xi=(0,1,2,\ldots)$ is Cauchy with respect to $d_2$. However, the metric topology induced by $\widehat{d_2}$ is the discrete topology, so $\xi$ does not converge in $\tau_{\widehat{d_2}}$. This shows that $(\omega+1, d_2)$ is \emph{not} a complete quasi-metric space.

In both of the above examples, the natural order on $\omega+1$ can be completely recovered from the quasi-metrics by defining $x\leq y$ if and only if $d(x,y)=0$, and is also equivalent to the specialization order of the respective topologies (i.e., $x\leq y$ if and only if $x$ is in the closure of $\{y\}$). The Scott-topology induced by $d_1$ has the desirable property that increasing sequences like $0<1<2<\cdots$, which converge to $\omega$ in an \emph{order theoretical} sense, also converge to $\omega$ in the \emph{topological} sense. This is not true for the Alexandroff topology, because $\{\omega\}$ is open in that topology.

The notion of quasi-metric completeness defined above forces the order theoretical and topological aspects of quasi-metric spaces to be compatible in this sense. If we only required that a Cauchy sequence converge with respect to the quasi-metric topology, then $d_2$ would be included because every sequence converges to $0$ with respect to $\tau_{d_2}$. Furthermore, since $(\omega+1,\widehat{d_2})$ is a complete metric space, it is not an option to define completeness only in terms of the induced metric. We will see throughout this paper that the definition of completeness we have adopted is precisely what is needed to generalize the descriptive set theory of Polish spaces to the non-Hausdorff setting.

We will say that a topological space $(X,\tau)$ is \emph{completely quasi-metrizable} if and only if there is a complete quasi-metric $d$ on $X$ such that $\tau=\tau_d$.

\begin{definition}
A topological space is \emph{quasi-Polish} if and only if it is countably based and completely quasi-metrizable.
\qed
\end{definition}

If $(X,d)$ is a countably based complete quasi-metric space, then $(X,\widehat{d})$ is separable by Proposition \ref{prop:countablybased_iff_sep} and $\widehat{d}$ is complete because any sequence that is Cauchy with respect to $\widehat{d}$ is Cauchy with respect to $d$. Therefore, $(X,\widehat{d})$ has a Polish topology. We can immediately use this connection between quasi-Polish spaces and Polish spaces to make a few simple observations.

\begin{proposition}\label{prop:cantortheoremforquasipolish}
Every uncountable quasi-Polish space has cardinality $2^{\aleph_0}$.
\qed
\end{proposition}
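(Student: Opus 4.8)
The plan is to reduce the statement about quasi-Polish spaces to the corresponding classical fact about Polish spaces, which I will assume as known. Specifically, the relevant classical result is the perfect set property: every uncountable Polish space contains a homeomorphic copy of the Cantor space $2^\omega$, and hence has cardinality exactly $2^{\aleph_0}$. The discussion immediately preceding the proposition has already established the crucial bridge: if $(X,d)$ is a countably based complete quasi-metric space, then the symmetrized metric $\widehat{d}$ makes $(X,\widehat{d})$ into a separable complete metric space, i.e.\ a Polish space. This is the key input I would exploit.

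First I would let $X$ be an uncountable quasi-Polish space and fix a complete quasi-metric $d$ with $\tau_d$ equal to the topology of $X$. By the remarks before the proposition, $(X,\widehat{d})$ is Polish, and crucially the \emph{underlying set} of this Polish space is the same set $X$. Since the cardinality of a set is a property of the set alone and not of any topology on it, the cardinality of the quasi-Polish space $X$ equals the cardinality of the Polish space $(X,\widehat{d})$. Because $X$ is uncountable as a set, the Polish space $(X,\widehat{d})$ is an uncountable Polish space.

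Next I would invoke the classical Cantor--Bendixson / perfect set theorem for Polish spaces (for instance the standard result that every uncountable Polish space has cardinality $2^{\aleph_0}$, as in the reference \cite{kechris}) to conclude that $\abs{X} = 2^{\aleph_0}$. Since the cardinality computation is carried out on the common underlying set, this immediately gives the cardinality of the quasi-Polish space $X$.

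I do not expect any serious obstacle here; the entire content of the proposition is the observation that quasi-Polishness hands us a Polish metric on the same point set for free, after which the result is purely a matter of quoting the classical perfect set property. The only point requiring a moment of care is to note explicitly that $d$ and $\widehat{d}$ share the same underlying set $X$, so that the two topologies, despite differing in general, agree on cardinality; everything else is routine. This is exactly the pattern of ``simple observation'' that the text flags as being an immediate consequence of the connection between quasi-Polish and Polish spaces.
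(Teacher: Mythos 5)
Your proposal is correct and is essentially identical to the paper's own (implicit) argument: the paper states the proposition with no separate proof precisely because the preceding paragraph establishes that $(X,\widehat{d})$ is Polish on the same underlying set, after which the classical fact that uncountable Polish spaces have cardinality $2^{\aleph_0}$ finishes the job. Your only addition is to spell out explicitly that cardinality is a property of the underlying set, which is exactly the intended reading.
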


We can also show that the fact that the Borel hierarchy on uncountable Polish spaces does not collapse (see, for example, Theorem 22.4 in \cite{kechris}) generalizes to uncountable quasi-Polish spaces.

\begin{theorem}\label{thrm:non_collapse}
If $X$ is an uncountable quasi-Polish space, then the Borel hierarchy on $X$ does not collapse.
\end{theorem}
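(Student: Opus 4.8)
The plan is to reduce the statement to the classical non-collapse theorem for uncountable Polish spaces (Theorem 22.4 in \cite{kechris}) by passing to the associated metric topology. Fix a complete quasi-metric $d$ with $\tau_d$ equal to the topology of $X$. Since $X$ is uncountable and countably based, Proposition \ref{prop:countablybased_iff_sep} (together with the observation preceding Proposition \ref{prop:cantortheoremforquasipolish}) shows that $(X,\widehat{d})$ is an uncountable separable complete metric space, that is, an uncountable Polish space. By the classical result its Borel hierarchy is strict at every level, so $\bsigma \alpha(X,\tau_{\widehat{d}})\neq\bpi \alpha(X,\tau_{\widehat{d}})$ for all $1\leq\alpha<\omega_1$.

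The main tool is then Theorem \ref{thrm:quasi_and_metric_borelrelation}, which asserts that $\bsigma \alpha(X,\tau_d)=\bsigma \alpha(X,\tau_{\widehat{d}})$ whenever $\alpha\geq\omega$. Combining this with the previous paragraph immediately gives that the quasi-Polish hierarchy does not collapse at any infinite level: $\bsigma \alpha(X,\tau_d)\neq\bpi \alpha(X,\tau_d)$ for all $\omega\leq\alpha<\omega_1$. The subtlety is that Theorem \ref{thrm:quasi_and_metric_borelrelation} only identifies the two hierarchies from level $\omega$ upward, so the finite levels are not covered directly and must be treated separately.

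To handle the finite levels I would argue by upward propagation of collapse rather than by exhibiting explicit witnessing sets. The key observation is that if the hierarchy collapses at a level $\alpha$, meaning $\bsigma \alpha(X,\tau_d)=\bpi \alpha(X,\tau_d)$, then $\bsigma \alpha(X,\tau_d)$ is closed under complementation and, by the first proposition of Section \ref{sec:borelhierarchy}, under countable unions and finite intersections, hence is a $\sigma$-algebra containing $\tau_d$. A routine check against Definition \ref{def:borel_hierarchy} then shows that every generator $B_i\setminus B'_i$ of $\bsigma {\alpha+1}(X,\tau_d)$ already lies in $\bsigma \alpha(X,\tau_d)$, so $\bsigma {\alpha+1}(X,\tau_d)=\bsigma \alpha(X,\tau_d)$, and by transfinite induction $\bsigma \beta(X,\tau_d)=\bsigma \alpha(X,\tau_d)$ for all $\beta\geq\alpha$. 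In particular, a collapse at any finite level $n$ would force $\bsigma \omega(X,\tau_d)=\bpi \omega(X,\tau_d)$, contradicting the conclusion of the second paragraph. Therefore the hierarchy is strict at every level $1\leq\alpha<\omega_1$.

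I do not expect a serious obstacle here: the only genuine content is the transfer of strictness from $(X,\tau_{\widehat{d}})$ to $(X,\tau_d)$, which rests entirely on the equality of the two hierarchies at infinite levels supplied by Theorem \ref{thrm:quasi_and_metric_borelrelation}, and the finite levels then follow for free from the structural propagation lemma. The one point meriting care is verifying that lemma cleanly from the definition of the hierarchy, since it is what lets us avoid constructing hard sets at each finite level by hand.
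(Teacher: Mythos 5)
Your proof is correct. Both your argument and the paper's rest on the same backbone: pass to the induced metric space $(X,\widehat{d})$, which is uncountable Polish, invoke the classical non-collapse theorem there, and transfer the conclusion back to $(X,\tau_d)$ via Theorem \ref{thrm:quasi_and_metric_borelrelation}. The difference is in how the transfer is organized. The paper runs a single contradiction that works uniformly at every level $\alpha$: assuming $\bsigma \alpha(X,\tau_d)=\bpi \alpha(X,\tau_d)$, it takes one witness set $A\in \bsigma {\alpha+1}(X,\tau_{\widehat{d}})\setminus \bpi {\alpha+1}(X,\tau_{\widehat{d}})$, pushes it into $\bsigma {\alpha+2}(X,\tau_d)$ using the inclusion $\tau_{\widehat{d}}\subseteq \bsigma 2(X,\tau_d)$, collapses it down to $\bsigma \alpha(X,\tau_d)$ (the same two-step instance of your propagation observation), and then uses $\tau_d\subseteq\tau_{\widehat{d}}$ to land $A$ in $\bsigma \alpha(X,\tau_{\widehat{d}})\subseteq\bpi{\alpha+1}(X,\tau_{\widehat{d}})$, contradicting the choice of $A$. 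You instead split by level: at infinite levels you get strictness for free from the equality $\bsigma \alpha(X,\tau_d)=\bsigma \alpha(X,\tau_{\widehat{d}})$ ($\alpha\geq\omega$), and at finite levels you propagate a hypothetical collapse upward to level $\omega$, where it contradicts what you already established. Your propagation lemma is verified correctly (it needs the monotonicity proposition to place the generators $B_i,B'_i$ in $\bsigma\alpha$, and the collapse hypothesis to absorb the complements), and the case split is sound. What the paper's version buys is uniformity and brevity — no case distinction, and no transfinite induction beyond two successor steps; what yours buys is that the finite-level subtlety of Theorem \ref{thrm:quasi_and_metric_borelrelation} is made explicit rather than routed around, and the two topologies' hierarchies never need to be compared level-by-level below $\omega$.
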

\begin{proof}
Let $d$ be a compatible complete quasi-metric on $X$. Assume for a contradiction that $\bsigma \alpha(X,\tau_d)=\bpi \alpha(X,\tau_d)$ for some $\alpha<\omega_1$. Since $(X,\widehat{d})$ is an uncountable Polish space, there is $A\in \bsigma {\alpha+1}(X,\tau_{\widehat{d}})\setminus \bpi {\alpha+1}(X,\tau_{\widehat{d}})$. By Theorem~\ref{thrm:quasi_and_metric_borelrelation} we have $\tau_{\widehat{d}}\subseteq \bsigma 2(X,\tau_d)$. Therefore,
\[A\in \bsigma {\alpha+2}(X,\tau_d)=\bsigma {\alpha}(X,\tau_d)\subseteq \bsigma \alpha(X,\tau_{\widehat{d}})\subseteq \bpi {\alpha+1}(X,\tau_{\widehat{d}}),\]
a contradiction.
\qed
\end{proof}

V. Selivanov \cite{selivanov} has shown that the Borel hierarchy does not collapse for some uncountable $\omega$-continuous domains, including $\cal P(\omega)$. We will see later that every $\omega$-continuous domain is quasi-Polish, so the hierarchy does not collapse on any uncountable $\omega$-continuous domain.

Let $f\colon\subseteq X\to Y$ be a partial continuous function between quasi-metric spaces $(X,d)$ and $(Y,d')$. For $\varepsilon>0$, let $Q(f,\varepsilon)$ be the set of all $x\in X$ such that for every open neighborhood $U$ of $x$ there is open neighborhood $V\subseteq U$ of $x$ and $y\in f(V)$ such that $f(V)\subseteq B_{d'}(y,\varepsilon)$. Define $Q(f) = \bigcap_{n\in\omega} Q(f,2^{-n})$.

The following theorem does not assume that the spaces are countably based.

\begin{theorem}\label{thrm:continuous_extensions}
Let $(X,d)$ be a quasi-metric space, $(Y,d')$ a complete quasi-metric space, and $f\colon\subseteq X\to Y$ partial continuous. Then $dom(f)\subseteq Q(f)$ and $f$ extends to a continuous function $g\colon Q(f) \to Y$.
\end{theorem}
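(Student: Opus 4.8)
The plan is to handle the two assertions separately, with the inclusion $dom(f)\subseteq Q(f)$ being routine and the construction of the extension $g$ carrying the weight. For $x\in dom(f)$ and $\varepsilon>0$, continuity of $f$ at $x$ (in the subspace topology on $dom(f)$) supplies an open $W\ni x$ with $f(W)\subseteq B_{d'}(f(x),\varepsilon)$; then for any open neighborhood $U$ of $x$ the set $V=U\cap W$ witnesses $x\in Q(f,\varepsilon)$, taking $y=f(x)\in f(V)$. As $\varepsilon$ is arbitrary, $x\in Q(f)$.

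For the extension, the decisive design choice is to build the approximating neighborhoods \emph{inside the quasi-metric balls of $X$}. Fix $x\in Q(f)$ and recursively pick open neighborhoods $V_n\ni x$ with $V_n\subseteq V_{n-1}\cap B_d(x,2^{-n})$, together with points $y_n\in f(V_n)$ such that $f(V_n)\subseteq B_{d'}(y_n,2^{-n})$; this is possible because $x\in Q(f,2^{-n})$ and $V_{n-1}\cap B_d(x,2^{-n})$ is an open neighborhood of $x$. For $m\ge n$ we have $y_m\in f(V_m)\subseteq f(V_n)\subseteq B_{d'}(y_n,2^{-n})$, so $d'(y_n,y_m)<2^{-n}$ and $(y_n)_n$ is Cauchy; by completeness of $(Y,d')$ it converges in $\tau_{\widehat{d'}}$ to a point I define to be $g(x)$. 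Passing to the limit records the bounds $d'(y_n,g(x))\le 2^{-n}$ and $\widehat{d'}(y_n,g(x))\to 0$.

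Next I would verify that $g$ extends $f$ and is continuous. If $x\in dom(f)$ then $f(x)\in f(V_n)$ gives $d'(y_n,f(x))<2^{-n}$, while $V_n\subseteq B_d(x,2^{-n})$ forces the witnesses $v_n$ (with $y_n=f(v_n)$) to satisfy $d(x,v_n)<2^{-n}$, so $v_n\to x$ in $\tau_d$ and continuity of $f$ yields $d'(f(x),y_n)\to 0$; hence $\widehat{d'}(f(x),y_n)\to 0$ and $f(x)=g(x)$. For continuity at $x\in Q(f)$, the crucial step is the bound $d'(y_n,g(x'))\le 2^{-n}$ for every $x'\in V_n\cap Q(f)$: since $V_n$ is a $\tau_d$-neighborhood of $x'$ it contains some $B_d(x',2^{-m})$, so the tail of the analogous approximating sequence for $x'$ lies in $V_n$, its $f$-images lie in $B_{d'}(y_n,2^{-n})$, and taking the $\widehat{d'}$-limit gives the inequality. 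Given $\varepsilon>0$, choosing $n$ with $2^{-n}<\varepsilon/2$ and (using $\widehat{d'}(y_n,g(x))\to 0$) with $d'(g(x),y_n)<\varepsilon/2$, the neighborhood $V_n$ then satisfies $d'(g(x),g(x'))\le d'(g(x),y_n)+d'(y_n,g(x'))<\varepsilon$ for all $x'\in V_n\cap Q(f)$; since the balls $B_{d'}(g(x),\varepsilon)$ form a neighborhood basis at $g(x)$, this is exactly continuity into $(Y,\tau_{d'})$.

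The main obstacle throughout is the asymmetry of the quasi-metric: one only ever controls \emph{forward} distances from the centres $y_n$ and from $g(x)$, so any argument that implicitly needs a reverse distance --- for instance a naive proof that $g$ is independent of the choices, or that the image net converges to $g(x)$ in $\tau_{\widehat{d'}}$ --- will stall. The device that circumvents this is precisely the constraint $V_n\subseteq B_d(x,2^{-n})$: it guarantees that approximating points converge to their base point in $\tau_d$, which is what lets the forward condition $f(V_n)\subseteq B_{d'}(y_n,2^{-n})$ be transferred to $g(x')$ for neighbouring $x'$ without invoking any reverse estimate or well-definedness of the limit. I expect this transfer, together with the reverse-distance bookkeeping in the final continuity estimate, to be the only delicate points; Cauchyness and the extension property are then straightforward.
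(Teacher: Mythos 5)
Your proposal is correct and follows essentially the same route as the paper's proof: the identical construction of nested open neighborhoods $V_n\subseteq B_d(x,2^{-n})$ with witnesses $y_n$ forming a Cauchy sequence whose $\widehat{d'}$-limit defines $g(x)$, together with the same key device that tails of any approximating sequence for a nearby point $x'$ eventually fall inside $V_n$, so that the forward estimate $f(V_n)\subseteq B_{d'}(y_n,2^{-n})$ transfers to $g(x')$. The only organizational difference is that the paper isolates a well-definedness claim (independence of $g(x)$ from the choices made) and derives continuity and the extension property from it, whereas you bypass well-definedness by arguing directly with the chosen sequences --- sequential continuity of $f$ to get $g(x)=f(x)$ on $dom(f)$, and the bound $d'(y_n,g(x'))\leq 2^{-n}$ for continuity --- which amounts to the same mechanism.
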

\begin{proof}
It is straight forward to check that $dom(f)\subseteq Q(f)$. For $x\in Q(f)$, there is a sequence $U_0\supseteq U_1\supseteq U_2\supseteq\cdots$ of open neighborhoods of $x$ such that $U_n\subseteq B_d(x,2^{-n})$ and $f(U_n)\subseteq B_{d'}(y_n,2^{-n})$ for some $y_n\in f(U_n)$. Then $d'(y_n,y_m)<2^{-n}$ for all $m\geq n$, thus $(y_n)_{n\in\omega}$ is Cauchy and converges with respect to $\widehat{d'}$ to some $y\in Y$. Define $g(x)=y$. 

Note that for any $\varepsilon>0$, there is $n\in\omega$ such that $1/n<\varepsilon/2$ and $d'(y,y_n)<\varepsilon/2$, hence $f(U_n)\subseteq B_{d'}(y,\varepsilon)$. Thus, if $V\subseteq U_n$ is an open neighborhood of $x$ and $y'\in f(V)$ is such that $f(V)\subseteq B_{d'}(y',\varepsilon)$, then $\widehat{d'}(y,y')<\varepsilon$. This implies that the definition of $g(x)$ is independent of our choice of $U_n$ and $y_n$.

Finally, we show that $g\colon Q(f)\to Y$ is continuous. If $x\in Q(f)$ and $\varepsilon>0$, then there is an open neighborhood $V$ of $x$ such that $f(V)\subseteq B_{d'}(g(x),\varepsilon/2)$. For any $x'\in V\cap Q(f)$, $g(x')$ is the limit (with respect to $\tau_{\widehat{d'}}$) of a Cauchy sequence of elements in $f(V)$, hence $g(x')\in B_{d'}(g(x),\varepsilon)$. Therefore, $V$ is an open neighborhood of $x$ satisfying $g(V)\subseteq B_{d'}(g(x),\varepsilon)$, and as $\varepsilon>0$ was arbitrary, $g$ is continuous at $x$.
\qed
\end{proof}

The above theorem should be compared with Theorem 3.8 in \cite{kechris} and Proposition II-3.9 in \cite{etal_scott}. The set $Q(f)$ defined above is closely related to the set of all $x$ such that $osc_f(x)=0$ as defined in \cite{kechris}. For arbitrary $f$ with metrizable codomain, the points of continuity of $f$ are precisely those $x$ satisfying $osc_f(x)=0$. However, this does not hold when we extend the definition of $Q(f)$ above to arbitrary functions. For example, take the Sierpinski space $X=\{\bot,\top\}$ with quasi-metric $d(\bot,\top)=0$ and $d(\top,\bot)=1$, and let $f$ be the function that swaps $\bot$ and $\top$. Then $Q(f)=X$, but $f$ is clearly not continuous.

The reader should note how completeness of the quasi-metric is used in the above proof. The theorem would not hold if we only required that $(Y,\widehat{d'})$ be a complete metric space (i.e., that $(Y,d)$ is bicomplete; see Section \ref{sec:othercompletegeneralmetrics}). For example, using the quasi-metrics on $\omega+1$ defined earlier, let $f\colon\subseteq (\omega+1, d_1)\to (\omega+1,d_2)$ be the restriction of the identity function on $\omega+1$ to the subspace $\{0,1,\ldots\}$. Then $f$ is a partial continuous function and $Q(f)=\omega+1$, but $f$ cannot be extended to a total continuous function because $\{\omega\}$ is open in $(\omega+1,d_2)$ but not in $(\omega+1,d_1)$.

\begin{theorem}\label{thrm:continuous_extensions_pi2domain}
If $X$, $Y$, and $f\colon\subseteq X\to Y$ are as in Theorem \ref{thrm:continuous_extensions}, and in addition $X$ is countably based, then $Q(f)\in \bpi 2(X)$.
\end{theorem}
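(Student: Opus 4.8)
The plan is to fix a countable basis $\{B_i\}_{i\in\omega}$ for $X$ and to write each $Q(f,2^{-n})$ as a countable intersection of sets of the form (closed)$\,\cup\,$(open). Such sets lie in $\bpi 2(X)$, and since $\bpi 2(X)$ is closed under countable intersections, $Q(f)=\bigcap_{n\in\omega}Q(f,2^{-n})$ will then be $\bpi 2(X)$ as well. Throughout, $f(V)$ abbreviates $f(V\cap dom(f))$.

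For a fixed $\varepsilon>0$ and an open set $V\subseteq X$, let $P_\varepsilon(V)$ denote the property that there is $y\in f(V)$ with $f(V)\subseteq B_{d'}(y,\varepsilon)$. For each $i\in\omega$ define
\[ G_i^\varepsilon=\bigcup\{V\subseteq X\,|\, V\text{ open},\ V\subseteq B_i,\ P_\varepsilon(V)\}. \]
The crucial observation is that $G_i^\varepsilon$ is open: the property $P_\varepsilon(V)$ depends only on the set $V$ and on no distinguished point of it, so whenever $V\subseteq B_i$ satisfies $P_\varepsilon(V)$ that same $V$ witnesses membership in $G_i^\varepsilon$ for every one of its points; hence $G_i^\varepsilon$ is a union of open sets. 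I would then verify the identity
\[ Q(f,\varepsilon)=\bigcap_{i\in\omega}\big((X\setminus B_i)\cup G_i^\varepsilon\big). \]
For the left-to-right inclusion, given $x\in Q(f,\varepsilon)$ and $i$ with $x\in B_i$, apply the defining condition of $Q(f,\varepsilon)$ with $U=B_i$ to obtain an open $V$ with $x\in V\subseteq B_i$ and $P_\varepsilon(V)$, so $x\in G_i^\varepsilon$. Conversely, given any open $U\ni x$, choose a basic $B_i$ with $x\in B_i\subseteq U$; membership of $x$ in $G_i^\varepsilon$ then supplies an open $V$ with $x\in V\subseteq B_i\subseteq U$ and $P_\varepsilon(V)$, which is exactly what the definition of $Q(f,\varepsilon)$ requires. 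It is here that countable basedness of $X$ is used, to reduce the universal quantifier over all open neighbourhoods to a countable conjunction.

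To finish, each $X\setminus B_i$ is closed and each $G_i^\varepsilon$ is open, so both lie in $\bdelta 2(X)$, whence $(X\setminus B_i)\cup G_i^\varepsilon\in\bdelta 2(X)\subseteq\bpi 2(X)$ by the closure properties of the hierarchy recorded earlier. A countable intersection of $\bpi 2$ sets is again $\bpi 2$, so $Q(f,\varepsilon)\in\bpi 2(X)$, and intersecting over $\varepsilon=2^{-n}$ yields $Q(f)\in\bpi 2(X)$.

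The one genuinely delicate point is the openness of $G_i^\varepsilon$, and recognizing that the witnessing $V$ need not be shrunk. The naive strategy of replacing $V$ by a basic open neighbourhood of $x$ contained in $V$ fails, because the condition that the centre $y$ lie in the image $f(V)$ is not preserved under passing to subsets when $d'$ is not symmetric (one cannot bound $d'(y',z)$ in terms of $d'(y,z)$ and $d'(y,y')$). Observing that no such shrinking is required — that $G_i^\varepsilon$ is automatically open as a union of witnesses — is what sidesteps the asymmetry and makes the argument go through.
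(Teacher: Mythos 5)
Your proof is correct, and it is essentially the De Morgan dual of the paper's argument. The paper fixes a countable basis $\{U_i\}_{i\in\omega}$, defines $A_{i,n}$ to be the set of points of $U_i$ having \emph{no} good witness inside $U_i$, and shows $X\setminus Q(f,2^{-n})=\bigcup_{i\in\omega}U_i\cap Cl(A_{i,n})$, a countable union of (open $\cap$ closed) sets; the one nontrivial step there is that $U_i\cap Cl(A_{i,n})$ misses $Q(f,2^{-n})$, which holds because a good witness $V\subseteq U_i$ for a point of $Cl(A_{i,n})$ would also serve as a good witness for some point of $A_{i,n}$ lying in $V$. You work on the positive side instead: your $G_i^\varepsilon$ is the union of all good witnesses contained in $B_i$, which is open for free, and your identity $Q(f,\varepsilon)=\bigcap_{i\in\omega}\big((X\setminus B_i)\cup G_i^\varepsilon\big)$ exhibits $Q(f,\varepsilon)$ directly as a countable intersection of (closed $\cup$ open) sets. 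Both proofs turn on exactly the same observation, which you correctly isolate at the end: the property ``there is $y\in f(V)$ with $f(V)\subseteq B_{d'}(y,\varepsilon)$'' is a property of the open set $V$ alone, not of any distinguished point of $V$, so every point of a witness inherits that witness. What your formulation buys is a modest simplification: openness of the relevant set is automatic (a union of open sets), so the closure operator and the accompanying disjointness argument in the paper's proof disappear entirely; what the paper's complement-side formulation buys is a description of $X\setminus Q(f,2^{-n})$ that plugs directly into the definition of $\bsigma 2$.
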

\begin{proof}
Let $\{U_i\}_{i\in\omega}$ be a countable basis for $X$. For $i,n\in\omega$ let $A_{i,n}$ be the set of all $x\in U_i$ such that for every open neighborhood $V\subseteq U_i$ of $x$, $f(V)\not\subseteq B_{d'}(y,2^{-n})$ for every $y\in f(V)$. Clearly, if $x\not\in Q(f,2^{-n})$, then $x\in A_{i,n}$ for some $i\in\omega$. Furthermore, if we let $Cl(\cdot)$ be the closure operator on $X$, then $U_i\cap Cl(A_{i,n})\cap Q(f,2^{-n})=\emptyset$ because if $x\in U_i\cap Cl(A_{i,n})$ and $V\subseteq U_i$ is an open neighborhood of $x$ then $V$ is an open neighborhood of some element of $A_{i,n}$. Therefore, $X\setminus Q(f,2^{-n}) = \bigcup_{i\in\omega} U_i\cap Cl(A_{i,n})$ is in $\bsigma 2(X)$, hence $Q(f)$ is the countable intersection of $\bpi 2$ sets.
\qed
\end{proof}

\begin{theorem}
If $X$ is a countably based quasi-metric space and $Y\subseteq X$ is quasi-Polish, then $Y\in\bpi 2(X)$.
\end{theorem}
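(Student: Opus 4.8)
The plan is to mimic the classical proof that a Polish subspace of a Polish space is $G_\delta$, replacing the Lavrentiev-style extension theorem with the quasi-metric extension machinery just developed in Theorems \ref{thrm:continuous_extensions} and \ref{thrm:continuous_extensions_pi2domain}. Since $Y$ is quasi-Polish, fix a complete quasi-metric $d'$ on $Y$ compatible with the subspace topology that $Y$ inherits from $X$. View the identity map on $Y$ as a partial function $f\colon\subseteq X\to Y$ with $dom(f)=Y$ and codomain the complete quasi-metric space $(Y,d')$. Because the subspace topology on $Y$ agrees with $\tau_{d'}$, this $f$ is partial continuous, so Theorem \ref{thrm:continuous_extensions} applies: $Y=dom(f)\subseteq Q(f)$ and $f$ extends to a continuous $g\colon Q(f)\to Y$. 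Since $X$ is countably based, Theorem \ref{thrm:continuous_extensions_pi2domain} gives $Q(f)\in\bpi 2(X)$.

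The key step is to identify $Y$ inside $Q(f)$ as an equalizer. Compose the extension with the inclusion of $Y$ into $X$ to regard $g$ as a continuous map $g\colon Q(f)\to X$, and let $\iota\colon Q(f)\to X$ be the inclusion. I claim $Y=\{x\in Q(f)\,|\,g(x)=\iota(x)\}$. Indeed, for $x\in Y$ we have $g(x)=f(x)=x$ since $f$ is the identity, so $g(x)=\iota(x)$; conversely, if $x\in Q(f)$ satisfies $g(x)=\iota(x)=x$, then $x=g(x)\in Y$ because $g$ takes values in $Y$. As a subspace of the countably based $T_0$-space $X$, the space $Q(f)$ is itself countably based and $T_0$, and $X$ is countably based $T_0$, so Corollary \ref{cor:equalizer} yields $Y=[g=\iota]\in\bpi 2(Q(f))$.

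It then remains to transfer this from the subspace $Q(f)$ back to $X$. By Proposition \ref{prop:hierarchy_on_subspaces} there is $A\in\bpi 2(X)$ with $Y=A\cap Q(f)$. Since $Q(f)\in\bpi 2(X)$ and $\bpi 2(X)$ is closed under finite intersections, we conclude $Y=A\cap Q(f)\in\bpi 2(X)$, as desired.

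I expect the main obstacle to be the equalizer identification rather than the extension or the transfer, since the latter two are direct applications of the stated results. The delicate point is that $Q(f)$ may be strictly larger than $Y$ (the extension $g$ need not agree with $\iota$ everywhere), and in the non-Hausdorff $T_0$ setting the equalizer is not closed; what rescues the argument is precisely Corollary \ref{cor:equalizer}, which guarantees the equalizer of two continuous maps into a countably based $T_0$-space lands in $\bpi 2$. One must also verify the admissibility hypotheses for that corollary, namely that $Q(f)$ inherits the countably based $T_0$ properties from $X$, which is immediate for subspaces.
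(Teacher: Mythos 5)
Your proof is correct and follows essentially the same route as the paper: take the identity on $Y$ as a partial continuous map $f\colon\subseteq X\to Y$, invoke Theorems \ref{thrm:continuous_extensions} and \ref{thrm:continuous_extensions_pi2domain} to get a continuous extension $g\colon Q(f)\to Y$ with $Q(f)\in\bpi 2(X)$, identify $Y$ as the equalizer $\{x\in Q(f)\,|\,g(x)=x\}$ via Corollary \ref{cor:equalizer}, and transfer from $\bpi 2(Q(f))$ to $\bpi 2(X)$. You merely make explicit two points the paper leaves implicit — composing $g$ with the inclusion so both maps share codomain $X$, and the final transfer via Proposition \ref{prop:hierarchy_on_subspaces} plus closure of $\bpi 2(X)$ under finite intersections — both of which are verified correctly.
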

\begin{proof}
Let $f\colon Y\to Y$ be the identity on $Y$. Then $f\colon\subseteq X\to Y$ is partial continuous and extends to a continuous function $g\colon Q(f)\to Y$ with $Q(f)\in\bpi 2(X)$. Then $Y=\{x\in Q(f)\,|\, g(x)=x\}$, hence $Y\in \bpi 2(Q(f))$ by Corollary \ref{cor:equalizer}. It follows that $Y\in\bpi 2(X)$.
\qed
\end{proof}

We now prove the converse of the above theorem. The following does not require spaces to be countably based. See Theorem 3.11 in \cite{kechris} for the corresponding proof for completely metrizable spaces.

\begin{theorem}\label{thrm:bpi2_subspace_complete}
Every $\bpi 2$-subspace of a complete quasi-metric space is completely quasi-metrizable.
\end{theorem}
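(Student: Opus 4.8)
The plan is to follow the structure of the proof of Theorem~3.11 in \cite{kechris}, isolating three ingredients and then assembling them. First I would show that a $\tau_d$-closed subspace $F$ of a complete quasi-metric space $(X,d)$ is complete under the restricted quasi-metric: a Cauchy sequence in $F$ is Cauchy in $X$, hence converges with respect to $\tau_{\widehat d}$ to some $x\in X$; since $\tau_d\subseteq\tau_{\widehat d}$, the set $F$ is also $\tau_{\widehat d}$-closed, so $x\in F$, and $\widehat{d}$ restricted to $F$ is exactly the symmetrization of $d|_F$. Next I would treat an open subspace $V\subseteq X$. Writing $\rho(x)=d(x,X\setminus V)$ for the forward distance to the complement, set
\[ d'(x,y)=d(x,y)+\max\Big(0,\tfrac1{\rho(y)}-\tfrac1{\rho(x)}\Big). \]
The triangle inequality for the added term follows by telescoping, so $d'$ is a quasi-metric. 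The compatibility claim $\tau_{d'}=\tau_d|_V$ rests on the fact that $\rho(x)\le\rho(y)+d(x,y)$ makes $\rho$ lower semicontinuous for $\tau_d$, hence $1/\rho$ upper semicontinuous; this one-sidedness is exactly matched by the $\max(0,\cdot)$ term. For completeness, a $d'$-Cauchy sequence has $1/\rho$ bounded above, so $\rho$ is bounded below, forcing its $\widehat d$-limit into $V$; and since $\rho$ is $1$-Lipschitz for $\widehat d$, the value $1/\rho$ converges, giving $\widehat{d'}$-convergence inside $V$.

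The third ingredient is that a countable intersection $\bigcap_n Y_n$ of completely quasi-metrizable subspaces of $X$ is again completely quasi-metrizable. I would realize each $Y_n$ as a closed subspace of a product $X\times M_n$ via a continuous graph map $x\mapsto(x,\gamma_n(x))$ (the extra coordinate being $1/\rho_n$ in the open case), and then embed $\bigcap_n Y_n$ into $X\times\prod_n M_n$ by $x\mapsto(x,(\gamma_n(x))_n)$. Because the $X$-coordinate is shared, this image is closed, and a suitable weighted product quasi-metric on $X\times\prod_n M_n$ is complete, so the closed-subspace ingredient applies. To reduce the theorem to these cases, write $X\setminus Y=\bigcup_n(U_n\setminus V_n)$ with $U_n,V_n$ open and $V_n\subseteq U_n$, so that $Y=\bigcap_n W_n$ with $W_n=(X\setminus U_n)\cup V_n$; by the third ingredient it then suffices to completely quasi-metrize a single set $W=(X\setminus U)\cup V$.

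The main obstacle is precisely this last set $W$, the disjoint union of the closed set $C=X\setminus U$ and the open set $V$. It is neither open nor closed, so neither of the first two ingredients applies directly, and the asymmetry of $d$ now genuinely bites. The only Cauchy sequences that leave $W$ are those whose $\widehat d$-limit lies in $U\setminus V$; such sequences are eventually inside $V$ with $\rho_V=d(\cdot,X\setminus V)\to0$, while $d(\cdot,C)$ stays bounded below (its limit is in the open set $U$). The natural blow-up $1/\rho_V$ detects exactly these, but it cannot be extended continuously across $C$: a point of $C$ may lie in the $\tau_d$-closure of $V$ and be approached by $V$-points along which $\rho_V\to0$, so upper semicontinuity of the added term fails at $C$. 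Worse, such ``threatening'' approach sequences are indistinguishable from genuinely divergent ones purely in terms of $\rho_U$ and $\rho_V$, so no additive term built from these forward distances can simultaneously preserve the topology at $C$ and force divergence toward $U\setminus V$.

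Resolving this is the heart of the proof. I expect the correct approach is to detect the approach to $U\setminus V$ \emph{inside} the complete open subspace $(U,d_U)$ obtained from the open-subspace construction, where $V$ is open and the blow-up is legitimate, rather than measuring it against the closed part $C$; one then glues this blow-up to the plain restriction $d|_C$ so that the combined added term remains upper semicontinuous for $\tau_d$ across all of $W$. Verifying this compatibility at the points of $C$ — and thereby confirming that $W$, and hence every $\bpi 2$ subspace, carries a compatible complete quasi-metric — is the principal technical difficulty, and is the step where the proof must go decisively beyond the metric argument of \cite{kechris}.
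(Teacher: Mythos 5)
You have correctly reduced the theorem to the single troublesome case $W=(X\setminus U)\cup V$ and correctly located the difficulty there, but you then stop: the step you yourself call ``the heart of the proof'' is only conjectured (``I expect the correct approach is\dots''), with no construction and no verification. That is a genuine gap, and your accompanying impossibility remark is in fact false: the paper resolves the problem with precisely ``an additive term built from these forward distances.'' Writing $Y=\bigcap_{i\in\omega}(U_i\cup A_i)$ with $U_i$ open, $A_i$ closed and disjoint from $U_i$, and $F_i=X\setminus U_i$, the paper sets
\[
d_i(x,y)=\begin{cases}
\min\bigl\{2^{-i-1},\max\bigl\{0,\tfrac{1}{d(y,F_i)}-\tfrac{1}{d(x,F_i)}\bigr\}\bigr\} & \mbox{if }x,y\in U_i,\\
2^{-i-1} & \mbox{if }x\in U_i,\ y\in A_i,\\
0 & \mbox{if }x\in A_i,
\end{cases}
\]
and takes $d'(x,y)=d(x,y)+\sum_{i=0}^\infty d_i(x,y)$. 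The point you half-identify but do not exploit is the asymmetry of the crossing penalty: leaving the closed part is free ($d_i(x,\cdot)\equiv 0$ for $x\in A_i$), so $d'$-balls centered at points of $A_i$ are just $d$-balls and compatibility there is trivial; entering $A_i$ from $U_i$ costs the full cap $2^{-i-1}$, so a $d'$-Cauchy sequence is eventually in $U_i$ or eventually in $A_i$. Your ``threatening'' sequences of $V$-points sliding toward a point of $C$ do not need to be distinguished from genuinely divergent ones: the blow-up term makes \emph{both} fail to be Cauchy, which is harmless, since completeness requires Cauchy sequences to converge, not convergent sequences to be Cauchy. Compatibility at points of $V$ is likewise unaffected, because small $d$-balls around a point of the open set $V$ never meet $C$.

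Your third ingredient is also broken in the non-Hausdorff setting, independently of the missing heart. The claim ``because the $X$-coordinate is shared, this image is closed'' is a Hausdorff phenomenon: if $x'\leq x$ in the specialization order with $x\in\bigcap_n Y_n$ and $x'\notin\bigcap_n Y_n$, then $(x',(\gamma_n(x))_n)$ lies in the closure of the graph but not in it. Already for the Sierpinski space $X=\{\bot,\top\}$ and the open subspace $V=\{\top\}$, the graph $\{(\top,1/\rho(\top))\}$ is not closed in $X\times M$ for \emph{any} choice of $M$, since its closure contains $(\bot,1/\rho(\top))$. The natural repair --- showing the image is $\bpi 2$ in the product, as in the paper's later lemma on intersections of quasi-Polish topologies --- would be circular here, because concluding anything from ``$\bpi 2$ in a complete space'' is exactly the theorem being proved. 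The paper avoids products and intersections altogether: the caps $2^{-i-1}$ make the series $\sum_i d_i$ converge, so all countably many pairs $(U_i,A_i)$ are handled simultaneously by the single quasi-metric $d'$, whose compatibility and completeness are then verified directly.
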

\begin{proof}
Assume $(X,d)$ is a complete quasi-metric space and $Y\in\bpi 2(X)$. Then $Y$ can be represented as $Y=\bigcap_{i\in\omega}(U_i \cup A_i)$, with $U_i\subseteq X$ open, $A_i\subseteq X$ closed, and $U_i\cap A_i = \emptyset$ for all $i\in\omega$. For $x,y\in Y$ and $i\in\omega$, define
\begin{eqnarray*}
d_i(x,y) &=& \left\{
\begin{array}{ll}
\min\{2^{-i-1},\max\{0,\frac{1}{d(y,F_i)}-\frac{1}{d(x,F_i)}\}\} & \mbox{ if }x,y\in U_i\\
2^{-i-1} & \mbox{ if }x\in U_i\mbox{ and } y\in A_i\\
0 & \mbox{ if } x\in A_i
\end{array}
\right.
\end{eqnarray*}
where $F_i = X\setminus U_i$ and $d(x,F_i) = \inf\{d(x,z) \,|\, z\in F_i\}$. Finally, define
\[d'(x,y) = d(x,y) + \sum_{i=0}^\infty d_i(x,y)\]
for $x,y\in Y$. It is straightforward to verify that $d'$ is a quasi-metric on $Y$. We show that $d'$ is compatible with the relative topology on $Y$ and that $(Y,d')$ is complete.

Clearly, for all $x\in Y$ and $\varepsilon>0$, $B_{d'}(x,\varepsilon)\subseteq B_d(x,\varepsilon)$, so the topology induced by $d'$ contains the subspace topology on $Y$. To prove that both topologies coincide, we must show that for all $x\in Y$ and $\varepsilon'>0$ there is $\varepsilon>0$ such that $B_d(x,\varepsilon)\subseteq B_{d'}(x,\varepsilon')$. So let $x\in Y$ and $\varepsilon'>0$ be fixed.

Choose $n>0$ large enough that $\sum_{i=n}^\infty 2^{-i-1}<\varepsilon'/3$. For each $i<n$, if $x\in A_i$ then let $\varepsilon_i=\varepsilon'/3$. Otherwise, $x\in U_i$ so we can choose $\varepsilon_i>0$ so that
\begin{enumerate}
\item
$\varepsilon_i<\varepsilon'/3$, 
\item
$B_d(x,\varepsilon_i)\subseteq U_i$,
\item
$y\in B_d(x,\varepsilon_i)$ implies $\frac{1}{d(y,F_i)}-\frac{1}{d(x,F_i)} < \varepsilon'/(3n)$.
\end{enumerate}
The second criterion can be met because $U_i$ is open. For the third criterion, note that if $d(x,y)<\varepsilon_i$ then $d(x,z) \leq d(x,y)+d(y,z) < \varepsilon_i + d(y,z)$ for all $z\in F_i$, hence $d(x,F_i) \leq d(y,F_i)+\varepsilon_i$. Therefore, if $\varepsilon_i < d(x,F_i)$ and $d(x,y)<\varepsilon_i$, then $\frac{1}{d(y,F_i)}-\frac{1}{d(x,F_i)} \leq \frac{1}{d(x,F_i)-\varepsilon_i}-\frac{1}{d(x,F_i)}$. Clearly, the right hand side of this last inequality becomes arbitrarily small as $\varepsilon_i$ approaches zero.

Now let $\varepsilon = \min\{\varepsilon_i \,|\, i<n\}$. By our choice of $\varepsilon$ we have that for all $y\in Y$, if $d(x,y)<\varepsilon$ then
\begin{eqnarray*}
d'(x,y) &=& d(x,y) + \sum_{i=0}^{n-1} d_i(x,y) + \sum_{i=n}^\infty d_i(x,y)\\
	&<& \varepsilon'/3 + \varepsilon'/3 + \varepsilon'/3\\
	&=& \varepsilon',
\end{eqnarray*}
hence $B_d(x,\varepsilon)\subseteq B_{d'}(x,\varepsilon')$. Therefore, $d'$ induces the relative topology on $Y$.

Finally, we show that $d'$ is complete. Let $(x_n)_{n\in\omega}$ be a Cauchy sequence in $(Y,d')$. Then $(x_n)_{n\in\omega}$ is Cauchy in $(X,d)$, so it converges to some $x\in X$ with respect to the metric topology induced by $\widehat{d}$.

For each $i\in\omega$, since $d_i(x_n,x_m)$ $(n\leq m)$ converges to zero, there is $n_i\in\omega$ such that either $x_n \in U_i$ for all $n\geq n_i$, or else $x_n \in A_i$ for all $n\geq n_i$. If $x_n \in A_i$ for all $n\geq n_i$ then $x\in A_i$ because $A_i$ is closed. On the other hand, if $x_n \in U_i$ for all $n\geq n_i$ then $d(x_n,F_i)$ must be bounded away from zero, thus $x\in U_i$. Therefore, $x\in Y$.

If $(x_n)_{n\in\omega}$ is eventually in $A_i$ then $x\in A_i$, and $d_i(x_n,x)=d_i(x,x_n)=0$ when $n$ is large enough. Otherwise, $(x_n)_{n\in\omega}$ is eventually in $U_i$ and $x\in U_i$. Since $\widehat{d}(x_n,x)$ converges to zero, it follows that $\abs{\frac{1}{d(x_n,F_i)}-\frac{1}{d(x,F_i)}}$ converges to zero, hence $d_i(x_n,x)$ and $d_i(x,x_n)$ converge to zero. As $d_i$ is bounded by $2^{-i-1}$, the infinite sums in the definitions of $d'(x_n,x)$ and $d'(x,x_n)$ converge to zero as $n$ goes to infinity. It follows that $(x_n)_{n\in\omega}$ converges to $x$ with respect to $\widehat{d'}$, and that $(Y,d')$ is complete.
\qed
\end{proof}

\begin{theorem}\label{thrm:quasipolish_bpi2_characterization}
A subspace of a quasi-Polish space is quasi-Polish if and only if it is $\bpi 2$.\qed
\end{theorem}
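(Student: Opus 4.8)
The plan is to recognize this statement as a direct corollary of the two immediately preceding theorems, which together supply the two implications. Throughout, fix a quasi-Polish space $X$ and a subspace $Y\subseteq X$. By definition $X$ is countably based and carries a compatible complete quasi-metric $d$, so $(X,d)$ is in particular a countably based complete quasi-metric space; this is the only structure on $X$ we will use.

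For the forward implication, suppose $Y$ is quasi-Polish. Since $(X,d)$ is a countably based quasi-metric space and $Y\subseteq X$ is quasi-Polish, the theorem preceding Theorem~\ref{thrm:bpi2_subspace_complete} applies verbatim and yields $Y\in\bpi 2(X)$. Nothing further is needed here.

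For the converse, suppose $Y\in\bpi 2(X)$. Because $(X,d)$ is a complete quasi-metric space, Theorem~\ref{thrm:bpi2_subspace_complete} shows that the subspace $Y$ is completely quasi-metrizable. It remains only to check that $Y$ is countably based; this is immediate, since $X$ is countably based and any subspace of a countably based space is countably based (intersect a countable basis of $X$ with $Y$). Hence $Y$ is countably based and completely quasi-metrizable, i.e.\ quasi-Polish.

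The only point requiring any care --- and it is a bookkeeping point rather than a genuine obstacle --- is matching the hypotheses of the two cited theorems: the forward direction needs $X$ merely to be a countably based quasi-metric space, whereas the converse needs the chosen quasi-metric on $X$ to be \emph{complete} so that Theorem~\ref{thrm:bpi2_subspace_complete} applies. Both conditions are guaranteed by the definition of quasi-Polish, so no new argument is required and the equivalence follows by combining the two theorems.
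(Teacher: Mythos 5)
Your proposal is correct and is exactly the paper's intended argument: the theorem is stated with an immediate \qed precisely because it combines the preceding theorem (quasi-Polish subspaces of countably based quasi-metric spaces are $\bpi 2$) with Theorem~\ref{thrm:bpi2_subspace_complete}, plus the trivial observation that subspaces of countably based spaces are countably based. You have simply made explicit the bookkeeping the paper leaves to the reader.
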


$\cal P(\omega)$ is complete with respect to the quasi-metric $d$ defined after Theorem \ref{thrm:quasi_and_metric_borelrelation}. Since every countably based $T_0$-space can be embedded into $\cal P(\omega)$, we obtain the following.

\begin{theorem}
A space is quasi-Polish if and only if it is homeomorphic to a $\bpi 2$-subset of $\cal P(\omega)$.
\qed
\end{theorem}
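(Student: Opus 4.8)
The plan is to read off both directions directly from Theorem \ref{thrm:quasipolish_bpi2_characterization}, combined with the two facts recalled just before the statement: that $\cal P(\omega)$ carries the complete quasi-metric $d$ defined after Theorem \ref{thrm:quasi_and_metric_borelrelation}, and that every countably based $T_0$-space embeds homeomorphically into $\cal P(\omega)$. The first thing to record is that $\cal P(\omega)$ is \emph{itself} quasi-Polish: its Scott-topology is countably based (with basis $\{\uparrow\! F \mid F\subseteq\omega \text{ finite}\}$), and $d$ is a compatible complete quasi-metric, so $(\cal P(\omega),\tau_d)$ is both countably based and completely quasi-metrizable. With this in hand, Theorem \ref{thrm:quasipolish_bpi2_characterization} can be applied to subspaces of $\cal P(\omega)$.

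For the forward direction, suppose $X$ is quasi-Polish. Since its topology is induced by a quasi-metric it is $T_0$, and it is countably based by definition, so by the universality of $\cal P(\omega)$ there is a homeomorphism of $X$ onto a subspace $Y\subseteq\cal P(\omega)$. Being homeomorphic to $X$, the subspace $Y$ is quasi-Polish, hence a quasi-Polish subspace of the quasi-Polish space $\cal P(\omega)$; Theorem \ref{thrm:quasipolish_bpi2_characterization} then gives $Y\in\bpi 2(\cal P(\omega))$. Thus $X$ is homeomorphic to a $\bpi 2$-subset of $\cal P(\omega)$.

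For the converse, suppose $X$ is homeomorphic to some $Y\in\bpi 2(\cal P(\omega))$. Applying Theorem \ref{thrm:quasipolish_bpi2_characterization} in the other direction, $Y$ is a $\bpi 2$-subspace of the quasi-Polish space $\cal P(\omega)$ and is therefore quasi-Polish; since quasi-Polishness is invariant under homeomorphism, $X$ is quasi-Polish as well.

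I do not expect a genuine obstacle here: all the mathematical content sits in Theorem \ref{thrm:quasipolish_bpi2_characterization} and the embedding theorem, both of which are assumed. The only points needing care are the bookkeeping observations that quasi-Polishness is a topological (homeomorphism-invariant) property and that $\cal P(\omega)$ really is quasi-Polish; the latter reduces to checking completeness of $d$, where one verifies that a Cauchy sequence $(X_n)_n$ stabilizes on each initial segment $\{0,\ldots,k\}$ and hence converges in $\widehat{d}$ to $\{n \mid n\in X_m \text{ for all sufficiently large } m\}$.
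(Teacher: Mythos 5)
Your proof is correct and takes essentially the same route as the paper, which obtains this theorem immediately from Theorem \ref{thrm:quasipolish_bpi2_characterization} together with the two facts you cite: that $d$ is a complete quasi-metric on $\cal P(\omega)$ (making $\cal P(\omega)$ quasi-Polish) and that every countably based $T_0$-space embeds homeomorphically into $\cal P(\omega)$. Your closing verification that $d$ is complete (Cauchy sequences stabilize on each initial segment $\{0,\ldots,k\}$ and converge in $\widehat{d}$ to the eventual intersection) is also correct, and merely makes explicit a fact the paper asserts without proof.
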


Using the above theorem we can give a very simple alternative proof that partial continuous functions into a quasi-Polish space can be extended to a $\bpi 2$-domain.

\begin{corollary}\label{cor:cont_extension_to_pi2_domain}
Let $X$ be an arbitrary topological space, $Y$ a quasi-Polish space, and $f\colon\subseteq X\to Y$ partial continuous. Then there is $G\in \bpi 2(X)$ with $dom(f)\subseteq G$ and a continuous extension $g\colon G\to Y$ of $f$.
\end{corollary}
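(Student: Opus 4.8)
The plan is to exploit the characterization of quasi-Polish spaces as exactly the $\bpi 2$-subsets of $\cal P(\omega)$, combined with the fact that a partial continuous map into $\cal P(\omega)$ always extends to a \emph{total} continuous map. First I would use the preceding theorem to fix a homeomorphism $\varphi\colon Y\to Y'$ onto a set $Y'\in\bpi 2(\cal P(\omega))$, and replace $f$ by the partial continuous map $\varphi\circ f\colon\subseteq X\to\cal P(\omega)$. It then suffices to produce $G\in\bpi 2(X)$ with $dom(f)\subseteq G$ and a continuous $g'\colon G\to Y'$ extending $\varphi\circ f$, since $g=\varphi^{-1}\circ g'$ is then the desired extension of $f$.

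The heart of the argument is a free total extension into $\cal P(\omega)$. For each $n\in\omega$ the set $(\varphi\circ f)^{-1}(\uparrow\{n\})$ is open in the subspace topology on $dom(f)$, so I would choose an open $V_n\subseteq X$ with $(\varphi\circ f)^{-1}(\uparrow\{n\})=V_n\cap dom(f)$, and define $\tilde f\colon X\to\cal P(\omega)$ by $\tilde f(x)=\{n\in\omega\,|\,x\in V_n\}$. Since $\tilde f^{-1}(\uparrow\{n\})=V_n$ is open and the sets $\uparrow\{n\}$ generate the Scott-topology on $\cal P(\omega)$ (finite intersections giving the basic sets $\uparrow F$), the map $\tilde f$ is continuous on all of $X$; and for $x\in dom(f)$ one has $n\in\tilde f(x)$ iff $x\in V_n$ iff $n\in(\varphi\circ f)(x)$, so $\tilde f$ extends $\varphi\circ f$.

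Finally I would cut $\tilde f$ back down to land in $Y'$ by setting $G=\tilde f^{-1}(Y')$. Because $\tilde f$ is continuous and $Y'\in\bpi 2(\cal P(\omega))$, the proposition on continuous preimages, applied to complements, gives $G\in\bpi 2(X)$; and every $x\in dom(f)$ satisfies $\tilde f(x)=(\varphi\circ f)(x)\in Y'$, so $dom(f)\subseteq G$. Restricting $\tilde f$ to $G$ yields a continuous $g'\colon G\to Y'$ extending $\varphi\circ f$, and composing with $\varphi^{-1}$ finishes the proof.

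There is no serious obstacle once this viewpoint is adopted; the single point that drives the whole argument is the free extension of $\varphi\circ f$ to a total continuous map into $\cal P(\omega)$, which reflects that $\cal P(\omega)$ is a countable power of the Sierpinski space and hence injective with respect to subspace inclusions. The only things requiring any care are the verifications that the coordinatewise $\tilde f$ is genuinely continuous and genuinely agrees with $\varphi\circ f$ on $dom(f)$, and both follow immediately from the choice of the $V_n$.
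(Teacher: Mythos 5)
Your proposal is correct and takes essentially the same approach as the paper: identify $Y$ with a $\bpi 2$-subset of $\cal P(\omega)$, extend the partial map to a total continuous map into $\cal P(\omega)$, and pull $Y$ back along that extension to get $G$ and $g$. The only difference is that the paper obtains the total extension by citing Proposition II-3.9 of \cite{etal_scott}, whereas you prove that step inline with the coordinatewise construction via the sets $V_n$ --- which is precisely the standard injectivity argument for $\cal P(\omega)$ as a countable power of the Sierpinski space.
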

\begin{proof}
We can assume without loss of generality that $Y\in \bpi 2(\cal P(\omega))$. By Proposition II-3.9 in \cite{etal_scott}, there is a continuous extension $f^* \colon X\to \cal P(\omega)$ of $f$. Let $G = (f^*)^{-1}(Y)$ and let $g$ be the restriction of $f^*$ to $G$. It is easy to check that $G$ and $g$ satisfy the claim of the corollary.
\qed
\end{proof}

Finally, we mention that the class of quasi-Polish spaces is closed under retracts. Recall that a topological space $X$ is a \emph{retract} of $Y$ if and only if there exist continuous functions $s\colon X\to Y$ and $r\colon Y\to X$ such that $r\circ s$ is the identity on $X$.

\begin{corollary}
Any retract of a quasi-Polish space is quasi-Polish.
\end{corollary}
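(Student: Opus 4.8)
The plan is to realize $X$ as a $\bpi 2$ subspace of the quasi-Polish space $Y$ and then invoke Theorem \ref{thrm:quasipolish_bpi2_characterization}. Let $s\colon X\to Y$ and $r\colon Y\to X$ be continuous functions with $r\circ s$ equal to the identity on $X$.

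First I would check that $s$ is a topological embedding, so that $X$ is homeomorphic to the subspace $s(X)\subseteq Y$. Since $r\circ s=\mathrm{id}_X$, the map $s$ is injective, and the restriction of $r$ to $s(X)$ is a continuous inverse to $s\colon X\to s(X)$; hence $s$ is a homeomorphism onto its image. It therefore suffices to show that $s(X)\in\bpi 2(Y)$.

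The key observation is that $s(X)$ is exactly the equalizer of the continuous self-map $s\circ r\colon Y\to Y$ and the identity on $Y$, that is, $s(X)=[s\circ r=\mathrm{id}_Y]$. Indeed, if $y=s(x)$ then $s(r(y))=s(r(s(x)))=s(x)=y$, and conversely if $s(r(y))=y$ then $y\in s(X)$. Since $Y$ is quasi-Polish it is countably based and $T_0$ (the quasi-metric inducing its topology yields a $T_0$ space), so Corollary \ref{cor:equalizer} applies with both domain and codomain equal to $Y$, giving $s(X)=[s\circ r=\mathrm{id}_Y]\in\bpi 2(Y)$.

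Finally, by Theorem \ref{thrm:quasipolish_bpi2_characterization} the $\bpi 2$ subspace $s(X)$ of the quasi-Polish space $Y$ is itself quasi-Polish, and since $X$ is homeomorphic to $s(X)$, the space $X$ is quasi-Polish. There is no real obstacle beyond spotting the equalizer description of $s(X)$; once that is in hand the conclusion is immediate from the machinery already developed. The only point requiring a moment's care is verifying that $s$ is genuinely an embedding rather than merely a continuous injection, which is precisely where the retraction $r$ is used.
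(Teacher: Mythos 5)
Your proof is correct and is essentially the paper's own argument: both identify $s(X)$ as the equalizer $\{y\in Y\,|\,s(r(y))=y\}$, apply Corollary \ref{cor:equalizer} to get $s(X)\in\bpi 2(Y)$, and conclude via Theorem \ref{thrm:quasipolish_bpi2_characterization} together with the homeomorphism $X\cong s(X)$. The only difference is that you spell out the (correct) verification that $s$ is an embedding, which the paper leaves implicit.
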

\begin{proof}
Assume $Y$ is quasi-Polish and $s\colon X\to Y$ and $r\colon Y\to X$ are continuous such that $r\circ s$ is the identity on $X$. Then $s(X) =\{ y\in Y \,|\, s(r(y))=y\}\in \bpi 2(Y)$ by Corollary \ref{cor:equalizer}. Since $s(X)$ is homeomorphic to $X$, it follows that $X$ is quasi-Polish.
\qed
\end{proof}

Similar results concerning retracts can be found in \cite{scott_datatypes} and \cite{tang1979}. Retracts also play an important role in V. Selivanov's \cite{selivanov} development of descriptive set theory for domains.


\section{Other notions of complete generalized metrics}\label{sec:othercompletegeneralmetrics}

Although complete quasi-metrics have many similarities with complete metrics, it is important to note that a quasi-metric does not always have a well-defined ``completion''. For example, define a quasi-metric $d$ on $\omega$ by $d(x,y)=0$ if $x\leq y$ and $d(x,y)=1$ if $x>y$. The topology of $(\omega,d)$ is simply the Scott-topology on $\omega$ with the usual order. Now assume for a contradiction that $(Y,d')$ is a complete quasi-metric space such that $X\subseteq Y$ and $d'$ agrees with $d$ on $X$. The sequence $0,1,2,\ldots$ is Cauchy in $(X,d)$, hence in $(Y,d')$, so this sequence converges to some $y\in Y$ with respect to $\widehat{d'}$. So there is some $x_0 \in\omega$ such that $\widehat{d'}(x,y)<1/2$ for all $x\geq x_0$. Thus $d'(x_0+1,x_0)\leq d'(x_0+1,y) + d'(y,x_0)<1$, a contradiction.

Clearly every countably based quasi-metric space can be embedded topologically into a quasi-Polish space. It is unclear, however, whether or not quasi-Polish spaces can be characterized by some other notion of a complete generalized metric which does have a well-defined completion.

Before we further investigate the properties of quasi-Polish spaces, in this section we will briefly investigate two other notions of complete generalized metrics which do have well defined completions: bicomplete quasi-metric spaces and complete partial metrics. The bicompletion of a quasi-metric space is discussed in \cite{kunzi_intro} and the completion of a partial metric is given in \cite{oneil}. In general, bicomplete quasi-metrizability is strictly more general than complete quasi-metrizability, which is strictly more general than complete partial metrizability.

\subsection{Bicomplete quasi-metric spaces}\label{subsec:Bicomplete}

Note that it is possible for $(X,\widehat{d})$ to be a separable complete metric space without $(X,d)$ being quasi-Polish. For example, the quasi-metric $d$ on $\omega$ defined at the top of this section induces a non-sober topology on $\omega$, which we will later see implies that $(\omega,d)$ is not quasi-Polish. However, $(\omega,\widehat{d})$ is a discrete metric space, hence complete. 

\begin{definition}
A quasi-metric space $(X,d)$ is \emph{bicomplete} if and only if $(X,\widehat{d})$ is a complete metric space.
\qed
\end{definition}

H. Junnila and H.-P. A. K\"{u}nzi \cite{junnila_kunzi} have shown that a metrizable space has a compatible bicomplete quasi-metric if and only if it is a $\bpi 3$-subset of every metrizable space in which it is embedded. The goal of this section is to generalize this result to all countably based bicompletely quasi-metrizable spaces. Thus bicompleteness generalizes completeness by exactly one step in the Borel hierarchy.

We first need to extend Corollary \ref{cor:cont_extension_to_pi2_domain} to a more general class of functions.

\begin{definition}
A function $f\colon X\to Y$ is $\bsigma \alpha$-measurable ($1\leq \alpha <\omega_1$) if and only if $f^{-1}(U)\in \bsigma \alpha(X)$ for all open $U\subseteq Y$.
\qed
\end{definition}

In particular, a function is continuous if and only if it is $\bsigma 1$-measurable. It is easily seen that if $f\colon X\to Y$ is $\bsigma \alpha$-measurable and $A\in \bpi 2(Y)$, then $f^{-1}(A)\in \bpi {\alpha+1}(X)$. Properties of $\bsigma \alpha$-measurable functions between countably based $T_0$-spaces have been investigated in \cite{debrecht_etal_4}.

\begin{lemma}\label{lem:extensions_of_measurable_functions}
Let $X$ be an arbitrary topological space, $Y$ a quasi-Polish space, and $f\colon\subseteq X\to Y$ a partial $\bsigma \alpha$-measurable function ($1\leq \alpha <\omega_1$). Then there exists $G\in \bpi {\alpha+1}(X)$ with $dom(f)\subseteq G$ and a $\bsigma \alpha$-measurable extension $g\colon G\to Y$ of $f$.
\end{lemma}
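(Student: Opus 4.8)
The plan is to lift the proof of Corollary \ref{cor:cont_extension_to_pi2_domain} one level in the Borel hierarchy, replacing its continuous extension into $\cal P(\omega)$ with a $\bsigma\alpha$-measurable one built coordinatewise. Since $Y$ is quasi-Polish, by the characterization of quasi-Polish spaces as $\bpi2$-subsets of $\cal P(\omega)$ we may assume without loss of generality that $Y\in\bpi2(\cal P(\omega))$. Viewing $f$ as a partial $\bsigma\alpha$-measurable map $f\colon\subseteq X\to\cal P(\omega)$, recall that the sets $\uparrow\!\{n\}=\{S\in\cal P(\omega)\,|\,n\in S\}$ form a countable subbasis of $\cal P(\omega)$. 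Hence for each $n\in\omega$ we have $f^{-1}(\uparrow\!\{n\})\in\bsigma\alpha(dom(f))$, and by Proposition \ref{prop:hierarchy_on_subspaces} there is $V_n\in\bsigma\alpha(X)$ with $f^{-1}(\uparrow\!\{n\})=V_n\cap dom(f)$.

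First I would define the extension $f^*\colon X\to\cal P(\omega)$ by $f^*(x)=\{n\in\omega\,|\,x\in V_n\}$. For $x\in dom(f)$ this gives $f^*(x)=\{n\,|\,x\in V_n\cap dom(f)\}=f(x)$, so $f^*$ extends $f$. Moreover $(f^*)^{-1}(\uparrow\!\{n\})=V_n\in\bsigma\alpha(X)$, and since $\bsigma\alpha(X)$ is closed under finite intersections and countable unions while $\cal P(\omega)$ is countably based, it follows that $(f^*)^{-1}(U)\in\bsigma\alpha(X)$ for every open $U\subseteq\cal P(\omega)$; that is, $f^*$ is $\bsigma\alpha$-measurable.

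It then remains to cut $f^*$ down to $Y$. I would set $G=(f^*)^{-1}(Y)$ and take $g=f^*|_G$ with codomain $Y$. Because $f^*$ is $\bsigma\alpha$-measurable and $Y\in\bpi2(\cal P(\omega))$, the remark preceding this lemma yields $G\in\bpi{\alpha+1}(X)$, and clearly $dom(f)\subseteq G$. For any open $U\subseteq Y$, writing $U=U'\cap Y$ with $U'$ open in $\cal P(\omega)$, we get $g^{-1}(U)=(f^*)^{-1}(U')\cap G$, which lies in $\bsigma\alpha(G)$ by Proposition \ref{prop:hierarchy_on_subspaces} since $(f^*)^{-1}(U')\in\bsigma\alpha(X)$; hence $g$ is a $\bsigma\alpha$-measurable extension of $f$ of the required form. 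The only real subtlety is bookkeeping the Borel levels, namely verifying that the coordinatewise definition keeps $f^*$ at level $\bsigma\alpha$ and that pulling the $\bpi2$-set $Y$ back along a $\bsigma\alpha$-measurable map lands exactly at $\bpi{\alpha+1}$; no genuinely new idea beyond the case $\alpha=1$ of Corollary \ref{cor:cont_extension_to_pi2_domain} is needed.
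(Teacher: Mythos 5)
Your proof is correct, but it follows a different route from the paper's. The paper keeps the codomain $Y$ abstract and instead modifies the \emph{domain}: it refines the topology $\tau$ of $X$ to the topology $\tau'$ generated by $\tau\cup\{V_i\}_{i\in\omega}$, observes that $f\colon\subseteq(X,\tau')\to Y$ is then continuous, applies Corollary \ref{cor:cont_extension_to_pi2_domain} as a black box to get a continuous extension with $\bpi 2(X,\tau')$ domain, and finally translates back through the $\bsigma\alpha$-measurable identity $id_X\colon(X,\tau)\to(X,\tau')$ to land in $\bpi{\alpha+1}(X,\tau)$. You instead work on the \emph{codomain} side: after embedding $Y$ as a $\bpi 2$-subset of $\cal P(\omega)$, you redo the extension construction itself one level up, building $f^*(x)=\{n\,|\,x\in V_n\}$ coordinatewise and checking measurability from the closure of $\bsigma\alpha(X)$ under countable unions and finite intersections; in effect you re-prove, at level $\alpha$, the Scott extension theorem (Proposition II-3.9 of the paper's reference on continuous lattices) that the paper's Corollary \ref{cor:cont_extension_to_pi2_domain} invokes for the continuous case. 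Both arguments hinge on the same two facts --- the $\bpi 2$-embedding of $Y$ into $\cal P(\omega)$ and the observation that the preimage of a $\bpi 2$-set under a $\bsigma\alpha$-measurable map is $\bpi{\alpha+1}$ --- so the proofs are morally parallel. What the paper's version buys is brevity and modularity: the refinement trick reuses the continuous case verbatim (at the small cost of verifying that every $\tau'$-open is in $\bsigma\alpha(X,\tau)$, which uses that only countably many sets were added). What your version buys is self-containedness and transparency: there is no juggling of two topologies on $X$, the extension is completely explicit, and the exact closure properties of $\bsigma\alpha$ that make the induction-free argument work are laid bare. Your step cutting $g$ down to $G=(f^*)^{-1}(Y)$ and verifying $\bsigma\alpha$-measurability of the restriction via Proposition \ref{prop:hierarchy_on_subspaces} is also handled correctly.
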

\begin{proof}
Let $\{U_i\}_{i\in\omega}$ be a countable basis for $Y$, and for each $i\in\omega$ choose $V_i\in\bsigma \alpha(X)$ so that $V_i\cap dom(f)=f^{-1}(U_i)$. Let $\tau$ be the topology of $X$, and let $\tau'$ be the topology on $X$ generated by adding $\{V_i\,|\, i\in\omega\}$ to $\tau$. Then $f\colon\subseteq (X,\tau') \to Y$ is continuous, so there is $G\in\bpi 2(X,\tau')$ and a continuous extension $g\colon \subseteq (X,\tau')\to Y$ of $f$ with $dom(g)=G$. Since the identity function $id_X\colon (X,\tau)\to (X,\tau')$ is $\bsigma \alpha$-measurable, it is clear that $G\in\bpi {\alpha+1}(X,\tau)$ and $g\colon (X,\tau)\to Y$ is a $\bsigma \alpha$-measurable extension of $f$.
\qed
\end{proof}

The next theorem provides a useful tool for proving that a topological space admits a compatible bicomplete quasi-metric.

\begin{theorem}[S. Romaguera and O. Salbany \cite{romaguera_salbany}]\label{thrm:bicomplete_if_polishmetric}
A topological space $X$ admits a compatible bicomplete quasi-metric if and only if there is a compatible quasi-metric $d$ on $X$ such that $(X,\widehat{d})$ is completely metrizable.
\qed
\end{theorem}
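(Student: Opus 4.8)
The plan is to treat the two implications separately, since the forward direction is immediate while the converse carries all the content. For the forward direction, if $d$ is a compatible bicomplete quasi-metric then $(X,\widehat{d})$ is by definition a complete metric space, hence completely metrizable, so $d$ itself witnesses the right-hand side. Thus the real task is the converse: given a compatible quasi-metric $d$ with $(X,\widehat{d})$ completely metrizable, I must build a compatible quasi-metric $d'$ whose symmetrization $\widehat{d'}$ is complete.

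For the converse I would first pass to the bicompletion $(\tilde X,\tilde d)$ of $(X,d)$ (see \cite{kunzi_intro}): it is a bicomplete quasi-metric space with $\tilde d|_X=d$, so that $\tau_{\tilde d}$ restricts to $\tau_d$ on $X$, and $(\tilde X,\widehat{\tilde d})$ is the metric completion of $(X,\widehat{d})$, in which $X$ is dense. Since $(X,\widehat{d})$ is completely metrizable and dense in the complete metric space $(\tilde X,\widehat{\tilde d})$, the classical theorem of Alexandrov (the converse of Theorem~3.11 in \cite{kechris}) shows that $X=\bigcap_{m}U_m$ for sets $U_m$ open in $\tau_{\widehat{\tilde d}}$; write $F_m=\tilde X\setminus U_m$. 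The problem is thereby reduced to equipping $X$ with a compatible (for $\tau_{\tilde d}|_X$) bicomplete quasi-metric, in direct analogy with Theorem~\ref{thrm:bpi2_subspace_complete} one Borel level lower.

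I would then imitate, on the symmetrization, the Alexandrov metric-completion construction. Setting $f_m(x)=1/\widehat{\tilde d}(x,F_m)$, which is finite on $X\subseteq U_m$, define
\[d'(x,y)=\tilde d(x,y)+\sum_{m}\min\{2^{-m},\max\{0,f_m(y)-f_m(x)\}\}.\]
The quasi-metric axioms follow from subadditivity of $t\mapsto\max\{0,t\}$ and of $t\mapsto\min\{2^{-m},t\}$, and one checks $\widehat{d'}\ge\widehat{\tilde d}$ as well as $\widehat{d'}(x,y)\ge\min\{2^{-m},|f_m(x)-f_m(y)|\}$ for every $m$. Completeness of $\widehat{d'}$ is then the classical argument: a $\widehat{d'}$-Cauchy sequence $(x_k)$ is $\widehat{\tilde d}$-Cauchy, hence converges to some $\tilde x\in\tilde X$; the penalty terms force each $f_m(x_k)$ to remain bounded, so $\widehat{\tilde d}(\tilde x,F_m)>0$, whence $\tilde x\in\bigcap_m U_m=X$, and continuity of $f_m$ on $U_m$ yields $\widehat{d'}$-convergence to $\tilde x$. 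These verifications are routine.

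The hard part is compatibility, $\tau_{d'}=\tau_{\tilde d}|_X$. The inclusion $\tau_{d'}\supseteq\tau_{\tilde d}$ is clear, but the reverse fails for the naive $f_m$ above: because $f_m$ is built from the symmetric distance $\widehat{\tilde d}$, a point $y$ with $\tilde d(x,y)$ small (hence in a small $\tau_{\tilde d}$-ball about $x$) may still have $\widehat{\tilde d}(x,y)$ large, and so be much closer to $F_m$ than $x$, making $\max\{0,f_m(y)-f_m(x)\}$ large; equivalently $f_m$ is $\tau_{\widehat{\tilde d}}$-continuous but not upper semicontinuous for the coarser topology $\tau_{\tilde d}$. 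This is the genuine obstacle, and it reflects that membership in $X$ is only $\bpi 3$, not $\bpi 2$, relative to $\tau_{\tilde d}$. The remedy I would pursue is to replace each symmetric penalty by penalties adapted to the asymmetry: by Proposition~\ref{prop:inducedmetric_opens_are_sigma2} each $\widehat{\tilde d}$-open $U_m$ is $\bsigma 2(\tilde X,\tau_{\tilde d})$, a countable union of intersections of a $\tau_{\tilde d}$-open ball with a $\tau_{\tilde d}$-closed conjugate half-ball. Penalizing instead by the \emph{forward} quasi-distance $\tilde d(x,\cdot)$ to these $\tau_{\tilde d}$-closed pieces keeps every penalty controlled by $\tilde d(x,y)$, restoring compatibility, while completeness survives because any $\widehat{d'}$-Cauchy escape to $F_m$ must in particular forward-approach $F_m$, so the symmetrized penalties still detect and forbid it. The delicate bookkeeping occurs at the forward-boundary points, where a forward quasi-distance degenerates to zero even though the symmetric distance does not; absorbing this degeneracy is precisely where the extra Borel level lives, and is the step I expect to demand the most care.
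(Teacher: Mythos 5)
The paper offers no proof to compare against: Theorem~\ref{thrm:bicomplete_if_polishmetric} is imported from \cite{romaguera_salbany} with a \qed, and everything in Section~\ref{subsec:Bicomplete} (Lemma~\ref{lem:sig2_bicomplete}, Theorem~\ref{thrm:bicomplete_characterization}) is built \emph{on top of} it as a black box. So your argument has to stand on its own, and it does not. What you have is correct as far as it goes: the forward direction is trivial; the bicompletion $(\tilde X,\tilde d)$ has the properties you list; Alexandrov gives $X=\bigcap_m U_m$ with $U_m$ open in $\tau_{\widehat{\tilde d}}$; and your diagnosis that the naive penalties $f_m=1/\widehat{\tilde d}(\cdot,F_m)$ produce a quasi-metric that is bicomplete but \emph{incompatible} with $\tau_{\tilde d}|_X$ is exactly right. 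But at that point nothing has been proved beyond a restatement: the step you defer to ``delicate bookkeeping'' --- converting complete metrizability of the symmetrization topology into an actual compatible quasi-metric with complete symmetrization --- is not a technical residue, it is the entire content of the Romaguera--Salbany theorem. Tellingly, the paper's own Lemma~\ref{lem:sig2_bicomplete}, which carries out precisely the kind of piece-by-piece construction you gesture at, ends by showing only that $(Y,\widehat{\rho})$ is Polish and then \emph{cites Theorem~\ref{thrm:bicomplete_if_polishmetric}} to finish; so the step you leave open cannot be absorbed into routine verification.

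The remedy paragraph also faces concrete obstacles beyond being undefined. First, writing $U_m=\bigcup_j(V_{m,j}\cap C_{m,j})$ with $V_{m,j}$ open and $C_{m,j}$ closed in $\tau_{\tilde d}$ (via Proposition~\ref{prop:inducedmetric_opens_are_sigma2}), a point $x\in X$ belongs to \emph{some} piece but typically lies outside $V_{m,j'}$ --- hence at forward quasi-distance $0$ from the closed set $\tilde X\setminus V_{m,j'}$ --- for other indices $j'$; so penalties of the form $1/\tilde d(\cdot,\tilde X\setminus V_{m,j'})$ are infinite on $X$ itself unless you first assign each point a definite piece and force $\widehat{d'}$-Cauchy sequences to commit eventually to a single piece. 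This is a polarity problem your sketch does not address: Theorem~\ref{thrm:bpi2_subspace_complete} keeps limits inside an \emph{intersection} $\bigcap_i(U_i\cup A_i)$, where each index gets an independent penalty, whereas here the limit must stay inside a countable \emph{union} of locally closed pieces, and additive penalties cannot express that disjunction without a separation device like the total order and $+1$-penalties of Lemma~\ref{lem:sig2_bicomplete}, whose compatibility proof is itself nontrivial. Your claim that ``completeness survives because any Cauchy escape to $F_m$ must forward-approach $F_m$'' begs exactly this question, since compatible penalties can only use forward-continuous data and forward distances cannot see which piece a point has committed to. Second, the decomposition of $U_m$ into countably many such pieces requires $(\tilde X,\widehat{\tilde d})$ to be separable (Propositions~\ref{prop:inducedmetric_opens_are_sigma2} and \ref{prop:countablybased_iff_sep}), while the theorem as stated carries no countability hypothesis; even a completed version of your construction would prove only the separable case.
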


As a simple consequence of the above theorem, note that if $(X,d)$ is a bicomplete quasi-metric space and $Y\in\bpi 2(X)$, then $(Y,\widehat{d})$ is a $\bpi 2$-subspace of $(X,\widehat{d})$ hence completely metrizable, thus it follows that $Y$ is bicompletely quasi-metrizable. 

\begin{lemma}\label{lem:sig2_bicomplete}
If $(X,d)$ is a bicomplete quasi-metric space and $Y\in\bsigma 2(X)$, then $Y$ admits a compatible bicomplete quasi-metric.
\end{lemma}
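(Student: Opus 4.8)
The plan is to invoke the Romaguera--Salbany criterion (Theorem~\ref{thrm:bicomplete_if_polishmetric}): it suffices to exhibit \emph{some} quasi-metric $d'$ on $Y$ that induces the subspace topology $\tau_d$ on $Y$ and for which $(Y,\widehat{d'})$ is merely completely \emph{metrizable}. This relaxation is exactly what makes $\bsigma 2$ tractable. The $\bpi 2$ observation preceding this lemma worked because $(Y,\widehat{d})$ is a $\bpi 2$ (hence $G_\delta$) subspace of the completely metrizable space $(X,\widehat{d})$; but a $\bsigma 2$ set is only $F_\sigma$ in the symmetrized metric, so $\widehat{d}$ restricted to $Y$ need not be completely metrizable. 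The idea is therefore to keep the quasi-metric topology on $Y$ equal to $\tau_d$ while \emph{refining} the symmetrized topology $\tau_{\widehat{d'}}$ just enough to realize $Y$ as a $G_\delta$ subset of a completely metrizable space.

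Write $Y=\bigcup_{i\in\omega} U_i\setminus V_i$ with each $U_i,V_i$ open in $(X,\tau_d)$, as given by the definition of $\bsigma 2$. For each $i$ I would introduce two ``separation gadgets'': let $g_i(x,y)=2^{-i}$ if $x\in U_i$ and $y\notin U_i$, and $g_i(x,y)=0$ otherwise, and define $h_i$ analogously from $V_i$. Each $g_i$ is a quasi-metric (the triangle inequality and the coincidence axiom are immediate), and its forward balls around points of $U_i$ recover $U_i$ while around other points they are all of $Y$; since $U_i$ is already $\tau_d$-open, adding $g_i$ does not enlarge the forward topology. Setting $d'=d+\sum_{i}(g_i+h_i)$ (the sum converges since $g_i,h_i\le 2^{-i}$), a routine sublevel-set computation shows $\tau_{d'}$ equals $\tau_d$ on $Y$. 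On the other hand $\widehat{d'}\ge\widehat{d}$ and $\widehat{d'}\ge\widehat{g_i},\widehat{h_i}$, and $\widehat{g_i}$ takes the value $2^{-i}$ precisely on pairs lying on opposite sides of $U_i$; hence in $\tau_{\widehat{d'}}$ every $U_i$ and every $V_i$ becomes clopen, and $\tau_{\widehat{d'}}$ is exactly $\tau_{\widehat{d}}$ refined by declaring all the $U_i,V_i$ clopen.

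To see that $(Y,\widehat{d'})$ is completely metrizable, I would embed it. Let $\chi_A$ denote the characteristic function of $A$ and define $e\colon Y\to (X,\widehat{d})\times 2^\omega\times 2^\omega$ by $e(x)=\big(x,(\chi_{U_i}(x))_{i},(\chi_{V_i}(x))_{i}\big)$. By the previous paragraph $e$ is a homeomorphism of $(Y,\tau_{\widehat{d'}})$ onto its image, and the codomain is completely metrizable as a countable product of completely metrizable spaces. The image is $\{(x,a,b)\mid a_i=\chi_{U_i}(x)\text{ and }b_i=\chi_{V_i}(x)\text{ for all }i\}$ intersected with the set $Y\times 2^\omega\times 2^\omega$. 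Each matching condition $a_i=\chi_{U_i}(x)$ has complement equal to a union of a closed set and an open set, hence is $\bsigma 2$, so each matching condition is $\bpi 2$; the countable intersection of these is again $\bpi 2$, i.e.\ $G_\delta$. The decisive point is that, once the matching conditions hold, membership $x\in Y$ is equivalent to the condition $\exists i\,(a_i=1\wedge b_i=0)$, which is an \emph{open} subset of $2^\omega\times 2^\omega$ and therefore $\bpi 2$ by the basic inclusions of the hierarchy. Thus $e(Y)$ is a finite intersection of $G_\delta$ sets, hence $G_\delta$ in a completely metrizable space, so $(Y,\widehat{d'})$ is completely metrizable; Theorem~\ref{thrm:bicomplete_if_polishmetric} then yields a compatible bicomplete quasi-metric on $Y$.

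The main obstacle is precisely this last completeness step, and in particular the danger that a $\widehat{d'}$-analysis degenerates because the ``witnessing index'' of points of $Y$ can escape to infinity; the construction sidesteps a direct Cauchy-sequence argument by working topologically, exploiting the observation that after all the $U_i$ and $V_i$ are made clopen the $F_\sigma$-type description of $Y$ collapses into the open (hence $G_\delta$) condition $\exists i\,(a_i=1\wedge b_i=0)$. I would also note that no countable basis for $X$ is needed, since only the countable family $\{U_i,V_i\}_{i\in\omega}$ supplied by the $\bsigma 2$ representation is used.
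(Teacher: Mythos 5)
Your proof is correct, and it reaches the conclusion by a genuinely different mechanism than the paper's. Both arguments share the same frame --- modify the quasi-metric so that the forward topology stays $\tau_d$ while the symmetrized topology improves, then invoke Theorem~\ref{thrm:bicomplete_if_polishmetric} --- but the implementations diverge. The paper, following Junnila--K\"{u}nzi, first normalizes to $V_i\subseteq U_i$, partitions $Y$ into difference pieces $A_\sigma$ indexed by $2^{<\omega}$, linearly orders these pieces, and charges a penalty of $1$ for crossing pieces against the order; the symmetrization then splits $Y$ into countably many clopen pieces, each a $\bpi 2$ (hence completely metrizable) subspace of $(X,\widehat{d})$, so $(Y,\widehat{\rho})$ is a disjoint sum of such spaces. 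You instead charge weighted one-directional penalties $2^{-i}$ for exiting $U_i$ (resp.\ $V_i$), which requires no normalization, no partition, and no linear order; the compatibility check for the forward topology becomes routine, since the finitely many relevant penalty terms vanish on a small forward $d$-ball (because $U_i,V_i$ are $\tau_d$-open and contain the center) and the tail of the series is uniformly small. Your completeness step is also different: rather than a disjoint-sum argument, you embed $(Y,\tau_{\widehat{d'}})$ into $(X,\widehat{d})\times 2^\omega\times 2^\omega$ via indicator functions and show the image is $G_\delta$ --- the graph conditions are $G_\delta$, and, crucially, once the $U_i,V_i$ are clopen the $\bsigma 2$ description of $Y$ collapses to the open condition $\exists i\,(a_i=1\wedge b_i=0)$. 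This makes transparent exactly why the $\bsigma 2$ hypothesis suffices, a point that is somewhat hidden inside the paper's tree combinatorics; what the paper's construction buys in exchange is a more concrete picture of the modified quasi-metric, whose symmetrization literally decomposes $Y$ into clopen completely metrizable blocks. One quibble: your $g_i$ and $h_i$ are quasi-\emph{pseudo}metrics, not quasi-metrics (two distinct points of $U_i$ are at $g_i$-distance $0$ in both directions), so the parenthetical claim that each $g_i$ satisfies the coincidence axiom is false as stated; this is harmless, since $d'\geq d$ guarantees that $d'$ itself separates points.
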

\begin{proof}
Let $Y=\bigcup_{i\in\omega} U_i\setminus V_i$ with $U_i,V_i$ open and $V_i\subseteq U_i$ for all $i\in\omega$. We first inductively define $A_\sigma,B_\sigma\subseteq X$ for $\sigma\in 2^{<\omega}$ as follows:
\begin{enumerate}
\item
$B_{\langle \rangle}=X$, where $\langle \rangle$ is the empty sequence,
\item
$B_{\sigma\diamond 0} = B_\sigma \setminus U_{\abs{\sigma}}$,
\item
$B_{\sigma\diamond 1} = B_\sigma \cap V_{\abs{\sigma}}$,
\item
$A_\sigma = B_\sigma\cap(U_{\abs{\sigma}}\setminus V_{\abs{\sigma}})$.
\end{enumerate}

Thus $A_{\langle\rangle} = U_0\setminus V_0$, $A_{\langle 0 \rangle}= (U_1\setminus V_1)\setminus U_0$, $A_{\langle 1 \rangle}= (U_1\setminus V_1)\cap V_0$, etc. In general, the sets $A_\sigma$ with $\abs{\sigma}=i$ form a partition of $(U_i\setminus V_i)\setminus \bigcup_{j<i} (U_j\setminus V_j)$, where the assumption that $V_j\subseteq U_j$ guarantees that the elements of the partition do not overlap. It is then easy to see that $Y=\bigcup_{\sigma\in 2^{<\omega}}A_\sigma$ and that $\sigma\not=\sigma'$ implies $A_\sigma\cap A_{\sigma'}=\emptyset$.

For $\sigma,\sigma'\in 2^{<\omega}$, let $\sigma\wedge\sigma'$ denote the longest common prefix of $\sigma$ and $\sigma'$. We define a total ordering $\sqsubseteq$ on the elements of $2^{<\omega}$ by
\[\sigma\sqsubseteq \sigma' \iff \sigma=\sigma'\mbox{ or } [(\sigma\wedge\sigma')\diamond 0 \preceq \sigma] \mbox{ or } [(\sigma\wedge\sigma')\diamond 1 \preceq \sigma'].\]
Intuitively, if we think of $2^{<\omega}$ as a binary tree with zeros branching to the left and ones branching to the right, and then collapse this tree vertically into a straight line, then $\sigma \sqsubseteq \sigma'$ if and only if $\sigma$ is to the left of $\sigma'$.

We now define a quasi-metric $\rho$ on $Y$. For $x,y\in Y$, let $\sigma_x$ ($\sigma_y$) be the unique element of $2^{<\omega}$ such that $x\in A_{\sigma_x}$ ($y\in A_{\sigma_y}$), and define
\begin{eqnarray*}
\rho(x,y) &=& \left\{
\begin{array}{ll}
d(x,y)+1 & \mbox{ if $\sigma_y \sqsubseteq \sigma_x$ and $\sigma_y\not=\sigma_x$,}\\
d(x,y)   & \mbox{ otherwise.}
\end{array}
\right.
\end{eqnarray*}
Since $\sqsubseteq$ is a total order, it is immediate that $\rho$ is a quasi-metric. 

We first show that the topology induced by $\rho$ is the same as the topology induced by $d$. Since $\rho(x,y)\leq d(x,y)$, it is clear that every $d$-open set is $\rho$-open. For the converse, let $V$ be $\rho$-open and choose $x\in V$ and $\varepsilon>0$ such that $B_\rho(x,\varepsilon)\subseteq V$. Define
\[U = U_{\abs{\sigma_x}}\cap\bigcap\{V_i\,|\, \sigma_x(i)=1\}.\]
Clearly $U$ is $d$-open, so choose $0<r<1$ such that $r<\varepsilon$ and $B_d(x,r)\subseteq U$. Now let $y\in Y$ be any point such that $d(x,y)<r$, and assume for a contradiction that $\rho(x,y)>r$. Then $\sigma_y\sqsubseteq \sigma_x$ and $\sigma_y \not=\sigma_x$. 

If $(\sigma_y\wedge\sigma_x)\diamond 0 \preceq \sigma_y$ then $A_{\sigma_y}\cap U_{\abs{\sigma_y\wedge\sigma_x}}$ is empty, hence $y\not\in U_{\abs{\sigma_y\wedge\sigma_x}}$. Since by assumption $y\in U \subseteq U_{\abs{\sigma_x}}$, it follows that $\sigma_x\not=(\sigma_y\wedge\sigma_x)$ hence $(\sigma_y\wedge\sigma_x)\diamond 1 \preceq \sigma_x$ because $\sigma_y\wedge\sigma_x$ is the longest common prefix of $\sigma_x$ and $\sigma_y$. But then $\sigma_x(\abs{\sigma_y\wedge\sigma_x})=1$, so $U\subseteq V_{\abs{\sigma_y\wedge\sigma_x}} \subseteq U_{\abs{\sigma_y\wedge\sigma_x}}$, contradicting $y\in U$.

On the other hand, if $(\sigma_y\wedge\sigma_x)\diamond 1 \preceq \sigma_x$, then $\sigma_x(\abs{\sigma_y\wedge\sigma_x})=1$, so $U\subseteq V_{\abs{\sigma_y\wedge\sigma_x}}$. Thus $y\in V_{\abs{\sigma_y\wedge\sigma_x}}$ so we must have $\sigma_y = \sigma_y\wedge\sigma_x$. But by definition, $A_{\sigma_y} \cap V_{\sigma_y}=\emptyset$, contradiction.

It follows that $B_d(x,r)\subseteq B_\rho(x,\varepsilon)\subseteq V$, hence $V$ is $d$-open.

Finally, each $A_\sigma$ is $\bpi 2$ in $(X,\widehat{d})$, hence $(A_\sigma,\widehat{d})$ is Polish. Since $\rho$ coincides with $d$ on $A_\sigma$, it follows that $(A_\sigma,\widehat{\rho})$ is Polish. For $x,y\in Y$, if $\sigma_x\not=\sigma_y$ then $\widehat{\rho}(x,y)\geq 1$, so each $A_\sigma$ is clopen in $(Y,\widehat{\rho})$. It follows that $(Y,\widehat{\rho})$ is the disjoint union of countably many Polish spaces, hence Polish. It follows from Theorem \ref{thrm:bicomplete_if_polishmetric} that $Y$ admits a compatible bicomplete quasi-metric.
\qed
\end{proof}

The general strategy in the above proof is essentially the same as the proof in \cite{junnila_kunzi}, but in order to guarantee that $\rho$ is a compatible quasi-metric we required a more complicated partitioning of $Y$.

We can now prove the following characterization of countably based spaces which admit bicomplete quasi-metrics.

\begin{theorem}\label{thrm:bicomplete_characterization}
Assume $X$ is a countably based space which admits a compatible bicomplete quasi-metric. Then $Y\subseteq X$ admits a compatible bicomplete quasi-metric if and only if $Y\in\bpi 3(X)$.
\end{theorem}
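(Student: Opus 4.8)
The plan is to prove the two implications separately. For the ``if'' direction I would combine Lemma \ref{lem:sig2_bicomplete} with the standard ``diagonal into a product'' argument, and for the ``only if'' direction I would push the identity map on $Y$ through Lemma \ref{lem:extensions_of_measurable_functions} and then recognize $Y$ as an equalizer, using that the preimage of a $\bpi 2$-set under a $\bsigma 2$-measurable map is $\bpi 3$.

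Suppose first that $Y\in\bpi 3(X)$. Since $3>2$, Proposition \ref{prop:borel3_and_up} applied to $X\setminus Y\in\bsigma 3(X)$ lets me write $X\setminus Y=\bigcup_{i\in\omega}C_i$ with each $C_i\in\bpi 2(X)$, so that $Y=\bigcap_{i\in\omega}Z_i$ with $Z_i:=X\setminus C_i\in\bsigma 2(X)$. By Lemma \ref{lem:sig2_bicomplete} each $Z_i$ carries a compatible bicomplete quasi-metric $d_i$, which I may assume bounded by $1$. I would then form the countable product $P=\prod_{i\in\omega}Z_i$ with the quasi-metric $d_P((x_i),(y_i))=\sum_{i\in\omega}2^{-i}d_i(x_i,y_i)$; this is compatible with the product topology, $P$ is countably based and $T_0$, and the symmetrization $\widehat{d_P}$ induces the product of the Polish topologies $\tau_{\widehat{d_i}}$, which is itself Polish, so Theorem \ref{thrm:bicomplete_if_polishmetric} shows $P$ is bicompletely quasi-metrizable. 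The diagonal $y\mapsto(y,y,\dots)$ embeds $Y$ homeomorphically onto $\{(x_i)\in P\,|\,x_i=x_j\text{ for all }i,j\}=\bigcap_{i,j}[f_i=f_j]$, where $f_i\colon P\to X$ is the $i$-th projection followed by the inclusion $Z_i\hookrightarrow X$ (note that the common value of the $x_i$ automatically lies in $\bigcap_i Z_i=Y$). By Corollary \ref{cor:equalizer}, using that $X$ is countably based and $T_0$, each $[f_i=f_j]$ is in $\bpi 2(P)$, so the image is a $\bpi 2$-subset of $P$. Finally, the remark following Theorem \ref{thrm:bicomplete_if_polishmetric} shows that a $\bpi 2$-subspace of a bicomplete quasi-metric space is bicompletely quasi-metrizable, hence so is $Y$.

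For the converse, let $e$ be a compatible bicomplete quasi-metric on $Y$; then $(Y,\widehat{e})$ is separable by Proposition \ref{prop:countablybased_iff_sep} and complete, hence Polish and in particular quasi-Polish. Regard the identity on $Y$ as a partial function $f\colon\subseteq X\to(Y,\tau_{\widehat{e}})$ with $dom(f)=Y$; since $f^{-1}(U)=U$ for open $U\subseteq(Y,\tau_{\widehat{e}})$ and Theorem \ref{thrm:quasi_and_metric_borelrelation} gives $\tau_{\widehat{e}}\subseteq\bsigma 2(Y,\tau_e)$ with $\tau_e$ the subspace topology of $Y$ in $X$, the function $f$ is partial $\bsigma 2$-measurable. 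Lemma \ref{lem:extensions_of_measurable_functions} with $\alpha=2$ then yields $G\in\bpi 3(X)$ with $Y\subseteq G$ and a $\bsigma 2$-measurable extension $g\colon G\to(Y,\tau_{\widehat{e}})$. Writing $\iota\colon G\hookrightarrow X$ for the inclusion and $j\colon(Y,\tau_{\widehat{e}})\to X$ for the continuous inclusion, I would check that $Y=\{x\in G\,|\,j(g(x))=\iota(x)\}$: on $Y$ equality holds because $g$ restricts to the identity, while for $x\in G\setminus Y$ the point $j(g(x))$ lies in $Y$ but $\iota(x)=x$ does not. The pair $\langle j\circ g,\iota\rangle\colon G\to X\times X$ is $\bsigma 2$-measurable (both components are, $j\circ g$ being a continuous map after a $\bsigma 2$-measurable one, and $\bsigma 2$ is closed under finite intersections and countable unions), so this equalizer equals $\langle j\circ g,\iota\rangle^{-1}(\Delta_X)$; since $\Delta_X\in\bpi 2(X\times X)$, it lies in $\bpi 3(G)$, and as $G\in\bpi 3(X)$ with $\bpi 3$ closed under intersection, $Y\in\bpi 3(X)$.

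The step I expect to be the main obstacle is the converse, specifically the bookkeeping that keeps the complexity at exactly $\bpi 3$ rather than higher: one must confirm both that the extension domain $G$ is only $\bpi 3$ (which forces the use of the $\bsigma 2$-measurable extension of Lemma \ref{lem:extensions_of_measurable_functions} rather than the continuous Corollary \ref{cor:cont_extension_to_pi2_domain}) and that the equalizer $\{x\in G\,|\,j(g(x))=\iota(x)\}$ is precisely $Y$, which relies crucially on $g$ taking values in $Y$ so that no stray point of $G\setminus Y$ can satisfy the equation. Verifying that the product $P$ is genuinely bicompletely quasi-metrizable in the first direction is routine but also requires a little care in identifying $\tau_{\widehat{d_P}}$ with the product of the symmetrized factor topologies before invoking Theorem \ref{thrm:bicomplete_if_polishmetric}.
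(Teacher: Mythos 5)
Your proof is correct and takes essentially the same route as the paper's: the converse direction extends the identity $(Y,\tau_e)\to(Y,\tau_{\widehat{e}})$ via Theorem \ref{thrm:quasi_and_metric_borelrelation} and Lemma \ref{lem:extensions_of_measurable_functions} and then recognizes $Y$ as an equalizer (your explicit computation $Y=\langle j\circ g,\iota\rangle^{-1}(\Delta_X)\in\bpi 3(G)$ is exactly the ``simple generalization of the proof of Corollary \ref{cor:equalizer}'' that the paper invokes), while the forward direction uses Proposition \ref{prop:borel3_and_up}, Lemma \ref{lem:sig2_bicomplete}, and the diagonal embedding into the product just as the paper does. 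The only cosmetic differences are that you certify the diagonal is $\bpi 2$ via Corollary \ref{cor:equalizer} instead of the paper's direct basis argument, and you justify bicompleteness of the product by passing through Theorem \ref{thrm:bicomplete_if_polishmetric} rather than asserting bicompleteness of the product quasi-metric directly.
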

\begin{proof}
Assume $d$ is a bicomplete quasi-metric compatible with the subspace topology of $Y$. Let $f\colon (Y,d)\to (Y,\widehat{d})$ be the identity function on $Y$. Theorem \ref{thrm:quasi_and_metric_borelrelation} implies that $f$ is $\bsigma 2$-measurable, hence by Lemma \ref{lem:extensions_of_measurable_functions} there is a $\bsigma 2$-measurable extension $g\colon \subseteq X\to (Y,\widehat{d})$ of $f$ with $dom(g)\in\bpi 3(X)$. Clearly, $f^{-1}\colon (Y,\widehat{d})\to (Y,d)$ is a continuous function, hence $(f^{-1})\circ g \colon dom(g)\to (Y,d)$ is $\bsigma 2$-measurable. Using a simple generalization of the proof of Corollary \ref{cor:equalizer} we see that $Y = \{x\in dom(g)\,|\, x = f^{-1}(g(x))\}\in \bpi 3(dom(g))$. Since $dom(g)\in\bpi 3(X)$, it follows that $Y\in \bpi 3(X)$.

The proof of the converse is a simple generalization of the proof given in \cite{junnila_kunzi}. Assume $Y\in \bpi 3(X)$. Using Proposition \ref{prop:borel3_and_up} we have $Y=\bigcap_{i\in\omega} A_i$ for some choice of sets $A_i\in\bsigma 2(X)$. By Lemma \ref{lem:sig2_bicomplete} each $A_i$ admits a bicomplete quasi-metric $d_i$. The topological product $\prod_{i\in\omega}A_i$ admits a bicomplete quasi-metric $d$ defined as
\[d(x,y) = \sum_{i\in\omega}2^{-i}\frac{d_i(x_i,y_i)}{1+d_i(x_i,y_i)}\]
for $x=(x_i)_{i\in\omega}$ and $y=(y_i)_{i\in\omega}$. Note that $Y$ is homeomorphic to the subspace $D=\{(x,x,\ldots)\,|\, x\in Y\}$ of $\prod_{i\in\omega}A_i$. Let $\{B_k\}_{k\in\omega}$ be a countable basis for $X$. Then $(x_i)_{i\in\omega}\in D$ if and only if $(\forall i,j,k)[x_i\in A_i\cap B_k \iff x_j\in A_j\cap B_k]$, which implies that $D\in \bpi 2(\prod_{i\in\omega}A_i)$. Therefore, $Y$ admits a bicomplete quasi-metric.
\qed
\end{proof}

\begin{theorem}
A countably based $T_0$-space has a compatible bicomplete quasi-metric if and only if it is homeomorphic to a $\bpi 3$-subset of $\cal P(\omega)$.
\qed
\end{theorem}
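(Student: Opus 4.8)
The plan is to derive this as an essentially immediate corollary of Theorem~\ref{thrm:bicomplete_characterization}, with $\cal P(\omega)$ playing the role of the ambient space $X$. Two preliminary observations do all the real work. First, $\cal P(\omega)$ is itself a countably based space admitting a compatible bicomplete quasi-metric: taking the quasi-metric $d$ defined after Theorem~\ref{thrm:quasi_and_metric_borelrelation}, the induced $\widehat{d}$ is the usual complete metric on $2^\omega$, so $(\cal P(\omega),\widehat{d})$ is a complete metric space and hence $\cal P(\omega)$ is bicomplete by definition. Second, every countably based $T_0$-space can be embedded into $\cal P(\omega)$ (the same fact used for the $\bpi 2$-characterization of quasi-Polish spaces). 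These are exactly the two hypotheses needed to invoke Theorem~\ref{thrm:bicomplete_characterization} with $X = \cal P(\omega)$.

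For the backward direction, I would start from a space $Z$ homeomorphic to some $Y \subseteq \cal P(\omega)$ with $Y \in \bpi 3(\cal P(\omega))$. Since $\cal P(\omega)$ is countably based and bicompletely quasi-metrizable, Theorem~\ref{thrm:bicomplete_characterization} gives that $Y$ admits a compatible bicomplete quasi-metric. Admitting a compatible bicomplete quasi-metric is a topological property (a homeomorphism transports a bicomplete quasi-metric to a compatible bicomplete quasi-metric on the image, and bicompleteness is preserved since the induced metrics are isometric under the homeomorphism), so $Z$ admits one as well.

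For the forward direction, I would take a countably based $T_0$-space $Z$ carrying a compatible bicomplete quasi-metric and embed it into $\cal P(\omega)$, writing $Y$ for its (homeomorphic) image. By the same transport argument, $Y$ admits a compatible bicomplete quasi-metric, and then Theorem~\ref{thrm:bicomplete_characterization}, applied in the direction sending bicomplete quasi-metrizability of a subspace to $\bpi 3$-membership, yields $Y \in \bpi 3(\cal P(\omega))$. Hence $Z$ is homeomorphic to a $\bpi 3$-subset of $\cal P(\omega)$. There is no substantive obstacle here: all the analytic content has been absorbed into Theorem~\ref{thrm:bicomplete_characterization} and into the embedding result, so the only points requiring care are the verification that $\cal P(\omega)$ is bicomplete and the (routine) observation that bicomplete quasi-metrizability is invariant under homeomorphism.
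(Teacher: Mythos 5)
Your proposal is correct and is essentially the proof the paper intends: the theorem is stated as an immediate consequence of Theorem~\ref{thrm:bicomplete_characterization} applied with $X = \cal P(\omega)$, using exactly the two facts you isolate, namely that $\cal P(\omega)$ is countably based and bicompletely quasi-metrizable (its symmetrized quasi-metric $\widehat{d}$ being the usual complete metric on $2^\omega$) and that every countably based $T_0$-space embeds into $\cal P(\omega)$. The transport of bicomplete quasi-metrizability across a homeomorphism is the same routine step the paper leaves implicit, so there is nothing missing.
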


Thus completeness corresponds to level $\bpi 2$ of the Borel hierarchy, bicompleteness corresponds to level $\bpi 3$, and bicompleteness is defined in terms of a quasi-metric which induces a $\bpi 2$ metric topology. H.-P. A. K\"{u}nzi and E. Wajch \cite{kunzi1997} have shown that this pattern extends to higher levels of the Borel hierarchy for metrizable quasi-metric spaces, and we expect that their results will generalize to all countably based quasi-metric spaces.

\subsection{Complete partial metric spaces}

Partial metrics are another generalization of metrics to non-Hausdorff spaces. Unlike quasi-metrics, partial metrics are always symmetric, but they allow the distance from a point to itself (sometimes called the ``weight'' or ``self-distance'' of the point) to be non-zero. In many applications, the weight of a point is used to provide a quantification of the degree of ``uncertainty'' or ``incompleteness'' of the point.

\begin{definition}[S. Matthews \cite{Matthews}]
A \emph{partial metric} on a set $X$ is a function $p : X\times X \to [0,\infty)$ such that for all $x,y,z\in X$:
\begin{enumerate}
\item
$x=y \iff p(x,x)=p(x,y)=p(y,y)$
\item
$p(x,x)\leq p(x,y)$
\item
$p(x,y)=p(y,x)$
\item
$p(x,z)\leq p(x,y)+p(y,z)-p(y,y)$.
\end{enumerate}
A \emph{partial metric space} is a pair $(X,p)$ where $p$ is a partial metric on $X$.
\qed
\end{definition}

Given a partial metric space $(X,p)$, $x\in X$, and $\varepsilon>0$, the open ball $B_p(x,\varepsilon)$ centered at $x$ with radius $\varepsilon$ is defined as 
\[B_p(x,\varepsilon)=\{y\in X \,|\, p(x,y) - p(x,x) < \varepsilon\}.\]
The collection $\{B_p(x,\varepsilon) \,|\, x\in X, \varepsilon > 0\}$ of open balls is a basis for a $T_0$ topology on $X$, which we denote by $\tau_p$.

A sequence $(x_i)_{i\in\omega}$ of elements of a partial metric space $(X,p)$ is a \emph{Cauchy sequence} if and only if $\lim_{i,j\to\infty} p(x_i,x_j)$ exists. $(X,p)$ is a \emph{complete partial metric space} if and only if every Cauchy sequence $(x_i)_{i\in\omega}$ in $X$ converges (with respect to $\tau_p$) to an element $x\in X$ satisfying $p(x,x) = \lim_{i,j\to\infty} p(x_i,x_j)$.

\begin{proposition}
Every countably based complete partial metric space is quasi-Polish.
\end{proposition}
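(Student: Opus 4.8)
The plan is to extract from the partial metric $p$ a complete quasi-metric inducing the same topology, and then appeal directly to the definition of quasi-Polish. The natural candidate, which I would use, is
\[ d(x,y) = p(x,y) - p(x,x). \]
First I would verify that $d$ is a quasi-metric. Non-negativity is axiom 2, $d(x,x)=0$ is clear, and the implication ``$d(x,y)=d(y,x)=0$ implies $x=y$'' follows from symmetry (axiom 3) together with axiom 1. The triangle inequality is a direct rearrangement of the partial-metric triangle inequality (axiom 4): subtracting $p(x,x)$ from both sides of $p(x,z)\leq p(x,y)+p(y,z)-p(y,y)$ gives exactly $d(x,z)\leq d(x,y)+d(y,z)$. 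Since $B_d(x,\varepsilon)=\{y\,|\,p(x,y)-p(x,x)<\varepsilon\}=B_p(x,\varepsilon)$, the two topologies $\tau_d$ and $\tau_p$ coincide, so $(X,\tau_p)=(X,\tau_d)$ is countably based by hypothesis.

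It then remains to show $(X,d)$ is complete. Given a $d$-Cauchy sequence $(x_n)_{n\in\omega}$, I would first control the self-distances $a_n = p(x_n,x_n)$. Using symmetry and axiom 2, for $m\geq n$ one has $a_m = p(x_m,x_m)\leq p(x_n,x_m) = a_n + d(x_n,x_m)$, so the Cauchy condition forces $a_m < a_n + \varepsilon$ whenever $m\geq n\geq n_0(\varepsilon)$; a standard $\limsup$/$\liminf$ argument then shows that $(a_n)$ converges to some $L\geq 0$. Moreover, for $m \geq n$ the inequalities $a_m \leq p(x_n,x_m) < a_n + \varepsilon$ show that $\lim_{i,j} p(x_i,x_j) = L$, so $(x_n)$ is a Cauchy sequence in the partial-metric sense.

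By completeness of $(X,p)$ the sequence converges with respect to $\tau_p = \tau_d$ to some $x\in X$ with $p(x,x) = L$. The crucial final step --- and the one where the full strength of the completeness hypothesis is used --- is to upgrade this to convergence in the metric topology $\tau_{\widehat{d}}$, i.e.\ to show $\widehat{d}(x,x_n)\to 0$. Convergence in $\tau_p$ gives $p(x,x_n)\to p(x,x)=L$ (here axiom 2 guarantees $p(x,x_n)\geq p(x,x)$, so the one-sided bound coming from $x_n\in B_p(x,\varepsilon)$ suffices), whence by symmetry $p(x_n,x)\to L$ as well; combined with $a_n = p(x_n,x_n)\to L$ this yields both $d(x,x_n)=p(x,x_n)-p(x,x)\to 0$ and $d(x_n,x)=p(x_n,x)-a_n\to 0$, hence $\widehat{d}(x,x_n)\to 0$. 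Thus $(X,d)$ is a complete quasi-metric space, and being countably based it is quasi-Polish.

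The step I expect to be the main obstacle is precisely this last one: merely knowing that a $d$-Cauchy sequence converges in the partial-metric topology $\tau_p$ is not enough, since $\tau_p=\tau_d$ is only the (one-sided) quasi-metric topology, whereas completeness of $(X,d)$ demands convergence in the finer metric topology $\tau_{\widehat{d}}$. It is exactly the self-distance clause $p(x,x)=\lim_{i,j}p(x_i,x_j)$ in the definition of partial-metric completeness that closes this gap, by pinning down $p(x_n,x)\to p(x,x)$ and thereby forcing $d(x_n,x)\to 0$.
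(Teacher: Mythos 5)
Your proposal is correct and follows essentially the same route as the paper: define $d(x,y)=p(x,y)-p(x,x)$, show a $d$-Cauchy sequence has convergent self-distances and hence is Cauchy in the partial-metric sense, invoke completeness of $p$, and then use the self-distance clause $p(x,x)=\lim_{i,j}p(x_i,x_j)$ together with symmetry to upgrade $\tau_p$-convergence to $\widehat{d}$-convergence. The only differences are cosmetic (your $\limsup$/$\liminf$ argument for the self-distances versus the paper's infimum-based Cauchy argument), and your closing remark correctly identifies where the full strength of partial-metric completeness is needed.
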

\begin{proof}
This follows from known results about the connections between partial metrics and quasi-metrics (see, for example, Theorem B in \cite{Romaguera} and the cited references therein). We include a proof for completeness.

Let $(X, p)$ be a complete partial metric space. Define $d_p(x,y) = p(x,y)-p(x,x)$ for each $x,y\in X$. The reader can verify that $d_p$ is a quasi-metric on $X$ that induces the same topology as $p$. We will show that $d_p$ is also complete. Fix a sequence $(x_i)_{i\in\omega}$ in $X$ which is Cauchy with respect to $d_p$. 

We first show that $(x_i)_{i\in\omega}$ is Cauchy with respect to $p$. Note that for any $i,j\in\omega$ and $\ell\in\bb R$, $\abs{p(x_i,x_j) - \ell} \leq \abs{p(x_i,x_j) - p(x_i,x_i)} + \abs{p(x_i,x_i) - \ell} = d_p(x_i,x_j) + \abs{p(x_i,x_i) - \ell}$. By the symmetry of $p$ and the assumption that  $(x_i)_{i\in\omega}$ is Cauchy with respect to $d_p$, we can conclude that $\lim_{i,j\to\infty} p(x_i,x_j)$ exists if and only if $(p(x_i,x_i))_{i\in\omega}$ converges. So let $\varepsilon>0$ be given and choose $m\in\omega$ large enough that  $d_p(x_i, x_j)<\varepsilon/2$ whenever $j \geq i \geq m$. Let $k = \inf_{i\geq m} p(x_i,x_i)$ and fix $n\geq m$ so that $p(x_n,x_n) < k + \varepsilon/2$. For any $i\geq n$, $d_p(x_n,x_i) = p(x_n,x_i) - p(x_n,x_n) < \varepsilon/2$, hence $k \leq p(x_i,x_i) \leq p(x_n,x_i) < p(x_n,x_n)+\varepsilon/2 < k + \varepsilon$. This implies that $(p(x_i,x_i))_{i\in\omega}$ is a Cauchy sequence in $\bb R$, hence it converges. It follows that $(x_i)_{i\in\omega}$ is Cauchy with respect to $p$.

Since $p$ is complete, $(x_i)_{i\in\omega}$ converges with respect to $\tau_p$ to some $x\in X$ satisfying $p(x,x)=\lim_{i,j\to\infty} p(x_i,x_j)$. Since $(x_i)_{i\in\omega}$ converges to $x$ with respect to $\tau_p$ it immediately follows that $d_p(x,x_i)$ converges to zero. Furthermore, $d_p(x_i,x) = p(x_i,x)-p(x_i,x_i) + p(x,x) - p(x,x) = d_p(x,x_i) + p(x,x) - p(x_i,x_i)$, hence $d_p(x_i,x)$ also converges to zero because $p(x_i,x_i)$ converges to $p(x,x)$. Therefore, $(x_i)_{i\in\omega}$ converges to $x$ with respect to $\widehat{d_p}$. This shows that $(X,d_p)$ is a complete quasi-metric space.
\qed
\end{proof}

$\cal P(\omega)$ can be equipped with a complete partial metric (see \cite{oneil}), hence every countably based $T_0$-space can be equipped with a topologically compatible partial metric. However, it is not true that every quasi-Polish space can be equipped with a compatible \emph{complete} partial metric.

\begin{proposition}
There exist quasi-Polish spaces which are not completely partially metrizable.
\end{proposition}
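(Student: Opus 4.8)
The plan is to isolate an invariant that every compatible complete partial metric must carry, and then to exhibit a quasi-Polish space on which that invariant cannot exist. Given a compatible partial metric $p$, write $w(x)=p(x,x)$ for the \emph{weight}. First I would record two features of $w$ that require no completeness. The weight is $\tau_p$-upper semicontinuous: if $y\in B_p(x,\alpha-w(x))$ then the second partial-metric axiom gives $w(y)=p(y,y)\le p(x,y)<\alpha$, so $B_p(x,\alpha-w(x))\subseteq\{y\mid w(y)<\alpha\}$. And $w$ is strictly order-reversing for the specialization order, since $x\le y$ forces $p(x,y)=w(x)$, and if in addition $w(y)=w(x)$ then the first axiom yields $x=y$. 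Completeness enters more subtly: if $(x_n)_{n\in\omega}$ is a Cauchy sequence then $\lim_{i,j}p(x_i,x_j)=L$ exists, its diagonal forces $w(x_n)\to L$, and the limit point $x$ satisfies $w(x)=L$. In particular every specialization-increasing sequence is Cauchy and converges to a point whose weight is the infimum of the weights along it.

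The crucial observation for this proposition is that partial metrizability \emph{alone} is no obstruction: the paragraph preceding the statement already embeds any countably based $T_0$-space into $\cal P(\omega)$ and restricts O'Neill's partial metric, so every quasi-Polish space is partially metrizable. Hence the entire content must come from completeness, and I would reformulate the target accordingly. A complete partial metric produces a strictly finer metric $\widehat{d_p}$ for which $w$ is $1$-Lipschitz, hence $\tau_{\widehat{d_p}}$-continuous, since $\lvert w(x)-w(y)\rvert=\lvert d_p(y,x)-d_p(x,y)\rvert\le\widehat{d_p}(x,y)$; one checks that $(X,\widehat{d_p})$ is complete and that the quasi-metric is recovered by $d_p(x,y)=\widehat{d_p}(x,y)-\max\{0,w(x)-w(y)\}$. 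Thus being completely partially metrizable amounts to equipping $X$ with a compatible finer Polish metric together with a $1$-Lipschitz, strictly order-reversing, $\tau$-upper semicontinuous weight whose induced $d_p$ is a \emph{complete} compatible quasi-metric.

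With this reformulation in hand, I would exhibit a single quasi-Polish space $X$ defeating every such choice. The natural source is a $\bpi 2$-subset of $\cal P(\omega)$, which is quasi-Polish by Theorem~\ref{thrm:quasipolish_bpi2_characterization}, whose specialization order and topology are engineered so that the recovery data above cannot coexist: a point whose weight is pinned from one side by the completeness-forced identity $w(x)=\inf_n w(x_n)$ along a configuration that must be Cauchy for \emph{any} partial metric, yet pinned to an incompatible value by upper semicontinuity and strict order-reversal coming from a second configuration at the same point. Quasi-Polishness of $X$ would be verified by writing down a complete quasi-metric directly, so that $X$ genuinely lies in our class, while the obstructing configuration is arranged to survive in the specialization order independently of the chosen metric.

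The main obstacle is the universal quantifier: one must rule out \emph{every} compatible complete partial metric, not merely the one inherited from $\cal P(\omega)$. The leverage is precisely the rigidity assembled in the first paragraph — upper semicontinuity, strict order-reversal, and the completeness-forced convergence of weights together nearly determine $w$ on any convergent configuration — so the construction must over-determine the weight at a single point and read off the contradiction uniformly in $p$. The delicate balance is to keep $X$ inside the quasi-Polish class, so that it carries \emph{a} complete quasi-metric, while guaranteeing that no weighting can be grafted onto any compatible quasi-metric to yield a complete partial metric; achieving both simultaneously is where the real work lies.
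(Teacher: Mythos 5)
Your first paragraph isolates exactly the right invariants --- in particular, that the weight $w(x)=p(x,x)$ is strictly order-reversing for the specialization order, and that completeness forces the limit of a Cauchy sequence to realize the limiting value $\lim_{i,j}p(x_i,x_j)$ as its own weight --- and these are precisely the tools the paper's proof runs on. But the proposal stops there: the space that is supposed to ``defeat every such choice'' is never constructed, its quasi-Polishness is never verified, and the contradiction is never carried out. Your final sentence concedes this (``achieving both simultaneously is where the real work lies''), so what you have is a correct strategy with the decisive step missing. The reformulation via $\widehat{d_p}$ in your second paragraph, while plausible, is machinery the argument does not need.

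The missing construction is short. Take $X=\omega\cup\{\bot_1,\bot_2\}$, ordered so that $\bot_1,\bot_2$ are incomparable, $\bot_1,\bot_2\sqsubseteq n$ for all $n\in\omega$, and $n\sqsubseteq m\iff n\geq m$ (an infinite descending chain with two incomparable bottom elements), equipped with the Scott topology; this is quasi-Polish via the complete quasi-metric $d(x,y)=0$ if $x\sqsubseteq y$ and $d(x,y)=1$ otherwise. Now let $p$ be any compatible partial metric. Since $\sqsubseteq$ coincides with the specialization order, your order-reversal fact gives $p(0,0)<p(1,1)<p(2,2)<\cdots<p(\bot_1,\bot_1)<\infty$, so the weights along the chain converge to some finite $\ell$; and since $p(n,m)=p(k,k)$ with $k=\max\{n,m\}$, the chain is Cauchy for $p$. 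Note this is a specialization-\emph{decreasing} sequence, whose Cauchyness requires the upper bound $p(\bot_1,\bot_1)$ on the weights --- not the increasing case you recorded in your first paragraph. If $p$ were complete, the limit point would have weight exactly $\ell$. But $\ell$ exceeds every $p(n,n)$; and if $\ell=p(\bot_1,\bot_1)$, then passing to the limit in the triangle inequality $p(\bot_1,\bot_2)\leq p(\bot_1,n)+p(n,\bot_2)-p(n,n)=p(\bot_1,\bot_1)+p(\bot_2,\bot_2)-p(n,n)$ yields $p(\bot_1,\bot_2)\leq p(\bot_2,\bot_2)$, which places $\bot_1$ in every open set containing $\bot_2$ and contradicts their incomparability; symmetrically $\ell\neq p(\bot_2,\bot_2)$. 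So no point of $X$ can serve as the limit, and $p$ is not complete. This two-incomparable-bottoms device is exactly the ``over-determination at a single point'' your third paragraph gestures at; without it, or something equally concrete, the proposal does not yet prove the proposition.
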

\begin{proof}
Consider the set $X = \omega \cup \{\bot_1,\bot_2\}$ with the partial ordering $\sqsubseteq$ defined so that $\bot_1$ and $\bot_2$ are incomparable, $\bot_1,\bot_2 \sqsubseteq n$ for all $n\in\omega$, and $n \sqsubseteq m \iff n\geq m$ for all $n,m\in\omega$ (so $X$ is an infinitely descending sequence with two incomparable ``bottom'' elements). Then $X$ with the Scott-topology is a quasi-Polish space with compatible complete quasi-metric $d(x,y) = 0$ if $x\sqsubseteq y$ and $d(x,y)=1$, otherwise. 

To see that $X$ is not completely partially metrizable, assume that $p$ is a partial metric on $X$. Since the specialization order on $X$ coincides with $\sqsubseteq$ we must have that $p(n,n) = p(n,m) \geq p(m,m)$ whenever $n\sqsubseteq m$. This implies that for all $n\in\omega$, $p(\bot_1,\bot_2)\leq p(\bot_1,n)+p(n,\bot_2)-p(n,n) = p(\bot_1,\bot_1)+p(\bot_2,\bot_2)-p(n,n)$.

Now consider the infinite descending sequence $0\sqsupseteq 1\sqsupseteq 2\sqsupseteq \cdots$. Then $p(0,0)<p(1,1)<p(2,2)<\cdots < p(\bot_1,\bot_1) <\infty$, so $(p(n,n))_{n\in\omega}$ converges, hence $\ell = \lim_{n,m\to\infty}p(n,m)$ exists (and is finite). Clearly, $\ell \not=p(n,n)$ for any $n\in\omega$. If $\ell = p(\bot_1,\bot_1)$, then since $p(\bot_1,\bot_2)\leq p(\bot_1,\bot_1)+p(\bot_2,\bot_2)-p(n,n)$ for all $n\in\omega$, it would follow that $p(\bot_1,\bot_2)\leq p(\bot_2,\bot_2)$. But this would mean that $\bot_1$ is in every open set containing $\bot_2$, which is a contradiction since $\bot_1$ and $\bot_2$ are incomparable under $\sqsubseteq$. Similarly, $\ell \not= p(\bot_2,\bot_2)$. Therefore, $(X,p)$ is not a complete partial metric space.
\qed
\end{proof}

Currently we do not know of a nice characterization of the quasi-Polish spaces which are completely partially metrizable.


\section{Open continuous surjections from quasi-Polish spaces}\label{sec:opencontsurjections}

In this section we characterize quasi-Polish spaces as precisely the images of Polish spaces under continuous open functions.

Recall that a function is \emph{open} if and only if the image of every open set is open. A closed set is \emph{irreducible} if and only if it is non-empty and not the union of two proper closed subsets. A space is \emph{sober} if and only if every irreducible closed set equals the closure of a unique point.

\begin{lemma}\label{lem:open_surj_sober}
If $X$ is a Polish space, $Y$ is a $T_0$-space, and $f\colon X\to Y$ is an open continuous surjection, then $Y$ is sober.
\end{lemma}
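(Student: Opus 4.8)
The plan is to establish, for each irreducible closed $C\subseteq Y$, the existence of a point $y_0$ with $\overline{\{y_0\}}=C$; uniqueness is then immediate from the $T_0$-axiom, since $\overline{\{y_0\}}=\overline{\{y_1\}}$ forces $y_0\leq y_1$ and $y_1\leq y_0$ in the specialization order, hence $y_0=y_1$. First I would record that $Y$ is countably based: if $\{B_n\}_{n\in\omega}$ is a countable basis for $X$, then openness of $f$ makes each $f(B_n)$ open, and for any open $W\subseteq Y$ and $y\in W$ one may pick $x\in f^{-1}(y)$ and then $B_n$ with $x\in B_n\subseteq f^{-1}(W)$, so that $y\in f(B_n)\subseteq W$; thus $\{f(B_n)\}_{n\in\omega}$ is a countable basis for $Y$. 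Fix such a basis $\{V_n\}_{n\in\omega}$.

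Now fix an irreducible closed $C\subseteq Y$ and set $K=f^{-1}(C)$, a closed subspace of the Polish space $X$, hence itself Polish and therefore a Baire space; it is nonempty because $C\neq\emptyset$ and $f$ is surjective. Let $I=\{n\in\omega\,|\,V_n\cap C\neq\emptyset\}$ and, for $n\in I$, put $W_n=f^{-1}(V_n)\cap K$, which is open in $K$. The heart of the argument is to show each $W_n$ is dense in $K$. Given a nonempty relatively open set $O\cap K$ with $O$ open in $X$, pick $x\in O\cap K$; then $f(O)$ is open and contains $f(x)\in C$, so $f(O)$ meets $C$. Since $C$ is irreducible and both $f(O)$ and $V_n$ meet $C$, their intersection meets $C$, so I may choose $z\in f(O)\cap V_n\cap C$ and a preimage $x'\in O$ with $f(x')=z$; then $x'\in O\cap f^{-1}(V_n)\cap K=O\cap W_n$, establishing density.

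With density in hand, the Baire category theorem yields a point $x_0\in\bigcap_{n\in I}W_n$, and I set $y_0=f(x_0)$. Since $x_0\in K$ we have $y_0\in C$, whence $\overline{\{y_0\}}\subseteq C$. For the reverse inclusion, let $z\in C$ and let $V$ be any open neighborhood of $z$; choosing a basic $V_n$ with $z\in V_n\subseteq V$ forces $n\in I$ (as $V_n$ meets $C$ at $z$), and then $x_0\in W_n$ gives $y_0\in V_n\subseteq V$. Thus every neighborhood of every $z\in C$ contains $y_0$, so $C\subseteq\overline{\{y_0\}}$ and therefore $\overline{\{y_0\}}=C$. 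The only genuinely delicate step is the density of the $W_n$: openness of $f$ is what lets me transport a nonempty open subset of $X$ to an open subset of $Y$ meeting $C$, and irreducibility of $C$ is what lets me intersect it with $V_n$ while staying inside $C$. Everything else is a routine combination of the Baire property of the Polish subspace $K$ with the characterization of the closure of a point in terms of basic neighborhoods.
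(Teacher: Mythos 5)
Your proof is correct, and it takes a recognizably different route from the paper's. The paper works entirely inside $X$: it fixes a compatible complete metric, enumerates the basic open sets $B_n$ of $Y$ meeting $C$, and by hand builds a nested sequence of open sets $U_{n+1}\subseteq U_n\cap f^{-1}(B_n)$ with shrinking diameters and nested closures, together with points $x_n\in U_n\cap f^{-1}(B_n\cap C)$; these form a Cauchy sequence whose limit $x$ lies in $f^{-1}(C)\cap\bigcap_n f^{-1}(B_n)$, and $y=f(x)$ generates $C$. You instead pass to the closed subspace $K=f^{-1}(C)$, observe that it is Polish and hence Baire, prove that each $W_n=f^{-1}(V_n)\cap K$ (for $V_n$ meeting $C$) is dense open in $K$, and invoke the Baire category theorem to obtain a common point. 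The two arguments use irreducibility identically --- two open sets meeting $C$ have intersection meeting $C$; the paper uses this to choose the points $x_n$ at each stage, you use it to establish density of the $W_n$. What your version buys is modularity: all the metric bookkeeping (diameters, closures, Cauchyness) is packaged into the citation of the Baire category theorem, and the argument transparently works for any space $X$ whose closed subspaces are Baire, not just Polish spaces. What the paper's version buys is self-containedness --- it is in effect an inline proof of exactly the instance of Baire category that you cite, without needing $K$ to carry a subspace Polish structure. One small presentational point: when you assert that irreducibility lets you intersect $f(O)$ and $V_n$ inside $C$, it is worth including the one-line justification (otherwise $C$ would be the union of the two proper closed subsets $C\setminus f(O)$ and $C\setminus V_n$), as the paper does.
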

\begin{proof}
Let $C\subseteq Y$ be an irreducible closed set. Since $Y$ is a $T_0$-space, it suffices to show that $C$ is the closure of some $y\in Y$. 

Note that if $U,V\subseteq Y$ are open and have non-empty intersection with $C$, then $U\cap V\cap C$ is non-empty. Indeed, if $U\cap V\cap C=\emptyset$, then $C$ equals the union of the two closed sets $C\setminus U$ and $C\setminus V$, hence $U\cap C=\emptyset$ or $V\cap C=\emptyset$ by the irreducibility of $C$.

Fix a compatible complete metric on $X$ and let $\cal B$ be a countable basis for $Y$ (which exists because $X$ is separable and $f$ is an open continuous surjection). Let $(B_n)_{n\in\omega}$ be an enumeration (with possible repetitions) of all $B\in \cal B$ such that $B \cap C$ is non-empty.

Set $U_0=X$. For $n\geq 0$, $f(U_n)$ and $B_n$ are open and have non-empty intersection with $C$, hence $f(U_n)\cap B_n \cap C$ is non-empty. Choose some $x_n\in U_n \cap f^{-1}(B_n\cap C)$. Let $U_{n+1}$ be an open neighborhood of $x_n$ with diameter less than $1/n$ and closure contained in $U_n\cap f^{-1}(B_n)$.

Since $(x_n)_{n\in\omega}$ is Cauchy and $X$ is complete, $(x_n)_{n\in\omega}$ converges to some $x\in X$. Since $x_n \in f^{-1}(C)$ for each $n\in\omega$ and $f^{-1}(C)$ is closed, it follows that $x\in f^{-1}(C)$. Furthermore, for each $n\in\omega$, the closure of $U_{n+1}$ is a subset of $f^{-1}(B_n)$ and $x_m\in U_{n+1}$ for all $m>n$, hence $x\in f^{-1}(B_n)$.

Setting $y=f(x)$, we have $y\in C$ and $y\in B_n$ for all $n\in\omega$. Since every open subset of $Y$ that intersects $C$ contains $y$, it follows that $C$ is the closure of $y$.
\qed
\end{proof}

\begin{lemma}\label{lem:open_surj_from_baire}
If $X$ is non-empty and quasi-Polish then there exists an open continuous surjection $f\colon \baire\to X$.
\end{lemma}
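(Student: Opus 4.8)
The plan is to build a Lusin-type scheme indexed by $\omega^{<\omega}$ and to define $f$ as the branchwise limit map, exactly as in the completely metrizable case (cf.\ Theorem 7.9 of \cite{kechris}), but adapted to the asymmetry of the quasi-metric. Fix a complete quasi-metric $d$ with $\tau_d$ the topology of $X$. As noted just after the definition of quasi-Polish, $(X,\widehat d)$ is a complete metric space, and it is separable by Proposition \ref{prop:countablybased_iff_sep}, so I would fix a countable $\widehat d$-dense set $D\subseteq X$. Throughout, $p{\restriction}m$ denotes the length-$m$ prefix of $p\in\baire$.

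First I would construct, for every $\sigma\in\omega^{<\omega}$, a non-empty $\tau_d$-open set $V_\sigma$ with $V_{\langle\rangle}=X$, where for $\abs{\sigma}\ge 1$ each $V_{\sigma}=B_d(x_{\sigma},q_{\sigma})$ is a $d$-ball with center $x_\sigma\in D$ and radius $q_{\sigma}\le 2^{-\abs{\sigma}}$, subject to two conditions: \emph{(i)} $\overline{B}_d(x_{\sigma\diamond i},q_{\sigma\diamond i})\subseteq V_\sigma$ for every child $\sigma\diamond i$, and \emph{(ii)} for every $z\in V_\sigma$ there is a child $\sigma\diamond i$ with $z\in V_{\sigma\diamond i}$ and $\widehat d(z,x_{\sigma\diamond i})<q_{\sigma\diamond i}$. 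Both can be met simultaneously: given $z\in V_\sigma$, choose $\rho>0$ with $B_d(z,\rho)\subseteq V_\sigma$, then pick $y\in D$ and a rational $q$ with $\widehat d(z,y)<q<\min\{\rho/2,\,2^{-\abs{\sigma}-1}\}$; the triangle inequality gives $\overline{B}_d(y,q)\subseteq B_d(z,\rho)\subseteq V_\sigma$, while $z\in B_d(y,q)$ and $\widehat d(z,y)<q$, so $(y,q)$ is an admissible child witnessing \emph{(ii)} for $z$. Enumerating all admissible pairs $(y,q)\in D\times\bb Q$ as the children of $\sigma$ (padding by repetition so that every $i\in\omega$ is used) produces the scheme; every $V_\sigma$ is non-empty since its centers lie in it, so every node indeed has at least one admissible child.

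Next I would define $f(p)=\lim_m x_{p{\restriction}m}$, the limit taken in $\widehat d$. This exists because the nesting $V_{p{\restriction}(m+1)}\subseteq V_{p{\restriction}m}$ forces $x_{p{\restriction}m'}\in V_{p{\restriction}n}=B_d(x_{p{\restriction}n},q_{p{\restriction}n})$ for all $m'\ge n$, so $d(x_{p{\restriction}n},x_{p{\restriction}m'})<q_{p{\restriction}n}\le 2^{-n}$; the sequence is Cauchy and converges by completeness of $d$. I would then establish the single identity $f(\uparrow\sigma)=V_\sigma$. For $\subseteq$, take $p\in\uparrow\sigma$ and put $i=p(\abs{\sigma})$; since $d(x_{\sigma\diamond i},\cdot)$ is $1$-Lipschitz for $\widehat d$ (because $d(a,w)-d(a,w')\le d(w',w)\le\widehat d(w,w')$), we get $d(x_{\sigma\diamond i},f(p))=\lim_m d(x_{\sigma\diamond i},x_{p{\restriction}m})\le q_{\sigma\diamond i}$, whence $f(p)\in\overline{B}_d(x_{\sigma\diamond i},q_{\sigma\diamond i})\subseteq V_\sigma$ by \emph{(i)}. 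For $\supseteq$, given $z\in V_\sigma$, repeatedly applying \emph{(ii)} yields a branch $p\in\uparrow\sigma$ with $z\in V_{p{\restriction}m}$ and $\widehat d(z,x_{p{\restriction}m})\to 0$, so $f(p)=z$. Surjectivity is the case $\sigma=\langle\rangle$, and openness follows since every open subset of $\baire$ is a union of sets $\uparrow\sigma$, each $f(\uparrow\sigma)=V_\sigma$ is open, and $f$ commutes with unions. Finally, continuity at $p$ follows because $\widehat d(x_{p{\restriction}m},f(p))\to 0$ gives $d(f(p),x_{p{\restriction}m})\to 0$, so for large $m$ the ball $V_{p{\restriction}m}$ of radius $\le 2^{-m}$ lies in any prescribed $B_d(f(p),\varepsilon)$, and $f(\uparrow(p{\restriction}m))=V_{p{\restriction}m}$.

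The crux is the asymmetry of $d$. A priori the centers $x_{p{\restriction}m}$ only satisfy a one-sided bound $d(x_\sigma,x_{p{\restriction}m})<q_\sigma$, and a $\tau_d$-limit of points of the open ball $V_\sigma$ need not lie in $V_\sigma$, so neither $f(\uparrow\sigma)\subseteq V_\sigma$ nor $f(p)=z$ is automatic. I expect the main obstacle to be isolating precisely the two scheme conditions that defeat this: condition \emph{(i)} (the closed child ball contained in the open parent) forces the limit $f(p)$ back into $V_\sigma$ despite only the non-strict bound $d(x_{\sigma\diamond i},f(p))\le q_{\sigma\diamond i}$, while condition \emph{(ii)} (a child whose center is $\widehat d$-close to the target $z$) forces the limit to equal $z$ rather than merely a point below it in the specialization order. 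Verifying that both conditions can be realized by the same countable family of balls, using only $\widehat d$-density of $D$ and the triangle inequality, is the technical heart of the argument.
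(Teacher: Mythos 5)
Your proof is correct, but it takes a genuinely different route from the paper. The paper's proof is a short reduction: it takes $X$ to be (up to homeomorphism) a $\bpi 2$ subset of $\cal P(\omega)$, observes that the enumeration map $\delta\colon\baire\to\cal P(\omega)$ is an open continuous surjection, notes that $Y=\delta^{-1}(X)$ is then a $\bpi 2$ (i.e.\ $G_\delta$) subset of $\baire$ and hence Polish, and composes the restriction $\delta|_Y$ with a classical open continuous surjection $\baire\to Y$ (Exercise 7.14 in \cite{kechris}). You instead build the surjection directly from a compatible complete quasi-metric via a Lusin-type scheme, in effect reproving the classical Exercise 7.14 in the quasi-metric setting rather than invoking it. I checked the details and they hold: your admissibility condition for a child pair $(y,q)$ depends only on the pair (namely $\overline{B}_d(y,q)\subseteq V_\sigma$ and the radius bound), so the countable enumeration with padding is legitimate; the center sequence along a branch is left $K$-Cauchy in exactly the paper's sense, so Smyth-completeness yields the $\widehat d$-limit; the map $w\mapsto d(a,w)$ is indeed $1$-Lipschitz for $\widehat d$, which justifies passing to the limit in $d(x_{\sigma\diamond i},x_{p\restriction m})<q_{\sigma\diamond i}$ and landing in the closed ball, whence condition \emph{(i)} returns $f(p)$ to $V_\sigma$; and condition \emph{(ii)} together with uniqueness of $\widehat d$-limits gives $f(p)=z$ in the reverse inclusion, so $f(\uparrow\sigma)=V_\sigma$ and openness, surjectivity, and continuity all follow as you say. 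Your diagnosis of the crux is also accurate --- the two scheme conditions are precisely what compensates for the asymmetry of $d$. Comparing the two: the paper's argument is three lines but leans on previously established machinery (the $\bpi 2$-in-$\cal P(\omega)$ characterization from Section 5 and an imported classical fact), while yours is self-contained, makes the role of Smyth-completeness explicit, and produces an explicit basis-to-basis correspondence $f(\uparrow\sigma)=V_\sigma$; the price is the bookkeeping of the scheme, which the paper's reduction avoids entirely. Note also that your construction quietly uses both hypotheses in the right places: countable basedness enters through Proposition \ref{prop:countablybased_iff_sep} to get the countable $\widehat d$-dense set $D$ (without which the child sets would be uncountable and the indexing by $\omega^{<\omega}$ would fail), and non-emptiness of $X$ guarantees the root has admissible children.
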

\begin{proof}
Let $\delta\colon \baire\to \cal P(\omega)$ be defined as $p \mapsto \{ n \,|\, \exists m: p(m)=n+1 \}$. It is easy to see that $\delta$ is an open continuous surjection. If $X\in \bpi 2(\cal P(\omega))$, then $Y=\delta^{-1}(X)$ is Polish, and $\delta|_Y$, the restriction of $\delta$ to $Y$, is an open continuous surjection onto $X$. There is an open continuous surjection $h$ from $\baire$ to $Y$ (see Exercise 7.14 in \cite{kechris}), hence $\delta|_Y \circ h$ is an open continuous surjection from $\baire$ to $X$.
\qed
\end{proof}

\begin{corollary}
Every quasi-Polish space is sober.
\qed
\end{corollary}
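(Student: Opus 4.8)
The plan is to combine the two preceding lemmas directly. The key observation is that Lemma~\ref{lem:open_surj_from_baire} produces an open continuous surjection onto any nonempty quasi-Polish space from Baire space $\baire$, and that $\baire$ is itself a Polish space (it is separable and completely metrizable under the standard metric compatible with the product topology). Lemma~\ref{lem:open_surj_sober} then says that a $T_0$-space which is the image of a Polish space under an open continuous surjection must be sober. So the entire argument reduces to verifying the hypotheses of Lemma~\ref{lem:open_surj_sober} are met.

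First I would dispose of the trivial case: if the quasi-Polish space $X$ is empty, then it has no irreducible closed sets and is vacuously sober. So assume $X$ is nonempty. Next I would note that $X$ is $T_0$: every quasi-Polish space is quasi-metrizable, and the topology $\tau_d$ induced by a quasi-metric $d$ is always $T_0$, as recorded in Section~\ref{sec:quasimetricspaces}. This supplies the $T_0$ hypothesis needed to invoke Lemma~\ref{lem:open_surj_sober}.

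Then I would apply Lemma~\ref{lem:open_surj_from_baire} to obtain an open continuous surjection $f\colon \baire \to X$. Since $\baire$ is Polish, Lemma~\ref{lem:open_surj_sober} applies with the Polish space taken to be $\baire$, the $T_0$-space taken to be $X$, and the map taken to be $f$. The conclusion is precisely that $X$ is sober, completing the proof.

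There is no real obstacle here, as the corollary is an immediate consequence of the two lemmas; the only points requiring care are the bookkeeping items above — handling the empty space separately, invoking the $T_0$ property of quasi-metrizable spaces, and recalling that $\baire$ qualifies as a Polish domain for Lemma~\ref{lem:open_surj_sober}.
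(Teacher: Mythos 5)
Your proposal is correct and is exactly the argument the paper intends: the corollary is stated with an immediate \qed because it follows directly from Lemma~\ref{lem:open_surj_from_baire} (open continuous surjection from $\baire$ onto any nonempty quasi-Polish space) combined with Lemma~\ref{lem:open_surj_sober} (such an image of a Polish space is sober). Your added bookkeeping --- the empty case, the $T_0$ property of quasi-metrizable spaces, and the fact that $\baire$ is Polish --- is precisely the routine verification the paper leaves implicit.
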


As shown at the top of Section \ref{subsec:Bicomplete}, bicomplete quasi-metric spaces can fail to be sober.

\begin{theorem}\label{thrm:open_surjection_preserves_pi2}
If $X$ is quasi-Polish, $Y$ is a $T_0$-space, and $f\colon X\to Y$ is an open continuous surjection, then $Y$ is quasi-Polish.
\end{theorem}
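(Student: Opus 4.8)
The plan is to reduce to the case of a Polish domain and then realize $Y$ as a $\bpi 2$-subspace of $\cal P(\omega)$, after which quasi-Polishness is immediate from the characterization that a space is quasi-Polish if and only if it is homeomorphic to a $\bpi 2$-subset of $\cal P(\omega)$ (equivalently, via Theorem~\ref{thrm:bpi2_subspace_complete}). If $X=\emptyset$ then $Y=\emptyset$ is trivially quasi-Polish, so assume $X\neq\emptyset$. By Lemma~\ref{lem:open_surj_from_baire} there is an open continuous surjection $\baire\to X$, and composing it with $f$ yields an open continuous surjection $\baire\to Y$; hence I may assume the domain is $\baire$, which is Polish. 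Lemma~\ref{lem:open_surj_sober} then shows $Y$ is sober, and $Y$ is countably based because the images $f(\uparrow\sigma)$ of the basic open sets of $\baire$ form a countable basis of $Y$ (using that $f$ is an open surjection). Since $Y$ is a countably based $T_0$-space it embeds into $\cal P(\omega)$, so the whole problem reduces to showing that the image of this embedding is $\bpi 2$.

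Concretely, I would work with the basis $V_\sigma=f(\uparrow\sigma)$ indexed by $\sigma\in\omega^{<\omega}$, which satisfies $V_{\langle\rangle}=Y$ and $V_\sigma=\bigcup_{i\in\omega}V_{\sigma\diamond i}$, and identify a point $y\in Y$ with its neighbourhood trace $T_y=\{\sigma\mid y\in V_\sigma\}$; this is exactly the embedding of $Y$ into $\cal P(\omega^{<\omega})\cong\cal P(\omega)$. The first task is to write down conditions on a set $T\subseteq\omega^{<\omega}$ that are necessary for $T=T_y$: that $T$ is a proper filter trace (upward closed and directed with respect to inclusion of the $V_\sigma$), that it is downward closed and has the ``child'' property $\sigma\in T\Rightarrow\exists i\,(\sigma\diamond i\in T)$ coming from $V_\sigma=\bigcup_i V_{\sigma\diamond i}$, and that it omits indices $\sigma$ with $V_\sigma=\emptyset$. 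Each of these is a countable conjunction of clauses of the form ``$\sigma\notin T$ or $\tau\in T$'' or ``$\sigma\notin T$ or some $\rho$ from a fixed countable set lies in $T$'', and each such clause is $\bpi 2$ in $\cal P(\omega^{<\omega})$, so their conjunction is $\bpi 2$. Checking necessity is routine.

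The substance of the argument is the converse: that such a $T$ is actually realized by a point, so that the image of the embedding coincides with a $\bpi 2$ set. Here I would run a Baire-category/fusion construction inside $\baire$ modelled on the proof of Lemma~\ref{lem:open_surj_sober}. Using directedness I would first extract a decreasing filter base $V_{\rho_0}\supseteq V_{\rho_1}\supseteq\cdots$ with $\rho_n\in T$ that is cofinal in the filter generated by $T$; then, exploiting that $f$ is open and that $\baire$ is complete, I would build a fast Cauchy sequence $(x_n)$ in $\baire$ whose closures of neighbourhoods satisfy $\overline{N_{n+1}}\subseteq f^{-1}(V_{\rho_{n+1}})$, so that its limit $p$ has image $y=f(p)$ lying in every $V_{\rho_n}$, hence in every $V_\sigma$ with $\sigma\in T$. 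Soberness of $Y$, via the correspondence between completely prime filters and points, is what guarantees that this limit is a genuine point whose neighbourhood filter is exactly the completely prime filter generated by $T$, so that $T_y$ equals $T$ rather than merely containing it.

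The main obstacle, and the step that uses openness of $f$ and completeness of $\baire$ essentially, is this \emph{exact} realization: ensuring $T_y\subseteq T$, i.e. that the constructed point avoids every $V_\tau$ with $\tau\notin T$, rather than only securing $y\in\bigcap_{\sigma\in T}V_\sigma$. A naive description of the image as the set of completely prime filter traces is only $\coanalytic$, since the complete-primeness clause quantifies over open covers of the $V_\sigma$, and purely combinatorial tree conditions do not obviously suffice, because a single $\preceq$-branch of $\baire$ cannot be made cofinal in an arbitrary directed filter on $Y$. The crux is therefore to collapse this complexity to $\bpi 2$ by using the open map to keep the fusion inside the filter while diagonalising against the countably many basic open sets indexed outside $T$, and then to invoke the soberness correspondence to certify that the resulting filter is completely prime. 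I expect the bookkeeping in this fusion — coordinating the cofinal filter base, the openness of $f$, and the avoidance requirements so that the invariant ``$f(N_n)$ meets the next filter set'' is maintained — to be the most delicate part of the proof.
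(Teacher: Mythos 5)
Your reduction to an open continuous surjection $f\colon\baire\to Y$, the embedding $y\mapsto T_y$ into $\cal P(\omega^{<\omega})$, and the list of combinatorial $\bpi 2$ conditions all match the paper's proof (the paper's conditions are: nonemptiness, existence of proper extensions inside the trace, upward closure under $f(\uparrow\!\sigma)\subseteq f(\uparrow\!\tau)$, and directedness; yours are equivalent). The gap is exactly where you place it yourself: the converse realization step, and the route you sketch for it does not go through. Your fusion produces a point $y\in\bigcap_{\sigma\in T}V_\sigma$, i.e.\ $T_y\supseteq T$, but the appeal to ``soberness, via the correspondence between completely prime filters and points'' to conclude $T_y=T$ is circular: soberness converts completely prime filters into points, whereas the complete primeness of the filter generated by $\{V_\sigma\mid\sigma\in T\}$ is equivalent to the statement you are trying to prove; it is not something soberness hands you. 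Likewise, the proposed diagonalization against the $V_\tau$ with $\tau\notin T$ is not mere bookkeeping: avoiding $V_\tau$ is a \emph{closed} constraint, so you would have to steer an open-set-based fusion through the relatively closed sets $V_\rho\setminus V_\tau$ while remaining able to enter every later filter set, and you give no mechanism for maintaining that invariant. That mechanism is precisely what is missing.

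The paper's key idea, absent from your sketch, is to realize $T$ not by constructing a point directly but by passing to the set of \emph{all} witnessing branches, $X_T=\{p\in\baire\mid\forall\sigma\prec p,\ \sigma\in T\}$, and showing that $f(X_T)$ is a nonempty irreducible closed subset of $Y$. The crucial identity is $f^{-1}(f(X_T))=X_T$: if $f(p)=f(q)$ with $q\in X_T$ and $\sigma\prec p$, then continuity of $f$ at $q$ gives a prefix $\rho\prec q$ (so $\rho\in T$) with $f(\uparrow\!\rho)\subseteq f(\uparrow\!\sigma)$, whence $\sigma\in T$ by upward closure. This identity yields closedness of $f(X_T)$ (its complement is $f(\baire\setminus X_T)$, which is open since $f$ is open), and directedness yields irreducibility. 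Soberness --- your Lemma \ref{lem:open_surj_sober} --- is then applied to this closed set: its generic point $y$ satisfies $y\in V_\sigma$ iff $V_\sigma$ meets $f(X_T)$ iff $\sigma\in T$, where the forward direction of the last equivalence is again the identity $f^{-1}(f(X_T))=X_T$. In particular the inclusion $T_y\subseteq T$, which your construction could not secure, falls out automatically because the limit of any fusion run inside $f^{-1}(f(X_T))=X_T$ stays in that closed set. So soberness is indeed the right tool, but it must be applied to an irreducible closed set built from all branches at once, with the upward-closure condition doing the work of your ``diagonalization''; without that step the proof is incomplete.
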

\begin{proof}
By Lemma \ref{lem:open_surj_from_baire}, it suffices to prove the theorem for open continuous surjections $f\colon \baire\to Y$. Let $(\sigma_n)_{n\in\omega}$ be an enumeration of all finite sequences of natural numbers and let $B_n = \{ p\in \baire \,|\, \sigma_n \prec p\}$. Note that $\{f(B_n)\}_{n\in\omega}$ is a countable basis for the topology on $Y$. Let $\cal F$ be the set of all $F\subseteq \omega$ such that:
\begin{enumerate}
\item
$F\not=\emptyset$,
\item
$m\in F$ implies $(\exists n\in F)$ such that $\sigma_m \prec \sigma_n$,
\item
$m\in F$ and $f(B_m)\subseteq f(B_n)$ implies $n \in F$,
\item
$m,n\in F$ implies $(\exists k\in F)$ such that $f(B_k)\subseteq f(B_m)\cap f(B_n)$.
\end{enumerate}
Note that the third condition implies that if $m\in F$ and $\sigma_n\preceq \sigma_m$, then $n\in F$.

Define $\phi\colon Y\to \cal P(\omega)$ by $\phi(y)=\{ n\in\omega \,|\, y\in f(B_n)\}$. It is easy to see that $\phi$ is a topological embedding of $Y$ into $\cal P(\omega)$. We will show that $\phi(Y)=\cal F$, which will imply that $Y$ and $\cal F$ are homeomorphic. It is straight forward to check that $\phi(y)\in \cal F$ for each $y\in Y$, so it remains to show that each $F\in \cal F$ is equal to $\phi(y)$ for some $y\in Y$.

Let $F\in\cal F$ be given. Define $X_F=\{p\in \baire\,|\, (\forall \sigma_n\prec p) n\in F\}$. We will show that $f(X_F)$ is an irreducible closed subset of $Y$.

Note that for any $n_0\in F$ there exists $p\in B_{n_0}\cap X_F$. This is because the second condition on $F$ implies there is an infinite strictly ascending sequence $\sigma_{n_0}\prec \sigma_{n_1}\prec \sigma_{n_2} \ldots$ with $n_i\in F$ for all $i\in\omega$. Then $\bigcap_{i\in\omega} B_{n_i} = \{p\}$ for some $p\in\baire$. Clearly, if $\sigma_m\prec p$ then $\sigma_m\preceq \sigma_{n_i}$ for some $i\in\omega$, so the third condition on $F$ guarantees that $p\in X_F$. In particular, $X_F$ is non-empty because $F$ is non-empty.

We now verify that $f(X_F)$ is closed. Since $F$ encodes a pruned tree \cite{kechris}, it is easy to see that $X_F$ is a closed subset of $\baire$. It follows that $f(\baire\setminus X_F)$ is open because $f$ is an open map, so by showing $Y\setminus f(X_F) = f(\baire\setminus X_F)$ we can conclude that $f(X_F)$ is closed. The surjectivity of $f$ implies that $Y\setminus f(X_F)\subseteq f(\baire\setminus X_F)$. To prove $f(\baire\setminus X_F) \subseteq Y\setminus f(X_F)$, it suffices to show that for all $p\in \baire$, if $f(p)\in f(X_F)$ then $p\in X_F$. So assume $f(p)=f(q)$ for some $q\in X_F$, and choose any $n\in\omega$ such that $\sigma_n\prec p$. Then $f(B_n)$ is an open neighborhood of $f(q)$, so using the fact that $f$ is continuous and $q\in X_F$ there is $m\in F$ such that $\sigma_m \prec q$ and $q\in f(B_m) \subseteq f(B_n)$. By the third condition on $F$ it follows that $n\in F$, and since $n$ was arbitrary, $p\in X_F$.

Next we show that $f(X_F)$ is irreducible. Let $C_1$ and $C_2$ be two closed proper subsets of $f(X_F)$, and choose $y_1 \in f(X_F)\setminus C_1$ and $y_2 \in f(X_F)\setminus C_2$. Then there are $n_1,n_2\in \omega$ such that $y_i\in f(B_{n_i})$ and $f(B_{n_i})\cap C_i = \emptyset$ ($i\in\{1,2\}$). Since $f(p)\in f(X_F)$ implies $p\in X_F$, it follows that $n_1$ and $n_2$ are in $F$. By the fourth condition on $F$ there is $k\in F$ such that $f(B_k)\subseteq f(B_m)\cap f(B_n)$, which implies there is $p\in B_k\cap X_F$ with $f(p)\in f(B_m)\cap f(B_n)$. Clearly, $f(p)\in f(X_F)$ but $f(p)\not\in C_1\cup C_2$.

Since Lemma \ref{lem:open_surj_sober} implies that $Y$ is sober, $f(X_F)$ is the closure of a unique $y\in Y$. For any $n\in F$, $B_n\cap X_F$ is non-empty, hence $f(B_n)$ is an open set intersecting the closure of $\{y\}$, thus $y\in f(B_n)$. On the other hand, if $y\in f(B_n)$ then $y=f(p)$ for some $p\in B_n$, which implies $p\in B_n\cap X_F$, hence $n\in F$. Therefore, $F=\phi(y)$.

It follows that $Y$ is homeomorphic to $\cal F$. To complete the proof of the theorem, we only need to show that $\cal F\in \bpi 2(\cal P(\omega))$. For $m\in\omega$, define
\[U_m = \{ S \in\cal P(\omega) \,|\, m\in S\} \qquad\mbox{and}\qquad N_m = \{ S \in\cal P(\omega) \,|\, m\not\in S\}.\]
Note that $U_m$ is open and $N_m$ is closed in $\cal P(\omega)$.

\begin{enumerate}
\item
Define $\cal F_1 = \cal P(\omega)\setminus \{\emptyset\}$.
\item
For $m\in\omega$ let $I_m = \{n\in\omega \,|\, \sigma_m \prec \sigma_n\}$ and define 
\[\cal F_2 = \bigcap_{m\in\omega}\left(N_m\cup \bigcup_{n\in I_m}U_n\right).\]
\item
For $m\in\omega$ let $J_m = \{n\in\omega \,|\, f(B_m)\subseteq f(B_n)\}$ and define
\[\cal F_3 = \bigcap_{m\in\omega}\left(N_m\cup \bigcap_{n\in J_m}U_n\right).\]
\item
For $m,n\in\omega$ let $K_{m,n} = \{k\in\omega\,|\, f(B_k)\subseteq f(B_m)\cap f(B_n)\}$ and define
\[\cal F_4 = \bigcap_{m,n\in\omega}\left(N_m\cup N_n \cup \bigcup_{k\in K_{m,n}} U_k\right).\]
\end{enumerate}
Since $\bpi 2(\cal P(\omega))$ is closed under countable intersections and finite unions, it is easy to see that $\cal F_1, \cal F_2,\cal F_3$, and $\cal F_4$ are all in $\bpi 2(\cal P(\omega))$. It is also straight forward to check that $\cal F = \cal F_1\cap\cal F_2\cap\cal F_3\cap\cal F_4$, hence $\cal F$ is in $\bpi 2(\cal P(\omega))$. Therefore, $Y$ is quasi-Polish.
\qed
\end{proof}

\begin{theorem}
A non-empty $T_0$-space $X$ is quasi-Polish if and only if there exists a continuous open surjection from $\baire$ to $X$.
\qed
\end{theorem}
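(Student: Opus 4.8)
The plan is to observe that this theorem is an immediate corollary of the two results directly preceding it, so the proof amounts to assembling Lemma~\ref{lem:open_surj_from_baire} and Theorem~\ref{thrm:open_surjection_preserves_pi2} and dispatching one trivial observation. I would split the biconditional into its two implications.

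For the forward direction, suppose $X$ is non-empty and quasi-Polish. Then the existence of a continuous open surjection $f\colon\baire\to X$ is precisely the content of Lemma~\ref{lem:open_surj_from_baire}, so nothing further is required here.

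For the reverse direction, suppose $f\colon\baire\to X$ is a continuous open surjection onto the $T_0$-space $X$. First I would note that $\baire$ is itself quasi-Polish: it is separable and completely metrizable, and since any metric is in particular a symmetric quasi-metric whose associated symmetrization $\widehat{d}$ coincides with the metric, a complete separable metric on $\baire$ already serves as a compatible countably based complete quasi-metric. With $\baire$ established as quasi-Polish, I would invoke Theorem~\ref{thrm:open_surjection_preserves_pi2} directly, taking the quasi-Polish domain to be $\baire$, the $T_0$-codomain to be $X$, and the open continuous surjection to be $f$; the theorem then yields that $X$ is quasi-Polish.

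The genuine content of this theorem has already been discharged in the two preceding results, so I do not expect any obstacle in the present argument. All of the real difficulty lies in Theorem~\ref{thrm:open_surjection_preserves_pi2}, whose proof reduces to surjections out of $\baire$ via Lemma~\ref{lem:open_surj_from_baire}, establishes sobriety of the image through Lemma~\ref{lem:open_surj_sober}, and then exhibits the image as a $\bpi 2$-subset of $\cal P(\omega)$, together with Lemma~\ref{lem:open_surj_from_baire} itself; the statement here merely packages these as a clean characterization.
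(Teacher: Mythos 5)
Your proposal is correct and coincides with the paper's own (implicit) proof: the paper states this theorem with no separate argument, treating it as an immediate corollary of Lemma~\ref{lem:open_surj_from_baire} (the forward direction) and Theorem~\ref{thrm:open_surjection_preserves_pi2} applied with domain $\baire$ (the reverse direction), exactly as you do. Your side remark that $\baire$ is quasi-Polish because a complete separable metric is itself a compatible complete quasi-metric with $\widehat{d}=d$ is also accurate and fills in the one trivial detail the paper leaves unstated.
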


It is well known (see, for example, Theorem 8.19 in \cite{kechris}) that if $X$ is Polish, $Y$ is a separable metrizable space, and $f\colon X\to Y$ is a continuous open surjection, then $Y$ is Polish.

\begin{corollary}
A metrizable space is quasi-Polish if and only if it is Polish.
\qed
\end{corollary}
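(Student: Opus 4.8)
The plan is to prove the two implications separately, the forward direction being immediate and the reverse direction being the one that the preceding machinery was built for.

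First I would dispatch the easy direction: every Polish space is quasi-Polish. A Polish space is separable and completely metrizable, hence countably based. A complete metric $d$ is in particular a complete quasi-metric in the Smyth sense: since $d$ is symmetric we have $\widehat{d}=d$, so $\tau_{\widehat{d}}=\tau_d$ and the Cauchy condition of the definition in Section~\ref{sec:complete_quasimetricspaces} coincides with the ordinary metric Cauchy condition. Completeness of the metric then yields that every such Cauchy sequence converges with respect to $\tau_{\widehat{d}}$, so $d$ is a complete quasi-metric. Thus a Polish space is countably based and completely quasi-metrizable, i.e.\ quasi-Polish.

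For the reverse direction, suppose $Y$ is a metrizable quasi-Polish space. The empty space is trivially Polish, so we may assume $Y$ is non-empty. Being quasi-Polish, $Y$ is countably based, and a second countable metrizable space is separable, so $Y$ is separable metrizable. By Lemma~\ref{lem:open_surj_from_baire} there is a continuous open surjection $f\colon\baire\to Y$. Since $\baire$ is Polish, $Y$ is separable metrizable, and $f$ is a continuous open surjection, the classical preservation theorem cited just above (Theorem~8.19 in \cite{kechris}) gives directly that $Y$ is Polish.

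I expect no genuine obstacle: the substantive content has already been front-loaded into Lemma~\ref{lem:open_surj_from_baire}, which supplies an open continuous surjection from a Polish space onto any non-empty quasi-Polish space, after which the classical result closes the argument. The only points needing a moment's care are the reduction from ``metrizable quasi-Polish'' to ``separable metrizable'' via countable basedness, and the observation that a complete metric is automatically a complete quasi-metric because symmetry forces $\widehat{d}=d$.
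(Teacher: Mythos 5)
Your proof is correct and follows essentially the same route as the paper: the reverse direction uses Lemma~\ref{lem:open_surj_from_baire} to obtain an open continuous surjection $\baire\to Y$ and then invokes the classical preservation result (Theorem 8.19 in \cite{kechris}), while the forward direction is the routine observation that a complete metric is a complete quasi-metric since symmetry gives $\widehat{d}=d$. Your handling of the empty space and the reduction to separable metrizable via countable basedness are exactly the right points of care.
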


The next corollary follows by taking products (or disjoint unions) of suitable continuous open surjections.

\begin{corollary}\label{cor:countable_products}
Every countable product of quasi-Polish spaces is quasi-Polish, and every countable disjoint union of quasi-Polish spaces is quasi-Polish.
\qed
\end{corollary}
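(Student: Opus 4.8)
The plan is to exhibit, for a countable family $(X_i)_{i\in\omega}$ of quasi-Polish spaces, a single open continuous surjection onto the product (respectively the disjoint union) whose domain is already known to be quasi-Polish, and then to invoke Theorem~\ref{thrm:open_surjection_preserves_pi2}. The building blocks are supplied by Lemma~\ref{lem:open_surj_from_baire}: for each non-empty $X_i$ fix an open continuous surjection $f_i\colon\baire\to X_i$.

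For the product, assume first that every $X_i$ is non-empty and form the map $f=\prod_{i\in\omega}f_i\colon\baire^\omega\to\prod_{i\in\omega}X_i$ given by $(p_i)_{i\in\omega}\mapsto(f_i(p_i))_{i\in\omega}$. This is clearly continuous and surjective. The one point deserving care is openness: the image of a basic open set $\prod_{i\in\omega}U_i$ (where $U_i=\baire$ for all but finitely many $i$) is exactly $\prod_{i\in\omega}f_i(U_i)$; each $f_i(U_i)$ is open since $f_i$ is open, and $f_i(U_i)=X_i$ for the cofinitely many coordinates with $U_i=\baire$ precisely because $f_i$ is surjective, so the image is again a basic open set. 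Since $\baire^\omega$ is Polish and therefore quasi-Polish (by the corollary identifying metrizable quasi-Polish spaces with Polish spaces), and $\prod_{i\in\omega}X_i$ is $T_0$ as a product of $T_0$-spaces, Theorem~\ref{thrm:open_surjection_preserves_pi2} shows the product is quasi-Polish.

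The disjoint union is handled symmetrically. Discarding the empty summands, form the coproduct map $\bigsqcup_i f_i\colon\bigsqcup_i\baire\to\bigsqcup_i X_i$; it is continuous and surjective, and it is open because the image of an open set, intersected with any summand $X_i$, is the $f_i$-image of an open set, hence open. The coproduct $\bigsqcup_i\baire$ of countably many copies of $\baire$ is homeomorphic to $\baire$ (realized inside $\baire$ by the clopen partition into the basic sets $\uparrow\sigma$ with $\abs{\sigma}=1$), hence quasi-Polish, while $\bigsqcup_i X_i$ is $T_0$, so Theorem~\ref{thrm:open_surjection_preserves_pi2} again applies.

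Finally, the degenerate cases must be dispatched separately, since the tools above presuppose non-emptiness: if some factor $X_i$ is empty then the product is empty, and if all summands are empty then the disjoint union is empty, and the empty space is trivially quasi-Polish. I do not anticipate any genuine obstacle here; the only steps requiring attention are the verification of openness for the product map, where the surjectivity of each $f_i$ is essential to keep the image a basic open set, and the bookkeeping for empty spaces.
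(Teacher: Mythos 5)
Your proof is correct and is essentially the paper's own argument: the paper proves this corollary precisely ``by taking products (or disjoint unions) of suitable continuous open surjections'' supplied by Lemma~\ref{lem:open_surj_from_baire} and then applying Theorem~\ref{thrm:open_surjection_preserves_pi2}, exactly as you do. Your verification of openness of the product map and your handling of empty factors/summands simply fill in the details the paper leaves implicit.
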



\section{Countably based locally compact sober spaces}\label{sec:locallycompactsober}

In this section we show that every countably based locally compact sober space is quasi-Polish. This implies, in particular, that every $\omega$-continuous domain is quasi-Polish.

A topological space $X$ is \emph{locally compact} if and only if for every $x\in X$ and open $U$ containing $x$, there is an open set $V$ and compact set $K$ such that $x\in V\subseteq K\subseteq U$. Given open sets $U$ and $V$ of a topological space $X$, we write $V\ll U$ to denote that $V$ is relatively compact in $U$ (i.e., every open cover of $U$ admits a finite subcover of $V$). As shown in \cite{Hofmann_Mislove}, a sober space $X$ is locally compact if and only if for every $x\in X$ and open $U$ containing $x$, there is open $V$ such that $x\in V \ll U$. Equivalently, a sober space is locally compact if and only if every open set is equal to the union of its relatively compact open subsets.

\begin{theorem}
Every countably based locally compact sober space is quasi-Polish.
\end{theorem}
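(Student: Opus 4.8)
The plan is to exhibit $X$ as a $\bpi 2$-subset of $\cal P(\omega)$, and then invoke the characterization of quasi-Polish spaces as precisely the spaces homeomorphic to a $\bpi 2$-subset of $\cal P(\omega)$. Since $X$ is sober it is $T_0$, so fixing a countable basis $\{B_n\}_{n\in\omega}$ for $X$, the canonical map $\phi\colon X\to\cal P(\omega)$ defined by $\phi(x)=\{n\in\omega\,|\, x\in B_n\}$ is a topological embedding. It therefore suffices to show that $\phi(X)$ is $\bpi 2$ in $\cal P(\omega)$.

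First I would describe $\phi(X)$ combinatorially. The set $\phi(x)$ is always the trace on the basis of a \emph{completely prime filter} of opens: it is nonempty, upward closed under $B_m\subseteq B_n$, directed (if $x\in B_m\cap B_n$ then $x\in B_k\subseteq B_m\cap B_n$ for some basic $B_k$), finitely prime ($x\in B_k\subseteq B_m\cup B_n$ forces $x\in B_m$ or $x\in B_n$), and, crucially, \emph{rounded}: by local compactness, if $x\in B_n$ then $x\in V\ll B_n$ for some open $V$, whence $x\in B_m\ll B_n$ for some basic $B_m$. The heart of the argument is the converse, namely that these five conditions on $S\subseteq\omega$ already force $S$ to arise from a point, and this is where the full strength of the hypotheses enters. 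Roundedness says that the filter of opens generated by $\{B_n\,|\, n\in S\}$ is Scott-open, and a Scott-open filter that is finitely prime is automatically completely prime, since an arbitrary union of opens is the directed join of its finite subunions and Scott-openness delivers a finite subunion in the filter. Thus each such $S$ determines a completely prime filter of opens, which by soberness is the neighborhood filter of a unique point $x\in X$, giving $\phi(X)=\{S\,|\, S\text{ satisfies (i)--(v)}\}$. Verifying that the basis-level conditions (i)--(v) correctly encode complete primeness of the generated frame-level filter is the step I expect to require the most care; it is the only place local compactness and soberness are genuinely used, and everything else is bookkeeping.

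Finally I would check that the defining conditions are each $\bpi 2$, exactly as in the computation of $\cal F_1,\dots,\cal F_4$ in the proof of Theorem \ref{thrm:open_surjection_preserves_pi2}. Writing $U_m=\{S\,|\, m\in S\}$ (open) and $N_m=\{S\,|\, m\notin S\}$ (closed), and using that the relations $B_m\subseteq B_n$, $B_k\subseteq B_m\cap B_n$, $B_k\subseteq B_m\cup B_n$, and $B_m\ll B_n$ are fixed subsets of the appropriate finite powers of $\omega$, each condition becomes a countable intersection of sets of the form (a finite union of sets $N_m$) together with (a union of sets $U_m$). Every such set is the complement of a set of the form (open) $\cap$ (closed), hence lies in $\bpi 2(\cal P(\omega))$, and $\bpi 2$ is closed under countable and finite intersections. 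Therefore $\phi(X)$ is $\bpi 2$ in $\cal P(\omega)$, and so $X$ is quasi-Polish.
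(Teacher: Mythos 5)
Your overall strategy coincides with the paper's: embed $X$ into $\cal P(\omega)$ via the basic-neighborhood map $\phi$, cut out $\phi(X)$ by countably many filter-type conditions each of the form (closed) $\cup$ (open), use local compactness for roundedness and sobriety to recover a point. (The one cosmetic difference: where the paper invokes the Hofmann--Mislove results on Scott-open prime filters, you upgrade the generated Scott-open filter to a completely prime one and apply the standard completely-prime-filter formulation of sobriety; both routes are fine.) However, there is a genuine gap exactly at the step you deferred as ``bookkeeping'': for an \emph{arbitrary} countable basis, your five conditions do not characterize $\phi(X)$, because binary basis-level primeness does not imply that the generated filter is finitely prime at the frame level --- a finite union of basic sets need not be basic, so the inductive reduction to the binary case is unavailable. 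Concretely, take $X=\{1,2,3\}$ discrete (countably based, locally compact, sober) with basis $B_1=\{1\}$, $B_2=\{2\}$, $B_3=\{3\}$, $B_4=X$, and $S=\{4\}$. Then $S$ is nonempty, upward closed, directed, rounded (since $X$ is compact, $B_4\ll B_4$), and binary prime (if $X\subseteq B_m\cup B_n$ then $m=4$ or $n=4$, hence $m\in S$ or $n\in S$), yet $S\neq\phi(x)$ for every $x\in X$. Indeed the generated filter is $\{X\}$, which is Scott-open but not prime: $X=\{1,2\}\cup\{3\}$ with neither piece in the filter. So your asserted implication ``Scott-open $+$ finitely prime $\Rightarrow$ completely prime'' is correct, but the filter you build need not be finitely prime in the first place.

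The repair is small, and it explains the extra hypotheses in the paper's proof. Either take the basis closed under finite unions, so that finitely many basic sets inside an open set can be merged into a single basic set before applying binary primeness (this is the paper's route; it also closes under intersections, and its condition that $B_m\neq\emptyset$ for all $m\in F$ is the nullary instance of primeness), or keep your arbitrary basis but strengthen primeness to all finite arities: if $k\in S$ and $B_k\subseteq B_{a_1}\cup\dots\cup B_{a_r}$ with $r\geq 0$, then $a_i\in S$ for some $i$, where the case $r=0$ reads $B_k\neq\emptyset$ and rules out improper filters (needed in case $\emptyset$ occurs as a basic set --- your conditions as stated also admit such $S$ then). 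This strengthened condition still holds for every $\phi(x)$, is still a countable intersection of sets of the form (closed) $\cup$ (open) $\cup\dots\cup$ (open), hence keeps $\phi(X)$ in $\bpi 2(\cal P(\omega))$, and with it your argument closes: given $\bigcup_{i}W_i$ in the generated filter, pick $n\in S$ with $B_n\subseteq\bigcup_i W_i$ and, by roundedness, $m\in S$ with $B_m\ll B_n$; the basic sets contained in some $W_i$ cover $B_n$, so finitely many of them, say $B_{a_1},\dots,B_{a_r}$, cover $B_m$, and finite-arity primeness puts some $a_j$ in $S$, whence the corresponding $W_{i_j}$ lies in the filter. This gives complete primeness directly, and sobriety finishes the proof as you intended.
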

\begin{proof}
Assume $(X,\tau)$ is a countably based locally compact sober space. Let $\{B_n\}_{n\in\omega}$ be a countable basis for $X$ that is closed under finite unions and intersections and contains the empty set. Define a function $\phi \colon X\to \cal P(\omega)$ by $\phi(x) = \{ n\in\omega \,|\, x\in B_n \}$. Clearly $\phi$ is a topological embedding of $X$ into $\cal P(\omega)$.

Let $\cal F$ be the subset of $\cal P(\omega)$ that contains exactly those $F\subseteq \omega$ satisfying:
\begin{enumerate}
\item
$(\forall m\in F) : B_m \not=\emptyset$,
\item
$(\forall m\in F)(\forall n\in\omega) : B_m \subseteq B_n \Rightarrow n \in F$,
\item
$(\forall m,n \in F)(\exists k\in F) : B_k = B_m \cap B_n$,
\item
$(\forall m\in F)(\exists n\in F) : B_n \ll B_m$.
\item
$(\forall k\in F)(\forall m,n\in \omega) : B_k = B_m\cup B_n \Rightarrow (m\in F \mbox{ or } n\in F)$,
\end{enumerate}

It is easy to check that $\cal F\in \bpi2(\cal P(\omega))$. Therefore, we only need to show that $X$ and $\cal F$ are homeomorphic. Since $\phi:X\to\cal P(\omega)$ is a topological embedding, it suffices to show that $\phi(X) = \cal F$. We first show that $\phi(X)\subseteq \cal F$. Set $x\in X$.
\begin{enumerate}
\item
If $m\in \phi(x)$ then $x\in B_m$ hence $B_m\not=\emptyset$.
\item
Assume $m\in \phi(x)$ and $n\in\omega$. If $B_m\subseteq B_n$ then $x\in B_n$ hence $n\in \phi(x)$.
\item
Assume $m,n\in \phi(x)$. Since $(B_n)_{n\in\omega}$ is closed under finite intersections, there exists $k\in\omega$ such that $B_k = B_m\cap B_n$. Clearly $x\in B_k$ hence $k\in \phi(x)$.
\item
Assume $m\in \phi(x)$. Since $X$ is locally compact, there is an open neighborhood $U$ of $x$ such that $U\ll B_m$. Let $n\in\omega$ be such that $x\in B_n \subseteq U$. Then $n\in \phi(x)$ and $B_n \ll B_m$.
\item
Assume $k\in \phi(x)$ and $m,n\in\omega$. If $B_k = B_m \cup B_n$, then either $x\in B_m$ or $x\in B_n$. Therefore, either $m\in \phi(x)$ or $n\in\phi(x)$.
\end{enumerate}
It follows that $\phi(x)\in\cal F$. Since $x\in X$ was arbitrary, $\phi(X)\subseteq \cal F$.

Next we show that $\cal F\subseteq \phi(X)$. Set $F\in \cal F$ and define
\[\cal U(F) = \{ U\in \tau \,|\, (\exists n\in F) B_n\subseteq U\}.\]
Note that:
\begin{enumerate}
\item
If $B_m \in \cal U(F)$ then $n\in F$: If $B_m\in\cal U(F)$, then there is $n\in F$ such that $B_n\subseteq B_m$, hence $m\in F$ because $F\in\cal F$. In particular, $\emptyset \not\in \cal U(F)$.
\item
$\cal U(F)$ is an upper set: If $U\in\cal U(F)$ then there is $n\in F$ such that $B_n\subseteq U$. So clearly if $U\subseteq V\in\tau$, then $B_n\subseteq V$ hence $V\in\cal U(F)$.
\item
$\cal U(F)$ is a filter: If $U,V\in\cal U(F)$, then there are $m,n\in F$ such that $B_m\subseteq U$ and $B_n\subseteq V$. Then there is $k\in F$ such that $B_k=B_m\cap B_n$ and clearly $B_k \subseteq U\cap V$. Therefore, $U\cap V\in\cal U(F)$.
\item
$\cal U(F)$ is a Scott-open filter: Assume $\cal D$ is a directed set of open subsets of $X$ and $\bigcup \cal D\in \cal U(F)$. By definition of $\cal U(F)$ there is $m\in F$ such that $B_m \subseteq \bigcup \cal D$, and by the assumptions on $F$ there is $n\in F$ with $B_n \ll B_m$. Since $B_m \subseteq \bigcup\cal D$ there is $W\in\cal D$ such that $B_n \subseteq W$. It follows that $W\in \cal U(F)$ hence $\cal D\cap \cal U(F)\not=\emptyset$.
\item
$\cal U(F)$ is a prime Scott-open filter: If $U\cup V\in\cal U(F)$, then there is $m,n\in F$ such that $B_m\subseteq U\cup V$ and $B_n\ll B_m$. Let $I_U = \{i\in\omega\,|\, B_i\subseteq U\}$ and $I_V = \{i\in\omega\,|\, B_i\subseteq V\}$. Then $B_m \subseteq U\cup V = \bigcup_{i\in I_U\cup I_V}B_i$, so there is finite $I'_U\subseteq I_U$ and finite $I'_V \subseteq I_V$ such that $B_n \subseteq \bigcup_{i\in I'_U\cup I'_V}B_i$. Since $(B_n)_{n\in\omega}$ is closed under finite unions, there are $i_U, i_V\in \omega$ such that $B_{i_U} = \bigcup_{i\in I'_U} B_i$ and $B_{i_V} = \bigcup_{i\in I'_V} B_i$. Since $B_n \subseteq B_{i_U}\cup B_{i_V}$, either $i_U\in F$ or else $i_V\in F$. If $i_U\in F$ then $U\in\cal U(F)$, otherwise $i_V\in F$ and $V\in \cal U(F)$.
\end{enumerate}

Since $X$ is sober and locally compact, it follows by results of K. Hofmann and M. Mislove (see Lemma 2.23 and Proposition 2.24 in \cite{Hofmann_Mislove}) that $\cal U(F)$ equals the set of open neighborhoods of some $x\in X$. Therefore, $F =  \{ n\in\omega \,|\, x\in B_n \} = \phi(x)$. Since $F\in\cal F$ was arbitrary, $\cal F\subseteq \phi(X)$.
\qed
\end{proof}

Every continuous domain is locally compact and sober (see Proposition III-3.7 in \cite{etal_scott}). Therefore, we immediately obtain the following.

\begin{corollary}
Every $\omega$-continuous domain is quasi-Polish.
\qed
\end{corollary}


\section{Admissible representations of quasi-Polish spaces}\label{sec:admissiblerepresentations}

In this section we characterize quasi-Polish spaces as precisely the countably based spaces that have an admissible representation with Polish domain. Equivalently, quasi-Polish spaces are precisely the countably based spaces with total admissible representations defined on all of $\baire$. Admissible representations of topological spaces are fundamental to the development of computable analysis under the Type 2 Theory of Effectivity (see \cite{weihrauch}).

\begin{definition}[K. Weihrauch \cite{weihrauch}, M. Schr\"{o}der \cite{schroder}]
A partial continuous function $\rho\colon\subseteq \baire\to X$ is an \emph{admissible representation} of $X$ if and only if for every partial continuous $f\colon\subseteq \baire\to X$ there exists a partial continuous $g\colon\subseteq \baire\to\baire$ such that $f = \rho\circ g$.
\qed
\end{definition}

A characterization of the topological spaces which have admissible representations has been given by M. Schr\"{o}der \cite{schroder}. Every space which has an admissible representation satisfies the $T_0$-axiom.

The major importance of admissible representations is due to the following fact. If $X$ and $Y$ are countably based spaces\footnote{The statement still holds for non-countably based $X$ and $Y$ if we either require $X$ and $Y$ to be sequential spaces or we relax the continuity requirement of $f$ to sequential continuity (see \cite{schroder} for details).}, and $\rho_X\colon\subseteq \baire\to X$ and $\rho_Y\colon \subseteq \baire\to Y$ are admissible representations, then a function $f\colon X\to Y$ is continuous if and only if there exists a continuous partial function $g\colon\subseteq \baire\to\baire$ such that $f\circ \rho_X = \rho_Y\circ g$. This reduces the analysis of continuous functions between represented spaces to the analysis of (partial) continuous functions on $\baire$, which are usually better understood and carry a natural definition of computability.

An example of an admissible representation is the function $\delta \colon \baire \to \cal P(\omega)$, defined as 
\[\delta(p) = \{ x\in\omega \,|\, \exists n : p(n)=x+1\}\]
for $p\in\baire$ (we have already used this function in the proof of Lemma \ref{lem:open_surj_from_baire}). This is sometimes called the \emph{enumeration representation} of $\cal P(\omega)$, and is known to be admissible (see Theorem 4.6 and Corollary 4.8 of \cite{kreitz_weihrauch} and Chapter 3 of \cite{weihrauch}). To see why $\delta$ is admissible, let $f\colon\subseteq \baire \to \cal P(\omega)$ be a partial continuous function. The set $\uparrow\!\{n\} = \{X\in\cal P(\omega) \,|\, n\in X\}$ is open for each $n\in\omega$, thus the continuity of $f$ implies that $n\in f(p)$ if and only if $f(\uparrow\!p[i]) \subseteq \uparrow\!\{n\}$ for some $i \in\omega$, where $p[i]$ denotes the initial segment of $p$ of length $i$. Let $r\colon\omega\to\omega$ be a bijection such that $r^{-1}(n)$ is infinite for each $n\in\omega$. Then $g\colon\subseteq\baire\to\baire$, defined as
\begin{eqnarray*}
g(p)(i) &=& \left\{
\begin{array}{ll}
r(i)+1 & \mbox{ if } f(\uparrow\!p[i]) \subseteq \uparrow\!\{r(i)\},\\
0 & \mbox{ otherwise }
\end{array}
\right.
\end{eqnarray*}
for $p\in dom(f)$, is continuous and satisfies $f=\delta\circ g$.

We now move on to our characterization of quasi-Polish spaces in terms of admissible representations with Polish domains. The following lemma is a simple adaption of a result by F. Hausdorff in \cite{hausdorff}.

\begin{lemma}\label{lem:open_restrictions}
Assume $X$ is a separable metric space, $Y$ is a countably based $T_0$-space, and $\phi\colon X \to Y$ is a continuous surjection. If there is $A\subseteq X$ such that $\phi$ restricted to $A$ is an open continuous surjection onto $Y$, then there is a $G_{\delta}$ subset $P$ of $X$ containing $A$ such that $\phi$ restricted to $P$ is an open continuous surjection onto $Y$.
\end{lemma}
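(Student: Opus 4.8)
The plan is to reduce the openness of a restriction of $\phi$ to a condition that can be checked on a fixed basis, and then to take $P$ to be the largest set satisfying that condition, which will turn out to be $G_\delta$ automatically. Since $X$ is separable metric, fix a countable basis $\{B_n\}_{n\in\omega}$ for $X$. For each $n$ set $V_n=\phi(B_n\cap A)$. Because $\{B_n\cap A\}_{n\in\omega}$ is a basis for the subspace topology on $A$ and $\phi$ restricted to $A$ is open onto $Y$, each $V_n$ is open in $Y$; this is the only place the openness hypothesis on $A$ enters.

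The key observation is that for any set $P$ with $A\subseteq P\subseteq X$, the restriction $\phi|_P$ is open onto $Y$ as soon as $\phi(B_n\cap P)=V_n$ for every $n$: a general relatively open subset of $P$ is a union of sets $B_n\cap P$, and $\phi$ carries such a union to the corresponding union of the $V_n$, which is open. Since $A\subseteq P$ already gives $V_n\subseteq\phi(B_n\cap P)$, the content is only the reverse inclusion, i.e.\ that $x\in B_n\cap P$ forces $\phi(x)\in V_n$. This dictates the definition
\[
P=\bigcap_{n\in\omega}\big((X\setminus B_n)\cup\phi^{-1}(V_n)\big).
\]
First I would check $A\subseteq P$: for $a\in A$, if $a\in B_n$ then $\phi(a)\in\phi(B_n\cap A)=V_n$, so $a\in\phi^{-1}(V_n)$, and otherwise $a\in X\setminus B_n$; either way $a$ lies in every factor. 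By construction $x\in B_n\cap P$ implies $\phi(x)\in V_n$, so $\phi(B_n\cap P)=V_n$ and hence $\phi|_P$ is open by the observation above. Continuity of $\phi|_P$ is inherited from $\phi$, and $\phi|_P$ is onto $Y$ because $A\subseteq P$ and $\phi|_A$ is already surjective.

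It then remains to see that $P$ is $G_\delta$. Here I would invoke the elementary facts that in a metric space every closed set is $G_\delta$ and every open set is (trivially) $G_\delta$, together with the closure of the $G_\delta$ sets under finite unions. Each factor $(X\setminus B_n)\cup\phi^{-1}(V_n)$ is the union of the closed set $X\setminus B_n$ with the open set $\phi^{-1}(V_n)$ (open by continuity of $\phi$), hence is $G_\delta$; and $P$, being a countable intersection of $G_\delta$ sets, is $G_\delta$. I do not expect a serious obstacle, since once the defining formula for $P$ is identified the argument is bookkeeping. The two points that deserve explicit care are the passage from $\phi(B_n\cap P)=V_n$ to openness on \emph{all} relatively open subsets of $P$ (which rests on images commuting with unions and on $\{B_n\cap P\}$ being a basis for the subspace topology) and the stability of the $G_\delta$ class under finite unions, which is exactly what makes each factor $G_\delta$.
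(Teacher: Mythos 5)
Your proof is correct, and it takes a genuinely different and more elementary route than the paper's. The paper adapts Hausdorff's original construction: it builds a tree of open sets $U_\sigma$ ($\sigma\in\omega^{<\omega}$) with diameters shrinking to $0$ along branches, sets $V_\sigma=\phi(A\cap U_\sigma)$, $W_\sigma=U_\sigma\cap\phi^{-1}(V_\sigma)$, and defines $P=\bigcap_{n}\bigcup_{\sigma\in\omega^n}W_\sigma$. There the diameter condition is what forces $\{P\cap W_\sigma\}$ to be a basis of $P$, so that $\phi(P\cap W_\sigma)=V_\sigma$ yields openness; in exchange, $P$ is visibly $G_\delta$ because each $\bigcup_{\sigma\in\omega^n}W_\sigma$ is open. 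You dispense with the tree entirely: fixing a countable basis $\{B_n\}$ of $X$, your $P=\bigcap_n\bigl((X\setminus B_n)\cup\phi^{-1}(V_n)\bigr)$ is exactly the largest set on which the implication $x\in B_n\Rightarrow\phi(x)\in V_n$ holds for all $n$, which makes $\phi(B_n\cap P)=V_n$ automatic and gives openness by pure bookkeeping, since $\{B_n\cap P\}$ is a basis of $P$ with no metric argument needed. The trade-off is where metrizability enters: the paper uses the metric to run the diameter argument (and gets an intersection of open sets for free), whereas you use it only through the standard fact that closed subsets of metrizable spaces are $G_\delta$, combined with closure of the $G_\delta$ class under finite unions, which holds in any space via $(\bigcap_i G_i)\cup(\bigcap_j H_j)=\bigcap_{i,j}(G_i\cup H_j)$. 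Both arguments use only second countability of $X$ and neither actually needs the hypotheses on $Y$; yours has the merit of showing that the shrinking-diameter machinery in Hausdorff's construction is not essential for this statement.
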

\begin{proof}
Since $X$ is a separable metric space we can define a system of open subsets $U_{\sigma}$ ($\sigma\in\omega^{<\omega}$) of $X$ such that
\begin{enumerate}
\item
$U_{\langle \rangle} = X$, where $\langle \rangle$ is the empty sequence,
\item
$U_{\sigma}$ has diameter less than $1/|\sigma|$ when $\sigma$ is not empty,
\item
$U_{\sigma} = \bigcup_{n\in\omega}U_{\sigma\diamond n}$,
\item
$\sigma \preceq \sigma'$ implies $U_{\sigma}$ contains the closure of $U_{\sigma'}$,
\end{enumerate}
where $|\sigma|$ denotes the length of $\sigma$ and $\sigma\diamond n$ denotes the sequence obtained by appending $n$ to the end of $\sigma$.

For $\sigma\in\omega^{<\omega}$ define 
\[V_\sigma = \phi(A\cap U_{\sigma}) \qquad\mbox{and}\qquad  W_\sigma = U_\sigma \cap \phi^{-1}(V_\sigma).\]
$V_\sigma$ is open in $Y$ because $\phi$ restricted to $A$ is an open map, and $W_\sigma$ is open in $X$ because $\phi$ is continuous. Now define
\[ P = \bigcap_{n\in\omega} \bigcup_{\sigma\in\omega^n} W_\sigma.\]
Clearly $P$ is a $G_\delta$ subset of $X$. For any $x\in A$, there is some $p\in\baire$ such that $\{x\} = \bigcap_{\sigma\prec p}U_\sigma$. Then for each $\sigma\prec p$, $x\in A\cap U_\sigma$, thus $\phi(x)\in V_\sigma$, hence $x\in W_\sigma$. Therefore, $x\in P$. It follows that $A\subseteq P$.

We show that $\phi(P\cap W_\sigma)=V_\sigma$ for each $\sigma\in\omega^{<\omega}$. Clearly, $\phi(P\cap W_\sigma)\subseteq \phi(P)\cap \phi(W_\sigma) \subseteq Y\cap V_\sigma = V_\sigma$. Conversely, for any $y\in V_\sigma$, there is $x\in A\cap U_\sigma$ such that $\phi(x)=y$. Then $x\in A\cap W_\sigma$, and since $A\subseteq P$, $x\in P\cap W_\sigma$. It follows that $V_\sigma \subseteq \phi(P\cap W_\sigma)$.

By definition of $P$, for each $x\in P$ and $n\in\omega$, there is $\sigma\in\omega^n$ with $x\in W_\sigma$. When $n>0$, we conclude that $W_\sigma$ is an open neighborhood of $x$ with diameter less than $1/n$ because $W_\sigma\subseteq U_\sigma$. Therefore, the sets $P\cap W_\sigma$ ($\sigma\in\omega^{<\omega}$) form a basis for the topology on $P$. As $\phi(P\cap W_\sigma)=V_\sigma$ is open for each $\sigma\in\omega^{<\omega}$, it follows that $\phi$ restricted to $P$ is an open continuous surjection onto $Y$.
\qed
\end{proof}

\begin{theorem}
A countably based space $X$ is quasi-Polish if and only if there is an admissible representation $\rho\colon\subseteq \baire\to X$ of $X$ such that $dom(\rho)$ is Polish.
\end{theorem}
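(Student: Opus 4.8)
The plan is to prove both implications by leaning on the two earlier characterizations of quasi-Polish spaces: as (homeomorphic copies of) the $\bpi 2$-subsets of $\cal P(\omega)$, and as the $T_0$ images of $\baire$ under open continuous surjections (Theorem \ref{thrm:open_surjection_preserves_pi2}). Throughout I identify $X$ with its image under a topological embedding $X\hookrightarrow\cal P(\omega)$, which exists since $X$ is countably based $T_0$, and I write $\delta\colon\baire\to\cal P(\omega)$ for the enumeration representation, which is admissible and is moreover an \emph{open} continuous surjection.

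For the forward direction, suppose $X$ is quasi-Polish. By the characterization of quasi-Polish spaces as the $\bpi 2$-subsets of $\cal P(\omega)$ I may assume $X\in\bpi 2(\cal P(\omega))$. Since $\delta$ is continuous, $\delta^{-1}(X)\in\bpi 2(\baire)$, and as $\baire$ is metrizable this is a classical $G_\delta$, hence $\delta^{-1}(X)$ is Polish. I would then take $\rho=\delta|_{\delta^{-1}(X)}$, whose domain is Polish. It is admissible: any partial continuous $f\colon\subseteq\baire\to X$ factors as $\delta\circ g$ by admissibility of $\delta$, and $g$ automatically lands in $\delta^{-1}(X)=dom(\rho)$ because $\delta\circ g=f$ takes values in $X$, so $\rho\circ g=f$.

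For the converse, suppose $\rho\colon\subseteq\baire\to X$ is admissible with $D:=dom(\rho)$ Polish; then $X$ is $T_0$, and $\rho$ is surjective (apply admissibility to constant functions). The first key observation is that $\rho_\delta:=\delta|_{\delta^{-1}(X)}$ is an \emph{open} continuous surjection onto $X$: for open $W\subseteq\baire$ one checks $\rho_\delta(W\cap\delta^{-1}(X))=\delta(W)\cap X$, which is open in $X$ since $\delta$ is an open map. As $\rho$ is admissible and $\rho_\delta$ is partial continuous, there is a partial continuous $g$ with $\rho_\delta=\rho\circ g$. Setting $A:=g(dom(\rho_\delta))\subseteq D$, I claim that $\rho|_A$ is again an open continuous surjection onto $X$. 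Surjectivity is inherited from $\rho_\delta$, and for openness one computes, for open $W\subseteq\baire$, that $\rho(A\cap W)=\rho_\delta\big(dom(\rho_\delta)\cap g^{-1}(W)\big)$, which is open because $g$ is continuous and $\rho_\delta$ is open. This identifies the conceptual crux of the argument, and the step I expect to be the main obstacle: admissibility does \emph{not} make $\rho$ itself open, but it forces $\rho$ to be open precisely on the subset $A$ onto which the open representation $\rho_\delta$ factors.

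The remaining task is to promote $A$ to a \emph{Polish} subset without destroying openness, and this is exactly what Lemma \ref{lem:open_restrictions} is designed for. Applying that lemma to the continuous surjection $\rho\colon D\to X$ (with $D$ separable metric and $X$ countably based $T_0$) together with the set $A$ yields a $G_\delta$ set $P$ with $A\subseteq P\subseteq D$ such that $\rho|_P$ is still an open continuous surjection onto $X$. Since $D$ is Polish, so is $P$; composing $\rho|_P$ with an open continuous surjection $\baire\to P$ (which exists by Exercise 7.14 in \cite{kechris}) gives an open continuous surjection $\baire\to X$. By the characterization of quasi-Polish spaces as the $T_0$ open continuous images of $\baire$ (Theorem \ref{thrm:open_surjection_preserves_pi2} and its corollary), $X$ is quasi-Polish; the empty case is trivial.
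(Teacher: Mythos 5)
Your proof is correct, and its skeleton coincides with the paper's. The forward direction is the same argument the paper gives (restrict the enumeration representation $\delta$ to $\delta^{-1}(X)$, which is $\bpi 2$ in $\baire$, hence Polish, and check admissibility by factoring through $\delta$), and your converse follows the paper's three-step plan exactly: produce a subset $A\subseteq dom(\rho)$ on which $\rho$ restricts to an open continuous surjection onto $X$, enlarge $A$ to a $G_\delta$ (hence Polish) set $P$ using Lemma \ref{lem:open_restrictions}, and conclude that $X$ is quasi-Polish from Theorem \ref{thrm:open_surjection_preserves_pi2}. The one genuine difference is how $A$ is obtained: the paper cites a lemma of Brattka and Hertling, whereas you construct $A$ explicitly as $g(dom(\rho_\delta))$, where $g$ is a continuous factorization of the open representation $\rho_\delta=\delta|_{\delta^{-1}(X)}$ through the admissible $\rho$, and you verify openness of $\rho|_A$ via the identity $\rho(A\cap W)=\rho_\delta\big(dom(\rho_\delta)\cap g^{-1}(W)\big)$. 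That identity and the ensuing openness argument are correct (note $dom(\rho_\delta)\cap g^{-1}(W)$ is relatively open in $dom(\rho_\delta)$ since $dom(\rho_\delta)\subseteq dom(g)$), so in effect you have reproved the cited Brattka--Hertling lemma in the countably based setting. What your route buys is self-containedness and a clear identification of where admissibility actually does work --- it forces $\rho$ to be open on the image of the factoring map, not everywhere; what the paper's route buys is brevity, delegating exactly that step to the literature. Your final detour through an open continuous surjection $\baire\to P$ is harmless but unnecessary: since $P$ is Polish, hence quasi-Polish, Theorem \ref{thrm:open_surjection_preserves_pi2} applies to $\rho|_P\colon P\to X$ directly.
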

\begin{proof}
Assume $X$ is quasi-Polish. Without loss of generality, we can assume that $X\in \bpi 2(\cal P(\omega))$. We already saw that the total function $\delta \colon \baire \to \cal P(\omega)$, defined as $p\mapsto \{ x\in\omega \,|\, \exists n : p(n)=x+1\}$, is an admissible representation of $\cal P(\omega)$. Because $\delta$ is continuous, $P=\delta^{-1}(X)$ is a $\bpi 2$ subset of $\baire$, hence a Polish space. The function $\rho\colon\subseteq \baire\to X$, defined as the restriction of $\delta$ to $P$, is easily seen to be an admissible representation of $X$ with $dom(\rho)$ a Polish space.

Conversely, assume $\rho\colon\subseteq \baire\to X$ is an admissible representation of $X$ and $dom(\rho)$ is Polish. By a result of V. Brattka and P. Hertling (see Lemma 7 in \cite{brattka_hertling}), there is a subset $A$ of $dom(\rho)$ such that the restriction of $\rho$ to $A$ is an open continuous surjection onto $X$. By Lemma \ref{lem:open_restrictions}, there is a $G_\delta$ subset $P$ of $dom(\rho)$ containing $A$ such that the restriction of $\rho$ to $P$ is an open continuous surjection. $P$ is Polish because it is a $G_\delta$ subset of the Polish space $dom(\rho)$, hence $X$ is quasi-Polish by Theorem~\ref{thrm:open_surjection_preserves_pi2}.
\qed
\end{proof}

V. Brattka has shown (Corollary 4.4.12 in \cite{brattka_thesis}) that every Polish space $X$ has a total admissible representation $\rho\colon \baire\to X$. By composing representations we obtain the following.

\begin{theorem}
A countably based space $X$ is quasi-Polish if and only if there is a total admissible representation $\rho\colon \baire\to X$ of $X$.
\qed
\end{theorem}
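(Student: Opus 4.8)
The plan is to derive this statement directly from the preceding characterization (quasi-Polish $\iff$ admissible representation with Polish domain) together with the result of V. Brattka \cite{brattka_thesis} that every Polish space carries a total admissible representation defined on all of $\baire$. Both implications are short, and the entire content is a composition-of-representations argument; no new construction is needed.

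For the backward direction, suppose $\rho\colon\baire\to X$ is a total admissible representation of $X$. Then $dom(\rho)=\baire$ is trivially Polish, so the preceding theorem immediately yields that $X$ is quasi-Polish. For the forward direction, suppose $X$ is quasi-Polish. The preceding theorem furnishes an admissible representation $\rho\colon\subseteq\baire\to X$ whose domain $P=dom(\rho)$ is Polish. By Brattka's result there is a total admissible representation $\delta_P\colon\baire\to P$ of the Polish space $P$. I would then claim that the composite $\rho\circ\delta_P\colon\baire\to X$ is the desired total admissible representation. Totality and continuity are immediate: $\delta_P$ is total and its image lies in $P=dom(\rho)$, so $\rho\circ\delta_P$ is defined and continuous on all of $\baire$.

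The one point requiring verification is admissibility of the composite, and this is the only place where anything must actually be checked. Given an arbitrary partial continuous $f\colon\subseteq\baire\to X$, admissibility of $\rho$ produces a partial continuous $h\colon\subseteq\baire\to\baire$ with $f=\rho\circ h$. Since $f=\rho\circ h$ forces the image of $h$ to lie in $dom(\rho)=P$, and since $P$ carries the subspace topology inherited from $\baire$, the corestriction $h\colon\subseteq\baire\to P$ is again continuous. Admissibility of $\delta_P$ then produces a partial continuous $g\colon\subseteq\baire\to\baire$ with $h=\delta_P\circ g$, and hence $f=\rho\circ h=\rho\circ\delta_P\circ g=(\rho\circ\delta_P)\circ g$, exhibiting the required realizer. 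Thus $\rho\circ\delta_P$ is admissible.

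I do not expect a genuine obstacle here. The only subtlety is the innocuous observation that a continuous realizer into $\baire$ whose image lands in $P$ is automatically continuous as a map into $P$ with its subspace topology; this is precisely the hinge that lets the two admissibility properties be chained. Everything else is formal, so the proof amounts to little more than writing down the composition and tracing realizers through it.
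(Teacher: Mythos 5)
Your proof is correct and follows exactly the paper's intended argument: the paper's entire proof is the remark that Brattka's result gives a total admissible representation of the Polish domain $dom(\rho)$, ``by composing representations we obtain the following.'' You have simply written out the realizer-chasing details of that composition (including the corestriction observation), which the paper leaves implicit.
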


The requirement that $X$ be countably based in the above theorems can not be dropped. In Example 3 of \cite{schroder}, an admissible representation is constructed for a countable Hausdorff space which is not first-countable (hence not quasi-metrizable). It is easy to see that the domain of the representation in this example is Polish, which implies that the space has a total admissible representation. An interesting question is whether or not the completeness properties of quasi-Polish spaces generalize in some way to all spaces with total admissible representations.


\section{A game theoretic characterization of quasi-Polish spaces}\label{sec:gamecharacterization}

In this section we give a game theoretic characterization of quasi-Polish spaces by a simple modification of the strong Choquet game (see \cite{kechris}).

\begin{definition}
Given a non-empty topological space $(X,\tau)$, the game $\cal G(X,\tau)$ is defined as follows.
\[\begin{array}{lccccc}
\mbox{Player I: }  & x_0, U_0    &     & x_1, U_1  &     & \ldots \\
\mbox{Player II: } &		& V_0 &           & V_1 & \ldots
\end{array}\]
Players I and II take turns playing non-empty open subsets of $X$ such that $U_0\supseteq V_0\supseteq U_1\supseteq \ldots$, but additionally Player I is required to play any point $x_n\in U_n$ and II must then play $V_n\subseteq U_n$ with $x_n\in V_n$.

Player II wins the game $\cal G(X,\tau)$ if and only if $\{V_i\,|\, i\in\omega\}$ is a neighborhood basis of some $x\in X$ (i.e., for any open $U\subseteq X$ containing $x$, there is $i\in\omega$ such that $x\in V_i \subseteq U$). Equivalently, Player II wins if and only if $\{U_i\,|\, i\in\omega\}$ is a neighborhood basis of some $x\in X$.
\qed
\end{definition}

If the topology of $X$ is clear from context, then we write $\cal G(X)$ instead of $\cal G(X,\tau)$. The \emph{strong Choquet game} for a topological space $X$ is played with the same rules as $\cal G(X)$, but with the exception that Player II wins if and only if $\bigcap_{n\in\omega}U_n$ is non-empty. A topological space $X$ is a \emph{strong Choquet space} if and only if Player II has a winning strategy\footnote{A precise definition for the term ``winning strategy'' can be found in \cite{kechris}.} for the strong Choquet game on $X$. It immediately follows that if Player II has a winning strategy in the game $\cal G(X)$, then $X$ is a strong Choquet space.

F. Dorais and C. Mummert \cite{dorais_mummert} have investigated the above game for $T_1$-spaces in terms of ``convergent'' strategies for Player II in the strong Choquet game. In particular, they showed that for an arbitrary $T_1$-space $X$, Player II has a winning strategy in the game $\cal G(X)$ if and only if $X$ is the open continuous image of a complete metric space. The following theorem shows that this result extends to all countably based $T_0$-spaces.

\begin{theorem}\label{thrm:game_theoretic_characterization}
If $X$ is a non-empty countably based $T_0$-space, then Player II has a winning strategy in the game $\cal G(X)$ if and only if $X$ is a quasi-Polish space.
\end{theorem}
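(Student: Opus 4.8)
The plan is to prove both implications, treating the direction from quasi-Polishness to the existence of a winning strategy as the routine one and the converse as the substantial one. For the direction ($\Leftarrow$), fix a compatible complete quasi-metric $d$ on $X$ and describe an explicit winning strategy for Player II: when confronted with $x_n\in U_n$, Player II chooses $\delta_n>0$ with $\delta_n\leq 2^{-n}$ and $B_d(x_n,2\delta_n)\subseteq U_n$ (possible since $U_n$ is open and $x_n\in U_n$), and plays $V_n=B_d(x_n,\delta_n)$. Because the rules force $U_{n+1}\subseteq V_n$, every later point satisfies $x_m\in V_n$, so $d(x_n,x_m)<\delta_n\leq 2^{-n}$ for $m>n$; thus $(x_n)_{n\in\omega}$ is Cauchy and converges with respect to $\tau_{\widehat{d}}$ to some $x\in X$. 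Passing to the limit gives $d(x_n,x)\leq\delta_n<2\delta_n$, whence $x\in U_n$ for every $n$. Finally, given any open $U\ni x$, pick $r>0$ with $B_d(x,r)\subseteq U$ and an index $i$ with $\widehat{d}(x_i,x)<r/2$ and $\delta_i<r/2$; the triangle inequality then yields $V_i=B_d(x_i,\delta_i)\subseteq B_d(x,r)\subseteq U$, so $\{V_i\}_{i\in\omega}$ is a neighborhood basis of $x$ and Player II wins.

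For the direction ($\Rightarrow$), the plan is to manufacture from a winning strategy $\Sigma$ an open continuous surjection onto $X$ from a quasi-Polish space, and then invoke Theorem~\ref{thrm:open_surjection_preserves_pi2}. I would embed $X$ as a subspace of $\cal P(\omega)$ and fix a countable basis $\{B_k\}_{k\in\omega}$ of $X$. Restricting Player I to basic open moves, consider the space $R$ of all infinite runs consistent with $\Sigma$, coded as sequences recording the data $(x_n,k_n)$ with $U_n=B_{k_n}$; give the point-coordinates the Polish metric topology of $\cal P(\omega)=2^\omega$ (via $\widehat{d}$) and the index-coordinates the discrete topology, so the ambient product is Polish. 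Define $\pi\colon R\to X$ by sending a run to the point whose neighborhood basis is $\{V_n\}_{n\in\omega}$: this point exists because $\Sigma$ is winning and is unique because $X$ is $T_0$. Surjectivity is immediate, since for any target $x\in X$ Player I may play $x_n=x$ together with basic $U_n$ forming a neighborhood basis of $x$, forcing the output to be $x$. Openness follows because the $\pi$-image of a basic open subset of $R$ is a union of sets of the form $V_m$ (as the exact values of $V_m$ vary with the point-coordinates ranging over their $\widehat{d}$-balls), each of which is open in $X$; and this image is a neighborhood of the central output. Continuity is similarly routine, since longer agreement of runs forces agreement of the $V_n$'s.

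The hard part will be establishing that $R$ (or a suitable subdomain of it) is genuinely quasi-Polish, which is delicate precisely because legality requires each $x_n\in X$, and membership in $X$ is exactly the property under investigation — so one cannot blithely treat the point-coordinates as ranging over the full Polish space $2^\omega$. My proposed route around this is to mimic the argument used for admissible representations: restrict Player I to a countable dense family of legal positions so that $\pi$ restricted to a small subset remains an open continuous surjection, then apply Lemma~\ref{lem:open_restrictions} to enlarge this to a $G_\delta$ (hence Polish) subdomain $P$ on which the evaluation is still an open continuous surjection. A second, self-contained option would be to argue directly that the set of $\Sigma$-consistent won runs is $\bpi 2$ in the quasi-Polish product $\prod_n(\cal P(\omega)\times\omega)$ (quasi-Polish by Corollary~\ref{cor:countable_products}), and hence quasi-Polish by Theorem~\ref{thrm:quasipolish_bpi2_characterization}. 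Either way, once I have an open continuous surjection from a quasi-Polish space onto the $T_0$-space $X$, Theorem~\ref{thrm:open_surjection_preserves_pi2} delivers that $X$ is quasi-Polish, completing the proof.
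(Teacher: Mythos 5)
Your left-to-right half (quasi-Polish implies a winning strategy for Player II) is correct and is essentially the paper's own argument: shrinking balls around the played points, Cauchyness of the centers, convergence via completeness, and the triangle inequality; the only nit is that at the end you also need $x\in V_i$, which follows from $x\in U_{i+1}\subseteq V_i$. Your plan for the converse --- manufacture an open continuous surjection onto $X$ and invoke Theorem~\ref{thrm:open_surjection_preserves_pi2} --- is also the paper's plan. But the gap is exactly the one you flag as ``the hard part,'' and neither of your two routes closes it. A winning strategy $\Sigma$ is an \emph{arbitrary} function of the history, and the history includes the points $x_0,\dots,x_n$ played by Player I; hence ``consistent with $\Sigma$'' places arbitrary constraints on the point coordinates of your run space $R$. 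The set of $\Sigma$-runs therefore need not be $\bpi 2$ in $\prod_{n}(\cal P(\omega)\times\omega)$ --- it need not be Borel at all --- which, together with the $x_n\in X$ legality issue you acknowledge, defeats your second route. Your first route fails for a different reason: Lemma~\ref{lem:open_restrictions} produces a $G_\delta$ subset $P$ of the ambient space on which the map is \emph{globally} defined and continuous, and $P$ is Polish only if that ambient space is Polish. In the admissible-representation theorem this is available because Polishness of $dom(\rho)$ is a hypothesis there; in your setting the ambient space is $R$ itself (or the set of ``won'' index sequences), whose Polishness is precisely as unknown as the quasi-Polishness of $X$, so the lemma cannot bootstrap you out. (Moreover, restricting Player I to a countable dense family of positions does not obviously preserve surjectivity: Player II's response is only guaranteed to contain the point actually played, not your intended target.)

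The missing idea --- and the paper's solution --- is to eliminate the point coordinates from the coding entirely, rather than record them and then fight their complexity. Normalize Player II so that it answers only with members of a fixed countable basis $\cal B$. Then, for a fixed finite sequence of open-set moves (Player I always handing back Player II's previous answer, paired with an arbitrary point of it), the set of possible answers of $\Sigma$, as that point ranges over \emph{all} legal choices, is a subset of $\cal B$ and hence countable. This yields a countably-branching tree of open sets, i.e.\ a map $f\colon\omega^{<\omega}\to\cal B$, and one defines $\phi\colon\baire\to X$ by sending $p$ to the unique point having $\{f(p[n])\,|\,n\in\omega\}$ as a neighborhood basis; this is well defined on \emph{all} of $\baire$ because the winning condition depends only on the sequence of open sets, not on which points realized the run, and uniqueness is the $T_0$ axiom. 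One then checks $\phi(\uparrow\sigma)=f(\sigma)$ (for surjectivity and openness, Player I chases a fixed target point through the run), so $\phi$ is a total open continuous surjection from the Polish space $\baire$, and Theorem~\ref{thrm:open_surjection_preserves_pi2} finishes the proof with no complexity question about the domain ever arising.
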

\begin{proof}
Let $(X,d)$ be a complete quasi-metric space. We show that the following strategy is winning for Player II in the game $\cal G(X)$:

\begin{quote}
If Player I plays $(x_n, U_n)$ on the $n$-th step, then Player II responds by playing $V_n = B_d(x_n,\varepsilon_n)$, where $\varepsilon_n$ is chosen so that $0<\varepsilon_n\leq 1/n$ and $\overline{B}_d(x_n,\varepsilon_n)\subseteq U_n$.
\end{quote}

Since $x_{n+1}\in V_n$ for all $n\in\omega$, it is clear that $(x_n)_{n\in\omega}$ is a Cauchy sequence. Let $x\in X$ be the limit of $(x_n)_{n\in\omega}$ with respect to $\tau_{\widehat{d}}$.

We first show that $x\in V_n$ for all $n\in\omega$. Assume for a contradiction that $d(x_n,x) = \varepsilon_n + \varepsilon$ for some $n\in\omega$ and $\varepsilon>0$. Since $(x_n)_{n\in\omega}$ converges to $x$ with respect to to $\tau_{\widehat{d}}$, there must be some $m\geq n$ such that $d(x_m,x)<\varepsilon$. Since $x_m\in B_d(x_n,\varepsilon_n)$, it follows that $d(x_n,x)\leq d(x_n,x_m)+d(x_m,x) < \varepsilon_n + \varepsilon$, a contradiction.

To finish the proof, it suffices to show that for all $\varepsilon>0$, there is some $n\in\omega$ such that $x\in V_n \subseteq B_d(x,\varepsilon)$. Choose $n$ large enough that $d(x,x_n)<\varepsilon/2$ and $1/n < \varepsilon/2$. Then $x\in V_n$ and for all $y\in V_n$,
\[d(x,y)\leq d(x,x_n) + d(x_n,y) < \varepsilon/2 + \varepsilon/2 = \varepsilon,\]
hence $y\in B_d(x,\varepsilon)$.

For the converse, assume that $X$ is a non-empty countably based $T_0$-space, and that Player II has a winning strategy in $\cal G(X)$. Let $\cal B$ be a countable basis for $X$ which contains $X$. We can assume that Player II always plays elements of $\cal B$. Indeed, if on the $n$-th step, Player I played $(x_n,U_n)$ and Player II's strategy is to play $V_n$ in response, then Player II could revise his strategy to play any $B\in\cal B$ such that $x_n\in B \subseteq V_n$ and still have a winning strategy. Below we assume that Player II plays according to some fixed winning strategy in which he only plays elements of $\cal B$.

We next define a function $f\colon \omega^{<\omega}\to \cal B$ and simultaneously associate each $\sigma\in\omega^{<\omega}$ with a set of plays of the game $\cal G(X)$, which we will call the \emph{$\sigma$-runs}. The $\sigma$-runs will be characterized by being runs of the game in which Player I plays the open set $f(\sigma')$ on the $|\sigma'|$-th step for all $\sigma'\preceq \sigma$.

For the empty sequence $\langle\rangle$, define $f(\langle\rangle) = X$ and define the $\langle\rangle$-runs to be the set of all plays of the game $\cal G(X)$ in which Player I starts by playing $(x_0,X)$ for some $x_0\in X$.

Now assume $f(\sigma)$ has been defined and $|\sigma|= n$. Let $\cal S$ be the set of all $B\in\cal B$ such that there is some $\sigma$-run of the game $\cal G(X)$ in which on the $n$-th step Player I plays $(x_n, f(\sigma))$ (for some $x_n\in X$) and in response Player II's strategy is to play $V_n=B$. Then $\cal S$ is non-empty and countable, so let $(S_i)_{i\in\omega}$ be an enumeration (possibly with repeats) of the elements of $\cal S$. For each $i\in\omega$, define $f(\sigma\diamond i)=S_i$. Let the $\sigma\diamond i$-runs be the subset of the $\sigma$-runs in which on the $n$-th step Player II responds with $f(\sigma\diamond i)$ and in the $(n+1)$-th step Player I continues by playing $(x_{n+1},f(\sigma\diamond i))$ for some $x_{n+1}\in f(\sigma\diamond i)$. 

We next define a function $\phi\colon \baire\to X$. For each $p\in\baire$ we can associate the set of all plays of the game $\cal G(X)$ in which Player I plays $(x_n,f(p[n]))$ on the $n$-th step, where $p[n]$ is the initial prefix of $p$ of length $n$ and $x_n\in f(p[n])$. As Player II's strategy is winning, $\{f(p[n]) \,|\, n\in\omega\}$ must be a neighborhood basis of some $x_p \in X$, which is necessarily unique because $X$ is a $T_0$-space. Define $\phi(p)=x_p$.

We finish the proof of the theorem by showing that $\phi\colon \baire\to X$ is an open continuous surjection.

To see that $\phi$ is continuous, fix $p\in \baire$ and open neighborhood $U$ of $\phi(p)\in X$. Then there must be some $\sigma\prec p$ such that $\phi(p)\in f(\sigma)\subseteq U$. As $\phi(q)\in f(\sigma)$ for all $q\in\baire$ extending $\sigma$, $\phi(\uparrow \sigma)\subseteq U$. It follows that $\phi$ is continuous.

Finally, we show that $\phi$ is an open surjection by proving $\phi(\uparrow\sigma)=f(\sigma)$ for each $\sigma\in\omega^{<\omega}$ (surjectivity follows because $f(\langle\rangle)=X$). It is clear by the definition of $\phi$ that $\phi(\uparrow\sigma)\subseteq f(\sigma)$. For the converse, assume $x\in f(\sigma)$. Then there is a $\sigma$-run of the game $\cal G(X)$ in which Player I plays $(x,f(\sigma))$ in the $|\sigma|$-th step. Player I can continue this $\sigma$-run by always playing $(x,V_n)$ in response to Player II playing $V_n$ in the $n$-th step. As a result, there must be some $p\in\baire$ extending $\sigma$ that is associated with this run of the game, hence $x=\phi(p)\in\phi(\uparrow~\sigma)$.

Since $\phi\colon\baire\to X$ is an open continuous surjection, it follows from Theorem \ref{thrm:open_surjection_preserves_pi2} that $X$ is quasi-Polish.
\qed
\end{proof}

It follows that every quasi-Polish space is strong Choquet. Also, every strong Choquet space is a Baire space (i.e., countable intersections of dense open sets are dense), thus we obtain the following.

\begin{corollary}
Every quasi-Polish space is a Baire space.
\qed
\end{corollary}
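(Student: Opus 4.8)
The plan is to reduce the statement to two facts: that every quasi-Polish space is a strong Choquet space, and that every strong Choquet space is a Baire space. The first is immediate from the preceding theory, so the real content is the second, which I would establish by a direct play of the strong Choquet game using Player II's winning strategy.

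First I would invoke Theorem~\ref{thrm:game_theoretic_characterization}: if $X$ is a non-empty quasi-Polish space (and if $X$ is empty it is trivially a Baire space), then Player II has a winning strategy in $\cal G(X)$. I would then observe that the winning condition for $\cal G(X)$, namely that $\{U_i\}_{i\in\omega}$ form a neighborhood basis of some point, is strictly stronger than the winning condition $\bigcap_{i\in\omega}U_i\not=\emptyset$ of the strong Choquet game played with the same moves. Hence the very same strategy is winning for Player II in the strong Choquet game, so $X$ is a strong Choquet space.

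Second, I would show that any strong Choquet space $X$ is a Baire space. Let $\{D_n\}_{n\in\omega}$ be dense open subsets and let $W$ be an arbitrary non-empty open set; it suffices to exhibit a point of $W\cap\bigcap_{n}D_n$. The idea is to run the strong Choquet game with Player II following a winning strategy while Player I steers the play into the $D_n$'s. Concretely, since $W\cap D_0$ is non-empty open, Player I opens with some $(x_0,U_0)$ where $U_0\subseteq W\cap D_0$; after Player II responds with $V_n$, Player I plays $(x_{n+1},U_{n+1})$ with $U_{n+1}\subseteq V_n\cap D_{n+1}$, which is non-empty open because $V_n$ is non-empty open and $D_{n+1}$ is dense. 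Since Player II's strategy wins, $\bigcap_{n}U_n$ contains some point $x$, and by construction $x\in U_0\subseteq W$ and $x\in U_n\subseteq D_n$ for every $n$, so $x\in W\cap\bigcap_{n}D_n$. As $W$ was arbitrary, $\bigcap_{n}D_n$ is dense, and $X$ is a Baire space.

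I do not anticipate a genuine obstacle: both steps are standard once Theorem~\ref{thrm:game_theoretic_characterization} is available. The only points requiring minor care are the index bookkeeping in the game (ensuring each $U_{n+1}$ lands inside both Player II's previous move $V_n$ and the next dense open set $D_{n+1}$) and the verification that the stronger neighborhood-basis winning condition of $\cal G(X)$ really does imply the non-emptiness condition of the strong Choquet game.
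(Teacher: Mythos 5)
Your proposal is correct and follows essentially the same route as the paper: the corollary there is obtained by observing that the winning condition of $\cal G(X)$ implies that of the strong Choquet game (so every quasi-Polish space is strong Choquet) and then citing the standard fact that strong Choquet spaces are Baire. The only difference is that you spell out the standard game argument for that last fact, which the paper leaves as a known result; your argument for it is sound.
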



\section{Embedding quasi-Polish spaces into $\omega$-continuous domains}\label{sec:domainembedding}

In this section we give a domain-theoretic characterization of quasi-Polish spaces. We then show some applications to modeling spaces as the maximal elements of a domain.

\begin{theorem}\label{thrm:domain_embedding}
The following are equivalent for a topological space $X$:
\begin{enumerate}
\item
$X$ is a quasi-Polish space,
\item
$X$ is homeomorphic to the set of non-compact elements of some $\omega$-continuous domain,
\item
$X$ is homeomorphic to the set of non-compact elements of some $\omega$-algebraic domain.
\end{enumerate}
\end{theorem}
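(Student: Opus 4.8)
The plan is to prove the cycle $(1)\Rightarrow(3)\Rightarrow(2)\Rightarrow(1)$, as $(3)\Rightarrow(2)$ is immediate (every $\omega$-algebraic domain is $\omega$-continuous). For $(2)\Rightarrow(1)$, let $D$ be an $\omega$-continuous domain with countable basis $B$. By the results of Section~\ref{sec:locallycompactsober}, $D$ is countably based, locally compact and sober, hence quasi-Polish, so by Theorem~\ref{thrm:quasipolish_bpi2_characterization} it suffices to show that the set $K$ of compact elements is $\bsigma 2(D)$. The key observation is that $x$ is compact if and only if there is $b\in B$ with $b\ll x\sqsubseteq b$: if $x\ll x$ then, writing $x=\bigsqcup D_x$ for some directed $D_x\subseteq\{b\in B\,|\,b\ll x\}$, we get $b\in D_x$ with $x\sqsubseteq b$ and $b\ll x$; conversely $x\sqsubseteq b\ll x$ forces $x\ll x$. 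Thus $K=\bigcup_{b\in B}\big(\wayabovearrow b\cap\{x\in D\,|\,x\sqsubseteq b\}\big)$, and since each $\wayabovearrow b$ is Scott-open and each $\{x\in D\,|\,x\sqsubseteq b\}=\overline{\{b\}}$ is closed, every term is the intersection of an open and a closed set. Hence $K\in\bsigma 2(D)$, the non-compact elements form a $\bpi 2$ subspace, and that subspace is quasi-Polish.

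The main work, and the main obstacle, is $(1)\Rightarrow(3)$. The plan is to realize $X$ as the non-compact elements of a suitable \emph{algebraic} formal-ball domain. Fix a compatible complete quasi-metric $d$ and a countable $\widehat{d}$-dense set $A$ (Proposition~\ref{prop:countablybased_iff_sep}), and form the countable poset $P$ of \emph{quantized formal balls} $(a,2^{-n})$ with $a\in A$, $n\in\omega$, ordered by $(a,2^{-n})\sqsubseteq(b,2^{-m})$ if and only if they are equal or $d(a,b)<2^{-n}-2^{-m}$ (the triangle inequality gives transitivity, and summing the two strict inequalities rules out incomparable antisymmetry failures). Let $D=\mathrm{Idl}(P)$ be the ideal completion; this is $\omega$-algebraic, its compact elements are the principal ideals and its non-compact elements are the non-principal ideals.

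The decisive point, and the reason for quantizing the radii, is that the set $\{2^{-n}\}$ is reverse-well-ordered: in any strictly increasing chain the radii either tend to $0$ or are eventually constant, and a chain of eventually constant radius is eventually constant (two distinct balls of equal radius are incomparable). Hence every non-principal ideal contains a cofinal chain $(a_k,2^{-n_k})$ with $n_k\to\infty$ and $d(a_k,a_l)<2^{-n_k}$ for $l\ge k$, i.e.\ a Cauchy sequence, which by completeness $\widehat{d}$-converges to a unique $x\in X$. This is exactly where completeness is used, and it is what forbids ``spurious'' non-compact elements (limits of positive radius). Conversely I send $x\in X$ to $I_x=\{(a,2^{-n})\,|\,d(a,x)<2^{-n}\}$; density of $A$ shows $I_x$ is a directed lower set, and since each of its elements lies strictly below one of smaller radius it is non-principal. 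One then checks that $x\mapsto I_x$ and $I\mapsto\lim I$ are mutually inverse, using $T_0$-ness and uniqueness of $\widehat{d}$-limits for injectivity and a cofinal chain of $I_x$ with centres tending to $x$ to see $\lim I_x=x$.

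Finally, for the homeomorphism: the Scott-open set $\{I\in D\,|\,(a,2^{-n})\in I\}$ meets the non-compact elements in $\{x\,|\,d(a,x)<2^{-n}\}=B_d(a,2^{-n})$, and since $A$ is $\widehat{d}$-dense these balls form a basis for $\tau_d$, so $x\mapsto I_x$ carries $\tau_d$ exactly onto the subspace topology on the non-compact elements. I expect the genuinely delicate part to be verifying that the correspondence $x\leftrightarrow I_x$ is a bijection: because $d$ is asymmetric one must track $d(a,x)$ and $d(x,a)$ separately and be careful about strict versus non-strict inequalities when passing to limits, whereas the algebraicity of $D$ and the absence of extra non-compact elements follow cleanly from the radius quantization together with completeness.
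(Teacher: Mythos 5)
Your proposal is correct, but for the main implication $(1)\Rightarrow(3)$ it follows a genuinely different route from the paper. The paper's proof never touches the quasi-metric at that stage: it realizes $X$ as a $\bpi 2$ subset of $\cal P(\omega)$, writes $\cal P(\omega)\setminus X=\bigcup_{i\in\omega}U_i\setminus V_i$, and takes the ideal completion of a countable poset on $\cal F\times\omega$ (with $\cal F$ the finite subsets of points of $X$), where $\langle F_1,n_1\rangle\sqsubseteq\langle F_2,n_2\rangle$ requires $F_1\subseteq F_2$, $n_1<n_2$, and $F_1\in U_m\Rightarrow F_2\in V_m$ for all $m\leq n_1$; non-principal ideals are then forced into $X$ by the pure combinatorics of this presentation. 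You instead work directly with a compatible Smyth-complete quasi-metric and build the quantized formal-ball poset over a countable $\widehat{d}$-dense set, in the Weihrauch--Schreiber/Edalat--Heckmann tradition. Both proofs conclude the same way: the ideal completion of a countable poset (Proposition I-4.10 of \cite{etal_scott}) is $\omega$-algebraic, the non-principal ideals are put in bijection with the points of $X$, and the basic Scott-open sets determined by principal ideals trace out a basis of $X$ (in your case the balls $B_d(a,2^{-n})$, which form a basis of $\tau_d$ by the proof of Proposition \ref{prop:countablybased_iff_sep}). What the paper's encoding buys is a metric-free argument that reuses the already-established $\bpi 2$ characterization (Theorem \ref{thrm:quasipolish_bpi2_characterization}); what your route buys is a more standard, conceptually transparent model in which the role of Smyth-completeness is isolated exactly where you say it is --- and the delicate points you flag do all go through: radius quantization rules out non-principal ideals of ``positive radius''; for $I\subseteq I_{\lim I}$ one needs the non-strict bound $d(a_k,x)\leq 2^{-n_k}$, obtained from $d(a_k,x)\leq d(a_k,a_l)+\widehat{d}(a_l,x)$ by letting $l\to\infty$, which combines with the strict inequality $d(b,a_k)<2^{-m}-2^{-n_k}$ to keep membership in $I_x$ strict; and for $I_{\lim I}\subseteq I$ one needs $d(x,a_k)\to 0$, which is precisely what convergence in $\tau_{\widehat{d}}$ (Smyth-completeness) provides. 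Your direction $(2)\Rightarrow(1)$ is essentially the paper's, with a cosmetic variation: you exhibit the compact elements as $\bigcup_{b\in B}\bigl(\wayabovearrow b\cap\overline{\{b\}}\bigr)$, whereas the paper notes that they form a countable union of $\bdelta 2$ singletons; either way they are $\bsigma 2$, so the non-compact elements form a $\bpi 2$, hence quasi-Polish, subspace of the quasi-Polish domain.
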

\begin{proof}
The implication from $3$ to $2$ is trivial. To see that $2$ implies $1$, first note that if $x$ is a compact element of an $\omega$-continuous domain $D$, then the singleton $\{x\}$ is in $\bdelta 2(D)$. As there are at most a countably infinite number of compact elements in an $\omega$-continuous domain, the set of compact elements of $D$ is $\bsigma 2$, hence the set of non-compact elements is $\bpi 2$. The implication from $2$ to $1$ follows because every $\omega$-continuous domain is quasi-Polish.

It only remains to show that $1$ implies $3$. So assume without loss of generality that $X\in \bpi 2(\cal P(\omega))$. Then we have
\[\cal P(\omega)\setminus X = \bigcup_{i\in\omega} U_i\setminus V_i\]
for some appropriate choice of open sets $U_i,V_i$. We define
\[\cal F = \{ F\in\cal P(\omega) \,|\, F \mbox{ is finite and } (\exists x\in X) \colon F\subseteq x\}.\]
We partially order $\cal F\times\omega$ by $\langle F_1,n_1\rangle \sqsubseteq \langle F_2,n_2\rangle$ if and only if either
\begin{enumerate}
\item
$F_1=F_2$ and $n_1=n_2$, or else
\item
$F_1\subseteq F_2$ and $n_1 < n_2$ and $(\forall m\leq n_1) \colon F_1\in U_m \Rightarrow F_2 \in V_m$.
\end{enumerate}
It is immediate that $\sqsubseteq$ is reflexive and anti-symmetric. To see that it is transitive, simply note that if $F_2 \in V_m$ and $F_2\subseteq F_3$, then $F_3\in V_m$ because $V_m$ is open.

We let $\cal I$ denote the set of all ideals of $\langle\cal F\times\omega,\sqsubseteq\rangle$ ordered by inclusion. By Proposition I-4.10 in \cite{etal_scott}, $\cal I$ is an $\omega$-algebraic domain whose compact elements are precisely the principal ideals. We complete the proof by showing that $X$ is homeomorphic to the subspace of non-principal ideals of $\cal I$.

Define $\phi\colon X\to \cal I$ by $\phi(x)=\{\langle F,n\rangle\in \cal F\times\omega \,|\, F\subseteq x\}$. We first show that $\phi(x)$ is an ideal and thus $\phi$ is well-defined.

Clearly $\phi(x)$ is a lower set with respect to $\sqsubseteq$. To show $\phi(x)$ is directed, assume $\langle F_1,n_1\rangle,\langle F_2,n_2\rangle\in \phi(x)$. Let $n=\max\{n_1,n_2\}+1$. For all $m\leq n$ we define a finite $G_m\subseteq\omega$ as follows. If $F_1\in U_m$ or $F_2\in U_m$ then $x\in U_m$ since $U_m$ is open, hence $x\in V_m$ because $x\in X$. We can therefore choose $G_m\in \cal F$ so that $F_1\cup F_2 \subseteq G_m \subseteq x$ and $G_m\in V_m$. If on the other hand, $F_1\not\in U_m$ and $F_2\not\in U_m$, then let $G_m=F_1\cup F_2$. By defining $F=\bigcup_{m\leq n}G_m$ it follows by our construction that $\langle F,n\rangle\in\phi(x)$ and $\langle F_1,n_1\rangle,\langle F_2,n_2\rangle\sqsubseteq \langle F,n\rangle$. Therefore, $\phi(x)$ is an ideal and $\phi$ is a well-defined function.

By the above argument it is clear that $\phi(x)$ is a non-principal ideal for each $x\in X$. For the converse, assume that $I\in\cal I$ is non-principal. Let $x = \bigcup\{ F\,|\, \langle F,n\rangle \in I\}$. If $x\in U_n$ for some $n\in \omega$, then there is finite $F\subseteq x$ such that $F\in U_n$. Since $F$ is finite and $I$ is directed, there is some $\langle F_0,n_0\rangle\in I$ such that $F\subseteq F_0$. Since $I$ is not principal, there is $\langle F_1,n_1\rangle\in I$ distinct from $\langle F_0,n_0\rangle$ such that $\langle F_0,n_0\rangle \sqsubseteq \langle F_1,n_1\rangle$. In particular, $n_0<n_1$ by definition of $\sqsubseteq$. Thus, there exists a finite strictly increasing chain $\langle F_0,n_0\rangle \sqsubseteq \langle F_1,n_1\rangle\sqsubseteq \cdots \sqsubseteq \langle F_m,n_m\rangle \sqsubseteq \langle F_{m+1},n_{m+1}\rangle$ of elements in $I$ such that $n_m > n$. Clearly, $F_m\in U_n$ because $F\subseteq F_m$, and it follows that $F_{m+1}\in V_n$ by definition of $\sqsubseteq$. Since $F_{m+1}\subseteq x$, it follows that $x\in V_n$. As $n\in\omega$ was arbitrary, it follows that $x\in X$.

Since $x =\bigcup\{ F\,|\, \langle F,n\rangle \in I\}$, it is clear that $I\subseteq \phi(x)$. On the other hand, if $\langle F,n\rangle\in\phi(x)$, then by repeating the argument from the previous paragraph, there exist elements $\langle F',n'\rangle$ and $\langle F'',n''\rangle$ in $I$ satisfying $F\subseteq F'$, $n<n'$, and $\langle F',n'\rangle\sqsubseteq \langle F'',n''\rangle$. For all $m\leq n$, if $F\in U_m$ then $F'\in U_m$, hence $F''\in V_m$. It follows that $\langle F,n\rangle \sqsubseteq \langle F'',n''\rangle$, hence $\langle F,n\rangle\in I$. Therefore, $I = \phi(x)$. 

It follows that the image of $\phi$ is precisely the set of non-principal ideals, hence the non-compact elements in $\cal I$. Since the Scott-topology on $\cal I$ is generated by sets of the form $\{I\in\cal I\,|\, \langle F,n\rangle\in I\}$ for $\langle F,n\rangle\in\cal F\times \omega$, it is easily seen that $\phi$ is a homeomorphic embedding of $X$ into $\cal I$.
\qed
\end{proof}

Given a topological space $X$, let $Max(X)$ denote the set of maximal elements of $X$ with respect to the specialization order. Every quasi-Polish space is a dcpo with respect to the specialization order by virtue of being sober, hence $Max(X)$ is non-empty when $X$ is a non-empty quasi-Polish space.

In \cite{Martin_ideal}, K. Martin defines an $\omega$-ideal domain to be an $\omega$-algebraic domain in which every element is compact or maximal with respect to the specialization order. Furthermore, an $\omega$-ideal model of a topological space $X$ is defined to be an $\omega$-ideal domain $D$ in which $X$ is homeomorphic to $Max(D)$. 

If $D$ is an $\omega$-ideal domain, then $D\setminus Max(D)$ is a countable collection of compact elements, hence $Max(D)\in\bpi 2(D)$. Clearly every space that has an $\omega$-ideal model must satisfy the $T_1$-separation axiom, and conversely it is clear that the construction given in the proof of Theorem \ref{thrm:domain_embedding} is an $\omega$-ideal model when applied to a $T_1$ quasi-Polish space. We therefore obtain the following.

\begin{corollary}
A topological space has an $\omega$-ideal model if and only if it is quasi-Polish and satisfies the $T_1$-separation axiom.
\qed
\end{corollary}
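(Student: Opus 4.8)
The plan is to prove both implications, assembling the facts already sketched before the corollary and filling the one genuine gap, namely the verification that the construction of Theorem~\ref{thrm:domain_embedding} actually yields an $\omega$-ideal domain when $X$ is $T_1$.

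For the forward direction I would assume $D$ is an $\omega$-ideal model of $X$, so $X$ is homeomorphic to $Max(D)$. Since every element of $D$ is compact or maximal, every non-maximal element is compact, so $D\setminus Max(D)$ is contained in the countable set of compact elements of the $\omega$-algebraic domain $D$. Each singleton of a compact element lies in $\bdelta 2(D)$ (as noted in the proof of Theorem~\ref{thrm:domain_embedding}), hence $D\setminus Max(D)$ is a countable union of $\bdelta 2$ sets and so is $\bsigma 2$, giving $Max(D)\in\bpi 2(D)$. As $D$ is $\omega$-algebraic it is quasi-Polish, so by Theorem~\ref{thrm:quasipolish_bpi2_characterization} the $\bpi 2$-subspace $Max(D)$ is quasi-Polish. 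For the $T_1$-axiom I would observe that the specialization order of the subspace $Max(D)$ is the restriction of that of $D$, which is trivial on maximal elements, and that a $T_0$-space with trivial specialization order is $T_1$. Hence $X$ is quasi-Polish and $T_1$.

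For the converse I would assume $X$ is quasi-Polish and $T_1$ and take the embedding $\phi\colon X\to\cal I$ from the proof of Theorem~\ref{thrm:domain_embedding}, where $\cal I$ is an $\omega$-algebraic domain whose compact elements are exactly the principal ideals and where the image of $\phi$ is exactly the set of non-principal ideals. It then remains only to show that the non-principal ideals are precisely the maximal elements of $\cal I$, which simultaneously upgrades ``$\omega$-algebraic'' to ``$\omega$-ideal'' and identifies $Max(\cal I)$ with $\phi(X)$. Since $X$ is $T_1$, the set $X\subseteq\cal P(\omega)$ is an antichain under $\subseteq$; thus if $\phi(x)\subseteq I$ with $I$ non-principal, say $I=\phi(y)$, then $x\subseteq y$ forces $x=y$, and $\phi(x)$ cannot be contained in any principal ideal $\downarrow\langle F_0,n_0\rangle$ because $\langle\emptyset,n\rangle\in\phi(x)$ for every $n$ yields elements of unbounded second coordinate. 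So each $\phi(x)$ is maximal. Finally, no principal ideal is maximal: it suffices to show that $\langle\cal F\times\omega,\sqsubseteq\rangle$ has no maximal element, and I would produce a strict successor $\langle F_1,n_0+1\rangle$ of any $\langle F_0,n_0\rangle$ exactly by the directedness mechanism of Theorem~\ref{thrm:domain_embedding}: fixing $x\in X$ with $F_0\subseteq x$, for each $m\le n_0$ with $F_0\in U_m$ we have $x\in U_m$ and hence $x\in V_m$ (as $x\in X$), so some finite $G_m\subseteq x$ lies in $V_m$, and $F_1=F_0\cup\bigcup_m G_m$ works. Thus $Max(\cal I)=\phi(X)\cong X$ and $\cal I$ is an $\omega$-ideal model of $X$.

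The main obstacle is the converse, specifically the identification $Max(\cal I)=\phi(X)$: one must verify both that the $T_1$ hypothesis makes the non-principal ideals maximal and that the order on $\cal F\times\omega$ has no maximal element, so that no principal (compact) ideal slips into $Max(\cal I)$. Both points reduce to the single mechanism already used to prove $\phi$ well-defined, namely that $x\in X$ forces $x\in U_m\Rightarrow x\in V_m$; the forward direction is essentially bookkeeping over results established earlier.
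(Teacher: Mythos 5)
Your proposal is correct and takes essentially the same approach as the paper, which derives the forward direction from $Max(D)\in\bpi 2(D)$ (via countability of the compact elements) together with Theorem \ref{thrm:quasipolish_bpi2_characterization}, and obtains the converse by observing that the construction in Theorem \ref{thrm:domain_embedding} gives an $\omega$-ideal model when applied to a $T_1$ quasi-Polish space. The paper simply asserts this last point as ``clear,'' whereas you correctly fill in the two needed verifications (non-principal ideals are maximal under the $T_1$/antichain hypothesis, and $\cal F\times\omega$ has no maximal elements, so no compact element is maximal), both by exactly the mechanism used in the paper's proof of directedness.
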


K. Martin has shown that if $D$ is an $\omega$-continuous domain and $Max(D)$ is metrizable, then $Max(D)$ is $G_\delta$ in $D$. Since $D$ is quasi-Polish, this implies that $Max(D)$ is quasi-Polish, hence Polish because it is metrizable. In the proof of Theorem \ref{thrm:domain_embedding}, the maximal elements of $X$ and the $\omega$-continuous domain in which it is embedded coincide, so we obtain the following.

\begin{corollary}
If $X$ is quasi-Polish and $Max(X)$ is metrizable, then $Max(X)$ is Polish.
\qed
\end{corollary}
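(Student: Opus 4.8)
The plan is to deduce the corollary from K.~Martin's theorem by realizing $X$ inside a concrete domain. First I would apply Theorem~\ref{thrm:domain_embedding} in the form given by its proof: $X$ is homeomorphic, via $\phi(x)=\{\langle F,n\rangle\in\cal F\times\omega\,|\, F\subseteq x\}$, to the set $\phi(X)$ of non-compact (equivalently, non-principal) elements of the $\omega$-algebraic domain $\cal I$ of ideals of $\langle\cal F\times\omega,\sqsubseteq\rangle$. Since the specialization order on $X\subseteq\cal P(\omega)$ is inclusion, $\phi$ is an order-embedding: $x\subseteq y$ if and only if $\phi(x)\subseteq\phi(y)$.

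The crucial step is to check that $\phi$ carries $Max(X)$ exactly onto $Max(\cal I)$. Every principal ideal is finite, since the pairs below a generator $\langle F_0,n_0\rangle$ satisfy $F\subseteq F_0$ and $n\leq n_0$; and each such generator has a strict successor, obtained as in the proof of Theorem~\ref{thrm:domain_embedding} by enlarging $F_0$ within some $x\in X$ and increasing $n_0$. Hence no principal ideal is maximal, so $Max(\cal I)\subseteq\phi(X)$, and every element of $Max(\cal I)$ is maximal in $\phi(X)$ as well. For the reverse inclusion, suppose $x\in Max(X)$ yet $\phi(x)\subsetneq I$ for some $I\in\cal I$. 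Then $I$ is infinite, hence non-principal, so by the analysis in the proof of Theorem~\ref{thrm:domain_embedding} we have $I=\phi(y)$ with $y=\bigcup\{F\,|\,\langle F,n\rangle\in I\}\in X$; but then $\phi(x)\subsetneq\phi(y)$ forces $x\subsetneq y$, contradicting the maximality of $x$. Thus $Max(\cal I)=\phi(Max(X))$, and in particular $Max(\cal I)$, as a subspace of $\cal I$, is homeomorphic to $Max(X)$.

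Granting this identification, the conclusion follows quickly. As $\cal I$ is an $\omega$-continuous domain and $Max(\cal I)\cong Max(X)$ is metrizable, K.~Martin's theorem quoted above shows $Max(\cal I)$ is $G_\delta$ in $\cal I$. A $G_\delta$ set is a countable intersection of open sets, and since open sets lie in $\bsigma 1\subseteq\bpi 2$ and $\bpi 2$ is closed under countable intersections, $Max(\cal I)\in\bpi 2(\cal I)$. Because every $\omega$-continuous domain is quasi-Polish, Theorem~\ref{thrm:quasipolish_bpi2_characterization} then gives that $Max(\cal I)$ is quasi-Polish, hence so is $Max(X)$. Finally, $Max(X)$ is metrizable, so by the corollary that a metrizable space is quasi-Polish precisely when it is Polish, $Max(X)$ is Polish.

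I expect the middle paragraph to be the main obstacle: the point requiring care is that maximality computed inside the subspace $\phi(X)$ must agree with maximality inside the ambient domain $\cal I$. This rests on the two structural observations that principal (compact) ideals are finite and never maximal, so that any ideal properly above the infinite ideal $\phi(x)$ is again of the form $\phi(y)$, and that $\phi$ is a genuine order-embedding of the specialization orders.
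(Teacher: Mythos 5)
Your proof is correct and follows essentially the same route as the paper: embed $X$ into the $\omega$-algebraic domain $\cal I$ constructed in the proof of Theorem \ref{thrm:domain_embedding}, identify $Max(X)$ with $Max(\cal I)$, apply K.~Martin's theorem to get that $Max(\cal I)$ is $G_\delta$ hence $\bpi 2$ in $\cal I$, and conclude quasi-Polish, hence Polish by metrizability. The only difference is one of detail: you carefully verify the identification $Max(\cal I)=\phi(Max(X))$ (via finiteness of principal ideals and the order-embedding property of $\phi$), which the paper asserts in a single sentence.
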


Things become more complicated if $X$ is not an $\omega$-ideal domain and $Max(X)$ is not metrizable. H. Bennett and D. Lutzer \cite{bennett_lutzer} provided the first example of an $\omega$-algebraic domain $X$ in which the subspace $Max(X)$ is a non-metrizable Hausdorff space. An interesting property of their construction is that $Max(X)$ contains the space of rationals as a closed subspace. Since the rationals are not completely metrizable, it is clear that $Max(X)$ is not quasi-Polish in this case.

To characterize the complexity of $Max(X)$ for arbitrary quasi-Polish $X$, we begin with a lemma. Below we let $\pi_X$ denote the projection from $X\times Y$ onto $X$.

\begin{lemma}\label{lem:analytic_equiv}
The following are equivalent for a subset $A$ of a quasi-Polish space $X$:
\begin{enumerate}
\item
$A=\pi_X(F)$ for some $\bpi 2$ subset $F\subseteq X\times \baire$.
\item
$A=\pi_X(F)$ for some quasi-Polish $Y$ and $\bpi 2$ subset $F\subseteq X\times Y$.
\item
$A=\pi_X(B)$ for some quasi-Polish $Y$ and Borel subset $B\subseteq X\times Y$.
\item
$A=f(\baire)$ for some continuous $f\colon \baire\to X$.
\item
$A=f(Y)$ for some quasi-Polish $Y$ and continuous $f\colon Y\to X$.
\end{enumerate}
\end{lemma}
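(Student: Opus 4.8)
The plan is to prove the cycle of implications $(1)\Rightarrow(2)\Rightarrow(3)$ trivially by weakening hypotheses, then close the loop with the substantive implications. Several of these equivalences are near-tautologies given the machinery already developed, so I would organize the argument to isolate the one genuinely nontrivial step.

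First I would dispose of the easy implications. The implication $(1)\Rightarrow(2)$ is immediate since $\baire$ is quasi-Polish (it is Polish, and Polish spaces are quasi-Polish via the $\widehat{d}$ construction). The implication $(2)\Rightarrow(3)$ is immediate because every $\bpi 2$ set is Borel. For $(4)\Rightarrow(5)$, again $\baire$ is quasi-Polish. For $(5)\Rightarrow(1)$, if $A=f(Y)$ with $Y$ quasi-Polish and $f\colon Y\to X$ continuous, then by Lemma~\ref{lem:open_surj_from_baire} there is an open continuous surjection $h\colon\baire\to Y$, so $f\circ h\colon\baire\to X$ is continuous with image $A$; now take $F=\{\langle x,p\rangle\,|\, (f\circ h)(p)=x\}$, which is the equalizer of the two continuous maps $\langle x,p\rangle\mapsto x$ and $\langle x,p\rangle\mapsto (f\circ h)(p)$ from $X\times\baire$ to $X$, hence $\bpi 2$ by Corollary~\ref{cor:equalizer} (both $X$ and $\baire$ are countably based $T_0$), and clearly $\pi_X(F)=A$. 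This yields $(5)\Rightarrow(1)$, completing the chain $(1)\Rightarrow(2)\Rightarrow(3)$ and $(4)\Rightarrow(5)\Rightarrow(1)$.

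The two remaining implications to close everything are $(3)\Rightarrow(4)$ and the link back into $(4)$. For $(3)\Rightarrow(4)$: suppose $A=\pi_X(B)$ for quasi-Polish $Y$ and Borel $B\subseteq X\times Y$. Since $X\times Y$ is quasi-Polish by Corollary~\ref{cor:countable_products}, I would like to realize $B$ as a continuous image of $\baire$ and then compose with $\pi_X$. The natural route is to show that every Borel subset of a quasi-Polish space is a continuous image of $\baire$; granting this, pick continuous $g\colon\baire\to X\times Y$ with $g(\baire)=B$, and set $f=\pi_X\circ g$, which is continuous with $f(\baire)=\pi_X(B)=A$, giving $(4)$. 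I also still need $(4)\Rightarrow(1)$ to close the cycle cleanly, but $(4)$ is just the special case $Y=\baire$ of $(5)$, so the $(5)\Rightarrow(1)$ argument already covers it.

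The main obstacle is thus the claim that \emph{every Borel subset of a quasi-Polish space is a continuous image of $\baire$}, which is what powers $(3)\Rightarrow(4)$. In the classical Polish setting this is standard (analytic sets are exactly continuous images of $\baire$, and Borel sets are analytic). Here I would argue as follows: embed $X$ as a $\bpi 2$ subset of $\cal P(\omega)$ so that $X$ is Polish after passing to $\widehat{d}$; by Theorem~\ref{thrm:quasi_and_metric_borelrelation} the Borel sets of $(X,\tau_d)$ coincide with the Borel sets of the Polish space $(X,\tau_{\widehat d})$, so $B$ is Borel in the associated Polish topology. By the classical theory (Theorem~14.3 and Exercise~14.3 in \cite{kechris}), a nonempty Borel subset of a Polish space is a continuous image of $\baire$ with respect to the Polish topology; I must then upgrade this to continuity with respect to the coarser topology $\tau_d$, which is automatic since $\tau_d\subseteq\tau_{\widehat d}$ means $\tau_d$-continuity is weaker than $\tau_{\widehat d}$-continuity. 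The empty case is handled separately (the empty set is trivially an image if we allow it, or is excluded by a nonemptiness convention). The delicate point to verify carefully is precisely this interaction between the two topologies: the classical result gives a map continuous into the fine metric topology, and one must confirm that this same map remains continuous into the original quasi-Polish topology, which follows because every $\tau_d$-open set is $\tau_{\widehat d}$-open.
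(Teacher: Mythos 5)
Your proof is correct, and its skeleton (trivial weakenings plus an equalizer argument to return to a $\bpi 2$ set) matches the paper's; the genuine difference is in the key step $(3)\Rightarrow(4)$. The paper stays inside the given topology: since $X\times Y$ is quasi-Polish, Lemma~\ref{lem:open_surj_from_baire} provides an open continuous surjection $g\colon\baire\to X\times Y$, the set $g^{-1}(B)$ is Borel in $\baire$, the classical theorem (Theorem~13.7 of \cite{kechris}) yields a continuous $h\colon\baire\to\baire$ with $h(\baire)=g^{-1}(B)$, and $f=\pi_X\circ g\circ h$ works since $g(g^{-1}(B))=B$ by surjectivity. You instead refine the topology: a compatible complete quasi-metric $d$ on $X\times Y$ gives the Polish space $(X\times Y,\tau_{\widehat{d}})$ with the same Borel sets (Theorem~\ref{thrm:quasi_and_metric_borelrelation}), the classical theorem applied there gives a $\tau_{\widehat{d}}$-continuous surjection of $\baire$ onto $B$, and $\tau_d\subseteq\tau_{\widehat{d}}$ makes that same map continuous into the original space, after which you compose with $\pi_X$. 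Both routes rest on the same classical fact about Borel subsets of Polish spaces; the paper's recycles its open-surjection workhorse from Section~\ref{sec:opencontsurjections}, while yours is exactly the transfer principle the paper itself advertises immediately after this lemma (properties of analytic sets ``carry directly over'' because the Borel structures agree), and it is arguably the more economical mechanism for this particular step. Note that you do not escape Section~\ref{sec:opencontsurjections} altogether, since your $(5)\Rightarrow(1)$ still uses Lemma~\ref{lem:open_surj_from_baire} (and needs $Y\not=\emptyset$ there); also, the detour through an embedding into $\cal P(\omega)$ in your key claim is unnecessary---just take a compatible complete quasi-metric on the space itself, as the paper does when it observes that $(X,\widehat{d})$ is Polish. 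Your fusing of the paper's $(5)\Rightarrow(4)$ and $(4)\Rightarrow(1)$ into a single $(5)\Rightarrow(1)$ via Corollary~\ref{cor:equalizer} is cosmetic, and your explicit attention to the empty-set convention is in fact more careful than the paper, which ignores that $(4)$ is literally unattainable when $A=\emptyset$.
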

\begin{proof}
The implications $1 \Rightarrow 2$ and $2 \Rightarrow 3$ are obvious. Now assuming $3$, $X\times Y$ is quasi-Polish so there is a continuous open surjection $g\colon \baire \to X\times Y$. Then $g^{-1}(B)$ is Borel in $\baire$, so there is a continuous function $h\colon \baire\to\baire$ such that $h(\baire)=g^{-1}(B)$ (see Theorem 13.7 in \cite{kechris}). Therefore, $f = \pi_X \circ g \circ h$ satisfies $4$. The implication from $4$ to $5$ is trivial, and $5$ implies $4$ by composing with a continuous surjection from $\baire$ to $Y$. Finally, assume $4$ and define $F=\{\langle x,y\rangle\in X\times \baire\,|\, f(y)=x\}$. Then $F\in\bpi 2(X\times \baire)$ and $\pi_X(F)=f(\baire)=A$, which proves $1$.
\qed
\end{proof}

This equivalence allows us to extend the definition of analytic sets to quasi-Polish spaces.

\begin{definition}
Let $X$ be quasi-Polish. A subset $A\subseteq X$ is called \emph{analytic} if and only if it satisfies one of the equivalent conditions of Lemma \ref{lem:analytic_equiv}. A subset is \emph{co-analytic} if and only if its complement is analytic. A subset is \emph{bi-analytic} if and only if it is both analytic and co-analytic. The analytic, co-analytic, and bi-analytic subsets of $X$ will be denoted $\analytic(X)$, $\coanalytic(X)$, and $\bianalytic(X)$, respectively.
\qed
\end{definition}

If $(X,d)$ is a countably based complete quasi-metric space, then $(X,\widehat{d})$ is Polish and ${\mathbf B}(X,\tau_d)={\mathbf B}(X,\tau_{\widehat{d}})$ by Theorem \ref{thrm:quasi_and_metric_borelrelation}. Therefore, most of the known properties of analytic sets in Polish spaces carry directly over to quasi-Polish spaces. For example, we have the following generalization of Souslin's Theorem.

\begin{theorem}
If $X$ is quasi-Polish, then ${\mathbf B}(X)=\bianalytic(X)$.
\qed
\end{theorem}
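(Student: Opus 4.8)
The plan is to reduce the statement to the classical Souslin theorem for the Polish space $(X,\tau_{\widehat d})$, where $d$ is a fixed compatible complete quasi-metric on $X$. The bridge is Theorem \ref{thrm:quasi_and_metric_borelrelation}, which already yields ${\mathbf B}(X,\tau_d)={\mathbf B}(X,\tau_{\widehat d})$; what remains is to show that the analytic subsets, and hence the co-analytic and bi-analytic subsets, are insensitive to the choice between $\tau_d$ and $\tau_{\widehat d}$.

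First I would establish the key claim that $\analytic(X,\tau_d)=\analytic(X,\tau_{\widehat d})$. The inclusion $\analytic(X,\tau_{\widehat d})\subseteq\analytic(X,\tau_d)$ is immediate: since $d(x,y)\le\widehat d(x,y)$ we have $\tau_d\subseteq\tau_{\widehat d}$, so the identity map $(X,\tau_{\widehat d})\to(X,\tau_d)$ is continuous, and composing a continuous surjection $f\colon\baire\to(X,\tau_{\widehat d})$ with it (condition $4$ of Lemma \ref{lem:analytic_equiv}) gives a continuous surjection onto $(X,\tau_d)$ with the same image. For the reverse inclusion I would use condition $1$ of Lemma \ref{lem:analytic_equiv}: if $A\in\analytic(X,\tau_d)$ then $A=\pi_X(F)$ for some $F\in\bpi 2(X\times\baire,\tau_d)$. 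Since $X\times\baire$ is quasi-Polish by Corollary \ref{cor:countable_products}, Theorem \ref{thrm:quasi_and_metric_borelrelation} applies to it and shows that $F$ is Borel in the Polish space $(X\times\baire,\tau_{\widehat d})$. The classical fact that the projection of a Borel subset of a product of Polish spaces is analytic (see \cite{kechris}) then gives that $A=\pi_X(F)$ is analytic in $(X,\tau_{\widehat d})$.

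From this claim the rest is formal. Because co-analytic and bi-analytic are defined purely by complementation from analytic, and $X$ carries the same underlying set in either topology, the equality of the analytic classes forces $\bianalytic(X,\tau_d)=\bianalytic(X,\tau_{\widehat d})$. I would then invoke the classical Souslin theorem (Theorem 14.11 in \cite{kechris}) for the Polish space $(X,\tau_{\widehat d})$ to get ${\mathbf B}(X,\tau_{\widehat d})=\bianalytic(X,\tau_{\widehat d})$, and combine it with Theorem \ref{thrm:quasi_and_metric_borelrelation} to conclude ${\mathbf B}(X,\tau_d)={\mathbf B}(X,\tau_{\widehat d})=\bianalytic(X,\tau_{\widehat d})=\bianalytic(X,\tau_d)$, which is the claim.

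I expect the only genuine content to be the reverse inclusion in the key claim, namely that a $\tau_d$-analytic set is $\tau_{\widehat d}$-analytic. The subtlety is precisely that $\tau_d$ is strictly coarser than $\tau_{\widehat d}$, so one cannot directly upgrade a $\tau_d$-continuous surjection from $\baire$ to a $\tau_{\widehat d}$-continuous one; routing through the projection characterization of Lemma \ref{lem:analytic_equiv} together with the Borel-invariance of Theorem \ref{thrm:quasi_and_metric_borelrelation} is what sidesteps this difficulty. Everything else is a routine transfer from the classical Polish setting.
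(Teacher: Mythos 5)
Your proposal is correct and takes essentially the same route as the paper: the paper's proof consists of the remark preceding the theorem, namely that ${\mathbf B}(X,\tau_d)={\mathbf B}(X,\tau_{\widehat{d}})$ by Theorem \ref{thrm:quasi_and_metric_borelrelation} and that therefore the known properties of analytic sets in the Polish space $(X,\widehat{d})$ (in particular Souslin's theorem) carry over directly. You simply make explicit the one step the paper leaves implicit---that $\analytic(X,\tau_d)=\analytic(X,\tau_{\widehat{d}})$, via condition $1$ of Lemma \ref{lem:analytic_equiv} in one direction and condition $4$ in the other---which is a faithful filling-in of the same argument rather than a different approach.
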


The above observation has already been made by D. Scott and V. Selivanov for the case of $\cal P(\omega)$.

We now give an upper bound on the complexity of the maximal elements of a quasi-Polish space.

\begin{theorem}
If $X$ is quasi-Polish then $Max(X)\in\coanalytic(X)$.
\end{theorem}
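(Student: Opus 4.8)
The plan is to show that the complement $X \setminus Max(X)$ is analytic and then read off co-analyticity of $Max(X)$ from the definition. The starting observation is that an element $x$ fails to be maximal in the specialization order exactly when there is some $y$ with $x \le y$ and $x \ne y$. Hence $X \setminus Max(X) = \pi_X(F)$, where $F = \{\langle x,y\rangle \in X \times X \mid x \le y \text{ and } x \ne y\}$. Since $X$ is quasi-Polish, by condition $3$ of Lemma \ref{lem:analytic_equiv} (taking $Y = X$) it suffices to prove that $F$ is a Borel subset of $X \times X$.

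The key step is to pin down the Borel complexity of the graph of the specialization order $R = \{\langle x,y\rangle \mid x \le y\}$, and here I would argue exactly as in the earlier proof that the diagonal of a countably based $T_0$-space lies in $\bpi 2$. Fixing a countable basis $\{B_n\}_{n\in\omega}$ for $X$, the relation $x \le y$ (that is, $x$ lies in the closure of $\{y\}$) is equivalent to the condition that $x \in B_n$ implies $y \in B_n$ for every $n$. Thus $R = \bigcap_{n\in\omega} \big(((X\setminus B_n)\times X) \cup (X \times B_n)\big)$. Each set in this intersection is the union of a closed set and an open set, so it lies in $\bpi 2(X\times X)$; since $\bpi 2$ is closed under countable intersections, $R \in \bpi 2(X\times X)$.

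To finish, recall that $X$ is a countably based $T_0$-space (every quasi-metric topology is $T_0$), so by the earlier proposition its diagonal $\Delta_X$ lies in $\bpi 2(X\times X)$, whence $X\times X \setminus \Delta_X \in \bsigma 2(X\times X)$. Consequently $F = R \cap (X\times X \setminus \Delta_X)$ is the intersection of a $\bpi 2$ set with a $\bsigma 2$ set, hence Borel. Applying Lemma \ref{lem:analytic_equiv} with $Y = X$ and $B = F$ then shows that $X \setminus Max(X) = \pi_X(F)$ is analytic, so $Max(X) \in \coanalytic(X)$.

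I expect the only real content to lie in the second paragraph: recognizing that non-maximality is an existential (projection) statement and establishing that the specialization preorder is Borel (indeed $\bpi 2$) via the countable basis. Everything else is bookkeeping with the closure properties of the Borel hierarchy and the equivalent characterizations of analytic sets already collected in Lemma \ref{lem:analytic_equiv}. One minor point to keep track of is that $F$ need not itself be $\bpi 2$ (only Borel), which is why I would invoke condition $3$ of the lemma rather than condition $2$.
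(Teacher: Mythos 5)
Your proof is correct and is essentially the paper's own argument: the paper forms the Borel set $B=\{\langle x,y\rangle \mid x\leq y \Rightarrow x=y\}$ (whose complement is exactly your $F$), establishes that the specialization order and the equality relation are $\bpi 2$ in $X\times X$ via the same countable-basis computation, and concludes that $Max(X)$ is the complement of the projection of the complement of $B$, i.e.\ co-analytic by the same clause of Lemma \ref{lem:analytic_equiv}. Your remark that $F$ is only Borel (not $\bpi 2$), so that condition $3$ rather than condition $2$ must be invoked, is a correct and worthwhile point of care.
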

\begin{proof}
If $X$ is quasi-Polish, then $x\in Max(X)$ if and only if $(\forall y\in X)\colon x\leq y \Rightarrow x=y$, where $\leq$ is the specialization order of $X$. We have already seen that the equality relation is a $\bpi 2$ subset of $X\times X$. If we let $\{U_i\}_{i\in\omega}$ be a countable basis for $X$, then $x\leq y$ if and only if $(\forall i\in\omega)\colon x\in U_i \Rightarrow y\in U_i$, so the specialization order is also $\bpi 2$ in $X\times X$. Thus, $B=\{\langle x,y\rangle \,|\, x\leq y \Rightarrow x=y\}$ is Borel in $X\times X$, and $Max(X)$ is the complement of the projection of the complement of $B$.
\qed
\end{proof}

It turns out that this is the best lower bound possible in general. C. Mummert has shown (Theorem 2.8 in \cite{mummert}) that any co-analytic subset of $\baire$ can be embedded into a relatively closed subset of the maximal elements of an $\omega$-continuous domain. If we choose a co-analytic set that is not analytic, then the maximal elements of such a domain can not be Borel. C. Mummert and F. Stephan \cite{mummert_stephan} have shown that the spaces that are homeomorphic to the maximal elements of some $\omega$-continuous domain are precisely the countably based strong Choquet spaces that satisfy the $T_1$-separation axiom (K. Martin had previously shown that the maximal elements are strong Choquet).

\begin{corollary}
A topological space $X$ is homeomorphic to $Max(Y)$ for some quasi-Polish space $Y$ if and only if $X$ is a countably based strong Choquet space satisfying the $T_1$-axiom.
\qed
\end{corollary}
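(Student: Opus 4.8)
The plan is to reduce both directions of the corollary to the theorem of C.~Mummert and F.~Stephan \cite{mummert_stephan}, which characterizes the spaces homeomorphic to $Max(D)$ for some $\omega$-continuous domain $D$ as exactly the countably based strong Choquet spaces satisfying the $T_1$-axiom. It therefore suffices to show that a space is homeomorphic to $Max(Y)$ for some quasi-Polish $Y$ if and only if it is homeomorphic to $Max(D)$ for some $\omega$-continuous domain $D$; the corollary then follows at once by substituting the Mummert--Stephan characterization for the right-hand class. In other words, I would prove that the class $\{Max(Y) \,|\, Y \mbox{ quasi-Polish}\}$ and the class $\{Max(D)\,|\, D\ \omega\mbox{-continuous}\}$ coincide up to homeomorphism.

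One inclusion is essentially free. Since every $\omega$-continuous domain is quasi-Polish (Section~\ref{sec:locallycompactsober}), any space of the form $Max(D)$ with $D$ an $\omega$-continuous domain is already of the form $Max(Y)$ with $Y=D$ quasi-Polish. Consequently, if $X$ is a countably based strong Choquet $T_1$-space, then by the ``if'' part of the Mummert--Stephan theorem $X$ is homeomorphic to $Max(D)$ for some $\omega$-continuous domain $D$, and taking $Y=D$ exhibits $X$ as $Max(Y)$ for a quasi-Polish $Y$. This establishes the ``if'' direction of the corollary.

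For the converse, suppose $X$ is homeomorphic to $Max(Y)$ with $Y$ quasi-Polish. First I would apply Theorem~\ref{thrm:domain_embedding} to obtain an $\omega$-algebraic (hence $\omega$-continuous) domain $\cal I$ together with a homeomorphic embedding $\phi$ of $Y$ onto the set of non-compact elements of $\cal I$. The crucial point is that $\phi$ carries the maximal elements of $Y$ exactly onto the maximal elements of $\cal I$, so that $Max(Y)$ is homeomorphic to $Max(\cal I)$; this is precisely the coincidence of maximal elements recorded for the construction in the proof of Theorem~\ref{thrm:domain_embedding}. Applying the ``only if'' part of the Mummert--Stephan theorem to $Max(\cal I)$ then yields that $X \cong Max(\cal I)$ is countably based, strong Choquet, and $T_1$, completing this direction.

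The main obstacle is the verification that $\phi$ identifies $Max(Y)$ with $Max(\cal I)$. The embedding $\phi$ of Theorem~\ref{thrm:domain_embedding} is order-preserving and order-reflecting, its image being exactly the set of non-principal ideals ordered by inclusion, so it is automatic that maximal elements of $Y$ correspond to ideals maximal \emph{among the non-principal ideals}. What must additionally be checked is that such ideals are maximal in all of $\cal I$, i.e.\ that no maximal element of $\cal I$ is a compact (principal) ideal; equivalently, that the non-principal ideals of the form $\phi(x)$ with $x$ maximal cannot be strictly contained in any principal ideal. This is exactly the coincidence asserted in the discussion of Theorem~\ref{thrm:domain_embedding}, and it is the only genuinely new verification the proof requires. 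The remaining two properties are comparatively trivial: the $T_1$-axiom also follows directly from the fact that the maximal elements of any space form a subspace whose inherited specialization order is the identity, and countable basedness is inherited from $Y$ as a subspace; but in any case all three properties are delivered wholesale by the Mummert--Stephan characterization once $X \cong Max(\cal I)$ is known.
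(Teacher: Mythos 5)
Your proposal is correct and follows essentially the same route the paper intends for this corollary: one direction is the Mummert--Stephan characterization combined with the fact that every $\omega$-continuous domain is quasi-Polish, and the other direction applies Theorem \ref{thrm:domain_embedding} together with the paper's observation that, in that construction, the maximal elements of $Y$ coincide with the maximal elements of the ambient $\omega$-algebraic domain, after which Mummert--Stephan delivers the countably based, strong Choquet, and $T_1$ properties. Your identification of the maximal-element coincidence (no maximal ideal is principal, and $\phi(x)$ is never contained in a principal ideal) as the only substantive verification is exactly where the content lies, and deferring to the paper's earlier assertion of that fact is consistent with how the paper itself justifies the corollary.
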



\section{Scattered spaces}\label{sec:scatteredspaces}

In this section we show that scattered countably based $T_0$-spaces are quasi-Polish, which extends the known result that scattered metrizable spaces are Polish. Non-metrizable countably based scattered spaces naturally occur in the field of inductive inference as precisely those spaces that can be identified in the limit (relative to some oracle) with an ordinal mind change bound \cite{luo_schulte,debrecht_etal_3}.

\begin{definition}
A point $x$ of a topological space is \emph{isolated} if and only if $\{x\}$ is open. If $x$ is not isolated, then it is a \emph{limit point}. A space is \emph{perfect} if all of its points are limit points.
\qed
\end{definition}

\begin{definition}[see \cite{kechris}]\label{def:accum_order}
Let $X$ be a topological space. For each ordinal $\alpha$, the $\alpha$-th derived set of $X$, denoted $X^{(\alpha)}$, is defined inductively as follows:
\begin{enumerate}
\item
$X^{(0)}=X$,
\item
$X^{(\alpha+1)}=\{x\in X\,|\, x\mbox{ is a limit point of } X^{(\alpha)}\}$,
\item
If $\alpha$ is a limit ordinal, then $X^{(\alpha)}=\bigcap_{\beta<\alpha}X^{(\beta)}$.
\end{enumerate}
The \emph{Cantor-Bendixson rank} of $X$, denoted $\abs{X}_{CB}$, is the least ordinal $\alpha$ such that $X^{(\alpha)}=X^{(\alpha+1)}$.
\qed
\end{definition}

Since $\{X^{(\alpha)}\}_{\alpha<\omega_1}$ is a decreasing sequence of closed subsets of $X$, if $X$ is countably based then $\abs{X}_{CB}$ is strictly less than $\omega_1$ (see Theorem I.6.9 in \cite{kechris}). Note that $X^{\abs{X}_{CB}}$ is a closed perfect subset of $X$.

\begin{definition}
A topological space $X$ is \emph{scattered} if and only if $X^{\abs{X}_{CB}}$ is empty.
\qed
\end{definition}

Equivalently, $X$ is scattered if and only if every subspace of $X$ contains an isolated point. It is not difficult to see that if $X$ is countably based and scattered then $X$ has at most countably many points.

The following is a separation axiom proposed by C. E. Aull and W. J. Thron \cite{aull_thron} that is strictly between the $T_0$ and $T_1$ axioms. Recall that a subset of a topological space is \emph{locally closed} if and only if it is equal to the intersection of an open set and a closed set.

\begin{definition}
A topological space $X$ satisfies the $T_D$-separation axiom if and only if $\{x\}$ is locally closed for every $x\in X$.
\qed
\end{definition}

Clearly, every scattered space satisfies the $T_D$-axiom, although the converse does not hold in general.

\begin{theorem}
A countably based space is scattered if and only if it is a countable quasi-Polish space satisfying the $T_D$-axiom.
\end{theorem}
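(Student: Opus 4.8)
Two of the three properties on the right are already in hand: a countably based scattered space is countable and satisfies the $T_D$-axiom by the remarks preceding the statement, and conversely $T_D$ implies $T_0$ while quasi-Polishness implies $T_0$. So the real content splits into (i) every scattered countably based $T_0$-space is quasi-Polish, and (ii) every countable quasi-Polish $T_D$-space is scattered. I would also record the trivial case $X=\emptyset$ separately and otherwise assume $X$ non-empty.

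For (i) the plan is to invoke the game-theoretic characterization of Theorem~\ref{thrm:game_theoretic_characterization} by handing Player II a winning strategy in $\cal G(X)$. Since $X$ is scattered, each point $x$ has a well-defined Cantor--Bendixson rank, namely the unique $\alpha$ with $x\in X^{(\alpha)}\setminus X^{(\alpha+1)}$ (Definition~\ref{def:accum_order}); being isolated in $X^{(\alpha)}$, it has an open $W_x$ with $W_x\cap X^{(\alpha)}=\{x\}$, so every point of $W_x$ other than $x$ has strictly smaller rank. Fix a countable basis in which each basic open set occurs infinitely often. When Player I plays $(x_n,U_n)$, Player II responds with $V_n=U_n\cap W_{x_n}$, additionally intersected with the $n$-th basic set whenever it contains $x_n$. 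Because $x_{n+1}\in V_n\subseteq W_{x_n}$, the ranks of the $x_n$ are non-increasing and drop strictly whenever the played point changes; well-foundedness of the ordinals forces $(x_n)$ to be eventually equal to some $x^*$. The infinite-repetition bookkeeping then guarantees that each basic neighbourhood of $x^*$ contains some $V_k$, so $\{V_n\}$ is a neighbourhood basis of $x^*$ and Player II wins. Theorem~\ref{thrm:game_theoretic_characterization} then gives that $X$ is quasi-Polish.

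For (ii) I would argue by contradiction. If $X$ is not scattered, its perfect kernel $P=X^{(\abs{X}_{CB})}$ is a non-empty, $\tau_d$-closed, perfect subspace; being closed it is $\bpi 2$, hence quasi-Polish by Theorem~\ref{thrm:quasipolish_bpi2_characterization}, and it inherits countability and the (hereditary) $T_D$-axiom. Every quasi-Polish space is a Baire space, so $P$ is non-meager in itself; since $P$ is countable, some singleton $\{p\}$ must be non-meager, i.e.\ $\overline{\{p\}}$ has non-empty interior, say a non-empty open $V\subseteq\overline{\{p\}}$. Any $q\in V$ satisfies $q\leq p$ in the specialization order, so every open neighbourhood of $q$ contains $p$; in particular $p\in V$. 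Writing $\{p\}=G\cap C$ with $G$ open and $C$ closed, I would show $V\cap G=\{p\}$: a point $q\in(V\cap G)\setminus\{p\}$ lies outside $C$, so $(V\cap G)\setminus C$ is an open neighbourhood of $q$ avoiding $p$, contradicting $q\leq p$. Thus $\{p\}$ is open and $p$ is isolated, contradicting perfection of $P$, so $X$ is scattered.

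The main obstacle is (ii). Here perfection is measured in $\tau_d$, whereas the completeness of the space lives in the finer metric topology $\tau_{\widehat d}$, so a direct Cantor-scheme construction aiming at continuum-many points stalls: $\tau_d$-open sets need not have small $\widehat d$-diameter, and $(P,\widehat d)$ is itself a countable (hence scattered, perfect-set-free) Polish space. The Baire-category argument circumvents this, but it is delicate in that it must couple the specialization order (used both to force $p\in V$ and to exploit $q\leq p$) with the $T_D$-axiom in precisely the right way. Dropping $T_D$ genuinely destroys the equivalence, as the complete-quasi-metric Scott space on $\omega+1$ from Section~\ref{sec:complete_quasimetricspaces} is a countable, perfect (hence non-scattered) quasi-Polish space that fails to be $T_D$ at its top point.
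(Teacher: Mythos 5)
Your proof is correct, and it splits along the same two directions as the paper's. The forward direction is essentially identical to the paper's argument: Player II responds inside an open set isolating $x_n$ in its Cantor--Bendixson level, ranks strictly drop whenever the played point changes, well-foundedness stabilizes the sequence, and the basis bookkeeping turns $\{V_n\}$ into a neighbourhood basis of the stabilized point; the paper merely intersects with all basic sets $B_m$ ($m\le n$) containing $x_n$ instead of using an enumeration with infinite repetition, an immaterial difference. In the converse you take a genuinely different, and somewhat leaner, route. The paper proves outright that every subspace $S$ of a countable quasi-Polish $T_D$-space contains an isolated point; the lemma making this possible is that in a countable $T_D$-space both $S$ and its complement are countable unions of locally closed sets, so $S\in\bdelta 2(X)$ and is therefore quasi-Polish, hence Baire. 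You avoid that observation entirely by running the Baire argument only on the perfect kernel $P=X^{(\abs{X}_{CB})}$, which is closed, hence $\bpi 2$, hence quasi-Polish, and then contradicting perfection; this also appeals directly to the paper's official definition of scattered (empty perfect kernel) rather than the ``every subspace has an isolated point'' reformulation. The isolated-point extraction itself --- Baire category gives a point whose closure has non-empty interior, then the $T_D$-axiom combined with the specialization order produces an open singleton --- is the same computation in both proofs. What the paper's version buys is the stronger intermediate fact that \emph{every} subset of such a space is $\bdelta 2$ and hence itself quasi-Polish, which your argument does not recover; what yours buys is economy, since it needs only the trivial fact that closed subspaces of quasi-Polish spaces are quasi-Polish.
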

\begin{proof}
Assume $X$ is countably based and scattered. We have already observed that $X$ is countable and satisfies the $T_D$-axiom. Consider the following strategy for Player II in the game $\cal G(X)$:
\begin{quote}
Assume Player I plays $(x_n,U_n)$ on the $n$-th step. Let $\alpha_n$ be the least ordinal such that $x_n\not\in X^{(\alpha_n+1)}$. Player II responds with open $V_n\subseteq U_n$ such that $V_n\cap X^{\alpha_n}=\{x_n\}$ and $V_n\subseteq\bigcap_{m\leq n}\{B_m\,|\, x_n\in B_m\}$, where $\{B_m\}_{m\in\omega}$ is some fixed enumeration of a basis for $X$.
\end{quote}
By the choice of $V_n$, if $x_{n+1}\not=x_n$ then $x_{n+1}\not\in X^{(\alpha_n)}$, hence $\alpha_{n+1}<\alpha$. It follows that there is some $n_0\in\omega$ such that $x_n=x_{n_0}$ for all $n\geq n_0$. Clearly, Player II's strategy enumerates a neighborhood basis for $x_{n_0}$.

For the converse, assume $X$ is a countable quasi-Polish space satisfying the $T_D$-axiom and let $S\subseteq X$ be given. Both $S$ and $X\setminus S$ are countable unions of locally closed sets, hence $S\in\bdelta 2(X)$ and $S$ is quasi-Polish. Let $Cl(\cdot)$ be the closure operator on $X$. Then $S=\left( \bigcup_{x\in S} Cl(x)\right)\cap S$, so since $S$ is a Baire space there is some $x\in S$ such that $Cl(x)\cap S$ has non-empty interior (relatively in $S$). This implies there is open $U\subseteq X$ such that $x\in U\cap S\subseteq Cl(x)$. Using the $T_D$-axiom, there is open $V\subseteq X$ such that $V\cap Cl(x)=\{x\}$. Then $U\cap V \cap S = \{x\}$, hence $x$ is isolated in $S$.
\qed
\end{proof}

\begin{corollary}
Every non-empty perfect quasi-Polish space satisfying the $T_D$-axiom has cardinality $2^{\aleph_0}$.
\end{corollary}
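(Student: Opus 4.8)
The plan is to reduce the statement to the preceding characterization theorem together with Proposition~\ref{prop:cantortheoremforquasipolish}. Let $X$ be a non-empty perfect quasi-Polish space satisfying the $T_D$-axiom. Since $X$ is quasi-Polish it is in particular countably based, so the preceding theorem applies and tells us that $X$ is scattered if and only if it is countable. The heart of the argument is therefore to show that $X$ is \emph{not} scattered, from which it will follow that $X$ is uncountable.

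To see that $X$ is not scattered, I would invoke the equivalent description of scatteredness noted in the text, namely that a space is scattered if and only if every subspace of it contains an isolated point. Taking the subspace to be $X$ itself, perfectness says precisely that $X$ has no isolated point, and since $X$ is non-empty this witnesses the failure of the scattered condition. (Alternatively, one checks directly that $X^{(1)}=X$ by definition of perfect, whence $X^{(\alpha)}=X$ for every ordinal $\alpha$ by transfinite induction and $X^{\abs{X}_{CB}}=X\neq\emptyset$.) Thus $X$ is not scattered, and by the preceding theorem $X$ cannot be countable.

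Finally, having established that $X$ is an uncountable quasi-Polish space, Proposition~\ref{prop:cantortheoremforquasipolish} immediately yields $\abs{X}=2^{\aleph_0}$. I do not expect any substantive obstacle here: the only point requiring any care is the elementary observation linking perfectness to non-scatteredness, while all of the completeness- and cardinality-theoretic content is entirely absorbed into the two cited results.
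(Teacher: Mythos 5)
Your proof is correct and is precisely the intended argument: the paper states this corollary without proof, and the implicit derivation is exactly yours --- a non-empty perfect space cannot be scattered, so by the preceding theorem $X$ cannot be countable, and Proposition~\ref{prop:cantortheoremforquasipolish} then gives cardinality $2^{\aleph_0}$. No gaps; the perfectness-implies-not-scattered step is handled correctly via either of your two observations.
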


The $T_D$-axiom is necessary in the above corollary, because the ordinal $\omega+1$ with the Scott-topology is a countable perfect quasi-Polish space which does not satisfy the $T_D$-axiom.

The following are some typical examples of countable perfect $T_0$-spaces which are not quasi-Polish:
\begin{enumerate}
\item
The rational numbers with the subspace topology inherited from the space of reals.
\item
The natural numbers with the cofinite topology. A subset of this space is open if and only if it is either empty or cofinite.
\item
The natural numbers with the Scott-topology under the usual ordering. A subset of this space is open if and only if it is empty or else of the form $\uparrow\!n = \{ m\in\omega \,|\, n \leq m\}$ for some $n\in\omega$.
\item
The rationals with the Scott-topology under the usual ordering. A subset of this space is open if and only if it is empty or else of the form $\uparrow\!r = \{ q\in\bb Q \,|\, r < q \}$ for some $r\in\bb Q$.
\end{enumerate}

Examples 1, 2, and 3, respectively satisfy the $T_2$, $T_1$, and $T_D$ separation axioms. Example 4 does not satisfy the $T_D$-axiom because no singleton subset is locally closed. It may be worthy to note, however, that Example 3 can be embedded into Example 4 as a $\bpi 2$-subset.


\section{Generalized Hausdorff-Kuratowski Theorem}\label{sec:HausdorffKuratowski}

In this section, we show that all levels of the difference hierarchy on countably based $T_0$-spaces are preserved under admissible representations. This result is then used to prove a generalization of the Hausdorff-Kuratowski Theorem for quasi-Polish spaces. The difference hierarchy on Polish spaces is well understood \cite{kechris}, and recently V. Selivanov \cite{selivanov2008} has extended many of these results to $\omega$-continuous domains.

\begin{definition}
Any ordinal $\alpha$ can be expressed as $\alpha=\beta+n$, where $\beta$ is a limit ordinal or $0$, and $n<\omega$. We say that $\alpha$ is \emph{even} if $n$ is even, and \emph{odd}, otherwise. For any ordinal $\alpha$, let $r(\alpha)=0$ if $\alpha$ is even, and $r(\alpha)=1$, otherwise. For any ordinal $\alpha$, define
\[D_{\alpha}(\{A_{\beta}\}_{\beta<\alpha})=\bigcup \{A_{\beta}\setminus\bigcup_{\gamma<\beta}A_{\gamma} \,|\, \beta<\alpha,\, r(\beta)\not=r(\alpha)\},\]
where $\{A_{\beta}\}_{\beta<\alpha}$ is a sequence of sets such that $A_{\gamma}\subseteq A_{\beta}$ for all $\gamma < \beta < \alpha$. 

For any topological space $X$ and countable ordinals $\alpha$ and $\beta$, define $D_{\alpha}(\bsigma \beta(X))$ to be the class of all sets $D_{\alpha}(\{A_{\gamma}\}_{\gamma<\alpha})$, where $\{A_{\gamma}\}_{\gamma<\alpha}$ is an increasing sequence of elements of $\bsigma \beta(X)$.
\qed
\end{definition}

The proof of the following theorem depends on a result by J. Saint Raymond (Lemma 17 in \cite{saint_raymond}) that is closely related to the Vaught transform \cite{vaught1974}. We refer the reader to Section 8 of \cite{kechris} for notions of Baire Category. Very briefly, a subset of a space is \emph{nowhere dense} if and only if its closure has empty interior. A subset is \emph{meager} if it is equal to the countable union of nowhere dense sets. A fundamental property of Polish spaces is that every non-empty open subset is non-meager.

\begin{theorem}\label{thrm:preservation_of_diffhier}
Let $X$ be a countably based $T_0$ space and $\rho\colon\subseteq \baire \to X$ an admissible representation. For any countable ordinals $\alpha,\theta>0$ and $S\subseteq X$,
\[ S\in D_{\alpha}(\bsigma \theta(X)) \iff \rho^{-1}(S)\in D_{\alpha}(\bsigma \theta(dom(\rho))).\]
\end{theorem}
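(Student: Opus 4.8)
The plan is to treat the two implications separately; the left-to-right implication is routine, while the converse carries all the weight. For the forward implication, suppose $S=D_\alpha(\{A_\gamma\}_{\gamma<\alpha})$ for an increasing sequence $\{A_\gamma\}_{\gamma<\alpha}$ in $\bsigma \theta(X)$. Viewing $\rho$ as a total continuous map on the space $dom(\rho)$, the preimage operation sends $\bsigma \theta(X)$ into $\bsigma \theta(dom(\rho))$, and $\rho^{-1}$ commutes with arbitrary unions, intersections and set-differences while preserving inclusions. Hence $\{\rho^{-1}(A_\gamma)\}_{\gamma<\alpha}$ is an increasing sequence in $\bsigma \theta(dom(\rho))$ and
\[\rho^{-1}(S)=\rho^{-1}\big(D_\alpha(\{A_\gamma\}_{\gamma<\alpha})\big)=D_\alpha(\{\rho^{-1}(A_\gamma)\}_{\gamma<\alpha}),\]
so $\rho^{-1}(S)\in D_\alpha(\bsigma \theta(dom(\rho)))$.

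For the converse, fix an increasing sequence $\{C_\gamma\}_{\gamma<\alpha}$ in $\bsigma \theta(dom(\rho))$ with $\rho^{-1}(S)=D_\alpha(\{C_\gamma\}_{\gamma<\alpha})$. The strategy is to project each $C_\gamma$ forward to a set $A_\gamma\subseteq X$ by a Baire-category quantifier and to verify that $\{A_\gamma\}_{\gamma<\alpha}$ witnesses $S\in D_\alpha(\bsigma \theta(X))$. Working with Baire category and using that all the Borel sets involved have the Baire property, I would use the non-meager Vaught-style transform $A\mapsto A^{\triangle}$ relative to $\rho$, together with its dual comeager transform $A^{*}$. The core input is Saint Raymond's Lemma~17: because $\rho$ is admissible, so that the topology of $X$ is tied to $\rho$ tightly enough for the category computation, this transform sends $\bsigma \theta(dom(\rho))$ into $\bsigma \theta(X)$, is monotone, and, on sets with the Baire property, acts locally at each point of $X$ like a two-valued Boolean homomorphism. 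This last feature is exactly what lets the transform commute with the alternating-difference operation $D_\alpha$.

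Setting $A_\gamma=C_\gamma^{\triangle}$, monotonicity gives that the sequence is increasing and the lemma gives $A_\gamma\in\bsigma \theta(X)$. The Boolean compatibility then yields $D_\alpha(\{A_\gamma\}_{\gamma<\alpha})=\big(D_\alpha(\{C_\gamma\}_{\gamma<\alpha})\big)^{\triangle}=(\rho^{-1}(S))^{\triangle}$, and since $\rho$ is onto and $\rho^{-1}(S)$ is a union of full fibres of $\rho$, its category projection $(\rho^{-1}(S))^{\triangle}$ collapses to $S$ itself. Hence $S=D_\alpha(\{A_\gamma\}_{\gamma<\alpha})\in D_\alpha(\bsigma \theta(X))$, which completes the converse.

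The main obstacle is precisely this reverse direction, and within it the two properties of the category projection supplied by Saint Raymond's lemma: preservation of the Borel level $\theta$ in passing from $dom(\rho)$ to $X$, and compatibility with the difference operation $D_\alpha$. Both hinge on the admissibility of $\rho$ and on the Baire-category calculus, and the genuinely delicate point is the handling of the set-differences inside $D_\alpha$, where one must know that, localized at a generic point of a fibre and taken modulo meager sets, non-meagerness behaves as a finitely additive two-valued assignment. Reconciling this with the fact that $dom(\rho)$ need not itself be a Baire space, and verifying that the transform genuinely recovers $S$ from the fibre-constant set $\rho^{-1}(S)$, is the remaining subtlety.
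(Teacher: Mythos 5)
Your toolkit is the right one---the forward direction by continuity, and for the converse the fiber-wise non-meagerness transform together with Saint Raymond's Lemma 17---but there is a genuine gap: you apply the transform relative to the given representation $\rho$ itself, and at that level of generality the argument fails. The two facts you rely on, namely that $(\rho^{-1}(S))^{\triangle}=S$ and that the transform sends $\bsigma{\theta}(dom(\rho))$ into $\bsigma{\theta}(X)$, require respectively that every fiber $\rho^{-1}(x)$ be a Baire space and that $\rho$ be an open map; an arbitrary admissible representation has neither property, and admissibility alone does not supply them. Concretely, let $\delta$ be any admissible representation of $X$, let $Q\subseteq\baire$ be countable and dense-in-itself, and set $\rho(p,q)=\delta(p)$ on (a homeomorphic copy in $\baire$ of) $dom(\delta)\times Q$. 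This $\rho$ is still admissible, since $\delta$ factors through it via $p\mapsto(p,q_0)$, but every fiber $\delta^{-1}(x)\times Q$ is meager in itself, so $C^{\triangle}=\emptyset$ for \emph{every} $C$, and your construction outputs the empty sequence no matter what $S$ is. The step you are missing is exactly how the paper begins: choose an admissible representation $\delta$ of $X$ that is an open map with Polish fibers (one exists for every countably based $T_0$-space), use admissibility of $\rho$ to obtain a continuous $f$ with $\delta=\rho\circ f$, and note that $\delta^{-1}(S)=f^{-1}(\rho^{-1}(S))$ remains in $D_{\alpha}(\bsigma{\theta}(dom(\delta)))$; only then is the transform applied, relative to $\delta$. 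What you flag as a ``remaining subtlety'' about $dom(\rho)$ not being Baire is not something to be reconciled---it is the point at which the proof must change representations.

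A second defect is the claim that, modulo meager sets, non-meagerness acts as a finitely additive two-valued assignment, so that the transform commutes with $D_{\alpha}$. This is false as a general principle: with fiber $[0,1]$, $C_0=[0,1/2]$, $C_1=[0,1]$, the set $D_2(\{C_\gamma\})=C_1\setminus C_0$ is non-meager, yet $C_1^{\triangle}\setminus C_0^{\triangle}=\emptyset$, so $D_2(\{C_\gamma^{\triangle}\})\neq(D_2(\{C_\gamma\}))^{\triangle}$. The identity you want is not formal; it holds in this situation only because $\delta^{-1}(S)$ is saturated (a union of fibers), and the paper proves the two inclusions $S\subseteq D_{\alpha}(\{B_\beta\})$ and $D_{\alpha}(\{B_\beta\})\subseteq S$ directly. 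The mechanism is a least-entry-ordinal and parity argument: for $x\in S$ the fiber $\delta^{-1}(x)$ is covered by the sets $C_\gamma$, and since it is Polish (hence non-meager in itself) and $\alpha$ is countable, there is a least $\beta_x$ with $C_{\beta_x}\cap\delta^{-1}(x)$ non-meager; then $(C_{\beta_x}\setminus\bigcup_{\gamma<\beta_x}C_\gamma)\cap\delta^{-1}(x)$ is non-meager, hence non-empty, and any point in it lies in $\delta^{-1}(S)=D_{\alpha}(\{C_\gamma\})$, which forces $r(\beta_x)\neq r(\alpha)$; the reverse inclusion is symmetric. So your outline needs both the change-of-representation step and the replacement of the commutation claim by this saturation-plus-parity argument.
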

\begin{proof}
One direction is trivial because $\rho$ is continuous.

For the non-trivial direction, let $\delta\colon \subseteq \baire\to X$ be an admissible representation that is an open function and has Polish fibers (such a $\delta$ necessarily exists because $X$ is countably based and $T_0$). Let $f\colon \subseteq \baire\to\baire$ be continuous such that $\delta=\rho\circ f$, which exists because $\rho$ is admissible. If $\rho^{-1}(S)\in D_{\alpha}(\bsigma \theta(dom(\rho)))$, then $\delta^{-1}(S)=f^{-1}(\rho^{-1}(S))\in D_{\alpha}(\bsigma \theta(dom(\delta)))$ because $f$ is continuous. Thus, it suffices to prove the theorem for $\delta$ instead of $\rho$.

Assume $\delta^{-1}(S)=D_{\alpha}(\{A_{\beta}\}_{\beta<\alpha})$, where $A_\beta\in \bsigma \theta(dom(\delta))$. Define 
\[B_\beta = \{ x\in X \,|\, A_\beta\cap \delta^{-1}(x) \mbox{ is non-meager in } \delta^{-1}(x)\}.\]
By the proof of Lemma 17 in \cite{saint_raymond}\footnote{Saint-Raymond states his lemma for metrizable spaces, but it is easy to generalize it to our case.} we have $B_\beta \in \bsigma \theta(X)$ for all $\beta$. We claim that $S=D_{\alpha}(\{B_{\beta}\}_{\beta<\alpha})$, which proves $S\in D_{\alpha}(\bsigma \theta(X))$.

First we show $S\subseteq D_{\alpha}(\{B_{\beta}\}_{\beta<\alpha})$. Let $x\in S$ be given. Since
\[\delta^{-1}(x)= \bigcup_{\beta<\alpha} A_{\beta}\cap \delta^{-1}(x),\]
there must be some $\beta$ such that $x\in B_\beta$. Otherwise, $\delta^{-1}(x)$ would equal the countable union of meager sets, and hence be meager in itself, which is impossible because $\delta^{-1}(x)$ is Polish. So let $\beta_x$ be the least ordinal such that $x\in B_{\beta_x}$. By choice of $\beta_x$, we have that $A_{\gamma}\cap \delta^{-1}(x)$ is meager in $\delta^{-1}(x)$ for all $\gamma<\beta_x$, hence $\bigcup_{\gamma<\beta_x} (A_{\gamma}\cap \delta^{-1}(x))$ is meager in $\delta^{-1}(x)$. Since $A_{\beta_x}\cap \delta^{-1}(x)$ is non-meager in $\delta^{-1}(x)$, it follows that $(A_{\beta_x}\setminus \bigcup_{\gamma<\beta_x} A_{\gamma})\cap \delta^{-1}(x)$ is non-meager in $\delta^{-1}(x)$ (otherwise $A_{\beta_x}\cap \delta^{-1}(x)$ would equal the union of two meager sets and be meager, a contradiction). Thus, in particular, $(A_{\beta_x}\setminus \bigcup_{\gamma<\beta_x} A_{\gamma})\cap \delta^{-1}(x)$ is non-empty, so choose $p\in (A_{\beta_x}\setminus \bigcup_{\gamma<\beta_x} A_{\gamma})\cap \delta^{-1}(x)$. Since $p \in D_{\alpha}(\{A_{\beta}\}_{\beta<\alpha})$ by hypothesis, we must have $r(\beta_x)\not=r(\alpha)$. Therefore, since $r(\beta_x)\not=r(\alpha)$ and $x\in B_{\beta_x}\setminus \bigcup_{\gamma<\beta_x}B_\gamma$, we have $x\in D_{\alpha}(\{B_{\beta}\}_{\beta<\alpha})$.

Next we show $D_{\alpha}(\{B_{\beta}\}_{\beta<\alpha}) \subseteq S$. Assume $y\in D_{\alpha}(\{B_{\beta}\}_{\beta<\alpha})$, then there is $\beta_y<\alpha$ such that $r(\beta_y)\not=r(\alpha)$ and $y\in B_{\beta_y}\setminus \bigcup_{\gamma<\beta_y}B_\gamma$. Then $A_{\beta_y}\cap\delta^{-1}(y)$ is non-meager in $\delta^{-1}(y)$, and $\bigcup_{\gamma<\beta_y} A_{\gamma}\cap\delta^{-1}(y)$ is meager in $\delta^{-1}(y)$. Then $(A_{\beta_y}\setminus \bigcup_{\gamma<\beta_y}A_\gamma)\cap \delta^{-1}(y)$ is non-meager in $\delta^{-1}(y)$, and in particular it is non-empty. Since $r(\beta_y)\not=r(\alpha)$, this implies that there is $p\in \delta^{-1}(y)$ such that $p\in D_{\alpha}(\{A_{\beta}\}_{\beta<\alpha})$. Therefore, $p\in\delta^{-1}(S)$, and it follows that $y\in S$.

Therefore, $S=D_{\alpha}(\{B_{\beta}\}_{\beta<\alpha})\in D_{\alpha}(\bsigma \theta(X))$.
\qed
\end{proof}

Since $D_1(\bsigma \alpha(X)) = \bsigma \alpha(X)$, we obtain the following result from \cite{debrecht_etal_4}.

\begin{corollary}\label{cor:preservation_of_borel_hierarchy}
Let $X$ be a countably based $T_0$-space and $\rho\colon\subseteq \baire \to X$ an admissible representation. For any $S\subseteq X$ and $1\leq \alpha<\omega_1$, $S\in \bsigma \alpha(X)$ if and only if $\rho^{-1}(S)\in \bsigma \alpha(dom(\rho))$.
\qed
\end{corollary}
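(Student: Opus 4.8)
The plan is to obtain this corollary as an immediate specialization of Theorem \ref{thrm:preservation_of_diffhier}, which already establishes preservation of every level of the difference hierarchy under admissible representations. Since the corollary concerns only the Borel hierarchy itself rather than the finer difference hierarchy, the entire task reduces to recognizing the Borel classes $\bsigma \alpha$ as a degenerate case of the difference-hierarchy classes.

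First I would verify the identity $D_1(\bsigma \alpha(X)) = \bsigma \alpha(X)$ by unwinding the definitions. For difference-hierarchy parameter $1 = 0+1$ the rank function gives $r(1)=1$, while $r(0)=0$, so the only index $\beta<1$ satisfying $r(\beta)\neq r(1)$ is $\beta=0$. A sequence $\{A_\gamma\}_{\gamma<1}$ consists of the single set $A_0$, and the difference operator collapses to
\[ D_1(\{A_0\}) = A_0 \setminus \bigcup_{\gamma<0} A_\gamma = A_0. \]
As $A_0$ ranges over $\bsigma \alpha(X)$ this shows $D_1(\bsigma \alpha(X)) = \bsigma \alpha(X)$, and the identical computation applies with $X$ replaced by $dom(\rho)$.

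Having established this, I would invoke Theorem \ref{thrm:preservation_of_diffhier} with its difference-hierarchy parameter set to $1$ and its Borel-level parameter $\theta$ set equal to the ambient $\alpha$ of the corollary. The theorem then yields, for any $S\subseteq X$,
\[ S\in D_1(\bsigma \alpha(X)) \iff \rho^{-1}(S)\in D_1(\bsigma \alpha(dom(\rho))), \]
which by the identity above is precisely the claimed equivalence $S\in \bsigma \alpha(X) \iff \rho^{-1}(S)\in \bsigma \alpha(dom(\rho))$. There is no genuine obstacle here beyond carefully tracking the two distinct roles played by the symbol ``$\alpha$'' in the theorem and in the corollary; all of the analytic content — in particular the appeal to Saint Raymond's lemma and the Baire-category argument on the Polish fibers of an open admissible representation — has already been absorbed into the proof of Theorem \ref{thrm:preservation_of_diffhier}.
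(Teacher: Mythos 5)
Your proposal is correct and is exactly the paper's own derivation: the paper introduces the corollary with the remark that $D_1(\bsigma{\alpha}(X)) = \bsigma{\alpha}(X)$ and then obtains it by specializing Theorem \ref{thrm:preservation_of_diffhier} to difference parameter $1$ with Borel level $\theta$ playing the role of the corollary's $\alpha$. Your explicit verification of the identity (computing $r(1)=1$, $r(0)=0$, and collapsing the difference operator to $A_0$) is a careful unwinding of what the paper leaves implicit, and your bookkeeping of the two roles of ``$\alpha$'' is exactly right.
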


The following is a generalization of the Hausdorff-Kuratowski Theorem. The case for $\theta=1$ was proven by V. Selivanov \cite{selivanov2008} for all $\omega$-continuous domains, but $\theta >1$ was left open.

\begin{theorem}\label{thrm:Haus_Kura}
If $X$ is a quasi-Polish space and $1\leq \theta<\omega_1$, then
\[\bdelta {\theta+1}(X) = \bigcup_{1\leq \alpha<\omega_1} D_{\alpha}(\bsigma \theta(X)).\]
\end{theorem}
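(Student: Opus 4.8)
The plan is to transfer the statement---which is classical for Polish spaces---across a total admissible representation, using the two preservation theorems for the Borel and difference hierarchies proved just above. Fix a quasi-Polish $X$ and $1\le\theta<\omega_1$, and recall that $X$ admits a total admissible representation $\rho\colon\baire\to X$, so that $dom(\rho)=\baire$ is Polish. I would prove the two inclusions separately; one is a soft fact valid in arbitrary spaces, and the other carries the real content.

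For $\bigcup_{1\le\alpha<\omega_1}D_\alpha(\bsigma\theta(X))\subseteq\bdelta{\theta+1}(X)$, no completeness is needed. Given an increasing sequence $\{A_\beta\}_{\beta<\alpha}$ in $\bsigma\theta(X)$, each piece $A_\beta\setminus\bigcup_{\gamma<\beta}A_\gamma$ is the intersection of the $\bsigma\theta$-set $A_\beta$ with the $\bpi\theta$-set $X\setminus\bigcup_{\gamma<\beta}A_\gamma$ (the union being countable since $\beta<\omega_1$), hence lies in $\bdelta{\theta+1}$ by the basic closure propositions. Thus $D_\alpha(\{A_\beta\})$ is a countable union of $\bdelta{\theta+1}$-sets and so is $\bsigma{\theta+1}$; its complement is $X\setminus\bigcup_{\beta<\alpha}A_\beta$ together with the analogous pieces of the opposite parity, again a countable union of $\bdelta{\theta+1}$-sets, hence also $\bsigma{\theta+1}$. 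Therefore $D_\alpha(\{A_\beta\})\in\bdelta{\theta+1}(X)$.

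For the reverse inclusion, take $S\in\bdelta{\theta+1}(X)$. Applying Corollary \ref{cor:preservation_of_borel_hierarchy} to $S$ and to $X\setminus S$ gives $\rho^{-1}(S)\in\bdelta{\theta+1}(\baire)$. Since $\baire$ is Polish, the classical Hausdorff--Kuratowski theorem (\cite{kechris}) supplies a countable $\alpha\ge1$ with $\rho^{-1}(S)\in D_\alpha(\bsigma\theta(\baire))=D_\alpha(\bsigma\theta(dom(\rho)))$, and the nontrivial direction of Theorem \ref{thrm:preservation_of_diffhier} then pulls this back to $S\in D_\alpha(\bsigma\theta(X))$. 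The genuine difficulty of this direction is entirely absorbed into Theorem \ref{thrm:preservation_of_diffhier}, whose proof via the Saint-Raymond/Vaught-transform argument is the real obstacle; once that is available, the only assembly points requiring care are ensuring the representation is total (so the classical theorem applies on the Polish space $\baire$) and noting that $\theta>1$ is permitted throughout, which is precisely the case left open by Selivanov and now covered by the general preservation theorem.
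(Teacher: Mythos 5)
Your proof is correct and follows essentially the same route as the paper: the easy inclusion holds in any topological space, and the hard direction transfers $S\in\bdelta{\theta+1}(X)$ through an admissible representation with Polish domain, applies the classical Hausdorff--Kuratowski theorem there, and pulls the result back via Theorem \ref{thrm:preservation_of_diffhier}. The only cosmetic differences are that you spell out the soft inclusion (which the paper merely asserts) and use a total representation on $\baire$ where the paper uses a partial one with Polish domain; neither affects the argument.
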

\begin{proof}
$\bigcup_{1\leq \alpha<\omega_1} D_{\alpha}(\bsigma \theta(X))\subseteq \bdelta {\theta+1}(X)$ holds for any topological space.

Assume $S\in \bdelta {\theta+1}(X)$. Since $X$ is quasi-Polish, there exists an admissible representation $\rho\colon \subseteq \baire\to X$ of $X$ such that $dom(\rho)$ is a Polish space. Since $\rho$ is continuous, $\rho^{-1}(S)\in \bdelta {\theta+1}(dom(\rho))$. Since $dom(\rho)$ is Polish, by the Hausdorff-Kuratowski theorem (Theorem 22.27 in \cite{kechris}) there is $\alpha<\omega_1$ such that $\rho^{-1}(S)\in D_{\alpha}(\bsigma \theta(dom(\rho)))$. By the previous theorem we have $S\in D_{\alpha}(\bsigma \theta(X))$.
\qed
\end{proof}


\section{Extending quasi-Polish topologies}\label{sec:topextension}

In this section we show that classic results concerning the extension of Polish topologies naturally generalize to the quasi-Polish case. An important new result is that any (separable) metrizable extension of a quasi-Polish topology by $\bsigma 2$-sets results in a Polish topology. As corollaries, we obtain that the metric topology induced by an arbitrary (compatible) quasi-metric on a quasi-Polish space is Polish, and that the Lawson topology on an $\omega$-continuous domain is Polish.

\begin{lemma}\label{lem:delta2_extension}
Let $X$ be a quasi-Polish space and $B\in\bdelta 2(X)$. Then the topology on $X$ generated by adding $B$ as an open set is quasi-Polish.
\end{lemma}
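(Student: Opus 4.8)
The plan is to realize $(X,\tau')$, where $\tau'$ is the topology generated by $\tau\cup\{B\}$, as a $\bpi 2$ subspace of a quasi-Polish space and then invoke Theorem~\ref{thrm:quasipolish_bpi2_characterization}. The natural ambient space is $X\times\bb S$, where $\bb S=\{\bot,\top\}$ is the Sierpinski space with $\{\top\}$ open; $\bb S$ is a finite $\omega$-algebraic domain, hence quasi-Polish, so $X\times\bb S$ is quasi-Polish by Corollary~\ref{cor:countable_products}. I would define $e\colon X\to X\times\bb S$ by $e(x)=\langle x,\top\rangle$ if $x\in B$ and $e(x)=\langle x,\bot\rangle$ otherwise, and then show that $e$ is a homeomorphism of $(X,\tau')$ onto a $\bpi 2$ subspace of $X\times\bb S$.

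First I would verify that $e$ is a topological embedding of $(X,\tau')$. Since $\pi_{\bb S}^{-1}(\{\top\})=X\times\{\top\}$ and $\pi_X^{-1}(U)=U\times\bb S$ are subbasic open sets of the product, we have $e^{-1}(X\times\{\top\})=B$ and $e^{-1}(U\times\bb S)=U$ for each $U\in\tau$. As preimage commutes with unions and finite intersections, the initial topology on $X$ induced by $e$ is exactly the topology generated by $\tau\cup\{B\}$, namely $\tau'$. Since $e$ is clearly injective, it is a homeomorphism onto its image $e(X)=(B\times\{\top\})\cup((X\setminus B)\times\{\bot\})$.

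The main step is to show $e(X)\in\bpi 2(X\times\bb S)$, and this is where both halves of the hypothesis $B\in\bdelta 2(X)$ enter. I would compute the complement
\[(X\times\bb S)\setminus e(X)=\big((X\setminus B)\times\{\top\}\big)\cup\big(B\times\{\bot\}\big).\]
Because $\pi_X$ is continuous and preimages preserve each level $\bsigma\alpha$, both $\pi_X^{-1}(B)=B\times\bb S$ and $\pi_X^{-1}(X\setminus B)=(X\setminus B)\times\bb S$ lie in $\bdelta 2(X\times\bb S)$ (using $B\in\bsigma 2$ for the first and $X\setminus B\in\bsigma 2$, i.e. $B\in\bpi 2$, for the second). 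The set $X\times\{\top\}$ is open and $X\times\{\bot\}$ is closed, so both lie in $\bsigma 2$. Hence $(X\setminus B)\times\{\top\}=\pi_X^{-1}(X\setminus B)\cap(X\times\{\top\})$ and $B\times\{\bot\}=\pi_X^{-1}(B)\cap(X\times\{\bot\})$ are finite intersections of $\bsigma 2$ sets, hence $\bsigma 2$, and so is their union. Therefore $e(X)\in\bpi 2(X\times\bb S)$, and Theorem~\ref{thrm:quasipolish_bpi2_characterization} yields that $e(X)$, and hence $(X,\tau')$, is quasi-Polish.

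I expect the only real subtlety to be the bookkeeping in the last paragraph: one must track which factor of the complement requires $B\in\bsigma 2$ and which requires $B\in\bpi 2$, so that the full strength $B\in\bdelta 2$ is genuinely used. Everything else—that $\bb S$ and the product $X\times\bb S$ are quasi-Polish, and that $e$ is an embedding—is routine.
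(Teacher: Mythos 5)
Your proof is correct, and it is genuinely different from the one in the paper. The paper proves this lemma by combining two of its heavier tools: the generalized Hausdorff--Kuratowski theorem (Theorem~\ref{thrm:Haus_Kura}) is used to write $X\setminus B$ as a transfinite difference $D_\alpha$ of an increasing family of open sets, and then an explicit winning strategy for Player II in the game $\cal G(X,\tau')$ is constructed (splitting into the case where some $x_n$ lands in $B$, where play continues inside the quasi-Polish space $U_n\cap B$, and the case where all $x_n$ avoid $B$, where a decreasing ordinal rank along the difference decomposition forces the limit point to avoid $B$ as well), concluding via the game characterization (Theorem~\ref{thrm:game_theoretic_characterization}). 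Your route avoids both of these tools entirely: the embedding $x\mapsto\langle x,\chi_B(x)\rangle$ into $X\times\bb S$ reduces the lemma to the $\bpi 2$-subspace characterization (Theorem~\ref{thrm:quasipolish_bpi2_characterization}) and closure under countable products (Corollary~\ref{cor:countable_products}), both established well before this point in the paper, so there is no circularity. Your Sierpinski-factor trick is exactly the right non-Hausdorff replacement for the classical Polish-space argument that splits $X$ into a clopen sum: here $B$ need not become clopen in $\tau'$, and the asymmetry of $\bb S$ (only $\{\top\}$ is open) encodes precisely that only $B$, not its complement, is added. Note also that your argument scales with no extra effort: mapping $x\mapsto\langle x,\chi_{B_0}(x),\chi_{B_1}(x),\ldots\rangle$ into $X\times\bb S^\omega$ proves Theorem~\ref{thrm:delta2sequence_extension} in one step, whereas the paper obtains it by combining this lemma with the diagonal argument of Lemma~\ref{lem:quasipolishsequence_extension} (your product embedding is, in effect, that diagonal argument and this lemma merged). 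What the paper's proof buys in exchange is an explicit strategy-theoretic picture, which fits its emphasis on the game characterization; yours is shorter and more elementary. One small wording point: in your middle paragraph the parenthetical justifying $\bdelta 2$ membership of the two cylinder preimages is slightly tangled, but your closing sentence sorts out the bookkeeping correctly --- $(X\setminus B)\times\{\top\}\in\bsigma 2$ needs $B\in\bpi 2$, and $B\times\{\bot\}\in\bsigma 2$ needs $B\in\bsigma 2$.
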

\begin{proof}
Let $\tau$ be the original topology on $X$, and $\tau'$ the topology generated by adding $B$. By Theorem \ref{thrm:Haus_Kura}, $X\setminus B\in D_{\alpha}(\bsigma 1(X,\tau))$ for some $\alpha<\omega_1$, so let $\{A_\beta\}_{\beta<\alpha}$ be an increasing sequence of open subsets of $(X,\tau)$ such that 
\[X\setminus B = \bigcup \{A_{\beta}\setminus\bigcup_{\gamma<\beta}A_{\gamma} \,|\, \beta<\alpha,\, r(\beta)\not=r(\alpha)\}.\]

By Theorem \ref{thrm:game_theoretic_characterization}, it is sufficient to show that the following strategy is winning in the game $\cal G(X,\tau')$:
\begin{quote}
Assume Player I plays $(x_n,U_n)$ on the $n$-th step. If $x_n\in B$, then Player II plays $U_n\cap B$, and continues the rest of the game according to a winning strategy for $\cal G(U_n\cap B)$, which exists because $U_n\cap B$ is quasi-Polish.

\vspace{\baselineskip}

Otherwise, $x_m\not\in B$ for all $m\leq n$. Let $\beta_n<\alpha$ be the least ordinal such that $x_n\in A_{\beta_n} \setminus\bigcup_{\gamma<\beta_n}A_{\gamma}$ and $r(\beta_n)\not=r(\alpha)$. Since $x_n\not\in B$, there is $U'_n\in\tau$ such that $x_n\in U'_n\subseteq U_n\cap A_{\beta_n}$. Player II then plays $V_n$, where $V_n$ is determined from some fixed winning strategy for the game $\cal G(X,\tau)$ in response to Player I having played $(x_n,U'_n)$ on the $n$-th move.

\end{quote}
If $x_n\in B$ for any $n\in\omega$, then it is clear that the above strategy is winning for Player II. So we can assume that $x_n\not\in B$ for all $n\in\omega$. Since Player II is playing according to a winning strategy for $\cal G(X,\tau)$, $\{V_n\}_{n\in\omega}$ is a neighborhood basis (with respect to $\tau$) for some $x\in X$.

Clearly, $\beta_n\geq \beta_{n+1}$ for all $n\in\omega$, so there is some $m\in\omega$ such that $x_n\in A_{\beta_m}\setminus \bigcup_{\gamma<\beta_m}A_{\gamma}$ for all $n\geq m$. Since $x\in A_{\beta_m}$, if $x\in B$ then $\bigcup_{\gamma<\beta_m}A_{\gamma}$ would be an open neighborhood (with respect to $\tau$) of $x$, which is impossible because $V_n\not\subseteq \bigcup_{\gamma<\beta_m}A_{\gamma}$ for all $n\in\omega$. Therefore, $x\not\in B$ and it follows that $\{V_n\}_{n\in\omega}$ is a neighborhood basis for $x$ with respect to $\tau'$.
\qed
\end{proof}

If $X$ is Polish and $B\subseteq X$ is closed, then the topology on $X$ generated by adding $B$ as an open set is also Polish (see Lemma 13.2 in \cite{kechris}). However, if $B\in\bdelta 2(X)$ is not closed then the resulting topology might fail to be metrizable. For a simple example, let $\bb R$ be the real numbers with the usual topology, $K=\{1/n \,|\,n\in\omega, n\geq 1\}$, and $B=\bb R\setminus K$. Then $K\in \bdelta 2(\bb R)$ because it is countable and Polish, hence $B\in\bdelta 2(\bb R)$. The topology on $\bb R$ generated by adding $B$ as an open set, sometimes called the $K$-topology on $\bb R$, is quasi-Polish by Lemma \ref{lem:delta2_extension} and clearly Hausdorff, but it is not regular, hence not Polish, because $0$ and the closed set $K$ do not have disjoint neighborhoods.

\begin{lemma}\label{lem:quasipolishsequence_extension}
Let $(X,\tau)$ be a quasi-Polish space and $\{\tau_n\}_{n\in\omega}$ a sequence of quasi-Polish topologies on $X$ with $\tau\subseteq \tau_n$ for each $n\in\omega$. Then the topology on $X$ generated by $\bigcup_{n\in\omega}\tau_n$ is quasi-Polish.
\end{lemma}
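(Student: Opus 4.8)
The plan is to realize the extended space as a $\bpi 2$-subspace of a suitable quasi-Polish product and then invoke Theorem \ref{thrm:quasipolish_bpi2_characterization}. This mirrors the classical argument for Polish topologies (Lemma 13.2/13.3 in \cite{kechris}), with the role played there by a \emph{closed} diagonal replaced here by a $\bpi 2$ diagonal. Write $\tau^*$ for the topology on $X$ generated by $\bigcup_{n\in\omega}\tau_n$. First I would form the product $Y = \prod_{n\in\omega}(X,\tau_n)$. Each factor is quasi-Polish by hypothesis, so $Y$ is quasi-Polish by Corollary \ref{cor:countable_products}, and in particular countably based and $T_0$.

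Next I would consider the diagonal map $\Delta\colon X\to Y$ given by $x\mapsto (x,x,x,\ldots)$ and check that it is a topological embedding of $(X,\tau^*)$ into $Y$. Continuity is immediate, since for each subbasic open set $\pi_n^{-1}(U)$ of $Y$ (with $U\in\tau_n$) we have $\Delta^{-1}(\pi_n^{-1}(U)) = U \in \tau_n\subseteq\tau^*$. For the embedding property I would note that a basic $\tau^*$-open set has the form $U_{n_1}\cap\cdots\cap U_{n_k}$ with $U_{n_i}\in\tau_{n_i}$, and that $\Delta(U_{n_1}\cap\cdots\cap U_{n_k}) = \Delta(X)\cap \bigcap_{i\leq k}\pi_{n_i}^{-1}(U_{n_i})$, so $\Delta$ is open onto its image. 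Hence $(X,\tau^*)$ is homeomorphic to $\Delta(X)$.

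The central step is to show that $\Delta(X)=\{(x_m)_m\in Y\,|\, x_0=x_1=\cdots\}$ is $\bpi 2$ in $Y$. Since $\tau\subseteq\tau_n$ for every $n$, the identity map $\iota_n\colon (X,\tau_n)\to (X,\tau)$ is continuous, so $g_n:=\iota_n\circ\pi_n\colon Y\to (X,\tau)$ is a continuous map sending $(x_m)_m$ to $x_n$ viewed as a point of $(X,\tau)$. Because $(X,\tau)$ is quasi-Polish, hence countably based and $T_0$, Corollary \ref{cor:equalizer} applies to the pair $g_0,g_n\colon Y\to (X,\tau)$ and gives $[g_0=g_n]=\{(x_m)_m\,|\, x_0=x_n\}\in\bpi 2(Y)$. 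Then $\Delta(X)=\bigcap_{n\in\omega}[g_0=g_n]$ is a countable intersection of $\bpi 2$-sets, hence $\bpi 2(Y)$. Finally, Theorem \ref{thrm:quasipolish_bpi2_characterization} yields that $\Delta(X)$ is quasi-Polish, so $(X,\tau^*)\cong\Delta(X)$ is quasi-Polish.

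I expect the only delicate points to be the two bookkeeping verifications: that the subspace topology $Y$ induces on $\Delta(X)$ really coincides with $\tau^*$ (i.e.\ that $\Delta$ is an embedding and not merely a continuous bijection onto its image), and that the two projections $\pi_0$ and $\pi_n$ must be routed through the common coarse codomain $(X,\tau)$ so that Corollary \ref{cor:equalizer} is applicable with a single target space. Everything else is routine, and no winning-strategy construction in the game $\cal G(X,\tau^*)$ is needed, although such an approach generalizing the proof of Lemma \ref{lem:delta2_extension} would also be available as an alternative.
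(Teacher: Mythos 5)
Your proposal is correct and takes essentially the same route as the paper: embed $(X,\tau^*)$ via the diagonal into the quasi-Polish product $\prod_{n\in\omega}(X,\tau_n)$, show the diagonal's image is $\bpi 2$, and conclude with Theorem \ref{thrm:quasipolish_bpi2_characterization}. The only inessential difference is that you certify $\Delta(X)\in\bpi 2$ by citing Corollary \ref{cor:equalizer} for the maps $(x_m)_m\mapsto x_0$ and $(x_m)_m\mapsto x_n$ into $(X,\tau)$, whereas the paper verifies the same fact directly via a countable basis $\{U_k\}$ of $(X,\tau)$ --- the underlying computation is identical, and your explicit check that $\Delta$ is an embedding just fills in a step the paper leaves implicit.
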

\begin{proof}
The proof is a simple modification of the proof of Lemma 13.3 in \cite{kechris}. Let $\tau_{\infty}$ be the topology generated by $\bigcup_{n\in\omega}\tau_n$. Since $(X,\tau_n)$ is quasi-Polish for each $n\in\omega$, their topological product $\prod_{n\in\omega}(X,\tau_n)$ is quasi-Polish by Corollary~\ref{cor:countable_products}.

Define $f\colon (X,\tau_\infty)\to \prod_{n\in\omega}(X,\tau_n)$ as $f(x)=\langle x,x,\ldots\rangle$. If we fix a countable basis $\{U_k\}_{k\in\omega}$ for $(X,\tau)$, then it is clear that $\langle x_0,x_1,\ldots \rangle\in f(X)$ if and only if $\forall i,j,k\in\omega\colon x_i\in U_k \iff x_j\in U_k$. Therefore, $f(X)$ is $\bpi 2$ in $\prod_{n\in\omega} (X,\tau_n)$.

Since $f$ is a homeomorphism of $(X,\tau_{\infty})$ with $f(X)$, it follows that $(X,\tau_{\infty})$ is quasi-Polish.
\qed
\end{proof}

From Lemmas \ref{lem:delta2_extension} and \ref{lem:quasipolishsequence_extension} we immediately obtain the following.

\begin{theorem}\label{thrm:delta2sequence_extension}
Let $X$ be a quasi-Polish space and $A_n\in\bdelta 2(X)$ for $n\in\omega$. Then the topology on $X$ generated by adding $\{A_n\}_{n\in\omega}$ as open sets is quasi-Polish.
\qed
\end{theorem}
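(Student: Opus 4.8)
The plan is to obtain the result directly by chaining the two preceding lemmas, so essentially no new argument is needed beyond some bookkeeping about generated topologies. Write $\tau$ for the original quasi-Polish topology on $X$. First I would, for each $n\in\omega$, invoke Lemma~\ref{lem:delta2_extension} with $B=A_n$: since $A_n\in\bdelta 2(X,\tau)$, the topology $\tau_n$ generated by $\tau\cup\{A_n\}$ is quasi-Polish, and evidently $\tau\subseteq\tau_n$.

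Next I would feed the sequence $\{\tau_n\}_{n\in\omega}$ into Lemma~\ref{lem:quasipolishsequence_extension}. Its hypotheses are met, since each $\tau_n$ is a quasi-Polish topology refining $\tau$, so the topology $\tau_\infty$ generated by $\bigcup_{n\in\omega}\tau_n$ is quasi-Polish.

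The only point that needs a word of justification is that $\tau_\infty$ is exactly the topology $\tau^\ast$ obtained by adjoining all of the $A_n$ to $\tau$ simultaneously. This is immediate from the definition of a generated topology: on the one hand $\tau^\ast\supseteq\tau\cup\{A_n\}$, so $\tau^\ast\supseteq\tau_n$ for every $n$ and hence $\tau^\ast\supseteq\tau_\infty$; on the other hand each $A_n\in\tau_n\subseteq\tau_\infty$ and $\tau\subseteq\tau_\infty$, so $\tau_\infty$ contains a subbasis for $\tau^\ast$ and therefore $\tau_\infty\supseteq\tau^\ast$. Thus $\tau_\infty=\tau^\ast$ is quasi-Polish, which is the claim.

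Since the whole argument is a concatenation of two lemmas, I do not expect a genuine obstacle. The only things one must not overlook are that Lemma~\ref{lem:delta2_extension} requires each $A_n$ to be $\bdelta 2$ with respect to the \emph{original} topology $\tau$, which is precisely the hypothesis of the theorem, and that the join-of-topologies identification in the previous paragraph is correct; both are routine to check.
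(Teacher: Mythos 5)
Your proof is correct and is exactly the argument the paper intends: the paper derives Theorem~\ref{thrm:delta2sequence_extension} immediately from Lemmas~\ref{lem:delta2_extension} and~\ref{lem:quasipolishsequence_extension}, just as you do. Your additional check that the join of the topologies $\tau_n$ coincides with the topology generated by adjoining all the $A_n$ simultaneously is the routine bookkeeping the paper leaves implicit.
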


We also easily obtain the following generalization of a theorem by K. Kuratowski (see Theorem 22.18 in \cite{kechris}).

\begin{theorem}\label{thrm:borels_as_opens}
Let $(X,\tau)$ be quasi-Polish and $A_n\in\bsigma \alpha(X,\tau)$ for $n\in\omega$. Then there is a quasi-Polish topology $\tau'\subseteq \bsigma \alpha(X,\tau)$ extending $\tau$ such that $A_n$ is open in $(X,\tau')$ for all $n\in\omega$.
\end{theorem}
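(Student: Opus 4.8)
The plan is to argue by transfinite induction on $\alpha$, reducing the extension of an arbitrary $\bsigma\alpha$-family to repeated applications of the $\bdelta 2$-extension Theorem \ref{thrm:delta2sequence_extension}. The statement I would prove by induction is: for every quasi-Polish $(X,\tau)$ and every countable family $\{A_n\}_{n\in\omega}\subseteq\bsigma\alpha(X,\tau)$, there is a quasi-Polish topology $\tau'\subseteq\bsigma\alpha(X,\tau)$ refining $\tau$ in which every $A_n$ is open.

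For the base cases, when $\alpha=1$ the $A_n$ are already $\tau$-open and $\tau'=\tau$ works. When $\alpha=2$, I would write each $A_n=\bigcup_m L_{n,m}$ with $L_{n,m}=U_{n,m}\setminus V_{n,m}$ for $\tau$-open $U_{n,m},V_{n,m}$; each $L_{n,m}$ is locally closed, hence in $\bdelta 2(X,\tau)$, so Theorem \ref{thrm:delta2sequence_extension} produces a quasi-Polish $\tau'$ in which all $L_{n,m}$, and therefore all $A_n$, are open.

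For the inductive step $\alpha>2$, I would use Proposition \ref{prop:borel3_and_up} to write each $A_n=\bigcup_m A_{n,m}$ with $A_{n,m}\in\bpi{\beta_{n,m}}(X,\tau)$ and $\beta_{n,m}<\alpha$, so that it suffices to make every $A_{n,m}$ open. The difficulty, which I expect to be the main obstacle, is that one cannot directly adjoin a $\bpi{<\alpha}$ set as an open set and remain quasi-Polish, since Theorem \ref{thrm:delta2sequence_extension} only adjoins $\bdelta 2$ sets. The key device is a two-stage refinement. First, each complement $X\setminus A_{n,m}$ lies in $\bsigma{\beta_{n,m}}(X,\tau)$ with $\beta_{n,m}<\alpha$, so by the induction hypothesis there is a quasi-Polish topology $\sigma_{n,m}\subseteq\bsigma{\beta_{n,m}}(X,\tau)$ refining $\tau$ in which $X\setminus A_{n,m}$ is open. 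Joining these countably many topologies via Lemma \ref{lem:quasipolishsequence_extension} gives a single quasi-Polish topology $\sigma\subseteq\bsigma\alpha(X,\tau)$ refining $\tau$ in which every $X\setminus A_{n,m}$ is open, i.e. every $A_{n,m}$ is $\sigma$-closed and hence in $\bdelta 2(X,\sigma)$. Second, applying Theorem \ref{thrm:delta2sequence_extension} to the quasi-Polish space $(X,\sigma)$ and the family $\{A_{n,m}\}\subseteq\bdelta 2(X,\sigma)$ yields a quasi-Polish $\tau'$ extending $\sigma$ in which every $A_{n,m}$, and hence every $A_n$, is open.

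Finally I would check the level bound $\tau'\subseteq\bsigma\alpha(X,\tau)$, which is the other point requiring care. All generators of $\tau'$ lie in $\bsigma\alpha(X,\tau)$: the $\sigma$-open sets do, and each adjoined $A_{n,m}\in\bpi{\beta_{n,m}}(X,\tau)\subseteq\bsigma\alpha(X,\tau)$. Since $\bsigma\alpha(X,\tau)$ is closed under finite intersections and countable unions, and since $\tau'$ is countably based (being quasi-Polish), from the base consisting of all finite intersections of generators one may extract a countable base whose members all lie in $\bsigma\alpha(X,\tau)$; every $\tau'$-open set is then a countable union of these, so $\tau'\subseteq\bsigma\alpha(X,\tau)$. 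The same second-countability remark justifies the containment $\sigma\subseteq\bsigma\alpha(X,\tau)$ used above and the base-case containments. Thus the complement-then-$\bdelta 2$ trick circumvents the inductive obstacle while the level bookkeeping keeps every intermediate topology inside $\bsigma\alpha(X,\tau)$, completing the induction.
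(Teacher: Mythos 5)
Your proof is correct and follows essentially the same route as the paper's: a transfinite induction in which the inductive hypothesis supplies intermediate quasi-Polish topologies making the lower-level constituents of each $A_n$ into $\bdelta 2$ sets relative to the refined topology, after which the $\bdelta 2$-extension results (Lemma \ref{lem:delta2_extension}, Theorem \ref{thrm:delta2sequence_extension}) and the countable-join Lemma \ref{lem:quasipolishsequence_extension} finish the job, with the same second-countability bookkeeping giving $\tau'\subseteq\bsigma\alpha(X,\tau)$. The only differences are cosmetic: the paper decomposes each set directly from the definition of $\bsigma\alpha$ as $\bigcup_i B_i\setminus B_i'$ and uses the inductive hypothesis to make both $B_i$ and $B_i'$ open, so each difference is locally closed (hence $\bdelta 2$) in the intermediate topology --- which works uniformly for all $\alpha>1$ and avoids your separate $\alpha=2$ base case --- whereas you dualize via Proposition \ref{prop:borel3_and_up} and make complements open so that the pieces become closed.
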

\begin{proof}
By Lemma \ref{lem:quasipolishsequence_extension} it suffices to prove the theorem for a single set $A\in\bsigma \alpha(X,\tau)$. The theorem is trivial if $\alpha = 1$, so assume $1<\alpha<\omega_1$. Then
\[ A = \bigcup_{i\in\omega}B_i\setminus B'_i\]
with $B_i,B'_i\in \bsigma{\alpha_i}(X,\tau)$ and $\alpha_i<\alpha$. By the induction hypothesis, for each $i\in\omega$ there is a quasi-Polish topology $\tau_i\subseteq \bsigma{\alpha_i}(X,\tau)$ extending $\tau$ such that $B_i,B'_i$ are open in $(X,\tau_i)$. Let $\tau'_i$ be the topology generated by adding $B_i\setminus B'_i$ to $\tau_i$. Since $B_i\setminus B'_i\in \bdelta 2(X,\tau_i)$, Lemma \ref{lem:delta2_extension} implies that $\tau'_i$ is quasi-Polish, and clearly $\tau'_i\subseteq \bsigma{\alpha_i+1}(X,\tau)\subseteq \bsigma{\alpha}(X,\tau)$. The topology $\tau'$ generated by $\bigcup_{i\in\omega}\tau'_i$ satisfies the claims of the theorem.
\qed
\end{proof}

At a glance, Theorem~\ref{thrm:borels_as_opens} may appear more general than Theorem \ref{thrm:delta2sequence_extension}. However, there is the important difference that the extending topology in Theorem~\ref{thrm:delta2sequence_extension} is generated only by the sets $A_n$, whereas the extending topology in Theorem~\ref{thrm:borels_as_opens} includes many sets other than those generated by the $A_n$ in order to make the topology quasi-Polish. 

We conclude with an important result concerning metrizable extensions of quasi-Polish topologies.

\begin{theorem}\label{thrm:sigma2_metric_polish}
Let $\tau$ and $\tau'$ be topologies on $X$ such that $(X,\tau)$ is quasi-Polish, $(X,\tau')$ is separable metrizable, and $\tau\subseteq \tau'\subseteq \bsigma 2(X,\tau)$. Then $(X,\tau')$ is Polish.
\end{theorem}
\begin{proof}
To avoid arguments involving multiple topologies on a single set, we will instead prove the following equivalent statement:

\begin{quote}
Let $X$ be a quasi-Polish space, $Y$ a separable metrizable space, and $f\colon X\to Y$ a $\bsigma 2$-measurable bijection such that $f^{-1}$ is continuous. Then $Y$ is Polish.
\end{quote}

Assume for a contradiction that $Y$ is not Polish. Using an argument similar to the first half of the proof of Theorem \ref{thrm:bicomplete_characterization}, it is immediate that $Y$ is a $\bpi 3$-subset of its completion. By a theorem of W. Hurewicz (see Theorem 21.18 in \cite{kechris}), it follows that there is a closed set $C\subseteq Y$ homeomorphic to the space of rational numbers. Since $C$ is closed, $A=f^{-1}(C)$ is in $\bpi 2(X)$, hence $A$ is quasi-Polish. 

We now let $g\colon A\to C$ be the restriction of $f$ to $A$ and consider $A$ and $C$ in their relative topologies. Let $\{U_i\}_{i\in\omega}$ be a countable basis for $C$ consisting of only clopen sets, and let $B_i=g^{-1}(U_i)$ for $i\in\omega$. Clearly each $B_i$ is in $\bdelta 2(A)$ because each $U_i$ is clopen and $g$ is $\bsigma 2$-measurable. Since $g^{-1}$ is a continuous bijection, the topology on $A$ generated by adding each $B_i$ as an open set makes $A$ homeomorphic to $C$. But Theorem \ref{thrm:delta2sequence_extension} implies that $C$ is quasi-Polish, which contradicts $C$ being homeomorphic to the rationals.
\qed
\end{proof}

\begin{corollary}
If $X$ is quasi-Polish and $d$ is any quasi-metric compatible with the topology on $X$, then $(X,\widehat{d})$ is Polish.
\qed
\end{corollary}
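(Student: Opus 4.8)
The plan is to derive this as an immediate application of Theorem~\ref{thrm:sigma2_metric_polish}, which characterizes exactly when a separable metrizable extension of a quasi-Polish topology is Polish. I would set $\tau = \tau_d$ to be the quasi-Polish topology on $X$ and $\tau' = \tau_{\widehat{d}}$ to be the metric topology induced by $\widehat{d}$, and then verify the three hypotheses of that theorem: that $(X,\tau)$ is quasi-Polish, that $(X,\tau')$ is separable metrizable, and that $\tau \subseteq \tau' \subseteq \bsigma 2(X,\tau)$.

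The first hypothesis holds by assumption. For the second, $(X,\tau')$ is metrizable by construction of $\widehat{d}$, and it is separable because $X$ is countably based (being quasi-Polish), so Proposition~\ref{prop:countablybased_iff_sep} gives that $(X,\widehat{d})$ is separable. For the inclusion $\tau \subseteq \tau'$, I would use that $\widehat{d}(x,y) = \max\{d(x,y),d(y,x)\} \geq d(x,y)$, so every $\widehat{d}$-ball sits inside the corresponding $d$-ball, $B_{\widehat{d}}(x,\varepsilon) \subseteq B_d(x,\varepsilon)$; hence every $\tau_d$-open set is also $\tau_{\widehat{d}}$-open. The remaining inclusion $\tau' \subseteq \bsigma 2(X,\tau)$ is precisely the content of Theorem~\ref{thrm:quasi_and_metric_borelrelation}, applicable because $X$ is a countably based quasi-metric space.

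With all three hypotheses in place, Theorem~\ref{thrm:sigma2_metric_polish} yields at once that $(X,\tau') = (X,\widehat{d})$ is Polish. I do not expect any genuine obstacle: the corollary is an essentially formal consequence of the preceding theorem, and the only care required is in confirming the two topological inclusions, both of which are routine.
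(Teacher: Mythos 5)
Your proposal is correct and is exactly the argument the paper intends: the corollary is stated as an immediate consequence of Theorem~\ref{thrm:sigma2_metric_polish}, with the inclusion $\tau_{\widehat{d}}\subseteq\bsigma 2(X,\tau_d)$ supplied by Theorem~\ref{thrm:quasi_and_metric_borelrelation} and separability of $(X,\widehat{d})$ by Proposition~\ref{prop:countablybased_iff_sep}. Your verification of the remaining inclusion $\tau_d\subseteq\tau_{\widehat{d}}$ via $B_{\widehat{d}}(x,\varepsilon)\subseteq B_d(x,\varepsilon)$ is the routine step the paper leaves implicit.
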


Note that the above corollary \emph{does not} claim that $(X,\widehat{d})$ is a complete metric space, which is false in general. It only means that the topology on $(X,\widehat{d})$ is compatible with some complete metric, possibly different than $\widehat{d}$.

For another simple application of Theorem \ref{thrm:sigma2_metric_polish}, let $X$ be an $\omega$-continuous domain and let $\tau$ be the Scott-topology on $X$. Let $\{B_i\}_{i\in\omega}$ be an enumeration of all subsets of $X$ of the form $\wayabovearrow b_0 \setminus (\uparrow\! b_1\cup\cdots\cup\uparrow\! b_n)$, where $b_0,b_1,\ldots, b_n$ are elements of some fixed countable basis (in the domain theoretic sense) for $X$. The topology $\lambda$ generated by $\{B_i\}_{i\in\omega}$ is called the \emph{Lawson topology} on $X$, and is known to be separable and metrizable for $\omega$-continuous domains (see Theorem III-4.5 and Corollary III-4.6 in \cite{etal_scott}). Since $\wayabovearrow b$ is open and $\uparrow\!b$ is $G_\delta$ with respect to the Scott-topology, it is clear that $\tau\subseteq \lambda\subseteq \bsigma 2(X,\tau)$. Theorem \ref{thrm:sigma2_metric_polish} therefore provides an alternative proof of the known fact that the Lawson topology on an $\omega$-continuous domain is Polish (compare with the proof of Proposition V-5.17 in \cite{etal_scott}).


\section{Conclusions}\label{sec:conclusion}

We have seen that the quasi-Polish spaces provide a nice common ground for the development of descriptive set theory for both Polish spaces and $\omega$-continuous domains. Our results also suggest that much can be gained by a further integration of the fields of descriptive set theory, domain theory, and generalized metrics.

It turns out that the category of quasi-Polish spaces and continuous functions has a very natural description: up to equivalence, it is the smallest full subcategory of the category of topological spaces and continuous functions which contains the Sierpinski space and is closed under countable limits. Closure under countable limits follows from our results showing that quasi-Polish spaces are closed under equalizers and countable products (Corollaries \ref{cor:equalizer} and \ref{cor:countable_products}). On the other hand, to see that every quasi-Polish space can be obtained from the Sierpinski space using countable limits, first note that $\cal P(\omega)$ is homeomorphic to the product of countably infinite many copies of the Sierpinski space. We then only need to apply a result due to D. Scott \cite{scott_datatypes} which shows that every $\bpi 2$-subset of $\cal P(\omega)$ can be obtained as the equalizer of a pair of continuous functions on $\cal P(\omega)$. In general, for any topologial space $X$ and $A\in\bpi 2(X)$, there are sequences $(U_i)_{i\in\omega}$ and $(V_i)_{i\in\omega}$ of open subsets of $X$ such that
\[x\in A \mbox{ if and only if } (\forall i\in\omega)\colon x\in U_i \Rightarrow x\in V_i.\]
Then the continuous functions $f,g \colon X\to \cal P(\omega)$, defined as
\begin{eqnarray*}
f(x) &=& \{i\in\omega \,|\, x\in U_i\},\\
g(x) &=& \{i\in\omega \,|\, x\in U_i\cap V_i\},
\end{eqnarray*}
together satisfy $x\in A$ if and only if $f(x)=g(x)$. Therefore, $A$ is the equalizer of a pair of continuous functions into $\cal P(\omega)$.

Although quasi-Polish spaces are closed under countable co-products, they are not closed under countable co-limits in general, and the category of quasi-Polish spaces is not cartesian closed. 

Recently, Reinhold Heckmann \cite{heckmann} has found a very nice proof that countably presentable locales are spatial using a generalization of the Baire category theorem. An interesting consequence of his proof is that the class of sober spaces corresponding to countably presentable locales is precisely the class of quasi-Polish spaces.

The fact that quasi-metrics do not always have a completion leaves a rather unsatisfactory gap in the theory of quasi-Polish spaces. Currently we do not know whether or not there is some alternative characterization of quasi-Polish spaces in terms of a ``complete'' generalized metric with better completion properties.

Another important task is to see how well our results can be extended to non-countably based quasi-metric spaces. For example, we do not know if a completely quasi-metrizable subspace of a non-countably based quasi-metric space is necessarily $\bpi 2$ (although Theorem \ref{thrm:bpi2_subspace_complete} shows that the converse holds). Note that the singleton set $\{\omega_1\}$ is not Borel in $(\omega_1+1)$ with the Scott-topology, so there exist non-quasi-metrizable $T_0$-spaces which contain completely metrizable non-Borel subspaces. Another interesting question is whether or not the game theoretic characterization of quasi-Polish spaces given in Section \ref{sec:gamecharacterization} applies to all completely quasi-metrizable spaces.

Finally, Theorem \ref{thrm:preservation_of_diffhier} and Corollary \ref{cor:preservation_of_borel_hierarchy} show that the Borel complexity of a subset of an admissibly represented countably based space is precisely the Borel complexity (relative to the domain of the representation) of the set of elements of $\baire$ representing the subset. This provides additional evidence that the modification of the Borel hierarchy that we have adopted in this paper is the ``correct'' definition for generalizing descriptive set theory to all countably based $T_0$-spaces. This equivalence between the complexity of subsets and their representations can serve as a basic guideline for further extending the techniques of descriptive set theory to the entire class of admissibly represented spaces. This is an important task because the admissibly represented spaces form a cartesian closed category \cite{schroder}. An important first step in this direction is to determine whether or not Corollary \ref{cor:preservation_of_borel_hierarchy} can be extended in a natural way to all admissibly represented spaces.

\section*{Acknowledgments}
I would like to thank V. Selivanov, H.-P. A. K\"{u}nzi, V. Brattka, and D. Spreen for valuable comments on earlier versions of this paper and for useful references. I am also indebted to the Kyoto Computable Analysis group, in particular H. Tsuiki, and also to K. Keimel for helping me translate reference \cite{hausdorff} during his stay in Kyoto. Finally, I would like to thank the anonymous referees for comments and suggestions which have improved this paper.

\bibliographystyle{amsplain}
\bibliography{myrefs}

\providecommand{\bysame}{\leavevmode\hbox to3em{\hrulefill}\thinspace}
\providecommand{\MR}{\relax\ifhmode\unskip\space\fi MR }
\providecommand{\MRhref}[2]{%
  \href{http://www.ams.org/mathscinet-getitem?mr=#1}{#2}
}
\providecommand{\href}[2]{#2}
\begin{thebibliography}{10}

\bibitem{aull_thron}
C.~E. Aull and W.~J. Thron, \emph{Separation axioms between {$T_0$} and
  {$T_1$}}, Indag. Math. \textbf{24} (1963), 26--37.

\bibitem{bennett_lutzer}
H.~Bennet and D.~Lutzer, \emph{A countably-based domain representation of a
  non-regular {H}ausdorff space}, Topology Proceedings \textbf{30} (2006),
  no.~1, 83--93.

\bibitem{brattka_thesis}
V.~Brattka, \emph{Recursive and computable operations over topological
  structures}, Ph.D. thesis, {F}ern{U}niversit{\"{a}}t {H}agen, 1999.

\bibitem{brattka_hertling}
V.~Brattka and P.~Hertling, \emph{Topological properties of real number
  representations}, Theoretical Computer Science \textbf{284} (2002), 241--257.

\bibitem{debrecht_etal_4}
M.~de~Brecht and A.~Yamamoto, \emph{$\bsigma \alpha$-admissible
  representations}, Sixth International Conference on Computability and
  Complexity in Analysis (CCA 2009), 2009.

\bibitem{debrecht_etal_3}
\bysame, \emph{Topological properties of concept spaces (full version)},
  Information and Computation \textbf{208} (2010), no.~4, 327 -- 340.

\bibitem{dorais_mummert}
F.~Dorais and C.~Mummert, \emph{Stationary and convergent strategies in
  {C}hoquet games}, Fundamenta Mathematicae \textbf{209} (2010), 59 -- 79.

\bibitem{etal_scott}
G.~Gierz, K.~H. Hofmann, K.~Keimel, J.~D. Lawson, M.~W. Mislove, and D.~S.
  Scott, \emph{Continuous lattices and domains}, Cambridge University Press,
  2003.

\bibitem{hausdorff}
F.~Hausdorff, \emph{{\"{U}}ber innere {A}bbildungen}, Fundamenta Mathematicae
  \textbf{23} (1934), 279--291.

\bibitem{heckmann}
R.~Heckmann, \emph{Spatiality of countably presentable locales (proved with the
  {B}aire category theorem)}, (to appear).

\bibitem{Hofmann_Mislove}
K.~H. Hofmann and M.~W. Mislove, \emph{Local compactness and continuous
  lattices}, Continuous Lattices, Lecture Notes in Mathematics, vol. 871,
  Springer, 1981, pp.~209--248.

\bibitem{junnila_kunzi}
H.~Junnila and H.-P.~A. K{\"{u}}nzi, \emph{Characterizations of absolute
  {$F_{\sigma\delta}$}-sets}, Czechoslovak Mathematical Journal \textbf{48}
  (1998), 55--64.

\bibitem{kechris}
A.~Kechris, \emph{Classical descriptive set theory}, Springer-Verlag, 1995.

\bibitem{kreitz_weihrauch}
C.~Kreitz and K.~Weihrauch, \emph{Theory of representations}, Theoretical
  Computer Science \textbf{38} (1985), 35--53.

\bibitem{kunzi1983}
H.-P.~A. K{\"{u}}nzi, \emph{On strongly quasi-metrizable spaces}, Archiv der
  Mathematik \textbf{41} (1983), 57--63.

\bibitem{kunzi_intro}
\bysame, \emph{An introduction to quasi-uniform spaces}, Beyond Topology
  (F.~Mynard and E.~Pearl, eds.), Contemporary Mathematics, vol. 486, American
  Mathematical Society, 2009, pp.~239--304.

\bibitem{kunzi1997}
H.-P.~A. K{\"{u}}nzi and E.~Wajch, \emph{Borel classification via
  quasi-metrics}, Topology and its Applications \textbf{77} (1997), 183--192.

\bibitem{luo_schulte}
W.~Luo and O.~Schulte, \emph{Mind change efficient learning}, Information and
  Computation \textbf{204} (2006), 989--1011.

\bibitem{Martin_ideal}
K.~Martin, \emph{Ideal models of spaces}, Theor. Comput. Sci. \textbf{305}
  (2003), 277--297.

\bibitem{keye_martin}
\bysame, \emph{Topological games in domain theory}, Topology and its
  Applications \textbf{129} (2003), 177--186.

\bibitem{Matthews}
S.~Matthews, \emph{Partial metric topology}, Annals of the New York Academy of
  Sciences \textbf{728} (1994), 183--197.

\bibitem{mummert}
C.~Mummert, \emph{Reverse mathematics of {MF} spaces}, Journal of Mathematical
  Logic \textbf{6} (2006), 203--232.

\bibitem{mummert_stephan}
C.~Mummert and F.~Stephan, \emph{Topological aspects of poset spaces}, Michigan
  Math. J. \textbf{59} (2010), 3--24.

\bibitem{oneil}
S.~O'Neill, \emph{Two topologies are better than one}, Research Report 283,
  Dept. Computer Science, University of Warwick, 1995.

\bibitem{romaguera_salbany}
S.~Romaguera and S.~Salbany, \emph{On bicomplete quasi-pseudometrizability},
  Topology and its Applications \textbf{50} (1993), 283--289.

\bibitem{Romaguera}
S.~Romaguera and O.~Valero, \emph{A quantitative computational model for
  complete partial metric spaces via formal balls}, Mathematical Structures in
  Comp. Sci. \textbf{19} (2009), 541--563.

\bibitem{saint_raymond}
J.~Saint~Raymond, \emph{Preservation of the {B}orel class under
  countable-compact-covering mappings}, Topology and its Applications
  \textbf{154} (2007), 1714--1725.

\bibitem{schroder}
M.~Schr{\"{o}}der, \emph{Extended admissibility}, Theoretical Computer Science
  \textbf{284} (2002), 519--538.

\bibitem{scott_datatypes}
D.~S. Scott, \emph{Data types as lattices}, SIAM Journal on Computing
  \textbf{5} (1976), 522--587.

\bibitem{selivanov1984}
V.~Selivanov, \emph{Index sets in the hyperarithmetical hierarchy}, Siberian
  Mathematical Journal \textbf{25} (1984), 474--488.

\bibitem{selivanov2004}
\bysame, \emph{Difference hierarchy in $\phi$-spaces}, Algebra and Logic
  \textbf{43} (2004), 238--248.

\bibitem{selivanov}
\bysame, \emph{Towards a descriptive set theory for domain-like structures},
  Theoretical Computer Science \textbf{365} (2006), 258--282.

\bibitem{selivanov2008}
\bysame, \emph{On the difference hierarchy in countably based {$T_0$}-spaces},
  Electronic Notes in Theoretical Computer Science \textbf{221} (2008),
  257--269.

\bibitem{smyth1988}
M.~Smyth, \emph{Quasi-uniformities: {R}econciling domains with metric spaces},
  Mathematical Foundations of Programming Language Semantics (M.~Main,
  A.~Melton, M.~Mislove, and D.~Schmidt, eds.), Lecture Notes in Computer
  Science, vol. 298, Springer, 1988, pp.~236--253.

\bibitem{tang1979}
A.~Tang, \emph{Chain properties in ${P}(\omega)$}, Theoretical Computer Science
  \textbf{9} (1979), 153--172.

\bibitem{tang1981}
\bysame, \emph{Wadge reducibility and {H}ausdorff difference hierarchy in
  ${P}(\omega)$}, Lecture Notes in Mathematics \textbf{871} (1981), 360--371.

\bibitem{vaught1974}
R.~Vaught, \emph{Invariant sets in topology and logic}, Fundamenta Mathematicae
  \textbf{82} (1974), 269--294.

\bibitem{weihrauch}
K.~Weihrauch, \emph{Computable analysis}, Springer-Verlag, 2000.

\end{thebibliography}

\end{document}